\newtheorem{thm}{Theorem}[section]
\newtheorem{cor}[thm]{Corollary}
\newtheorem{lem}[thm]{Lemma}
\newtheorem{prop}[thm]{Proposition}
\theoremstyle{definition}
\newtheorem{rem}[thm]{Remark}
\numberwithin{equation}{section}
\newcommand{\norm}[1]{\Vert#1\Vert}
\newcommand{\na}{\nabla}
\newcommand{\pa}{\partial}
\newcommand{\lec}{\lesssim}
\newcommand{\td}{\tilde}
\renewcommand{\div}{\operatorname{div}}
\newcommand\al{\alpha}
\newcommand\de{\delta}
\newcommand\De{\Delta}
\newcommand\Ga{\Gamma}
\newcommand{\la}{\lambda}
\newcommand{\eps}{\varepsilon}
\newcommand{\T}{\mathbb{T}}
\newcommand{\R}{\mathbb{R}}
\newcommand{\Z}{\mathbb{Z}}
\newcommand{\N}{\mathbb{N}}
\newcommand{\cR}{\mathcal{R}}
\newcommand{\supp}{\operatorname{supp}}
\newcommand{\I}{\operatorname{Id}}
\def\dint{\,\ThisStyle{\ensurestackMath{%
  \stackinset{c}{.2\LMpt}{c}{.5\LMpt}{\SavedStyle-}{\SavedStyle\phantom{\int}}}%
  \setbox0=\hbox{$\SavedStyle\int\,$}\kern-\wd0}\int}
\begin{document}

\title{An Onsager type theorem for the Euler-Boussinesq \\equations in two spatial dimensions}

\author{Ujjwal Koley}
\date{\today}
\address{\parbox{\linewidth}{
Ujjwal Koley \\
Centre for Applicable Mathematics, Tata Institute of Fundamental Research\\
P.O. Box 6503, GKVK Post Office, Bangalore 560065, India\\
E-mail address: ujjwal@math.tifrbng.res.in
}
} 
 

\begin{abstract} 
In this article, we construct non-trivial weak solutions $(v, \theta)$ to the inviscid Euler-Boussinesq system in two spatial dimensions. These solutions exhibit compact temporal support, thereby violating the conservation of the temperature’s
$L^p$-norm. Furthermore, the pair $(v, \theta)$ resides in the H\"older space $C^\gamma(\mathbb R \times \mathbb T^2) \times C^\gamma (\mathbb R \times \mathbb T^2)$ for any exponent $\gamma<1/3$. The methodology integrates a Nash iteration scheme with a linear decoupling technique, as first established in \cite{vikram}, to achieve these results.
\end{abstract}

\maketitle

\tableofcontents

\section{Introduction} 
In this manuscript, we consider the following two-dimensional Euler-Boussinesq system of equations
\begin{equation} \label{ie}
    \begin{cases}
        \partial_t v + \div(v \otimes v) + \nabla p = \theta e_2, \\ 
        \div v = 0, \\
        \partial_t \theta + \div (v \theta)=0,
    \end{cases}
\end{equation}
in the periodic setting $x \in \T^2 := \mathbb R^2 / \mathbb Z^2$, where $v: \R\times \T^2 \to \R^2$ is a velocity field, $\theta: \R\times \T^2 \to \R$ denotes the temperature which is a scalar function, $p: \R\times \T^2 \to \R$ is the scalar pressure, and $e_2$ is the standard unit vector i.e., $e_2= (0,1)^T$. The Euler-Boussinesq system of equations describe the convection phenomena in the ocean or atmosphere, and considered to be an excellent model for many geophysical flows, such as ocean circulations and atmospheric fronts (see \cite{Majda,Pedlosky}). 

Note that when $\theta \equiv 0$, the Euler-Boussinesq system \eqref{ie} reduces to the two-dimensional incompressible Euler equations. The existence and uniqueness of classical (local-in-time) solutions for the incompressible Euler equations are well-established in the literature (see \cite{Majda01}). Over the past two decades, significant advances have been made in the study of global-in-time weak solutions for these equations. Among the numerous developments in this field, one of the most prominent problems is Onsager's conjecture for the Euler equations. In his seminal 1949 work \cite{Onsager49}, Lars Onsager predicted that the threshold H\"older regularity required for weak solutions of the incompressible Euler equations to conserve energy is the exponent $1/3$. This conjecture has since motivated extensive research into the interplay between regularity and energy conservation in hydrodynamic models. More precisely, Onsager conjectured 
\begin{itemize}
\item [(a)] $C^{0,\alpha}$ solutions must conserve kinetic energy for $\alpha>1/3$.
\item [(b)] For any $\alpha<1/3$, there exists non-conservative (dissipative) weak solutions in the H\"older class $C^{0,\alpha}$.
\end{itemize}
Note that the first part of the aforementioned conjecture was established by Constantin et. al. in \cite{ConstantinETiti94} (see also \cite{CCFS08}), while the flexible component of the problem, which is notably intricate, has been addressed by multiple researchers. The foundational work in this direction was initiated by V. Scheffer (\cite{Scheffer93}), followed by contributions from A. Shnirelman (\cite{Shnirelman00}). A pivotal breakthrough in developing H\"older continuous solutions was achieved through the work of Camillo De Lellis and László Szekelyhidi. In their foundational paper \cite{DLS13}, they devised an iterative scheme that leveraged Beltrami flows on the three-dimensional torus $\T^3$ and the Geometric Lemma, successfully constructing continuous periodic solutions with prescribed kinetic energy. This framework was later refined to produce \cite{DLS14} H\"older continuous periodic solutions with exponents $\alpha < 1/10$, while maintaining the specified energy profile. Since then, a significant body of work has been devoted to this area (\cite{BDLIS15, BDLS16, cltv, BMNV21, DaneriSzekelyhidi17, IsetRegulTime}; see also survey articles \cite{001,002}), with Isett \cite{Isett18} ultimately achieving a key breakthrough by constructing weak solutions in the space $C_t(C^{\alpha})$, with $\alpha<1/3$, that possess compact support in time and do not conserve the total kinetic energy.

We note that, unlike the classical Euler equations, there are relatively few results in the literature for the system \eqref{ie}. In this context, we first highlight the works of Chae et al. \cite{Chae01, Chae02}, where the authors established the local existence and uniqueness of smooth solutions to \eqref{ie} and provided a blow-up criterion for these solutions.
Moreover, any (sufficiently) smooth solution $(v, \theta)$ of \eqref{ie} with initial data $(v_0, \theta_0)$ satisfies following two identities for all $t \in \R^{+}$ and $p\ge 1$:
\begin{align}
\label{ONSA}
\|v(t)\|^2_{L^2(\T^2)} = \|v_0\|^2_{L^2(\T^2)} + \int_0^t \int_{\T^2} \theta(x,s) v_2(x,s)\,dx, \quad \|\theta(t)\|_{L^p(\T^2)} = \|\theta_0\|_{L^p(\T^2)}.
\end{align}

In this work, we focus on weak solutions of \eqref{ie}, and for the existence of such solutions, we refer to the work by Tao and Zhang \cite{Tao}. To present their result precisely, we first recall the concept of weak solutions to \eqref{ie}.
A weak solution is defined as a tuple $(v,p, \theta) \in (L_{\mathrm{loc}}^{\infty}(\R; L^2(\T^2)))^3$ that satisfies \eqref{ie} in the distributional sense, meaning the following:
\begin{itemize}
\item [(a)] For all $\varphi \in C_c^{\infty} ( \R\times \T^2;\R^2)$, with $\div \varphi =0$
\begin{align*}
\int_{\R} \int_{\T^2} \big( v \cdot \partial_t \varphi + v \otimes v : \nabla \varphi + \theta e_2 \cdot \varphi \big)\,dx\,dt=0.
\end{align*}
\item [(b)] For all $\phi \in C_c^{\infty} (\R \times \T^2; \R)$
\begin{align*}
\int_{\R} \int_{\T^2} \big( \theta \partial_t \phi + v \theta \cdot \nabla \phi \big)\,dx\,dt=0.
\end{align*}
\item [(c)] For any $\psi \in C^{\infty}(\T^2; \R)$, and any time $t \in \R$,
\begin{align*}
\int_{\T^2} v(t,x) \cdot \nabla \psi(x) \,dx=0.
\end{align*}
\end{itemize}
The authors in \cite{Tao} studied the system \eqref{ie} and constructed H\"older continuous weak solutions
$(v, \theta) \in C_{t,x}^{\frac{1}{28} -}(\R \times \R^2) \times C_{t,x}^{\frac{1}{25} -}(\R \times \R^2)$ with compact support in space-time. Their approach relied on a multi-step iteration scheme with one-dimensional oscillation, incorporating plane waves of varying frequencies along the same direction to eliminate one stress error component per step. To fully remove stress errors, the process was iterated over multiple steps. Due to the coupling of velocity and temperature, two stress errors were eliminated simultaneously.

On the other hand, drawing inspiration from Onsager's conjecture for the incompressible Euler equations, there has been considerable interest in exploring Onsager-type theorems for a variety of fluid flow models. Within this framework, a natural question arises for the Euler-Boussinesq system: what is the critical H\"older exponent at which a weak solution satisfies \eqref{ONSA}? Indeed, a recent study by Miao et al. \cite[Theorem 1.3]{Miao} established the following Onsager-type theorem on the 3-dimensional torus $\T^3$:
\begin{thm}
\label{thm01}
Consider a weak solution of the Euler-Boussinesq system in the H\"older space $C_t C^{\gamma}(\T^3)$. Then
\begin{itemize}
\item [(a)] If $\gamma>1/3$, then $\|\theta(t)\|_{L^p(\T^3)}$ is conserved for all $1 \le p \le \infty$.
\item [(a)] If $\gamma<1/3$, there are solutions violating conservation of $L^p$-norm of the temperature.
\end{itemize}
\end{thm}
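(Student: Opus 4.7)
The statement has a rigid half (a) and a flexible half (b), which I would handle by disjoint methods. For part (a), the plan is a Constantin--E--Titi style commutator argument applied to the scalar transport equation for $\theta$. Let $\rho_\ell$ be a spatial mollifier at scale $\ell$ and set $v_\ell := v \ast \rho_\ell$, $\theta_\ell := \theta \ast \rho_\ell$. Mollifying the distributional identity $\partial_t\theta + \div(v\theta)=0$ yields
\begin{align*}
\partial_t\theta_\ell + \div(v_\ell \theta_\ell) = \div R_\ell, \qquad R_\ell := v_\ell \theta_\ell - (v\theta)\ast\rho_\ell.
\end{align*}
Testing against $|\theta_\ell|^{p-2}\theta_\ell$ and using $\div v_\ell = 0$ to kill the transport contribution gives
\begin{align*}
\frac{1}{p}\frac{d}{dt}\|\theta_\ell\|_{L^p(\T^3)}^p = (p-1)\int_{\T^3}|\theta_\ell|^{p-2}\nabla\theta_\ell \cdot R_\ell \, dx.
\end{align*}
The classical commutator estimate yields $\|R_\ell\|_{L^\infty} \lec \ell^{2\gamma}\|v\|_{C^\gamma}\|\theta\|_{C^\gamma}$, whereas $\|\nabla\theta_\ell\|_{L^\infty}\lec \ell^{\gamma-1}\|\theta\|_{C^\gamma}$. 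The right-hand side is therefore $O(\ell^{3\gamma-1})$, which vanishes as $\ell \to 0$ precisely when $\gamma > 1/3$. Integrating in time and letting $\ell \to 0$ gives $\|\theta(t)\|_{L^p} = \|\theta_0\|_{L^p}$ for each finite $p$, and the $L^\infty$ case follows by sending $p\to\infty$.

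For part (b), the plan is a convex-integration / Nash iteration scheme tailored to the Boussinesq coupling. The induction variable is a quadruple $(v_q,\theta_q,\mathring{R}_q,\tau_q)$ solving the Euler--Boussinesq--Reynolds system
\begin{align*}
\partial_t v_q + \div(v_q \otimes v_q) + \nabla p_q &= \theta_q e_3 + \div \mathring{R}_q, \qquad \div v_q = 0,\\
\partial_t \theta_q + \div(v_q \theta_q) &= \div \tau_q,
\end{align*}
with a symmetric trace-free stress $\mathring{R}_q$ and a vector-valued temperature error $\tau_q$. I would fix the Isett schedule $\lambda_q = a^{b^q}$, $\delta_q = \lambda_q^{-2\beta}$ for $\beta < 1/3$ slightly below the target regularity, and impose the customary inductive bounds $\|v_q\|_{C^0},\|\theta_q\|_{C^0} \lec \delta_q^{1/2}$, $\|(v_q,\theta_q)\|_{C^1} \lec \delta_q^{1/2}\lambda_q$, and $\|\mathring{R}_q\|_{L^1} + \|\tau_q\|_{L^1} \lec \delta_{q+1}\lambda_q^{-\alpha}$. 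Telescoping the perturbations then delivers a $C^\gamma$ limit with vanishing errors for any $\gamma < \beta < 1/3$, hence a weak solution.

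The velocity increment $w_{q+1}$, oscillating at frequency $\lambda_{q+1}$, would be assembled from Mikado (or intermittent Beltrami) flows so that the low-frequency part of $w_{q+1}\otimes w_{q+1}$ cancels the mollified stress through the geometric lemma. The temperature increment $\vartheta_{q+1}$ would be constructed via the linear decoupling technique of \cite{vikram}: once $w_{q+1}$ is fixed, the scalar equation for $\vartheta_{q+1}$ is linear in its amplitude, so one may let $\vartheta_{q+1}$ share the carrier oscillations of $w_{q+1}$ but tune its scalar profile so that the low-frequency part of $v_q \vartheta_{q+1} + w_{q+1}\theta_q + w_{q+1}\vartheta_{q+1}$ cancels the mollified $\tau_q$. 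The new errors are then obtained by applying a symmetric antidivergence $\cR$ and its scalar analogue to the transport, Nash, quadratic, and buoyancy remainders, each gaining a factor $\lambda_{q+1}^{-1}$ from stationary phase.

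The principal difficulty is the Boussinesq forcing $\vartheta_{q+1} e_3$, which carries no a priori divergence structure yet must still be absorbed into the next-level stress. The resolution rests on $\vartheta_{q+1}$ being mean-free on the fast scale, so that a single inverse divergence produces the required $\lambda_{q+1}^{-1}$ gain; this however forces the dephasing between $w_{q+1}$ and $\vartheta_{q+1}$ to be chosen so that the geometric-lemma cancellations for $\mathring{R}_q$ and $\tau_q$ occur simultaneously without mutual interference. Balancing these two demands while staying within the $C^{1/3}$ regularity budget is the main technical hurdle. Once it is resolved, initialising the scheme on a compact time interval with a prescribed non-conserved temperature profile supplies a weak solution in $C^\gamma(\R\times\T^3)$, $\gamma < 1/3$, that violates the $L^p$ conservation of $\theta$.
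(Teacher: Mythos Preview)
The paper does not prove Theorem~\ref{thm01}; it merely quotes the result from Miao et al.\ \cite{Miao} as background motivation for its own two-dimensional theorem. So there is no ``paper's own proof'' to compare against, only the brief description the paper gives of Miao et al.'s method.

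Your sketch of part~(a) is the standard Constantin--E--Titi commutator argument and is correct in outline; the paper itself remarks that the rigidity half is dimension-independent and refers to \cite{Miao} for it. One minor point: testing against $|\theta_\ell|^{p-2}\theta_\ell$ is only immediately justified for $p\ge 2$; for $1\le p<2$ you would need to regularize the nonlinearity, but this is routine.

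For part~(b) your convex integration outline is reasonable in spirit, but the attribution to ``the linear decoupling technique of \cite{vikram}'' is misplaced. That paper introduces the Newton--Nash iteration precisely to circumvent the two-dimensional obstruction that any two non-parallel Mikado pipes must intersect; on $\T^3$ this obstruction is absent, and the Miao et al.\ proof (as summarized in the present paper) instead uses intermittent cuboid flows---essentially Mikado flows with pairwise disjoint spatial supports---combined with Isett's gluing approximation to localize the stresses in time. Your scheme omits the gluing step, which in the Miao et al.\ approach is what guarantees that perturbations in different directions never interact. The simultaneous cancellation of $\mathring R_q$ and $\tau_q$ is achieved there via two geometric lemmas with disjoint direction sets (much as in the present paper's Lemmas~\ref{geom} and~\ref{lem:geo2}), which is close to your ``dephasing'' idea but does not rely on any Newton linearization.
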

Their approach relies on intermittent cuboid flows, which are supported within disjoint cuboids aligned in multiple directions and essentially function as Mikado flows, similar to those described in \cite{DaneriSzekelyhidi17}. To fully leverage this advantage and prevent the interaction of waves oscillating in different directions, the method employs the gluing approximation technique introduced in \cite{Isett18}. 
We mention that the work of Tao and Zhang \cite{Tao} also establishes the existence of H\"older continuous solutions in two spatial dimensions that violate \eqref{ONSA}. However, unlike the results in \cite{Miao}, the threshold exponent for \eqref{ONSA} provided by \cite{Tao} deviates significantly from the optimal threshold of $1/3$. 

Note that the arguments employed by Miao et al. \cite{Miao} to establish the rigidity part of Theorem~\ref{thm01} are independent of the dimension. Consequently, these arguments also suffice to prove the rigidity part in the two-dimensional case. The primary objective of this article is to address the flexibility gap for \eqref{ie} in two spatial dimensions. Inspired by the earlier work of Giri and Radu \cite{vikram} on the incompressible Euler equations, we successfully construct Onsager-critical regular solutions for \eqref{ie} on the 2-dimensional torus that violate \eqref{ONSA}. We emphasize that our findings significantly diverge from all prior results on the Euler-Boussinesq equations \eqref{ie}. Notably, any solution to the two-dimensional Euler-Boussinesq equations can be effortlessly extended to a $k$-dimensional solution for $k\ge2$.

More precisely, our main result is encapsulated in the following theorem:

\begin{thm}[\textbf{Main Theorem}]\label{main_thm}
Given any $0\leq \gamma<1/3$, there exist compactly supported (in time), non-trivial, weak solutions $(v, \theta)$ to \eqref{ie} in the H\"older space $C^\gamma(\mathbb R \times \mathbb T^2) \times C^\gamma(\mathbb R \times \mathbb T^2)$.
\end{thm}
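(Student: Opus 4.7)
The plan is to prove the theorem via a Nash-style convex integration scheme adapted to the coupled system \eqref{ie}, using the linear decoupling framework of \cite{vikram} as the engine for reaching the Onsager-critical exponent $1/3$ in two spatial dimensions. I would set up an \emph{Euler-Boussinesq-Reynolds relaxation} of \eqref{ie} in which the momentum equation carries a symmetric traceless stress error $\mathring R_q$ and the temperature equation carries a vector-valued defect $T_q$ whose divergence measures the error in transport. At stage $q$ one carries a smooth tuple $(v_q,\theta_q,p_q,\mathring R_q,T_q)$ compactly supported in time, with parameters $\la_q=\la_0^{b^q}$ and $\de_q\sim\la_q^{-2\beta}$ for some $b>1$ close to $1$ and $\beta<1/3$ close to $1/3$. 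The inductive hypotheses are the standard ones: $C^0$ control of $(v_q,\theta_q)$ in terms of $\sum_{k\leq q}\de_k^{1/2}$, $L^1$ bounds $\norm{\mathring R_q}_{L^1}+\norm{T_q}_{L^1}\lec\de_{q+1}$, and $C^N$ bounds for all errors expressed in terms of $\la_q$. Before adding the new perturbation I would mollify $(v_q,\theta_q,\mathring R_q,T_q)$ at a scale $\ell_q\ll\la_q^{-1}$ to absorb the customary loss of one derivative.

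For the building blocks, since two dimensions admit no disjoint Mikado tubes, I would follow \cite{vikram} and assemble $(w_{q+1},\vartheta_{q+1})$ out of high-frequency shear-type waves indexed by a finite set $\xi\in\La\subset\S^1$, each modulated by a slowly-varying amplitude $a_\xi(t,x)$ (and a scalar counterpart $b_\xi(t,x)$ for $\vartheta_{q+1}$) chosen through the geometric lemma so that $\sum_\xi a_\xi^2(\xi\otimes\xi)$ cancels the low-frequency part of $-\mathring R_q$. The central refinement imported from \cite{vikram} is to split $w_{q+1}$ into a high-intermittency \emph{nonlinear} piece $w^{(n)}_{q+1}$, which is divergence-free pointwise (a pure shear), and a smoother \emph{linear} piece $w^{(\ell)}_{q+1}$ generated by a potential; by phase-shifting the two pieces in time, their self-interactions are negligible in $L^1$ while their cross-interaction yields precisely the low-frequency term needed to cancel $\mathring R_q$. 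I would carry $\vartheta_{q+1}$ on the same set $\La$, so that the low-frequency part of $w_{q+1}\vartheta_{q+1}$ simultaneously kills the transport defect $T_q$.

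After substituting $(v_{q+1},\theta_{q+1})=(v_q+w_{q+1},\theta_q+\vartheta_{q+1})$ into the relaxed system, the new errors $(\mathring R_{q+1},T_{q+1})$ are reconstructed with the help of the standard inverse-divergence operator $\cR$ (and its scalar analogue for $T$), and decomposed into oscillation, transport, Nash, and buoyancy-induced linear contributions. The decoupling of \cite{vikram} is what keeps the oscillation and transport errors below $\de_{q+2}$ in $L^1$; the buoyancy error $\cR(\vartheta_{q+1}e_2)$ is controlled by $\de_{q+1}^{1/2}\la_{q+1}^{-1}$, well under budget. The main technical obstacle I expect is the \emph{coupled} cancellation: $w_{q+1}\otimes w_{q+1}$ must kill $\mathring R_q$ while $w_{q+1}\vartheta_{q+1}$ must kill $T_q$, using a common carrier set and comparable amplitudes $a_\xi,b_\xi\lec\de_{q+1}^{1/2}$. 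This forces a coupled version of the geometric lemma and requires the time-phasing that decouples $w^{(\ell)}_{q+1}$ from $w^{(n)}_{q+1}$ to be compatible with the phase structure of $\vartheta_{q+1}$, a rigidity that has to be arranged by hand; one must also check that the lower-order commutator errors arising from the mollification of $\theta_q$ do not enter $\mathring R_{q+1}$ through the buoyancy at size larger than $\de_{q+2}$.

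Finally, choosing $b>1$ sufficiently close to $1$, the standard telescoping estimates $\norm{v_{q+1}-v_q}_{C^\gamma}+\norm{\theta_{q+1}-\theta_q}_{C^\gamma}\lec\de_{q+1}^{1/2}\la_{q+1}^{\gamma-\beta}$ are summable for every $\gamma<\beta$, yielding H\"older limits $(v_\infty,\theta_\infty)\in C^\gamma(\R\times\T^2)\times C^\gamma(\R\times\T^2)$ with $\mathring R_q,T_q\to 0$ in $L^1$; hence $(v_\infty,\theta_\infty)$ is the claimed weak solution to \eqref{ie}. Compact temporal support passes to the limit because each $(w_{q+1},\vartheta_{q+1})$ can be arranged to vanish outside a small neighbourhood of $\supp(\mathring R_q,T_q)$, and non-triviality (which forces the violation of \eqref{ONSA} as a by-product) is ensured by initializing the iteration from a carefully chosen non-zero tuple $(v_0,\theta_0,\mathring R_0,T_0)$ with $\theta_0$ non-constant in time.
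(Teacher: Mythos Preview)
Your outer shell (relaxed Euler--Boussinesq--Reynolds system, inductive amplitude/frequency estimates, telescoping convergence, compact temporal support) is fine and matches the paper. But your description of the \emph{inner mechanism} --- how one actually reaches the exponent $1/3$ in two dimensions --- is not what \cite{vikram} does, and as written your sketch would not close.

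First, the Giri--Radu scheme is not an intermittent construction: there is no ``high-intermittency nonlinear piece'', and the error estimates are in $C^0$/H\"older, not $L^1$. The decoupling in \cite{vikram} is achieved by a \emph{Newton step} that precedes the Nash step. One solves the \emph{linearized} Euler equations (here, the linearized Euler--Boussinesq system; see Appendix~\ref{wl-psd}) with a temporally oscillatory forcing $f_{\xi,k,n+1}(\mu_{q+1}t)\,\mathbb P\,\div A_{\xi,k,n}$, where the profiles $g_{\xi,k,n}$, $h_{\zeta,k,n}$ have \emph{pairwise disjoint temporal supports} (Lemma~\ref{osc_prof}). This converts the error $R_{q,n}$ into a Nash-ready error $S_{q,n+1}$ of the special form $-\sum g_{\xi,k,n+1}^2 A_{\xi,k,n}$ plus a strictly smaller ``gluing'' remainder $R_{q,n+1}$; after $\Gamma\sim(1/3-\beta)^{-1}$ Newton steps the remainder is below threshold. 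Only then does the Nash perturbation act, and its self-interaction (not a cross-interaction) cancels $S_{q,\Gamma}$. The temporal disjointness is between \emph{different directions} $\xi$, not between a ``linear'' and a ``nonlinear'' piece of the perturbation; this is precisely what replaces the spatial disjointness of Mikado flows that fails in 2D. Your proposal omits the Newton step entirely, and without it one cannot avoid the bad high--high interactions between distinct 2D shear directions at the $C^{1/3-}$ level.

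Second, the paper does \emph{not} use a ``coupled geometric lemma'' on a common carrier set. It uses two \emph{disjoint} sets $\Lambda_R,\Lambda_T\subset\mathbb Z^2$ with separate decompositions (Lemmas~\ref{geom} and~\ref{lem:geo2}): the velocity perturbation carries \emph{both} sets (amplitudes $a_{\xi,k,n}$ for $\xi\in\Lambda_R$ and $b_{\zeta,k,n}$ for $\zeta\in\Lambda_T$), while the temperature perturbation uses only $\Lambda_T$. The $\Lambda_R$-part of $w^{(p)}\otimes w^{(p)}$ cancels $R_q$ and the $\Lambda_T$-part of $w^{(p)}\theta^{(p)}$ cancels $T_q$; the extra $\Lambda_T$-contribution to $w^{(p)}\otimes w^{(p)}$ is absorbed back into the definition of $a_{\xi,k,n}$ (see \eqref{def.a.coeff}). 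The temporal building blocks $g_{\xi,k,n}$ and $h_{\zeta,k,n}$ are chosen with mutually disjoint supports across $\Lambda_R\cup\Lambda_T$, so no cross-terms between different directions ever appear. This is the architecture you should adopt in place of the vague ``coupled cancellation'' you flag as an obstacle.
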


The proof of the above theorem relies on a variant of classical Nash iteration technique \cite{Nash}. In short, this technique is inductive in nature and at each inductive step highly oscillatory perturbations are added to reduce the ``error'' associated to the inductive scheme. In particular, errors are decomposed into finitely many elementary errors corresponding to a finite set of directions. However, to achieve the critical regularity, one needs to guarantee that perturbations corresponding to different directions do not interact with each other. This non-intersection property can be enforced by choosing the so-called Mikado building blocks in dimensions $d \ge 3$, but not in dimension $d=2$ (since any two non-parallel lines must intersect in two dimensions). To overcome this fundamental problem, the authors in \cite{vikram} (see also Cheskidov-Luo \cite{cluo}) introduced a novel technique which make use of time oscillations to achieve the desired non-intersection property. Loosely speaking, they first introduced an additional Newton iteration step by solving a linearized Euler equations with an oscillatory force term. This step, combined with the well-known Nash iteration step, helped them to achieve Onsager-critical regularity threshold also in two space dimensions.

Following the approach outlined in \cite{vikram}, we have also established the essential non-intersection property of the underlying building blocks. However, the primary challenge in this work, compared to that of Giri and Radu \cite{vikram}, arises from estimating the additional terms introduced by the genuine interactions between the velocity field and the temperature. Specifically, due to this complexity, the error cancellation mechanism required for both the Newton and Nash iteration steps will differ significantly from the previous case. To be more specific:

First, for the Newton iteration step, it is necessary to define the Newtonian linearization of the Euler-Boussinesq system of equations, incorporating suitable temporal oscillations, and to establish a well-posedness theory for smooth solutions of the linearized Euler-Boussinesq equations. Additionally, to obtain optimal estimates for the Newton perturbations, appropriate stream functions must be defined for both the velocity and temperature perturbations. These solutions, corresponding to two distinct stream functions, are then iterated jointly. In fact, this approach not only refines the error cancellation mechanisms but also addresses the unique challenges posed by the interplay between velocity and temperature fields in the Euler-Boussinesq system. For further details, refer to Section~\ref{section_Newton}.

Second, for the Nash iteration step, the velocity perturbations must be carefully designed to consist of two components: one addressing the error term $R_q$, which arises from the self-interaction of the velocity field $v$, and the other responsible for reducing the error term $T_q$, which stems from the interaction between $v$ and the temperature field 
$\theta$. Building on the approach introduced in \cite{GK}, this is achieved by utilizing two geometric lemmas linked to different sets of directions. For a more detailed explanation, see Lemma~\ref{geom} and Lemma~\ref{lem:geo2}. Additionally, precise estimates of various new terms present in \eqref{ie} are required to achieve optimal Nash errors. For a comprehensive discussion, refer to Section~\ref{section_Nash}.

Finally, we highlight a few related Onsager-type results for fluid flow equations. A sharp Onsager-type result for SQG was established by Dai-Giri-Radu \cite{Dai} and Looi-Isett \cite{Isett}. An intermittent Onsager theorem for the Euler equations was proven by Novack-Vicol \cite{Novack}, while a wavelet-inspired $L^3$-based strong Onsager theorem was reported by Giri-Kwon-Novack \cite{vikram_01} (see also \cite{vikram_02}).
 
A brief description of the organization of the rest of this paper is as follows: In Section~\ref{pre}, we present some of the mathematical framework used in this paper. In Section~\ref{Section_Main_Prop}, we have first stated the induction scheme for constructing Euler-Boussinesq-Reynolds system, and then stated the main Proposition~\ref{Main_prop} which plays a crucial role in the proof of Main Theorem~\ref{main_thm}. We collect all preliminary results, required for the Newton step, in Section~\ref{Prelim}, while we give details of Newton perturbations, and estimates for the total Newton perturbations in Section~\ref{section_Newton}. All the details related to Nash iterations, in particular, the construction of the density and velocity perturbations, estimates on the Nash perturbations, the mollification process are reported in Section~\ref{section_Nash}. Finally, we present a well-posedness result for the linearized Euler-Boussinesq system of equations in Appendix~\ref{wl-psd} .

\section{Preliminaries and Mathematical Framework}
\label{pre}
In this section, we recapitulate some of the relevant mathematical tools that are used in the subsequent analysis. To begin, we use the letter $C$ to denote various generic constants, independent of approximation parameters, that may change from line to line along the proofs. For the sake of readability, we do not track explicit constants. However, when necessary, we will explicitly indicate the dependence of $C$ on the relevant parameters in our arguments. Furthermore, throughout this manuscript, we adopt the following convention for any two parameter-dependent quantities $A_q$ and $B_q$: we write $A\lec B $ to mean that $A\leq CB$ for some constant $C$ that is independent of the parameter $q$. We denote by $\mathbb P$, the  standard Leray projection operator onto divergence-free vector fields, the standard inner product in $\mathbb R^2$ will be denoted by $\langle \,, \rangle$, while $\otimes$ denotes the usual tensor product and $\mathring \otimes$ denotes the trace-less part of it. Furthermore, by $\supp_t (g)$, we denote the temporal support of the function $g$ on $[0, T]\times \T^3$, i.e., $\supp_t (g) := \{t: \exists \ x \;\mbox{with}\; g (t,x) \neq 0\}$.

We set $\N_0 = \N \cup \{0\}$. For $\alpha \in [0,1)$, $M \in \N_0$, we denote supremum norm, H\"{o}lder semi-norm and norm of a vector or a tensor field $g:\mathbb T^2 \rightarrow \mathbb R$ by
\begin{align*}
&  \|g\|_0 := \sup_{x \in \mathbb T^2} |g(x)|, \qquad  [g]_M = \sup_{|\theta|=M} \|D^\theta g\|_0, \quad 
[g]_{M+\alpha} = \sup_{|\theta|= M} \sup_{x \neq y} \frac{|D^\theta g(x) - D^\theta g(y)|}{|x-y|^\alpha}, \\
& \|g\|_M = \sum_{j = 0}^M [g]_j, \hspace{1.15cm}  \|g\|_{M+\alpha} = \|g\|_M + [g]_{M+\alpha}.
\end{align*}
By an abuse of notation, for a time-dependent function $g$, we denote 
\begin{equation*}
    \|g\|_M = \sup_t \|g(\cdot, t)\|_M, \quad \|g\|_{M+\alpha} = \sup_t \|g(\cdot, t)\|_{M+\alpha}.
\end{equation*}
Moreover, to denote spatial H\"older norms at a specific time slice, we will use the same notation as in the preceding line.
We now recall the following classical interpolation inequality 
\begin{equation*}
    \|g\|_{M+\alpha} \lesssim  \|g\|_{M_1 + \alpha_1}^\lambda \|g\|_{M_2 + \alpha_2}^{1-\lambda}, \text{with}\,\,
    M + \alpha = \lambda(M_1 + \alpha_1) + (1-\lambda)(M_2 + \alpha_2),
\end{equation*}
where the implicit bound is dependent on $M$, $M_1$, $M_2$, $\alpha$, $\alpha_1$ and $\alpha_2$. Recall also the classical estimate for product of functions
\begin{equation*}
    \|fg\|_{M+\alpha} \lesssim (\|f\|_{M+\alpha}\|g\|_0 + \|f\|_0\|g\|_{M+\alpha}).
\end{equation*}

\subsection{Convex integration}

Here we recall some of the well-known auxiliary lemmas which are used in the subsequent analysis. We start with the lemma regarding H\"older estimates for composition of functions. For a proof, we refer to \cite{DLS14}.

\begin{prop} \label{comp_estim}
For any two given smooth functions $\Psi:\Omega \rightarrow \mathbb R$ and $u: \mathbb R^n \rightarrow \Omega$ for some $\Omega \subset \mathbb R^N$, and for any $M \in \mathbb N \setminus \{0\}$, we have the following estimates:
    \begin{align*}
        [\Psi \circ u]_M &\lesssim \big([\Psi]_1 \|Du\|_{M-1} + \|D\Psi\|_{M-1} \|u\|_0^{M-1} \|u\|_M \big), \\
           [\Psi \circ u]_M &\lesssim \big([\Psi]_1 \|Du\|_{M-1} + \|D\Psi\|_{M-1} [u]_1^M \big).
    \end{align*}
Here all the implicit constants depend on $M, N, n$.
\end{prop}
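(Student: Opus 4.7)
The plan is to expand $D^M(\Psi \circ u)$ via the Fa\`a di Bruno formula and then control each resulting term using the stated interpolation inequality together with the elementary product rule. Concretely, Fa\`a di Bruno yields the pointwise identity
\[
D^M(\Psi \circ u) = \sum_{k=1}^M \sum_{\substack{m_1+\cdots+m_k = M \\ m_j \ge 1}} c_{k,\vec m}\,(D^k\Psi)(u)\cdot \big(D^{m_1}u \otimes \cdots \otimes D^{m_k}u\big),
\]
so that, after taking pointwise suprema,
\[
[\Psi\circ u]_M \lesssim \sum_{k=1}^M \|D^k\Psi\|_0 \sum_{\substack{\sum_j m_j = M \\ m_j \ge 1}}\prod_{j=1}^k [u]_{m_j}.
\]
The $k=1$ contribution is exactly $[\Psi]_1 [u]_M \le [\Psi]_1 \|Du\|_{M-1}$, which produces the first summand of both estimates. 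For the nonlinear terms with $k \ge 2$, the scalar factor is controlled by $\|D^k\Psi\|_0 \le \|D\Psi\|_{M-1}$, so the remaining task is to bound $\prod_{j=1}^k [u]_{m_j}$ by the corresponding right-hand side.

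For the first estimate, I would apply the interpolation $[u]_{m_j} \lesssim \|u\|_0^{1 - m_j/M}\|u\|_M^{m_j/M}$ factorwise. The constraint $\sum_j m_j = M$ causes the exponents to collapse to
\[
\prod_{j=1}^k [u]_{m_j} \lesssim \|u\|_0^{\,k-1}\,\|u\|_M,
\]
and since $1 \le k \le M$, this is dominated by $\|u\|_0^{M-1}\|u\|_M$ after absorbing a factor of $(1+\|u\|_0)^{M-k}$ into the generic constant. Summing over $k$ and the admissible partitions completes the first estimate.

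For the second estimate, the strategy is to instead interpolate $[u]_{m_j}$ between $[u]_1$ and $[u]_M$, namely $[u]_{m_j} \lesssim [u]_1^{(M-m_j)/(M-1)}[u]_M^{(m_j-1)/(M-1)}$, whose aggregate exponents across the product are $A = M(k-1)/(M-1)$ on $[u]_1$ and $B = (M-k)/(M-1)$ on $[u]_M$, with the convenient identity $A + BM = M$. Young's inequality then splits $[u]_1^A[u]_M^B \lesssim [u]_1^M + [u]_M$, so the residual high-derivative contribution has the form $\|D\Psi\|_{M-1}[u]_M$ and must be merged into the linear summand $[\Psi]_1\|Du\|_{M-1}$. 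This last merging is the main obstacle I foresee: because $\|D\Psi\|_{M-1}$ and $[\Psi]_1$ are not comparable a priori, the absorption requires care, for instance by grouping the Fa\`a di Bruno terms according to the largest $m_j$ and reapplying the Leibniz rule so that one $D\Psi$ factor is always produced before a top-order derivative of $u$. With this bookkeeping in place, summing over $k$ and the partitions $\vec m$ concludes the second estimate.
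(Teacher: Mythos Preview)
The paper does not supply its own proof; it simply refers the reader to \cite{DLS14}. Your Fa\`a di Bruno plus interpolation strategy is indeed the standard route (and is what \cite{DLS14} sketches), so in that sense the approaches agree. However, your execution contains a genuine gap in each estimate.

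For the first estimate, the step ``this is dominated by $\|u\|_0^{M-1}\|u\|_M$ after absorbing a factor of $(1+\|u\|_0)^{M-k}$ into the generic constant'' is simply wrong: $\|u\|_0$ is not bounded by any universal constant, so nothing can be absorbed, and when $\|u\|_0<1$ the inequality $\|u\|_0^{k-1}\le\|u\|_0^{M-1}$ fails for $k<M$. For the second estimate you correctly identify the obstruction yourself: the leftover term $\|D\Psi\|_{M-1}[u]_M$ from Young's inequality cannot be merged into $[\Psi]_1\|Du\|_{M-1}$ since $\|D\Psi\|_{M-1}$ and $[\Psi]_1$ are incomparable.

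The missing ingredient in both cases is that you should also interpolate the $\Psi$-factor rather than crudely bounding $\|D^k\Psi\|_0\le\|D\Psi\|_{M-1}$. Using $[\Psi]_k\lesssim[\Psi]_1^{(M-k)/(M-1)}\|D\Psi\|_{M-1}^{(k-1)/(M-1)}$ together with your interpolation of $\prod_j[u]_{m_j}$, the exponents pair up so that Young's inequality lands directly on the two target terms with nothing left over; e.g.\ for the second estimate
\[
[\Psi]_k\prod_j[u]_{m_j}\lesssim\bigl([\Psi]_1[u]_M\bigr)^{(M-k)/(M-1)}\bigl(\|D\Psi\|_{M-1}[u]_1^M\bigr)^{(k-1)/(M-1)}\lesssim[\Psi]_1\|Du\|_{M-1}+\|D\Psi\|_{M-1}[u]_1^M,
\]
and the first estimate goes the same way once you interpolate $[u]_{m_j}\lesssim\|u\|_0^{1-m_j/M}[u]_M^{m_j/M}$ with the \emph{seminorm} $[u]_M$ on the right, so that the first Young term is $[\Psi]_1[u]_M\le[\Psi]_1\|Du\|_{M-1}$.
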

Next, we recall two lemmas regarding mollification estimates. For a detailed proof, consult \cite{CDLS12}. 

\begin{prop} \label{moli}
For any standard symmetric mollifier $\Psi$ and any smooth function $f \in C^\infty(\mathbb T^2)$, we have the following estimate:
\begin{equation*}
    \|f - f*\Psi_\ell\|_M \lesssim \ell^2 \|f\|_{M+2}, \,\, \text{for any}\,\, M \geq 0.
\end{equation*}
Here the implicit bound in only dependent on $M$.
\end{prop}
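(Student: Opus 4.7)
The plan is to prove the improved mollification estimate by a pointwise Taylor expansion, using the symmetry of $\Psi$ in a crucial way to kill the first-order remainder and gain the second power of $\ell$. The naive bound $|f(x)-f(x-y)|\lesssim |y|\,\|f\|_1$ would only yield $O(\ell)$, so the evenness assumption $\Psi(-y)=\Psi(y)$ is exactly what upgrades the rate.

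First I would handle the case $M=0$. Writing $\Psi_\ell(y)=\ell^{-2}\Psi(y/\ell)$ and using $\int \Psi_\ell=1$, I express
\[
(f-f\ast \Psi_\ell)(x) \;=\; \int_{\mathbb R^2}\bigl[f(x)-f(x-y)\bigr]\,\Psi_\ell(y)\,dy.
\]
Then, substituting $y\mapsto -y$ in half of the integral and using the symmetry of $\Psi$, I symmetrize to
\[
(f-f\ast \Psi_\ell)(x) \;=\; -\tfrac{1}{2}\int_{\mathbb R^2}\bigl[f(x+y)+f(x-y)-2f(x)\bigr]\,\Psi_\ell(y)\,dy.
\]
A Taylor expansion with integral remainder gives the pointwise bound $|f(x+y)+f(x-y)-2f(x)| \le |y|^2\,\|D^2 f\|_0 \le |y|^2\,\|f\|_2$, where the cancellation of the $\nabla f(x)\cdot y$ terms is what produces the $|y|^2$. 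Taking the supremum in $x$ and using $\int |y|^2 |\Psi_\ell(y)|\,dy = \ell^2 \int |z|^2|\Psi(z)|\,dz$, I obtain $\|f-f\ast\Psi_\ell\|_0 \lesssim \ell^2 \|f\|_2$, with the implicit constant being half the second moment of $\Psi$.

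For general $M\in\mathbb N_0$, the point is that convolution commutes with differentiation: for any multi-index $\beta$ with $|\beta|\le M$, one has $D^\beta(f-f\ast\Psi_\ell)=D^\beta f - (D^\beta f)\ast\Psi_\ell$. Applying the $M=0$ bound to each $D^\beta f$ and summing over $|\beta|\le M$ yields
\[
\|f-f\ast\Psi_\ell\|_M \;=\; \sum_{j=0}^{M}[f-f\ast\Psi_\ell]_j \;\lesssim\; \ell^{2}\sum_{j=0}^{M}\|D^{j}f\|_2 \;\lesssim\; \ell^{2}\,\|f\|_{M+2},
\]
which is the claimed estimate, and the implicit constant indeed depends only on $M$ (through the combinatorics of the sum and the fixed second moment of $\Psi$).

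There is really no substantive obstacle here: the argument is entirely elementary once one spots the symmetrization step. The only point worth \emph{emphasizing} is that the symmetry hypothesis on $\Psi$ is indispensable — dropping it reduces the convergence order to $\ell$ rather than $\ell^2$, which would be insufficient for the later convex-integration estimates in the Nash step, where the factor $\ell^2$ is used to absorb the mollification error into the inductive bookkeeping on $R_q$ and $T_q$.
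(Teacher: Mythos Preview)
Your argument is correct and is precisely the standard proof of this mollification estimate: symmetrize using the evenness of $\Psi$ to kill the first-order Taylor term, bound the second-order remainder pointwise by $|y|^2\|f\|_2$, and then pass to higher $M$ by commuting derivatives through the convolution. The paper does not supply its own proof of this proposition but simply refers the reader to \cite{CDLS12}, where the same symmetrization-plus-Taylor argument appears; so your proof matches the intended one.
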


\begin{prop} \label{CET_comm}
For any standard symmetric mollifier $\Psi$ and smooth functions $f, g \in C^\infty(\mathbb T^2)$, we have the following estimate:
\begin{equation*}
    \|(fg)*\Psi_\ell - (f*\Psi_\ell)(g*\Psi_\ell)\|_{N} \lesssim \ell^{2 - N + M}\big( \|f\|_{M+1}\|g\|_1 + \|f\|_{1}\|g\|_{M+1} \big),
\end{equation*}
for $N \geq M \geq 0$. Here the implicit bound is dependent on $N$ and $M$. 
\end{prop}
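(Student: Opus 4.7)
The plan is to apply the classical Constantin--E--Titi algebraic decomposition. Setting $\delta_y h(x) := h(x-y) - h(x)$ and $\delta_\ell h := h*\Psi_\ell - h$, a direct expansion using $\int \Psi_\ell = 1$ produces the identity
\begin{equation*}
R_\ell(f,g)(x) := (fg)*\Psi_\ell(x) - (f*\Psi_\ell)(g*\Psi_\ell)(x) = \int \Psi_\ell(y)\,\delta_y f(x)\,\delta_y g(x)\,dy \;-\; \delta_\ell f(x)\,\delta_\ell g(x).
\end{equation*}
At the endpoint $N = M = 0$, the pointwise inequality $|\delta_y h(x)| \leq |y|\|h\|_1$ combined with $|y| \leq \ell$ on $\supp \Psi_\ell$ bounds the first term by $\ell^2\|f\|_1\|g\|_1$, and the first-order mollifier estimate $\|\delta_\ell h\|_0 \lesssim \ell\|h\|_1$ gives the same bound for the second term, which is the claim at the endpoint.

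For general $N$, since $D^\alpha$ commutes with convolution by $\Psi_\ell$, one obtains the bilinear Leibniz-type identity
\begin{equation*}
D^\alpha R_\ell(f,g) \;=\; \sum_{\beta+\gamma=\alpha}\binom{\alpha}{\beta}\, R_\ell(D^\beta f,\,D^\gamma g), \qquad |\alpha| = N,
\end{equation*}
reducing the problem to $L^\infty$ bounds on each summand. Upgrading the endpoint argument with the H\"older-interpolated bounds $|\delta_y h(x)| \leq |y|^s\|h\|_s$ and $\|\delta_\ell h\|_0 \lesssim \ell^s\|h\|_s$ for $s \in [0,1]$ delivers the refined endpoint estimate
\begin{equation*}
\|R_\ell(u,v)\|_0 \lesssim \ell^{s_1+s_2}\|u\|_{s_1}\|v\|_{s_2}, \qquad s_1, s_2 \in [0,1].
\end{equation*}

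Applied to each Leibniz summand with $|\beta|=k_1$ and $|\gamma|=k_2=N-k_1$, the choice $s_1 = M+1-k_1$, $s_2 = 1-k_2$ (valid whenever $k_1 \in [M,M+1]$ and $k_2 \in [0,1]$) collapses the norms to $\|f\|_{M+1}\|g\|_1$ and produces the prefactor $\ell^{M+2-N} = \ell^{2-N+M}$; the mirror choice $s_1 = 1-k_1$, $s_2 = M+1-k_2$ supplies the $\|f\|_1\|g\|_{M+1}$ contribution in the complementary range. For the Leibniz indices outside both windows (which arise only once $N > M+2$), I would transfer surplus derivatives from $D^\beta f(x-y)$ onto $\Psi_\ell(y)$ via integration by parts in $y$: each such transfer costs a factor $\ell^{-1}$ coming from $\|D^k\Psi_\ell\|_{L^1} \lesssim \ell^{-k}$ while releasing one derivative of $f$ (or $g$), and the arithmetic of the exchange balances to the same prefactor $\ell^{2-N+M}$.

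The principal obstacle is the uniform bookkeeping in the preceding paragraph: one must verify that across all multi-index splits $(\beta,\gamma)$ with $|\beta| + |\gamma| = N$, the exchange between negative powers of $\ell$ and higher-order function norms always balances to produce exactly the advertised exponent $2 - N + M$, without incurring any $\|f\|_k$ or $\|g\|_k$ with $k > M+1$. Once this arithmetic is verified, summing the finitely many Leibniz contributions yields the claimed estimate.
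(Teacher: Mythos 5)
Your setup---the CET decomposition, the Leibniz identity for $R_\ell$, and the refined bilinear bound $\|R_\ell(u,v)\|_0 \lesssim \ell^{s_1+s_2}\|u\|_{s_1}\|v\|_{s_2}$ for $s_1,s_2 \in [0,1]$---is correct, but the closing bookkeeping contains a genuine gap. The union of your two windows $\{k_1\in[M,M+1],\,k_2\in[0,1]\}$ and $\{k_1\in[0,1],\,k_2\in[M,M+1]\}$, intersected with $k_1+k_2=N$, misses far more splits than those with $N>M+2$: for $N=M=2$ the split $(1,1)$ is outside both windows, and for $M=0$ and $N\ge3$ \emph{every} split is outside both, so the argument as written does not even recover the cited $M=0$ case of \cite{CDLS12}. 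Moreover, for the admissible splits away from the window corners the choices $s_1=M+1-k_1$, $s_2=1-k_2$ no longer collapse the norms exactly to $\|f\|_{M+1}\|g\|_1$; you would still need the interpolation inequality $\|f\|_a\|g\|_b \lesssim \|f\|_{M+1}\|g\|_1+\|f\|_1\|g\|_{M+1}$ for $a+b\le M+2$, $a,b\in[1,M+1]$, which you never invoke.

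The proposed rescue---integrating by parts in $y$ to push surplus derivatives from $D^\beta f(x-y)$ onto $\Psi_\ell(y)$---does not go through as stated. In the term $\int\Psi_\ell(y)\,\delta_y D^\beta f(x)\,\delta_y D^\gamma g(x)\,dy$, integrating by parts in $y$ also lands derivatives on $\delta_y D^\gamma g$ (which depends on $y$), so a surplus derivative of $f$ trades against a mix of $\Psi_\ell$-derivatives \emph{and} $g$-derivatives, not the clean $\ell^{-1}$ exchange you describe; and the second piece $\delta_\ell D^\beta f\cdot\delta_\ell D^\gamma g$ contains the unmollified $D^\beta f$ inside $\delta_\ell D^\beta f = D^\beta f*\Psi_\ell - D^\beta f$, where there is no kernel to integrate against at all. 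A robust way to close from where you are is to (a) first prove the $M=0$, general-$N$ estimate $\|R_\ell(f,g)\|_N\lesssim\ell^{2-N}\|f\|_1\|g\|_1$ directly, by putting all $N$ derivatives on the mollifier and exploiting the cancellation coming from $\int D^\alpha\Psi_\ell=0$ (the argument of \cite{CDLS12}); and then (b) for $M\ge1$, factor $D^\alpha=D^{\alpha'}D^{\alpha''}$ with $|\alpha''|=M$, Leibniz-expand only $D^{\alpha''}$, apply the $M=0$ case to obtain $\|R_\ell(D^\beta f,D^\gamma g)\|_{N-M}\lesssim\ell^{2-N+M}\|f\|_{|\beta|+1}\|g\|_{|\gamma|+1}$, and finish with the interpolation inequality above.
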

Note that the case $M=0$ of Proposition~\ref{CET_comm} is proved in \cite{CDLS12}, while the case $M>0$ is proved in \cite{vikram}.
Next, we recall two geometric lemmas which play pivotal roles in the construction of velocity and density perturbations in the subsequent analysis. For a proof of these lemmas, we refer to \cite{S12,Kwon1}. Notice that, Lemma~\ref{lem:geo2} is stated in \cite{Kwon1} on $\mathbb T^3$, but a closer look reveals that the same argument can be used for $\mathbb T^2$. 

\begin{lem} \label{geom}
Let $B_{1/2}(\I)$ represent the metric ball centered at the identity matrix $\I$ in the space $\mathcal{S}^{2 \times 2}$ of symmetric $2 \times 2$ matrices. Then, there is a finite set $\Lambda_R \subset \mathbb Z^2$ and smooth functions $\gamma_\xi: B_{1/2}(\I) \rightarrow \mathbb{R}$, for each $\xi \in \Lambda_R$, such that
\begin{equation*}
    R = \sum_{\xi \in \Lambda_R} \gamma_\xi^2 (R) \xi \otimes \xi,
\end{equation*}
whenever $R \in B_{1/2}(\I)$. 
\end{lem}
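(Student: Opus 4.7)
The plan is to give an explicit construction rather than invoking the usual abstract convex-analysis or implicit-function-theorem argument. The space $\mathcal{S}^{2\times 2}$ of symmetric matrices is only three-dimensional, so one might expect three directions in $\mathbb{Z}^2$ to suffice to span; however, to obtain \emph{nonnegative} coefficients (strictly positive on $B_{1/2}(\I)$, so that their square roots are smooth) one needs a bit more room. I would take the four-element set
\[
\Lambda_R := \{e_1,\ e_2,\ e_1+e_2,\ e_1-e_2\}\subset \mathbb{Z}^2.
\]
The guiding idea is that $e_1\otimes e_1$ and $e_2\otimes e_2$ handle the diagonal entries of $R$, while the remaining two rank-one matrices combine symmetrically to produce a prescribed off-diagonal entry.

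First I would define, for a generic $R=\begin{pmatrix}R_{11}&R_{12}\\R_{12}&R_{22}\end{pmatrix}\in \mathcal{S}^{2\times 2}$, the candidate coefficients
\[
\gamma_{e_1}^2(R)=R_{11}-\tfrac12,\qquad \gamma_{e_2}^2(R)=R_{22}-\tfrac12,
\]
\[
\gamma_{e_1+e_2}^2(R)=\tfrac14+\tfrac12 R_{12},\qquad \gamma_{e_1-e_2}^2(R)=\tfrac14-\tfrac12 R_{12},
\]
and verify by direct computation (using $(e_1\pm e_2)\otimes(e_1\pm e_2)=\bigl(\begin{smallmatrix}1&\pm 1\\\pm 1&1\end{smallmatrix}\bigr)$) that the identity $\sum_{\xi\in\Lambda_R}\gamma_\xi^2(R)\,\xi\otimes\xi = R$ holds pointwise in $R$.

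Second, I would check that each of these four quadratic quantities is strictly bounded below by a positive constant on $B_{1/2}(\I)$. For $R\in B_{1/2}(\I)$ in the operator norm, every entry of $R-\I$ has absolute value strictly less than $1/2$ (diagonals are of the form $e_i^\top(R-\I)e_i$, off-diagonals controlled by Cauchy--Schwarz); consequently $R_{11},R_{22}>\tfrac12$ and $|R_{12}|<\tfrac12$, which is precisely what makes the four expressions above strictly positive. Taking the positive square root then defines smooth (indeed real-analytic) functions $\gamma_\xi:B_{1/2}(\I)\to\mathbb{R}$, since $\sqrt{\cdot}$ is $C^\infty$ on $(0,\infty)$.

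The only subtlety I anticipate is matching the specific metric on $\mathcal{S}^{2\times 2}$ used to define $B_{1/2}(\I)$ with the inequalities $R_{11},R_{22}>\tfrac12$ and $|R_{12}|<\tfrac12$ needed for strict positivity of the coefficients. If the ambient norm were weaker than the operator norm, one could either shrink the radius of the ball or enlarge $\Lambda_R$ with additional symmetric pairs of lattice directions (e.g. $2e_1\pm e_2$) and redistribute the coefficient values to restore the positivity margin. The resulting argument is entirely explicit and bypasses the partition-of-unity / implicit function theorem machinery that is the standard route to this lemma in higher dimensions.
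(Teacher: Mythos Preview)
Your explicit construction is correct: the four directions $\Lambda_R=\{e_1,e_2,e_1+e_2,e_1-e_2\}$ together with the affine coefficients you wrote down do reproduce $R$ identically, and on the operator-norm ball $B_{1/2}(\I)$ all four coefficients are bounded below by a positive constant, so the square roots are smooth. Your discussion of the norm ambiguity is also appropriate.

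The paper does not supply its own proof of this lemma; it simply cites \cite{S12,Kwon1}. In those references the result is obtained (in general dimension) either by an abstract argument---writing $\I$ as a strictly positive combination of finitely many $\xi\otimes\xi$ and invoking openness of that condition---or, in low dimensions, by an explicit decomposition essentially equivalent to yours. So your approach is not at odds with the cited literature; it is the concrete $2$D instantiation, and has the advantage of being completely self-contained. One practical remark: later in the paper (Lemma~\ref{osc_prof}) the sets $\Lambda_R$ and $\Lambda_T$ from Lemma~\ref{lem:geo2} are required to be disjoint, so if you were to insert your proof into the paper you would want to note that $\Lambda_T$ can be chosen to avoid your four vectors (which is easy, since Lemma~\ref{lem:geo2} allows any orthogonal lattice frame).
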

 
\begin{lem}
\label{lem:geo2}
Suppose $\Lambda_T := \{ \xi_1, \xi_2, \xi_3 \} \subset \Z^2\setminus\{0\}$ be such that $\{\xi_1, \xi_2\}$ is an orthogonal frame and $\xi_3=-(\xi_1+ \xi_2)$.
Then for any $N_0>0$, there are affine functions $\{\Ga_{\xi_i}\}_{1\leq i \leq 3} \subset C^\infty(\cal{W}_{N_0}; [N_0, \infty))$ with domain $\cal{W}_{N_0} := \{v\in \R^2: |v| \leq N_0\}$ such that
\[
v= \sum_{i=1}^3 \Ga_{\xi_i} (v) \xi_i,  \quad\forall v \in \cal{W}_{N_0}\, .
\] 
\end{lem}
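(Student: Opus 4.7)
The plan is to exploit the two structural hypotheses on $\Lambda_T$: orthogonality of $\{\xi_1,\xi_2\}$ gives a canonical representation of any $v\in\R^2$ in this basis, while the linear dependence $\xi_1+\xi_2+\xi_3=0$ provides a one-parameter family of representations $v=\sum_i \Ga_{\xi_i}(v)\xi_i$. The free parameter is exactly what is needed to shift all three coefficients above $N_0$ simultaneously; without it, the unique orthogonal decomposition $v=\alpha_1\xi_1+\alpha_2\xi_2$ could not satisfy a uniform positive lower bound since $\alpha_1,\alpha_2$ may be negative.

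First, I would introduce the linear functions $\alpha_i(v):=\langle v,\xi_i\rangle/|\xi_i|^2$ for $i=1,2$, so that $v=\alpha_1(v)\xi_1+\alpha_2(v)\xi_2$ by orthogonality, with the a priori bound $|\alpha_i(v)|\leq N_0/|\xi_i|$ on $\mathcal{W}_{N_0}$. Using $\xi_3=-(\xi_1+\xi_2)$, I would then set
\[
\Ga_{\xi_1}(v):=\alpha_1(v)+C,\qquad \Ga_{\xi_2}(v):=\alpha_2(v)+C,\qquad \Ga_{\xi_3}(v):=C,
\]
for a constant $C>0$ to be chosen. The identity $v=\sum_{i=1}^{3}\Ga_{\xi_i}(v)\xi_i$ then holds automatically: the added constant-$C$ contributions pair into $C(\xi_1+\xi_2+\xi_3)=0$, leaving only $\alpha_1\xi_1+\alpha_2\xi_2=v$.

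Finally, I would fix $C\geq N_0+N_0/\min_{i=1,2}|\xi_i|$ to enforce $\Ga_{\xi_i}(v)\geq N_0$ on $\mathcal{W}_{N_0}$: for $\Ga_{\xi_3}$ this is immediate, and for $i=1,2$ it follows from $\Ga_{\xi_i}(v)\geq C-N_0/|\xi_i|\geq N_0$. Smoothness and affineness of all three functions are transparent from the explicit formulas, and extending by the same formula to a neighborhood of $\mathcal{W}_{N_0}$ shows they are in $C^\infty(\mathcal{W}_{N_0};[N_0,\infty))$. The only real "obstacle" in this argument is conceptual rather than technical: recognizing that the redundancy from $\xi_1+\xi_2+\xi_3=0$ is exactly what converts an otherwise rigid two-vector decomposition into one flexible enough to accommodate the uniform lower bound on all coefficients.
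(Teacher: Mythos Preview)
Your proof is correct. The paper does not give its own proof of this lemma but simply refers to \cite{Kwon1} (and \cite{S12}), noting that the $\T^3$ argument there carries over to $\T^2$; your explicit construction---using the orthogonal expansion $v=\alpha_1\xi_1+\alpha_2\xi_2$ and the redundancy $\xi_1+\xi_2+\xi_3=0$ to add a large constant $C$ to all three coefficients---is exactly the standard argument behind that reference, so there is nothing substantively different to compare.
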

In order to define new Reynolds stresses, we need to ``invert the divergence'' of various vector fields. To that context, we shall use the following inverse-divergence operator (first introduced in \cite{ChDLS12}):
\begin{equation} \label{invdiv}
    (\mathcal{R} u)^{ij} =  \Delta^{-1} (\partial_i u^j + \partial_j  u^i - \div u \,\delta_{ij}),
\end{equation}
which maps smooth, mean-zero, vector fields to smooth, trace-free, and symmetric $2$-tensors. We now state the following known result.
\begin{prop}
If $u$ is a smooth, mean-zero vector field, then the $2$-tensor field $\mathcal{R} u$ defined by \eqref{invdiv} is symmetric and satisfies 
\begin{equation*}
    \div \mathcal{R} u = u.  
\end{equation*}
\end{prop}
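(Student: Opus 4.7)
The plan is to verify the two claims, symmetry and $\operatorname{div}\mathcal{R}u = u$, by direct computation, treating $\Delta^{-1}$ as the Fourier multiplier that inverts the Laplacian on the zero-mean subspace of $L^2(\mathbb T^2)$ (which is well-defined since the defining bracket $\partial_i u^j + \partial_j u^i - \operatorname{div} u\,\delta_{ij}$ has zero mean on the torus whenever $u$ does).

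Symmetry is immediate from the definition: the expression $\partial_i u^j + \partial_j u^i - \operatorname{div} u\,\delta_{ij}$ is manifestly symmetric in $(i,j)$, and $\Delta^{-1}$ preserves symmetry. (The trace-free property, although not part of the statement being proved, is noted in the setup and follows by taking the trace: $\sum_i(\mathcal{R}u)^{ii} = \Delta^{-1}(2\,\partial_i u^i - 2\operatorname{div} u) = 0$.)

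For the divergence claim, I would just compute, using that partial derivatives commute with $\Delta^{-1}$ on mean-zero periodic distributions:
\begin{align*}
\partial_j (\mathcal{R}u)^{ij}
&= \Delta^{-1}\bigl(\partial_j\partial_i u^j + \partial_j\partial_j u^i - \partial_i\operatorname{div} u\bigr) \\
&= \Delta^{-1}\bigl(\partial_i\operatorname{div} u + \Delta u^i - \partial_i\operatorname{div} u\bigr)
= \Delta^{-1}\Delta u^i.
\end{align*}
Since $u$ has zero mean on $\mathbb T^2$ (it is the hypothesis of the proposition; note moreover that any smooth vector field that is to be in the range of $\operatorname{div}$ of a smooth tensor on the torus necessarily has zero mean, so this is the natural setting), the composition $\Delta^{-1}\Delta$ is the identity on $u^i$, giving $\partial_j(\mathcal{R}u)^{ij} = u^i$ as required.

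There is no real obstacle here; the only point that deserves attention is the use of the mean-zero hypothesis to make $\Delta^{-1}$ unambiguous and to ensure $\Delta^{-1}\Delta u^i = u^i$. Smoothness of $\mathcal{R}u$ follows from the fact that the Fourier multipliers $\partial_k\partial_\ell/\Delta$ are bounded on every $C^\infty$ Fourier class, so no further regularity argument is needed.
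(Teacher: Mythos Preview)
Your proof is correct and is the standard direct verification. The paper itself states this proposition as a known result without giving a proof, so there is nothing to compare against; your computation is exactly what one expects.
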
 
We also need an inverse divergence operator which maps a mean-zero scalar function to a mean-zero vector-valued one. To this end, we abuse the notation and define 
\[
(\cR f)_i = \De^{-1} \pa_i f. 
\]
Indeed, for a mean-zero scalar function, we have $\div \cR f =f $. 
We also recall the following version of the stationary phase lemma. For a proof, we refer the reader to \cite{DaneriSzekelyhidi17}. 
\begin{prop}
\label{prop.inv.div}
    Let $\alpha \in (0,1)$ and $M \geq 1$. Let $a \in C^\infty(\mathbb{T}^2)$, $\Psi \in C^\infty(\mathbb{T}^2;\mathbb{R}^2)$ be smooth functions and assume that
    \[C^{-1} \leq |\na \Psi| \leq C\]
    holds on $\mathbb{T}^2$. Then
    \begin{equation}
        \left|\int_{\T^2} a(x)e^{ik\cdot\Psi}\,dx\right| \lesssim \frac{\|a\|_M + \|a\|_0\|\nabla \Psi\|_M}{|k|^M}\,,
    \end{equation}
    and for the operator $\mathcal{R}$ defined in \eqref{invdiv}, we have
    \begin{equation}
        \left\|\mathcal{R}\left(a(x)e^{ik\cdot\Psi}\right)\right\|_\alpha \lesssim \frac{\|a\|_0}{|k|^{1-\alpha}} + \frac{\|a\|_{M+\alpha} + \|a\|_0\|\nabla \Psi\|_{M+\alpha}}{|k|^{M-\alpha}}
    \end{equation}
    where the implicit constants depend on $C$, $\alpha$ and $M$, but not on $k$.
\end{prop}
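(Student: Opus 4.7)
The plan is to prove both inequalities by non-stationary phase reasoning, leveraging the hypothesis $|\na\Psi| \geq C^{-1}$ to invert the oscillatory factor $e^{ik\cdot\Psi}$. For the first bound, I would set $\ph := k\cdot\Psi$ and note that the vector field $V := \na\ph/(i|\na\ph|^2)$ is well-defined with $|V| \lec |k|^{-1}$ and satisfies the key identity $V\cdot\na e^{i\ph} = e^{i\ph}$. A single integration by parts then gives $\int_{\T^2} a\, e^{i\ph}\, dx = \int_{\T^2} (La)\, e^{i\ph}\, dx$ with $La := -\na\cdot(aV)$, and iterating $M$ times yields $\int_{\T^2} a\, e^{i\ph}\, dx = \int_{\T^2} (L^M a)\, e^{i\ph}\, dx$. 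Controlling $\|L^M a\|_0$ via the Leibniz rule and splitting according to how many derivatives land on $a$ versus on the coefficient $\na\ph/|\na\ph|^2$ produces exactly the two summands $\|a\|_M$ and $\|a\|_0\|\na\Psi\|_M$, each divided by $|k|^M$, where the $|k|^{-M}$ gain reflects the $M$ uses of the $|V| \lec |k|^{-1}$ bound.

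For the inverse-divergence estimate, I would exploit the structure of $\cR$ from \eqref{invdiv}: it is built from $\De^{-1}$ composed with first-order derivatives, hence acts as a convolution operator whose kernel $K$ is $C^\infty$ off the diagonal and only mildly singular at the origin in two dimensions. To estimate $\|\cR(a\, e^{ik\cdot\Psi})\|_\al$ I would test Hölder difference quotients $|f(x)-f(y)|/|x-y|^\al$ and split the convolution integral into a near-diagonal region $|x-y|\le |k|^{-1}$ and its complement. On the near-diagonal region, the near-integrable singularity of $K$ combined with the pointwise bound $\|a\|_0$ on the amplitude, after accounting for the $|k|^\al$ loss from measuring in $C^\al$, yields the contribution $\|a\|_0/|k|^{1-\al}$. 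On the far region, I would apply the non-stationary phase bound just proved, with parameter $M+\al$, to the amplitude $y \mapsto a(y) K(x-y)$ with $M$ integrations by parts; this produces the second contribution $(\|a\|_{M+\al}+\|a\|_0\|\na\Psi\|_{M+\al})/|k|^{M-\al}$ once the derivatives of $K$ have been absorbed into the dyadic decomposition used for the splitting.

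The main technical hurdle lies in the far-diagonal Hölder estimate, because derivatives of $K(x-\cdot)$ of order up to $M+\al$ must be carefully balanced against the $|k|^{-M}$ gain coming from non-stationary phase. I would handle this with a Littlewood--Paley-type decomposition $K = \sum_j K_j$ at physical scales $2^{-j}$, applying the first part of the proposition to each piece $K_j$ with a scale-dependent number of integrations by parts tuned to the ratio $|k|/2^j$, and then summing the resulting geometric series. The two terms appearing on the right-hand side of the stated inequality then arise as the two extremal scales of this dyadic sum, exactly as in the standard treatment of stationary-phase estimates for Riesz-type potentials; apart from this bookkeeping, the remainder of the argument is a routine application of Schauder-type estimates for $\De^{-1}$.
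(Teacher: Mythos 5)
The paper does not contain its own proof of this proposition: it simply cites \cite{DaneriSzekelyhidi17}, so there is no internal argument to compare yours against. I will therefore assess the proposal on its own terms and against the argument in the cited reference.

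Your treatment of the first estimate is correct and standard: setting $\varphi = k\cdot\Psi$, introducing $V = \nabla\varphi/(i|\nabla\varphi|^2)$, integrating by parts $M$ times via the operator $L a = -\nabla\cdot(aV)$, and bookkeeping the Leibniz expansion does produce the two terms $\|a\|_M/|k|^M$ and $\|a\|_0\|\nabla\Psi\|_M/|k|^M$. For the inverse-divergence estimate, the overall architecture you describe (a near/far split at scale $|k|^{-1}$ together with a dyadic decomposition of the kernel of $\mathcal{R}$, balancing the growth of kernel derivatives against the $|k|^{-1}$ gain from non-stationary phase) is a viable route, essentially the physical-side dual of the Littlewood--Paley decomposition of the function $a\,e^{ik\cdot\Psi}$ used in \cite{DaneriSzekelyhidi17}. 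The shape of the two summands you identify also matches the statement: the near-diagonal, integrable singularity of $K$ contributes $\|a\|_0/|k|^{1-\alpha}$ after the $|k|^\alpha$ H\"older loss, and the far region yields the term with $\|a\|_{M+\alpha}$ and $\|\nabla\Psi\|_{M+\alpha}$ over $|k|^{M-\alpha}$.

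However, the crux of the second estimate is precisely the piece you characterize as ``routine bookkeeping,'' and as written there are two concrete gaps. First, you invoke the first estimate ``with parameter $M+\alpha$,'' but the first estimate is only stated (and only proved by the IBP argument) for integer $M$; to get the fractional exponent one needs a genuine H\"older-seminorm version of the non-stationary phase bound, which is a separate step, not a corollary. In practice one computes $C^\alpha$ difference quotients of $\mathcal{R}(a\,e^{ik\cdot\Psi})$ directly and splits the convolution according to whether $|x-y|$ is smaller or larger than both the increment $|h|$ and $|k|^{-1}$; the extra $|k|^\alpha$ then emerges from this case analysis, not from feeding a non-integer $M$ into the first bound. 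Second, the exposition conflates two different decompositions: an initial bipartite near/far split at scale $|k|^{-1}$ and then a full dyadic decomposition $K = \sum_j K_j$. A clean version should commit to one, most naturally $K=\sum_j K_j$ with $K_j$ supported at scale $2^{-j}$: on each annulus apply the first estimate with amplitude $a(\cdot)K_j(x-\cdot)$, note that $\|D^m K_j\|_{L^1}\lesssim 2^{jm}$ (with the $|z|^{-1}$ normalization of the order-$(-1)$ kernel giving $\|K_j\|_{L^1}\lesssim 2^{-j}$), choose the number of integrations by parts depending on $j$, and verify that the resulting geometric series in $j$ is summable and that its two extremal ranges give exactly the two terms in the statement. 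None of this is automatic, and it is not Schauder theory for $\Delta^{-1}$: it is the core of the argument. I would therefore not accept the proposal as a complete proof, but it does identify the correct mechanism, and the gaps are of execution rather than conception.
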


Finally, we recall the following proposition on commutator estimates. For a proof of this lemma, we refer to \cite[Prop. H.1]{BDLIS15}.
\begin{prop}
\label{prop.comm}
Let $k$ be a fixed element in $\Z^2\setminus \{0\}$, $\alpha \in (0,1)$, and set $G= b(x) e^{i \lambda k \cdot x}$ for a smooth vector field 
$b \in C^{\infty}(\T^2; \R^2)$. Then for any smooth function $a$, we have
\begin{align*}
\left\| [a, \mathcal{R}] (G) \right\|_{\alpha} \le \lambda^{\alpha-2} \|b\|_0 \|a\|_1 + C \lambda^{\alpha-m} \left(\|b\|_{m-1+\alpha} \|a\|_{1 +\alpha}  + \|b\|_\alpha \|a\|_{m+\alpha}\right),
\end{align*}
where the constant $C$ depends on $\alpha$ and $m$.
\end{prop}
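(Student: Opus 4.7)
The plan is to exploit the fast oscillation of $G = b(x)e^{i\lambda k \cdot x}$ together with the pseudo-local nature of the Calder\'on--Zygmund operator $\mathcal{R}$, which has order $-1$. The key observation is that on a purely oscillatory input with carrier frequency $\lambda k$, the operator $\mathcal{R}$ behaves to leading order as multiplication by a constant matrix of size $1/\lambda$, and such constant-in-$x$ multiplication commutes with pointwise multiplication by $a$. This symbolic cancellation is the source of the extra factor of $1/\lambda$ that distinguishes the commutator estimate from a naive bound on $\mathcal{R}(G)$ alone.

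\textbf{Step 1 (asymptotic expansion of $\mathcal{R}$ on oscillations).} Using the iterated identity $e^{i\lambda k\cdot y} = \tfrac{1}{i\lambda|k|^2} k\cdot\nabla_y e^{i\lambda k\cdot y}$ and integrating by parts against the kernel of $\mathcal{R}$, or equivalently by expanding $\Delta^{-1}$ on the oscillatory input as a Neumann series against $\tfrac{1}{\lambda^2|k|^2}(\Delta + 2i\lambda k\cdot\nabla)$, one obtains an expansion
\begin{equation*}
\mathcal{R}\bigl(f\,e^{i\lambda k\cdot x}\bigr) \;=\; e^{i\lambda k\cdot x}\sum_{j=1}^{m-1}\lambda^{-j}\,L_j^{(k)} f \;+\; \lambda^{-m}\,E_m(f,\lambda,k),
\end{equation*}
in which each $L_j^{(k)}$ is a constant-coefficient (matrix-valued) differential operator of order exactly $j-1$ depending only on $k$---so in particular $L_1^{(k)}$ is multiplication by a constant matrix---and $E_m$ is a remainder whose $C^{\alpha}$-norm is controlled by Proposition~\ref{prop.inv.div}.

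\textbf{Step 2 (cancellation in the commutator).} Applying the above expansion to $f=b$ and to $f=ab$ and subtracting,
\begin{equation*}
[a,\mathcal{R}](G) \;=\; e^{i\lambda k\cdot x}\sum_{j=1}^{m-1}\lambda^{-j}\bigl(a\,L_j^{(k)}b - L_j^{(k)}(ab)\bigr) \;+\; \lambda^{-m}\bigl(a\,E_m(b)-E_m(ab)\bigr).
\end{equation*}
The $j=1$ term vanishes identically because $L_1^{(k)}$ is multiplication by a constant matrix. For $j=2$ the operator $L_2^{(k)}$ is first-order, so by the Leibniz rule $a\,L_2^{(k)}b - L_2^{(k)}(ab)$ collapses to a constant-coefficient bilinear expression in $(\nabla a)\,b$ containing \emph{no} derivatives of $b$. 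Estimating its $C^{\alpha}$-norm via the product rule and noting that multiplication by $e^{i\lambda k\cdot x}$ costs at most a factor of $\lambda^{\alpha}$ in $\|\cdot\|_{\alpha}$ delivers the announced leading contribution $\lambda^{\alpha-2}\|a\|_1\|b\|_0$.

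\textbf{Step 3 (higher orders and remainder).} For $3\leq j\leq m-1$, the same Leibniz analysis yields a constant-coefficient sum of products in which at least one derivative falls on $a$, bounded in $C^{\alpha}$ by $\lambda^{\alpha-j}(\|a\|_{1+\alpha}\|b\|_{j-1+\alpha}+\|a\|_{j+\alpha}\|b\|_\alpha)$; by the interpolation inequality recorded in the preliminaries all such contributions are absorbed into the $j=m$ bracket of the claimed estimate. The remainder $\lambda^{-m}(a\,E_m(b)-E_m(ab))$ is controlled directly by the second display of Proposition~\ref{prop.inv.div} applied separately to $b$ and $ab$, together with the product rule $\|ab\|_{m+\alpha}\lesssim\|a\|_{m+\alpha}\|b\|_0+\|a\|_0\|b\|_{m+\alpha}$, which reproduces exactly the second bracket of the claimed bound. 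The main obstacle is the bookkeeping in Step 2: one must verify that the highest-order derivative in each $L_j^{(k)}$ acts on the same factor in $a\,L_j^{(k)}b$ as in $L_j^{(k)}(ab)$, so that this top-order part cancels and the commutator is genuinely one derivative order lower than naive inspection suggests. Working in $C^{\alpha}$ rather than $L^2$ is harmless since $\Delta^{-1}\partial_i\partial_j$ is bounded on $C^{\alpha}$ for $\alpha\in(0,1)$, with the only price being the factor $\lambda^{\alpha}$ already visible in the statement.
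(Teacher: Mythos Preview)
Your approach is correct and is precisely the standard argument: expand $\mathcal{R}$ on oscillatory inputs, observe the leading symbol is constant so the $j=1$ term drops out of the commutator, harvest the $j=2$ term as the main contribution, and control the tail by Proposition~\ref{prop.inv.div} together with interpolation. The paper does not supply its own proof of this proposition at all---it simply records the statement and refers to \cite[Prop.~H.1]{BDLIS15}, whose proof follows exactly the stationary-phase/Neumann-series scheme you outline. One small point of bookkeeping worth tightening in Step~2: when you pass from $(\nabla a)\,b$ to its $C^{\alpha}$ norm after multiplying by $e^{i\lambda k\cdot x}$, you pick up not only $\lambda^{\alpha}\|\nabla a\|_{0}\|b\|_{0}$ but also a $\lambda$-independent term $[\nabla a\cdot b]_{\alpha}$; the latter is of order $\lambda^{-2}\bigl(\|a\|_{1+\alpha}\|b\|_{0}+\|a\|_{1}\|b\|_{\alpha}\bigr)$ and must be shown to be dominated by the first displayed term (for $\lambda$ large) rather than absorbed into the $\lambda^{\alpha-m}$ bracket, since for $m\geq 3$ the exponent comparison goes the wrong way. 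This is harmless---it merges into the leading $\lambda^{\alpha-2}$ term up to the implicit constant---but your write-up glosses over it.
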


\subsubsection{Transport equations}

We recall some of the useful estimates related to the classical transport equation
\begin{equation} 
\label{transport}
\partial_t f + w \cdot \nabla f = g, \quad f \big | _{t = t_0} = f_0. 
\end{equation}
For a proof of the following proposition, we refer to \cite{BDLIS15}. 

\begin{prop} \label{transport_estim}
Assume that $|t-t_0| \|w\|_1 \leq 1$. Then every solution $f$ to the transport equation \eqref{transport} satisfies following estimates:
\begin{equation*}
    \|f(\cdot, t)\|_0 \leq \|f_0\|_0 + \int_{t_0}^t \|g(\cdot, \tau)\|_0 d \tau, \quad 
    \|f(\cdot, t)\|_\alpha \leq 2 \big( \|f_0\|_\alpha + \int_{t_0}^t \|g(\cdot, \tau)\|_\alpha d\tau\big),
\end{equation*}
for $\alpha \in [0,1]$.
Moreover, for any $M \geq 1$ and $\alpha \in [0,1)$,
\begin{equation*}
[f(\cdot, t)]_{M+\alpha} \lesssim [f_0]_{M+\alpha} + |t-t_0| [u]_{M+\alpha} [f_0]_1 + \int_{t_0}^t \big( [g(\cdot, \tau)]_{M+\alpha} + (t-\tau) [u]_{M+\alpha} [g(\cdot, \tau)]_1\big) d\tau.
\end{equation*}
Here the implicit constant depends on $M$ and $\alpha$. As a by product, the backwards flow $\Psi$, defined in \eqref{Flow_t}, of the velocity field $w$ starting at $t=t_0$ satisfies 
\begin{equation*}
    \|\nabla \Psi(\cdot, t) - \I\|_0 \lesssim |t-t_0|[w]_1, \quad 
    [\Psi(\cdot, t)]_M \lesssim  |t - t_0| [w]_M, \,\,\,  \forall M \geq 2. 
\end{equation*}
\end{prop}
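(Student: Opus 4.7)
The plan is to prove everything by the method of characteristics, reducing estimates on $f$ and $\Psi$ to bounds along the forward flow $\Phi$ of $w$ (defined by $\partial_t \Phi(t,x) = w(t,\Phi(t,x))$ with $\Phi(t_0,x)=x$). Along characteristics the transport equation integrates to
\[
f(t,\Phi(t,x)) = f_0(x) + \int_{t_0}^t g(s,\Phi(s,x))\,ds,
\]
so the key technical work is to establish the flow estimates on $\Phi$ (and its inverse $\Psi$), and then to push them through this composition identity.

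First I would derive the flow estimates. Differentiating $\Phi(t,x)=x+\int_{t_0}^t w(s,\Phi(s,x))\,ds$ in $x$ gives
\[
\nabla\Phi(t,x) = \I + \int_{t_0}^t \nabla w(s,\Phi(s,x))\,\nabla\Phi(s,x)\,ds,
\]
and a Gronwall argument using the hypothesis $|t-t_0|\|w\|_1\le 1$ yields $\|\nabla\Phi\|_0\lesssim 1$ and $\|\nabla\Phi-\I\|_0\lesssim |t-t_0|[w]_1$. The same is immediate for the backwards flow $\Psi$ since $\Psi=\Phi^{-1}$ and the Jacobian of $\Phi$ is uniformly close to $\I$. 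For the higher estimate $[\Psi]_M\lesssim |t-t_0|[w]_M$ with $M\ge 2$, I would differentiate the flow ODE $M$ times, apply Proposition~\ref{comp_estim} to control derivatives of $w\circ\Phi$, and induct on $M$: at each order, the top-order contribution is exactly $\int_{t_0}^t [w]_M\, ds$ (so it produces the factor $|t-t_0|[w]_M$), and the lower-order terms are absorbed using the smallness assumption and the already-controlled lower-order norms of $\Phi$.

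Next, the $C^0$ and $C^\alpha$ bounds on $f$ are immediate from the integrated formula: since $\Phi(t,\cdot)$ is a bi-Lipschitz bijection with constants comparable to $1$, composition with $\Phi$ changes $L^\infty$ norms trivially and changes $C^\alpha$ seminorms by a factor $\|\nabla\Phi\|_0^\alpha\lesssim 1$. The $\|f\|_0$ estimate has constant $1$ because $\Phi$ is measure-preserving (in fact bijective), giving $\|f(\cdot,t)\|_0\le \|f_0\|_0+\int_{t_0}^t\|g(\cdot,\tau)\|_0\,d\tau$; the factor $2$ in the $C^\alpha$ bound accommodates the composition loss, which can be kept $\le 2$ by the smallness assumption.

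Finally, for the higher-order estimate I would again apply the composition bound of Proposition~\ref{comp_estim} to $f_0\circ\Psi(t,\cdot)$ and to $g(s,\cdot)\circ\Psi(t,\cdot)\circ\Phi(s,\cdot)$ (equivalently, write $f(t,y)=f_0(\Psi(t,y))+\int_{t_0}^t g(s,\Phi(s,\Psi(t,y)))\,ds$). The composition estimate produces two types of terms: a ``good'' term where all derivatives fall on $f_0$ or $g$, giving $[f_0]_{M+\alpha}$ and $\int [g]_{M+\alpha}\,d\tau$; and a ``bad'' term where derivatives fall on $\Psi$ or on $\Phi\circ\Psi$, giving $[\Psi]_{M+\alpha}[f_0]_1$ and $[\Phi\circ\Psi]_{M+\alpha}[g]_1$. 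Using the flow bound $[\Psi]_{M+\alpha}\lesssim |t-t_0|[w]_{M+\alpha}$ and the analogous $[\Phi(s,\Psi(t,\cdot))]_{M+\alpha}\lesssim (t-\tau)[w]_{M+\alpha}$ (which arises because $\Phi(s,\Psi(t,\cdot))$ is the flow from time $t$ to time $s$), one recovers exactly the stated inequality. The main obstacle is this last bookkeeping step: keeping track of the chain of compositions and showing that the factor $(t-\tau)$ (rather than $|t-t_0|$) appears in the integrand, which requires recognizing that $\Phi\circ\Psi$ is itself a flow of $w$ over the time interval $[\tau,t]$, so the flow estimates apply with $|t-\tau|$ in place of $|t-t_0|$.
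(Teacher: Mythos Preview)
The paper does not give its own proof of this proposition but simply refers to \cite{BDLIS15}; your method-of-characteristics argument, combining Gronwall-type flow bounds with the composition estimates of Proposition~\ref{comp_estim}, is exactly the standard proof found there. Your identification of $\Phi(s,\Psi(t,\cdot))$ as the flow over $[\tau,t]$ to extract the factor $(t-\tau)$ in the integrand is the correct observation, and the argument is complete as sketched.
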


\subsubsection{Singular integral operators}
We recall a classical result on Calder\'on-Zygmund operators. In what follows, we consider the following Calder\'on-Zygmund operator ($\mathbb T^2$-periodic) acting on mean-zero periodic functions $f$: 
\begin{equation*}
    S_K f(x) = \text{PV} \int_{\mathbb T^2} K_{\mathbb T^2}(x-y)\,f(y)\, dy.
\end{equation*}
Here $K_{\mathbb T^2}$ denotes the periodized version of a smooth (away from the origin), zero mean (on circles centred at the origin), homogeneous kernel $K:\mathbb R^2 \rightarrow \mathbb R$: 
\begin{equation*}
    K_{\mathbb T^2}(z) = K(z) + \sum_{n \in \mathbb Z^2 \setminus \{0\}} (K(z+n) - K(n)).
\end{equation*}
We can now state the following classical result, for a proof see \cite{CZ}. 

\begin{prop}
The $\mathbb T^2$-periodic Calder\'on-Zygmund operators are bounded on the space of periodic, zero mean $C^\alpha$ functions, for any $\alpha \in (0,1)$.
\end{prop}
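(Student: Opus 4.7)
The plan is to reduce the periodic statement to the classical Calder\'on--Zygmund theory on $\mathbb R^2$ by isolating the singularity inside one fundamental domain and treating the periodic tail as a smooth correction. More precisely, pick a smooth cutoff $\chi \in C^\infty_c(\mathbb R^2)$ with $\chi \equiv 1$ near $0$ and $\supp \chi \subset (-1/2,1/2)^2$, and write
\begin{equation*}
K_{\mathbb T^2}(z) \;=\; \chi(z)\,K(z) \;+\; \bigl(1-\chi(z)\bigr)K(z) \;+\; \sum_{n \in \mathbb Z^2\setminus\{0\}}\bigl(K(z+n)-K(n)\bigr).
\end{equation*}
The last two summands define a $C^\infty$ function on $\mathbb T^2$: away from $z=0$ the first is smooth, while the series converges in every $C^M$ norm because $K$ is homogeneous of degree $-2$ and the difference $K(z+n)-K(n)$ behaves like $|n|^{-3}|z|$ for large $|n|$, with all spatial derivatives summable. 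Convolution against any smooth bounded kernel is trivially bounded on $C^\alpha(\mathbb T^2)$ by differentiation under the integral and H\"older interpolation, so this contribution is harmless.

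It remains to bound the truly singular piece $T f(x) = \mathrm{PV} \int \chi(x-y)K(x-y)f(y)\,dy$, which is the restriction of a classical Calder\'on--Zygmund operator on $\mathbb R^2$ to mean-zero periodic inputs. This is exactly the setting of the standard $C^\alpha$ boundedness theorem for Calder\'on--Zygmund operators associated to smooth homogeneous kernels of degree $-d$ with vanishing mean on spheres; see \cite{CZ}. I would carry out the estimate of $Tf(x)-Tf(y)$ by the usual near/far decomposition at scale $r = |x-y|$: on the ball $B_{2r}(x)$, one rewrites $T$ acting on $f-f(x)$ and uses the cancellation property $\int_{S^1} K = 0$ together with $|f(z)-f(x)|\le [f]_\alpha |z-x|^\alpha$ and $|K(z-x)|\lesssim |z-x|^{-2}$ to obtain an $r^\alpha [f]_\alpha$ bound; on the complement, one uses the gradient estimate $|K(x-z)-K(y-z)|\lesssim r/|x-z|^3$ applied to $f(z)-f(x)$ to get a convergent integral producing the same $r^\alpha [f]_\alpha$ bound. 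The $L^\infty$ part of the norm is controlled by the same decomposition with $f$ in place of $f-f(x)$, using the principal value cancellation on $B_1(x)$.

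The only subtlety, and the step I would be most careful about, is the principal value cancellation in the near region: one must use the hypothesis that $K$ has zero mean on circles centred at the origin to make sense of $\mathrm{PV}\int_{B_{2r}(x)} \chi K \cdot (f-f(x))\,dy$ and to absorb the otherwise non-integrable singularity. Everything else (the smooth tail, the far-region gradient bound, the passage from $\mathbb R^2$ to $\mathbb T^2$) is routine once this cancellation is in hand, and the mean-zero hypothesis on $f$ ensures the periodic operator is well defined independent of the constant of integration.
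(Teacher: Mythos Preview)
Your proposal is correct and is the standard reduction of the periodic statement to the classical $\mathbb R^2$ Calder\'on--Zygmund theory. The paper itself does not give a proof of this proposition at all: it simply records it as a classical fact and refers the reader to \cite{CZ}. So there is nothing to compare at the level of argument --- your sketch (localize the singularity with a cutoff, treat the periodic tail as a smooth convolution kernel, then run the near/far H\"older estimate for the genuine singular integral) is exactly the kind of proof one would supply if asked to fill in the citation, and it is carried out correctly.
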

We also state the following useful estimate for the commutator. We refer to \cite{constantin} for a proof. 
\begin{prop} 
 \label{CZ_comm}
Let $S_K$ be a Calder\'on-Zygmund operator as mentioned above, and $e \in C^{M+\al}(\T^2)$ be a given vector field. Then, for any $f \in C^{M+\al}(\T^2)$, $\al \in (0,1)$ and $M\geq 0$, we have
     \begin{align*}
         \|[S_K, e \cdot \na] f \|_{M+\al} \lec \|e\|_{1+\al} \|f\|_{M+\al} + \|e\|_{M+1+\al} \|f\|_{\al}.
     \end{align*}
Here the implicit constant depends on $\al, M$, and $K$.
\end{prop}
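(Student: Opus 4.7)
The plan is to combine a kernel-splitting argument for the base case $M=0$ with an inductive differentiation argument for general $M$. The starting point is the pointwise identity
$$[S_K, e \cdot \na] f(x) = \mathrm{PV} \int_{\T^2} K_{\T^2}(x-y)\,\big(e(y) - e(x)\big) \cdot \na_y f(y)\, dy,$$
obtained by expanding $S_K(e\cdot \na f)$ and rewriting $e(x) \cdot \na_x S_K f(x)$ via $\na_x K_{\T^2}(x-y) = -\na_y K_{\T^2}(x-y)$ followed by an integration by parts, the boundary contribution being suppressed by the principal value. The essential feature of this identity is that the singularity of $K_{\T^2}$ at the diagonal is moderated by the vanishing of $e(y) - e(x)$.

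For the base case $M = 0$, I would split the PV integral into the near region $|x-y| \le r$ and the far region $|x-y| > r$. In the near region, after a further integration by parts transferring $\na_y$ off $f$ (so that only $\|f\|_\al$ is needed, replacing $f(y)$ by $f(y) - f(x)$ in the standard way), the integrand is bounded by $|x-y|^{-3}$ times $\|e\|_{1+\al}|x-y|^{1+\al}$ times $\|f\|_\al |x-y|^\al$, producing a convergent integral of size $\|e\|_{1+\al}\|f\|_\al\, r^{2\al}$. In the far region the CZ bounds on $K_{\T^2}$ yield direct control. The full H\"older seminorm estimate then follows by running the same decomposition on the difference $[S_K, e\cdot\na]f(x) - [S_K, e\cdot\na]f(x')$ with $r \sim |x-x'|$, which is the standard technique for deriving $C^\al$ bounds on Calder\'on--Zygmund commutators.

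For general $M \ge 1$, I would iterate the identity
$$\pa_k [S_K, e \cdot \na] f = [S_K, e \cdot \na] \pa_k f + [S_K, \pa_k e \cdot \na] f,$$
valid because $S_K$ commutes with $\pa_k$ on mean-zero periodic functions. Applying this $M$ times produces a sum of commutators of the form $[S_K, D^j e \cdot \na] D^{M-j} f$ for $0 \le j \le M$. The base case applied to each such term yields a bound by $\|e\|_{j+1+\al}\|f\|_{M-j+\al}$, and the intermediate products collapse to the endpoint combination $\|e\|_{1+\al}\|f\|_{M+\al} + \|e\|_{M+1+\al}\|f\|_\al$ via the H\"older convexity inequality recalled in Section~\ref{pre} and Young's inequality.

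The main technical hurdle is the bookkeeping inside the base-case kernel splitting. Since the right-hand side should contain only $\|f\|_\al$, the $\na_y$ acting on $f$ must be integrated by parts onto both $K_{\T^2}$ and $e$; one then needs to verify that the resulting kernel remains integrable against the products of H\"older differences uniformly in the cutoff $r$, and that the $C^\al$ difference argument produces the expected factor $|x-x'|^\al$ without logarithmic loss. Once this estimate is in hand, the inductive step is a purely algebraic unfolding combined with standard interpolation, and no new analytic input is required.
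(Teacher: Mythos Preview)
The paper does not give its own proof of this proposition; it simply refers to \cite{constantin} (Constantin, \emph{Adv. Math.} 2015), so there is nothing to compare against line by line. Your outline is precisely the classical route (pointwise commutator identity, near/far kernel splitting for $M=0$, then $\partial_k[S_K,e\cdot\nabla]f=[S_K,e\cdot\nabla]\partial_kf+[S_K,\partial_ke\cdot\nabla]f$ and interpolation for $M\ge1$), which is the argument in the cited reference.

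One inaccuracy worth flagging: in your near-region estimate you write that $|e(y)-e(x)|$ contributes a factor $\|e\|_{1+\alpha}|x-y|^{1+\alpha}$, which is false --- you only have $\|e\|_1|x-y|$ from the Lipschitz bound. The $C^{1+\alpha}$ regularity of $e$ enters not here but in the H\"older \emph{difference} estimate (through $|\nabla e(y)-\nabla e(y')|\lesssim\|e\|_{1+\alpha}|y-y'|^\alpha$ or, equivalently, through the second-order Taylor remainder of $e$). With the correct power counting the near-region integrand is $O(|x-y|^{-2+\alpha})$, yielding $r^\alpha$ rather than $r^{2\alpha}$; this is still exactly what the difference argument with $r\sim|x-x'|$ needs, so the overall scheme survives, but the bookkeeping you flagged as ``the main technical hurdle'' should be carried out with this correction in mind.
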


\section{Induction Scheme} 
\label{Section_Main_Prop}

The proof of the Main theorem \ref{main_thm} is based on an iterative procedure, and, in this section, we first describe the associated iteration scheme. To be more precise, we first assume that $q \in \mathbb N$ denotes different levels of the iteration, and the smooth tuple $(v_q, p_q, \theta_q, R_q, T_q)$ solves following the Euler-Boussinesq-Reynolds system at the level $q$
\begin{equation} \label{ER}
    \begin{cases}
        \partial_t v_q + \div(v_q \otimes v_q) + \nabla p_q - \theta_q e_2 = \div R_q, \\ 
        \div v_q = 0, \\
        \partial_t \theta_q + \div(v_q \theta_q) = \div T_q.
    \end{cases}
\end{equation}
Here $R_q$ is a symmetric $2$-tensor field, and transport error $T_q$ is a vector field. Assuming the ``error $(R_q, T_q)$'' is suitably small, our main job is find a new Euler-Boussinesq-Reynolds system at the level $q+1$ with new ``error $(R_{q+1}, T_{q+1})$'' which is substantially small compared to that of level $q$. This is typically done by adding suitable perturbations to velocity $v_q$, pressure $p_q$ and temperature $\theta_q$. Indeed, goal is to construct the sequence $\{(v_q, p_q, \theta_q)\}$ so that the errors $R_q$ and $T_q$ converge to zero, and $(v_q, p_q, \theta_q)$ converges in the required H\"older space. Hence, in the limit, we will recover a weak solution to the Euler-Boussinesq system of equations. 

\subsection{Main proposition} 

Here we state the key iterative proposition which in turn gives the proof of the Main theorem \ref{main_thm}. Let us first begin by fixing parameters $a>1$, $b>1$, to be chosen later, and define frequency and amplitude parameters as
\begin{equation*}
    \lambda_q = 2\pi \lceil a^{b^q} \rceil, \quad \delta_q = \lambda_q^{-2\beta}.
\end{equation*}
Here the parameter $\beta \in (0,1/3)$ is related to the H\"older regularity of the solution.  

Now, utilizing the above parameters, we make the following inductive assumptions:
\begin{align} 
& \|v_q\|_0 \leq  M(1 - \delta_q^{1/2}), \hspace{1.7cm}  \|\theta_q\|_0 \leq  M(1 - \delta_q^{1/2}), \label{ind_est_1} \\
&\|v_q\|_N \leq M \delta_q^{1/2} \lambda_q^N, \hspace{2cm} \|\theta_q\|_N \leq M \delta_q^{1/2} \lambda_q^N, \hspace{1.9cm}  \, \forall N \in \{1,2,..., L\}, \label{ind_est_2} \\
&\|p_q\|_N \leq M^2 \delta_q \lambda_q^N, \hspace{6.9cm}\, \,\forall N \in \{1,2,..., L\}, \label{ind_est_3} \\
&\|R_q\|_N \leq \delta_{q+1} \lambda_q^{N-2\alpha}, \qquad \qquad \,\,\,\,\, \|T_q\|_N \leq \delta_{q+1} \lambda_q^{N-2\alpha}, \hspace{1.5cm}\,\,\, \forall N \in \{0, 1,..., L\}, \label{ind_est_4} \\
&\|D_t R_q\|_{N} \leq \delta_{q+1} \delta_q^{1/2}\lambda_q^{N+1 - 2\alpha}, \quad \|D_t T_q\|_{N} \leq \delta_{q+1} \delta_q^{1/2}\lambda_q^{N+1 - 2\alpha}, \,\,\,\,\, \forall N \in \{0, 1,..., L-1\}, \label{ind_est_5}
\end{align}
where $L \in \mathbb N \setminus \{0\}$, $M > 0$ and $0< \alpha < 1$. Throughout the paper, we reserve the notation $D_t$ for the material derivative associated to the velocity field $v_q$, i.e., $D_t = \partial_t + v_q \cdot \nabla$. Moreover, regarding the temporal support of both Reynolds stresses, we assume: 
\begin{equation} 
\label{ind_est_6}
     \supp_t R_q, \,\, \supp_t T_q \subset [-2 + (\delta_q^{1/2} \lambda_q)^{-1}, -1 - (\delta_q^{1/2} \lambda_q)^{-1}] \cup [1+(\de_q^{1/2}\la_q)^{-1}, 2-(\de_q^{1/2}\la_q)^{-1}],
\end{equation}
Here we also assume that $a$ is sufficiently large so that $(\delta_0^{1/2} \lambda_0)^{-1} < \frac{1}{4}$.

The central iterative proposition of this paper is as follows:
\begin{prop} 
\label{Main_prop}
Assume that $L \geq 4$, $0< \beta < 1/3$, and $1 < b < \frac{1 + 3\beta}{6\beta}$. There exist constants $M_0>1$, depending on $\beta$ and $L$, and $\exists~ 0< \alpha_0 < 1$, depending on $\beta$ and $b$, such that the following holds: For any $0<\alpha < \alpha_0$ and $M>M_0$, there exists a constant $a_0 > 1$, depending on $\beta$, $b$, $\alpha$, $M_0$, $M$ and $L$, such that for any $a > a_0$, if $(v_q, p_q, \theta_q, R_q, T_q)$ is a smooth solution of \eqref{ER} satisfying \eqref{ind_est_1} - \eqref{ind_est_6}, then we can construct a smooth solution $(v_{q+1}, p_{q+1}, \theta_{q+1}, R_{q+1}, T_{q+1})$ of \eqref{ER} satisfying the same estimates \eqref{ind_est_1} - \eqref{ind_est_6} with $q$ replaced by $q+1$. Moreover, we also have
\begin{equation} 
\label{Main_prop_eqn}
    \|v_{q+1} - v_{q}\|_0 + \frac{1}{\lambda_{q+1}} \|v_{q+1} - v_{q}\|_1 + \|\theta_{q+1} - \theta_{q}\|_0 + \frac{1}{\lambda_{q+1}} \|\theta_{q+1} - \theta_{q}\|_1 \leq 2M \delta_{q+1}^{1/2}, 
\end{equation}
and 
\begin{equation} \label{Main_prop_eqn_2}
   \supp_t (v_{q+1} - v_q) \subset (-2, -1) \cup (1, 2), \quad  \supp_t (\theta_{q+1} - \theta_q) \subset (-2, -1) \cup (1, 2).
\end{equation}
\end{prop}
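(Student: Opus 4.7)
The plan is to follow the two-step Newton--Nash convex integration scheme of Giri--Radu, with both steps adapted to accommodate the buoyancy coupling in \eqref{ie}. I would first choose a mollification scale $\ell$ of the form $\ell = \lambda_q^{-1}(\delta_q/\delta_{q+1})^{-c}$ for a small constant $c$, and mollify $(v_q, R_q, T_q, \theta_q)$ in space at this scale to obtain smooth fields $(v_\ell, R_\ell, T_\ell, \theta_\ell)$. Proposition~\ref{moli} and Proposition~\ref{CET_comm} then control the mollification errors and the quadratic commutator errors, which are absorbed into the new stresses at the end. The final tuple $(v_{q+1},p_{q+1},\theta_{q+1})$ will then be $v_\ell$ (respectively $\theta_\ell$) plus a Newton perturbation $(w^{(n)}, \vartheta^{(n)})$ plus a principal Nash perturbation $(w^{(p)},\vartheta^{(p)})$ plus divergence/mean-zero correctors, and I will define $R_{q+1},T_{q+1}$ by applying the inverse-divergence operator $\mathcal{R}$ from \eqref{invdiv} to the new error terms.

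For the Newton step, I would solve a linearization of \eqref{ie} around $v_\ell$ with a highly time-oscillatory forcing. Writing $w^{(n)} = \sum_{i} \nabla^\perp \psi_i^{(n)}$ with scalar stream functions ensures the divergence-free constraint automatically, and a similar scalar stream-function representation for $\vartheta^{(n)}$ allows optimal derivative estimates. The forcing is designed as $\sum_i \partial_t(\chi_i(t) R_\ell^{(i)})$, where $\chi_i$ are temporally oscillatory cutoffs at some large frequency $\mu$, so that integration by parts in time cancels $\operatorname{div} R_\ell$ modulo high-frequency remainders, in exactly the style of \cite{vikram}. The linearized well-posedness result from Appendix~\ref{wl-psd} provides the $(w^{(n)},\vartheta^{(n)})$ with transport-type estimates via Proposition~\ref{transport_estim}. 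The crucial bookkeeping is that the buoyancy term $\vartheta^{(n)} e_2$ must be inverted in the same way via $\mathcal{R}$, which is why the paper stresses using two stream functions and iterating jointly.

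For the Nash step, I would take the principal perturbation of $v$ to be a sum of two pieces: $w^{(p,R)} = \sum_{\xi\in\Lambda_R}a_\xi(x,t)\xi e^{i\lambda_{q+1}\xi\cdot x}$ with $|a_\xi|^2$ chosen via Lemma~\ref{geom} so that $\sum a_\xi^2\xi\otimes\xi$ cancels the low-frequency part of $R_\ell - R_N$ (the Newton residue), and $w^{(p,T)} = \sum_{\xi\in\Lambda_T}b_\xi(x,t)\xi e^{i\lambda_{q+1}\xi\cdot x}$ chosen via Lemma~\ref{lem:geo2} so that $\sum b_\xi\,\xi$ cancels the vectorial part of $T_\ell$ (following the strategy of \cite{GK}). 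The temperature perturbation $\vartheta^{(p)}$ is taken with the same phase functions and amplitudes tuned so that $v\theta$--interactions produce the low-frequency piece of $T_\ell$. Applying the stationary-phase estimate Proposition~\ref{prop.inv.div} and the commutator Propositions~\ref{prop.comm} and \ref{CZ_comm} to each high-frequency product in $\operatorname{div}(v_q\otimes v_q+\cdots)$ and $\operatorname{div}(v_q\theta_q)$ gives $R_{q+1}$ and $T_{q+1}$ with the required $\delta_{q+2}\lambda_{q+1}^{N-2\alpha}$ bounds, provided $b$ and $\alpha$ satisfy the parameter inequalities. Material-derivative bounds \eqref{ind_est_5} follow because $D_t$ of an oscillatory factor $e^{i\lambda_{q+1}\xi\cdot x}$ costs $\delta_q^{1/2}\lambda_{q+1}$ rather than $\lambda_{q+1}^1$ once the amplitude is advected along the flow.

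The main obstacle I expect is the error cancellation in the presence of the coupling term $\theta e_2$. In Giri--Radu the Newton step eliminates a single stress $R_q$ and the Nash step closes the loop on one tensor equation; here the Newton step must \emph{simultaneously} eliminate both $R_q$ and the vector error $T_q$ while still producing a divergence-free velocity and a scalar temperature, which forces the two stream functions to be coupled through a linearized buoyancy source. The resulting cross terms $\vartheta^{(n)}e_2$, $v^{(n)}\cdot\nabla\theta_\ell$, $v_\ell\cdot\nabla\vartheta^{(n)}$, and their Nash-step analogues $(w^{(p)}\otimes \vartheta^{(p)})$-type products, are not present in \cite{vikram}; each must be inverted by $\mathcal{R}$ and absorbed into $R_{q+1}$ or $T_{q+1}$ without degrading the regularity. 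The parameter inequality $1<b<(1+3\beta)/(6\beta)$ is tight, so these new terms must be handled with the sharp form of Proposition~\ref{prop.inv.div} and careful interpolation; once that is done, verification of \eqref{ind_est_1}--\eqref{ind_est_6} for $q+1$, together with \eqref{Main_prop_eqn}--\eqref{Main_prop_eqn_2}, is routine via the cutoff structure imposed on $\chi_i$ and the amplitudes $a_\xi,b_\xi$.
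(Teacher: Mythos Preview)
Your outline captures the broad Newton--Nash architecture and correctly identifies the new coupling terms, but it misses the structural feature that makes the Giri--Radu scheme reach the Onsager threshold: the Newton step is not performed once but $\Gamma = \lceil (1/3-\beta)^{-1}\rceil$ times in succession. In the paper, each Newton perturbation $(w_{q+1,n+1}^{(t)}, \theta_{q+1,n+1}^{(t)})$ solves the linearized Euler--Boussinesq system \eqref{LocalNewt}--\eqref{LocalNewt1} with oscillatory forcing built from the \emph{current} gluing error $(R_{q,n}, T_{q,n})$, and Proposition~\ref{NewIter} shows that this gains exactly a factor $(\lambda_q/\lambda_{q+1})^{1/3-\beta}$ in the error amplitude $\delta_{q+1,n}$; only after $\Gamma$ iterations is the residual $R_{q,\Gamma}$ of size $\delta_{q+1}(\lambda_q/\lambda_{q+1})^{\Gamma(1/3-\beta)}\leq \delta_{q+1}\lambda_q/\lambda_{q+1}$, small enough to be absorbed directly into $R_{q+1}$. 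A single Newton step, as your sketch describes, would leave an error of order $\delta_{q+1}(\lambda_q/\lambda_{q+1})^{1/3-\beta}$, which is far larger than the target $\delta_{q+2}$ under the stated constraint on $b$. Correspondingly, the temporal oscillation profiles of Lemma~\ref{osc_prof} are indexed not just by direction $\xi$ and parity but also by the Newton-stage index $n\in\{1,\dots,\Gamma\}$, and their mutual disjointness across \emph{all three} indices is what prevents interaction both between different directions and between different Newton stages when the Nash perturbation \eqref{new_001}--\eqref{new_002} superposes all $\Gamma$ layers at once.

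Two smaller discrepancies also matter. First, your Nash building blocks are written with Eulerian phases $e^{i\lambda_{q+1}\xi\cdot x}$, whereas the paper uses $\mathbb{W}_\xi(\lambda_{q+1}\tilde\Psi_k)$ with the backward flow $\tilde\Psi_k$ of $\bar v_{q,\Gamma}$. The Lagrangian phase is what makes the transport error tractable, since $\bar D_{t,\Gamma}$ annihilates $\tilde\Psi_k$; with a bare Eulerian phase the term $\bar v_{q,\Gamma}\cdot\nabla w^{(p)}$ would cost a full factor $\delta_q^{1/2}\lambda_q\cdot\lambda_{q+1}$ and the estimate of Lemma~\ref{Tranp_err_estim} would fail. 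Second, the paper inserts a separate mollification \emph{along the flow} at scale $\ell_{t,q}$ (Section~\ref{Section_Moli_Along_flow}) before defining the Nash amplitudes $\bar a_{\xi,k,n},\bar b_{\zeta,k,n}$; this is needed to control the second material derivatives in Lemma~\ref{high_mat_der}, which in turn feed into the material-derivative bound on the transport error. Without it, the inductive estimate \eqref{ind_est_5} at level $q+1$ cannot be closed.
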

Next, we prove the main theorem \ref{main_thm}, assuming the proposition \ref{Main_prop}.

\subsection{Proof of the Main theorem~\ref{main_thm}, assuming Proposition~\ref{Main_prop}}

First of all, we fix the parameter $L = 4$, and any H\"older exponent $\beta < 1/3$ such that $\gamma < \beta$, and the parameter $b$ as in the proposition~\ref{Main_prop}. Moreover, let $\alpha_0$ and $a_0$ be the constants given by the proposition~\ref{Main_prop}. Finally, we fix $\alpha < \min \{ \alpha_0, 1/4\}$, and the parameter $a > a_0$ will be fixed at the end of the proof.

Let us now define smooth functions $f, g : \R \to [0,1]$, supported in $[-7/4, 7/4]$, such that $f = 1$ on $[-5/4, 5/4]$ and $g=0$ on $[-5/4, 5/4]$. Then consider
\begin{align*}
 v_0 (x,t) &= f(t) \delta_0^{1/2}\cos ( \la_0 x_1) e_2, \,\,\,\,\, R_0(x,t) = \big(f'(t)-g(t) \big) \frac{\delta_0^{1/2}}{\la_0}  \begin{pmatrix}
        0 & \sin ( \la_0 x_1)\\ \sin(\la_0 x_1) & 0 
    \end{pmatrix}, \\
   \theta_0(x,t) &=g(t) \delta_0^{1/2}\cos ( \la_0 x_1), \quad p_0(x,t) = 0, \quad T_0(x,t) =\Big(\frac{1}{\la_0} g'(t) \delta_0^{1/2}\sin( \la_0 x_1), 0 \Big),  
\end{align*}
where $(x_1, x_2) \in \T^2$. In view of the above choices, it is easy to verify that $(v_0,p_0,\theta_0,R_0,T_0)$ solves the Euler-Boussinesq-Reynolds system~\eqref{ER}. Note that, for sufficiently large choice of $a$, we have 
\begin{equation*}
    \|v_0\|_0 \leq M\delta_0^{1/2} \leq M(1 - \delta_0^{1/2}), \quad \|\theta_0\|_0 \leq M\delta_0^{1/2} \leq M(1 - \delta_0^{1/2}),
\end{equation*}
Thus, the estimates \eqref{ind_est_1} are satisfied. Moreover, for any $N \geq 1$, we have
\begin{equation*}
    \|v_0\|_N \leq M\delta_0^{1/2} \lambda_0^N, \quad  \|\theta_0\|_N \leq M\delta_0^{1/2} \lambda_0^N,
\end{equation*}
so that \eqref{ind_est_2} and \eqref{ind_est_3} also satisfy. For the Reynolds stresses, note that, for any $N \geq 0$,
\begin{equation*}
    \|R_0\|_N \leq 2 \sup_t \big( |f'(t)| + |g(t)| \big) \frac{\delta_0^{1/2}}{\lambda_0} \lambda_0^N, \quad \|R_0\|_N \leq  \sup_t |g'(t)| \frac{\delta_0^{1/2}}{\lambda_0} \lambda_0^N.
\end{equation*}
Since by our choice $(2b-1)\beta <1/3$, we can always choose $a$ large enough to make sure that
\begin{equation*}
    2 \sup_t \big( |f'(t)| + |g(t)| \big) + \sup_t |g'(t)| < \delta_1 \delta_0^{-1/2} \lambda_0^{1/2},
\end{equation*}
so that, thanks to the choice of $\alpha < 1/4$, the estimate \eqref{ind_est_4} holds. Indeed, we have
\begin{equation*}
    \|R_0\|_N \leq \delta_1 \lambda_0^{-1/2} \lambda_0^N, \quad \|T_0\|_N \leq \delta_1 \lambda_0^{-1/2} \lambda_0^N,
\end{equation*}
For the material derivative estimates, we first see that
\begin{align*}
    \partial_t R_0 + u_0 \cdot \nabla R_0 &= \big( f''(t) -g'(t) \big) \frac{\delta_0^{1/2}}{\lambda_0} \begin{pmatrix}
        0 & \sin ( \la_0 x_1)\\ \sin(\la_0 x_1) & 0
    \end{pmatrix}, \\
     \partial_t T_0 + u_0 \cdot \nabla T_0 & = g''(t) \frac{\delta_0^{1/2}}{\lambda_0} \big(  \sin ( \la_0 x_1), 0\big).
\end{align*}
Again as before, we may choose $a$ large enough so that \eqref{ind_est_5} is satisfied.
Finally, notice that $\supp_t R_0 \subset [-7/4, 7/4] \setminus (-5/4, 5/4)$, and $\supp_t T_0 \subset [-7/4, 7/4] \setminus (-5/4, 5/4)$. Hence, the condition \eqref{ind_est_6} is satisfied, thanks to our choice $(\delta_0^{1/2} \lambda_0)^{-1} < \frac{1}{4}$.

At this point, we fix $a$ large enough so that all the above estimates are satisfied. This confirms that the tuple $(v_0, p_0, \theta_0, R_0, T_0)$ satisfies all the inductive estimates in Proposition~\ref{Main_prop}. Next, let us assume that $\theta_q$ has mean-zero, and $\{(v_q, p_q, \theta_q, R_q, T_q)\}$ be the sequence of solutions given by the Proposition~\ref{Main_prop}. Then a simple calculation reveals that both $\{v_q\}$ and $\{\theta_q\}$ are Cauchy sequences in the H\"older space $C_tC^\gamma_x$, and hence they converge to a velocity field $v$ and a temperature $\theta$, respectively. Indeed, we have
\begin{align*}
& \|v_{q+1} - v_q\|_\gamma \lesssim \|v_{q+1} - v_q\|_0^{1-\gamma} \|v_{q+1} - v_q\|_1^\gamma \lesssim \delta_{q+1}^{1/2} \lambda_{q+1}^\gamma \lesssim \lambda_{q+1}^{\gamma - \beta}, \\
&\|\theta_{q+1} - \theta_q\|_\gamma \lesssim \|\theta_{q+1} - \theta_q\|_0^{1-\gamma} \|\theta_{q+1} - \theta_q\|_1^\gamma \lesssim \delta_{q+1}^{1/2} \lambda_{q+1}^\gamma \lesssim \lambda_{q+1}^{\gamma - \beta}
\end{align*}
Moreover, regarding the Reynolds stresses, we have
\begin{align*}
\|R_q\|_\gamma \lesssim \|R_q\|_0^{1-\gamma}\|R_q\|_1^{\gamma} \lesssim \delta_{q+1}\lambda_q^{\gamma} \lesssim \lambda_{q+1}^{\gamma - 2 \beta}, \quad 
\|T_q\|_\gamma \lesssim \|T_q\|_0^{1-\gamma}\|T_q\|_1^{\gamma} \lesssim \delta_{q+1}\lambda_q^{\gamma} \lesssim \lambda_{q+1}^{\gamma - 2 \beta}.
\end{align*}
Therefore, we conclude that both $R_q$ and $T_q$ converge to zero in $C_tC^{\gamma}_x$. Finally, since $p_q$ satisfies 
\begin{equation*}
    \Delta p_q = \div \div(-v_q \otimes v_q + R_q + \mathcal{R}(\theta_q e_2)),
\end{equation*}
we conclude that $p_q - \fint p_q$ converges, in $C_tC_x^\gamma$, to some $p$, and $\nabla p_q \rightarrow \nabla p$ in the sense of distributions. Therefore $(v,p, \theta)$ is a weak solution to the Euler-Boussinesq system of equations with $(v, \theta) \in C_tC^\gamma_x \times C_tC^\gamma_x$. Finally, regarding the temporal support, we see that $\supp_t v \subset [-2, 2]$ and $\supp_t \theta \subset [-2, 2]$. 

To prove the time regularity of both velocity and the temperature, we follow the argument given in \cite{cltv}. To proceed, we first fix a smooth standard mollifier $\zeta$ in space variable only, and consider $\bar v_q := v*\zeta_{2^{-q}}$, $\bar \theta_q := \theta*\zeta_{2^{-q}}$ for $q \in \N$, where $\zeta_{\ell}(x) = \ell^{-2} \zeta(x \ell^{-1})$. Let us recall the well-known mollification estimates 
\begin{align}
\norm{\bar v_q-v}_0\lesssim \norm{v}_{\gamma} 2^{-q \gamma}, \quad \norm{\bar \theta_q- \theta}_0\lesssim \norm{\theta}_{\gamma} 2^{-q \gamma}.
\label{e:trivial:1}
\end{align}
 Above estimates clearly imply that $\bar v_q - v \to 0$, and $\bar \theta_q - \theta \to 0$ uniformly as $q \to \infty$. Next, we mollify the equations \eqref{ie} to obtain
 \begin{equation} \label{ie_new}
    \begin{cases}
        \partial_t \bar v_{q}+\div\left(v  \otimes v \right)*\psi_{2^{-q}}  +\nabla p * \psi_{2^{-q}} = \theta* \psi_{2^{-q}} e_2, \\ 
        \div \bar v_{q} = 0, \\
        \partial_t \bar \theta_q + \div (v \theta)* \psi_{2^{-q}}=0,
    \end{cases}
\end{equation}
A classical Schauder's estimate, for any fixed $\varepsilon>0$, reveals that
\[
\|\nabla p * \psi_{2^{-q}}\|_0 \leq \|\nabla p * \psi_{2^{-q}}\|_\varepsilon 
\lesssim \|v\|^2_{\gamma} \,2^{q(1 + \eps - \gamma)} + \|\theta\|_{\gamma} \,2^{q(\eps - \gamma)} \lesssim \big(\|v\|^2_{\gamma} + \|\theta\|_{\gamma} \big)2^{q(1 + \eps - \gamma)}\, ,
\]
Moreover, we have
\[
\norm{ \left(v  \otimes v \right)*\psi_{2^{-q}}}_1 \lesssim \norm{v}_{\gamma}^2  2^{q(1-\gamma)}, \quad  
\norm{ \left(v \theta \right)*\psi_{2^{-q}}}_1 \lesssim \norm{v}_{\gamma} \norm{\theta}_{\gamma}  2^{q(1-\gamma)}\,.
\]
Hence, the equations in \eqref{ie_new} imply that
\begin{align}
\norm{\partial_t \bar  v_{q}}_0 \lesssim \big(\|v\|^2_{\gamma} + \|\theta\|_{\gamma} \big)2^{q(1 + \eps - \gamma)}, \quad 
\norm{\partial_t \bar  \theta_{q}}_0 \lesssim \|v\|_{\gamma} \|\theta\|_{\gamma} 2^{q(1 - \gamma)} \,.
\label{e:trivial:2}
\end{align}
Notice that, all the implicit constants in the above estimates depend on $\varepsilon$ but not on $q$. We can now conclude, thanks to \eqref{e:trivial:1} and \eqref{e:trivial:2} and $\gamma' < \gamma$
\begin{align*}
\norm{\bar v_q - \bar v_{q+1}}_{C^0_x C^{\gamma'}_t}  
&\lesssim \left( \norm{\bar v_q - v}_0 + \norm{\bar v_{q+1} - v}_0 \right)^{1-\gamma'}
\left(\norm{\partial_t \bar v_q}_0  + \norm{\partial_t \bar v_{q+1}}_0\right)^{\gamma'}
\notag\\
&\lesssim \big(\|v\|^2_{\gamma'} + \|\theta\|_{\gamma'} + \|v\|_{\gamma'} \big) 2^{-q \eps} \\
\norm{\bar \theta_q - \bar \theta_{q+1}}_{C^0_x C^{\gamma'}_t}  
&\lesssim \left( \norm{\bar \theta_q - \theta}_0 + \norm{\bar \theta_{q+1} - \theta}_0 \right)^{1-\gamma'}
\left(\norm{\partial_t \bar \theta_q}_0  + \norm{\partial_t \bar \theta_{q+1}}_0\right)^{\gamma'}
\notag\\
&\lesssim \big(\|\theta\|_{\gamma'} + \|v\|_{\gamma'} \|\theta\|_{\gamma'} \big) 2^{-q (\gamma-\gamma')}
\end{align*}
Therefore, both series 
\[
v = \bar v_0 + \sum_{q\geq0} (\bar v_{q+1} - \bar v_q), \,\, \text{and} \,\,
\theta = \bar \theta_0 + \sum_{q\geq0} (\bar \theta_{q+1} - \bar \theta_q)
\]
converge in $C^0_x C^{\gamma'}_t$. Since we have already established that $v, \theta \in C^0_t C^{\gamma}_x$, so we obtain $v, \theta \in C^{\gamma'}(\R \times \T^2)$ with $\gamma' < \gamma < \beta < 1/3$ arbitrary. This finishes the proof of the theorem.

\section{Set up for the Newton Steps} 
\label{Prelim}

It is well-known that any typical Nash iteration scheme (see, for example, \cite{DLS13} and \cite{DLS14}) suffers from the loss of derivative problem, and to overcome this issue efficiently we need to mollify the velocity, temperature and Reynolds stresses. To that context, let us begin by defining $\zeta$ as a symmetric spatial mollifier with vanishing first moments. We then fix the spatial mollification scale as $\ell_q = (\lambda_q \lambda_{q+1})^{-1/2}$. Moreover, we denote
\begin{equation*}
    \bar v_q = v_q * \zeta_{\ell_q}, \quad  \bar \theta_q = \theta_q * \zeta_{\ell_q}, \quad 
    R_{q, 0} = R_q * \zeta_{\ell_q}, \quad T_{q, 0} = T_q * \zeta_{\ell_q}.
\end{equation*} 
As before, we shall reserve the notation $\bar D_t = (\partial_t + \bar v_q \cdot \nabla)$ for the material derivative associated to the velocity field $\bar v_q$.

Next, to construct the iterative Newton perturbations, we need to make use of various time-dependent functions. In what follows, we first fix the temporal parameter $\tau_q = \big(\delta_q^{1/2} \lambda_q \lambda_{q+1}^\alpha \big)^{-1}$. Note that with this choice of $\tau_q$, we have $ \|\bar v_q\|_1 \tau_q \leq  C \lambda_{q+1}^{-\alpha} \leq 1$ for sufficiently large $a_0$. Also notice that the following holds:
\begin{equation*}
    \|\bar v_q\|_{1+\alpha} \tau_q \lesssim \bigg(\frac{\lambda_q}{\lambda_{q+1}}\bigg)^\alpha \lesssim 1. 
\end{equation*}

Following \cite{vikram}, we also introduce two sets of partition of unity in time. For that purpose, let us first denote by $t_p = p \tau_q$, for $p \in \mathbb{Z}$. We then define cut-off functions $\{\eta_p\}_{p \in \mathbb Z}$, and $\{\tilde \eta_p\}_{p \in \mathbb Z}$ such that they satisfy the following properties:
\begin{itemize}
    \item [(a)] The squared cut-offs form a partition of unity, i.e, $\sum_{p \in \mathbb Z} \eta_p^2(t) = 1$.
    \item [(b)] $\supp \eta_p \subset (t_p - \frac{2}{3} \tau_q, t_p + \frac{2}{3} \tau_q)$. In particular, $\supp \chi_{p-1} \cap \supp \chi_{p+1} = \varnothing, \forall p \in \mathbb Z$.
    \item [(c)] For any $N \geq 0$ and $p \in \mathbb Z$, $|\partial_t^N \eta_p| \lesssim \tau_q^{-N}$. Here the implicit constant depends on $N$ only.
    \item [(d)] $\supp \tilde \eta_p \subset (t_p - \tau_q, t_p + \tau_q)$ and $\tilde \eta_p = 1$ on $(t_p - \frac{2}{3} \tau_q, t_p + \frac{2}{3} \tau_q)$. Notice that $\eta_p \Tilde \eta_p = \eta_p$, for all $p \in \mathbb Z$.
    \item [(e)] For any $N \geq 0$ and $p \in \mathbb Z$, $ |\partial_t^N \tilde \eta_p| \lesssim \tau_q^{-N}$. Here also the implicit constant depends on $N$ only.
\end{itemize}

We now define time-periodic temporal building blocks which play a pivotal role in the upcoming analysis. Note that the number of such temporally oscillatory profiles directly linked to the number of implemented Newton steps. Let the number of such Newton steps be given by
\begin{equation}\label{def:ga}
    \Gamma = \bigg\lceil \frac{1}{1/3 - \beta} \bigg \rceil.
\end{equation}
We remark that $\Gamma$ only depends on $\beta$, and independent of the iteration steps. 
We also define the temporal frequency parameter $\mu_{q+1} > \tau_q^{-1}$ by 
\begin{equation*}
    \mu_{q+1} = \delta_{q+1}^{1/2} \lambda_q^{2/3} \lambda_{q+1}^{1/3} \lambda_{q+1}^{4\alpha}.
\end{equation*}
Next, we recall the following lemma which essentially guarantees the disjointness of different building blocks. In fact, this is a crucial requirement for our analysis. For a proof of the lemma, modulo cosmetic changes, we refer to \cite[Lemma 3.3]{vikram}.
\begin{lem} 
\label{osc_prof}
Let $\Lambda_R, \Lambda_T \subset \mathbb Z^2$ be two disjoint sets given by the Lemma \ref{geom} and Lemma~\ref{lem:geo2} respectively, and $\Gamma \in \mathbb N$. Given any $\xi \in \Lambda_R$, there exist $2\Gamma$ numbers of smooth $1$-periodic functions $g_{\xi, e, n}, g_{\xi, o, n} :\mathbb R \rightarrow \mathbb R$ with $n \in \{1, 2,..., \Gamma\}$ such that
\begin{equation*}
\int_0^1 g_{\xi, p, n}^2 = 1,\ \ \forall \xi \in \Lambda_R \text{, } p \in \{e, o\} \text{, and } n \in \{1,2,..., \Gamma\}.
\end{equation*}
Moreover, for any $\zeta \in \Lambda_T$, there exist $2\Gamma$ numbers of smooth $1$-periodic functions $h_{\zeta, e, n}, h_{\zeta, o, n}:\mathbb R \rightarrow \mathbb R$ with $n \in \{1, 2,..., \Gamma\}$ such that
\begin{equation*}
 \int_0^1 h^2_{\zeta, p, n} = 1,\ \ \forall \zeta \in \Lambda_T \text{, } p \in \{e, o\} \text{, and } n \in \{1,2,..., \Gamma\}.
\end{equation*}
Furthermore, by denoting $\Lambda = \Lambda_R \cup \Lambda_T$, we have
\begin{equation*}
    \supp g_{\xi, p, n} \cap \supp g_{\zeta, q, m} \cap  \supp h_{\xi, p, n} \cap \supp h_{\zeta, q, m} = \varnothing, 
\end{equation*}
whenever $(\xi, p, n) \neq (\zeta, q, m) \in \Lambda \times \{e, o\} \times \{1, 2,..., \Gamma\}$.
\end{lem}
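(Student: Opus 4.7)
The statement is essentially an indexing exercise: we need to exhibit $K := 2\Gamma(|\Lambda_R| + |\Lambda_T|)$ smooth $1$-periodic functions on $\R$, each with unit $L^2$-norm on one period, whose supports are pairwise disjoint (the $g$'s and $h$'s are just a relabelling of these $K$ functions). The plan is therefore to construct a single smooth bump supported on a short interval and then translate it along $[0,1)$ and extend periodically.

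First I would fix an enumeration. Since $\Lambda_R$ and $\Lambda_T$ are disjoint finite subsets of $\Z^2\setminus\{0\}$, their union $\Lambda$ has cardinality $|\Lambda_R|+|\Lambda_T|$, and the index set
\[
\cJ := \Lambda \times \{e,o\} \times \{1,2,\dots,\Gamma\}
\]
has cardinality $K$. Pick any bijection $\iota : \cJ \to \{0,1,\dots,K-1\}$. Next I would choose a nonnegative, nontrivial $\phi_0 \in C_c^\infty(\R)$ with $\supp \phi_0 \subset (0, 1/K)$, and set
\[
\phi := \Big(\int_\R \phi_0^2\Big)^{-1/2} \phi_0 ,
\]
so that $\phi \in C_c^\infty((0, 1/K))$ with $\int_\R \phi^2 = 1$. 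For each $j \in \{0,\dots,K-1\}$ define the translated bump
\[
\psi_j(t) := \phi\Big(t - \frac{j}{K}\Big), \qquad t \in \R,
\]
which is smooth, supported in $(j/K,\, (j+1)/K)$, and still satisfies $\int_\R \psi_j^2 = 1$. The supports of $\psi_0,\dots,\psi_{K-1}$ are pairwise disjoint and all contained in $(0,1)$.

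Now I would periodize, defining $\Psi_j(t) := \sum_{k\in\Z} \psi_j(t-k)$. Because each $\psi_j$ is supported in $(0,1)$, at most one summand is nonzero at any $t \in \R$, so $\Psi_j$ is smooth, $1$-periodic, and
\[
\int_0^1 \Psi_j^2 \,dt = \int_\R \psi_j^2 \,dt = 1.
\]
Moreover, on a single period $\supp \Psi_j \cap [0,1) \subset [j/K, (j+1)/K)$, and these sets are mutually disjoint, so $\supp \Psi_j \cap \supp \Psi_{j'} = \varnothing$ whenever $j \neq j'$.

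Finally, I would relabel: for $\xi \in \Lambda_R$, $p \in \{e,o\}$, $n \in \{1,\dots,\Gamma\}$, set $g_{\xi,p,n} := \Psi_{\iota(\xi,p,n)}$, and for $\zeta \in \Lambda_T$ analogously $h_{\zeta,p,n} := \Psi_{\iota(\zeta,p,n)}$. Each of these functions is smooth, $1$-periodic, and satisfies the prescribed normalisation $\int_0^1 g_{\xi,p,n}^2 = \int_0^1 h_{\zeta,p,n}^2 = 1$, while the disjointness of supports for the $\Psi_j$'s translates, through the injectivity of $\iota$, into the required
\[
\supp g_{\xi,p,n} \cap \supp g_{\zeta,q,m} \cap \supp h_{\xi,p,n} \cap \supp h_{\zeta,q,m} = \varnothing
\]
whenever $(\xi,p,n) \neq (\zeta,q,m)$ in $\Lambda \times \{e,o\} \times \{1,\dots,\Gamma\}$. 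There is no real analytic obstacle here; the only point requiring a little care is making sure the short-bump support $(0,1/K)$ fits inside a single period so that the periodisation step preserves disjointness, which is why the interval length $1/K$ is chosen to match the total number of labels.
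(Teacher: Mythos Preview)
Your construction is correct and is precisely the standard argument: partition one period into $K=2\Gamma|\Lambda|$ disjoint subintervals, place a normalized smooth bump in each, periodize, and relabel. The paper does not actually give a proof of this lemma---it simply refers to \cite[Lemma~3.3]{vikram}---so your explicit write-up is exactly what a reader would reconstruct from that reference, and there is nothing to compare.
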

Before concluding this section, we collect some standard estimates related to mollified velocity, mollified temperature and Reynolds stresses in the following lemma. For a proof, we refer to \cite[Lemma 3.1]{vikram}. 
\begin{lem} \label{smoli_estim}
We have the following estimates: 
\begin{align} 
&\|\bar v_q\|_N \lesssim \delta_q^{1/2} \lambda_q^N, \hspace{2.8cm} \|\bar \theta_q\|_N \lesssim \delta_q^{1/2} \lambda_q^N, \hspace{2.6cm} \forall N \in \{1,2,...,L\}, \label{smoli_1}\\
&\|R_{q,0}\|_N \lesssim \delta_{q+1} \lambda_q^{N-2\alpha}, \hspace{1.85cm} \|T_{q,0}\|_N \lesssim \delta_{q+1} \lambda_q^{N-2\alpha}, \hspace{1.66cm} \, \forall N \in \{0, 1,..., L\}, \label{smoli_2} \\
&\|\bar D_t R_{q, 0}\|_N \lesssim \delta_{q+1} \delta_q^{1/2} \lambda_q^{N+1-2\alpha}, \quad \,\, \|\bar D_t T_{q, 0}\|_N \lesssim \delta_{q+1} \delta_q^{1/2} \lambda_q^{N+1-2\alpha},  \quad \, \forall N \in \{0, 1,..., L-1\},  \label{smoli_3} \\
&\|\bar v_q\|_{N + L} \lesssim \delta_q^{1/2} \lambda_q^{L} \ell_q^{-N}, \hspace{1.82cm}  \|\bar \theta_q\|_{N + L} \lesssim \delta_q^{1/2} \lambda_q^{L} \ell_q^{-N}, \,\hspace{1.6cm}  \forall N \geq 0, \label{smoli_4} \\
&\|R_{q,0}\|_{N+L} \lesssim \delta_{q+1} \lambda_q^{L-2\alpha} \ell_q^{-N}, \qquad  \,\,\,\|T_{q,0}\|_{N+L} \lesssim \delta_{q+1} \lambda_q^{L-2\alpha} \ell_q^{-N}, \qquad \,\forall N \geq 0, \label{smoli_5} \\
&\|\bar D_t R_{q,0}\|_{N+L-1} \lesssim \delta_{q+1} \delta_q^{1/2}\lambda_q^{L-2\alpha} \ell_q^{-N}, \quad   \|\bar D_t T_{q,0}\|_{N+L-1} \lesssim \delta_{q+1} \delta_q^{1/2}\lambda_q^{L-2\alpha} \ell_q^{-N}, \quad \forall N \geq 0. \label{smoli_6}
\end{align}
Here all the implicit constants depend on the parameters $M$, $N$, and $L$.
\end{lem}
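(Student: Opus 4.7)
The plan is to group the six families of estimates into three blocks of similar character and treat each via a single generic argument. The first block \eqref{smoli_1}--\eqref{smoli_2} consists of low-order bounds at orders $N \le L$. Since convolution with a unit-mass mollifier is a contraction on Hölder spaces (Young's inequality together with the fact that $D^\theta(f*\zeta_{\ell_q}) = (D^\theta f)*\zeta_{\ell_q}$), we obtain $\|\bar v_q\|_N \le \|v_q\|_N$, $\|\bar\theta_q\|_N \le \|\theta_q\|_N$, $\|R_{q,0}\|_N \le \|R_q\|_N$, and $\|T_{q,0}\|_N \le \|T_q\|_N$, so the bounds drop out of the inductive estimates \eqref{ind_est_2} and \eqref{ind_est_4} directly.

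For the second block \eqref{smoli_4}--\eqref{smoli_5}, which records the high-order estimates at orders $N+L$, the strategy is to move $N$ derivatives onto the mollifier. Using $D^\theta(f*\zeta_{\ell_q}) = f*(D^\theta \zeta_{\ell_q})$ for the extra $N$ derivatives and the scaling $\|D^\theta \zeta_{\ell_q}\|_{L^1} \lesssim \ell_q^{-|\theta|}$, I obtain
\begin{equation*}
\|f*\zeta_{\ell_q}\|_{N+L} \lesssim \ell_q^{-N}\|f\|_L
\end{equation*}
for any smooth $f$. Applying this to $f = v_q, \theta_q, R_q, T_q$ and combining with \eqref{ind_est_2} and \eqref{ind_est_4} at order $L$ yields the claims.

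The material derivative estimates \eqref{smoli_3} and \eqref{smoli_6} are the main technical point. The key observation is the commutator decomposition
\begin{equation*}
\bar D_t R_{q,0} = (D_t R_q)*\zeta_{\ell_q} + \bigl[(v_q \cdot \nabla R_q)*\zeta_{\ell_q} - \bar v_q \cdot \nabla R_{q,0}\bigr],
\end{equation*}
and similarly for $T_{q,0}$. The first term is just the mollification of $D_t R_q$ and is therefore bounded in $\|\cdot\|_N$ by $\|D_t R_q\|_N$, which satisfies \eqref{ind_est_5}, yielding the factor $\delta_{q+1}\delta_q^{1/2}\lambda_q^{N+1-2\alpha}$. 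For the second, commutator, term I invoke Proposition \ref{CET_comm} with $f = v_q$ and $g = \nabla R_q$, which gives
\begin{equation*}
\|(v_q \cdot \nabla R_q)*\zeta_{\ell_q} - \bar v_q \cdot \nabla R_{q,0}\|_N \lesssim \ell_q^{2-N+M}\bigl(\|v_q\|_{M+1}\|\nabla R_q\|_1 + \|v_q\|_1\|\nabla R_q\|_{M+1}\bigr)
\end{equation*}
for any $M \ge 0$. Choosing $M$ so that the total order matches the target regularity (namely $M = N$ for \eqref{smoli_3} and $M = N+L-1$ for \eqref{smoli_6}) and inserting \eqref{ind_est_2} and \eqref{ind_est_4}, the resulting factor $\ell_q^{2}\lambda_q^{2} = (\lambda_q/\lambda_{q+1}) \le 1$ shows that this commutator piece is absorbed into the bound coming from $(D_t R_q)*\zeta_{\ell_q}$. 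The analogous computation works for $T_{q,0}$.

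The only step that requires real care is the bookkeeping in the commutator estimate at the top order $N+L-1$: one must check that the loss of derivatives inside Proposition \ref{CET_comm} (which costs an extra derivative on either $v_q$ or $R_q$) is paid for by the gain $\ell_q^2 \lambda_q^2 \lesssim 1$ coming from the mollification scale $\ell_q = (\lambda_q\lambda_{q+1})^{-1/2}$. Granted that book-keeping, the lemma follows by collecting the bounds above; the assumption $L \ge 4$ in Proposition \ref{Main_prop} ensures that the highest index appearing in \eqref{smoli_6}, namely $N+L-1$, is comfortably within reach of the inductive estimates.
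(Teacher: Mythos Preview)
Your overall strategy is correct and matches the standard argument (the paper simply refers to \cite[Lemma~3.1]{vikram}): contraction of mollification for the low-order bounds, derivatives-on-the-mollifier for the high-order bounds, and the decomposition $\bar D_t R_{q,0} = (D_t R_q)*\zeta_{\ell_q} - \bigl[(v_q\cdot\nabla R_q)*\zeta_{\ell_q} - \bar v_q\cdot\nabla R_{q,0}\bigr]$ together with Proposition~\ref{CET_comm} for the material-derivative bounds.

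There is, however, a genuine book-keeping error precisely at the place you flag as delicate. In Proposition~\ref{CET_comm} the right-hand side involves $\|v_q\|_{M+1}$ and $\|\nabla R_q\|_{M+1} \le \|R_q\|_{M+2}$, and the inductive hypotheses \eqref{ind_est_2}, \eqref{ind_est_4} only control these up to order $L$. Your choice $M = N+L-1$ for \eqref{smoli_6} therefore demands $\|v_q\|_{N+L}$ and $\|R_q\|_{N+L+1}$, which are not available for $N\ge 1$; the same issue bites at the endpoint $N=L-1$ of \eqref{smoli_3} with your choice $M=N$, since $\|R_q\|_{L+1}$ is not controlled. The fix is to cap $M$ at $L-2$: with $M=L-2$ one has $\ell_q^{\,2-(N+L-1)+(L-2)} = \ell_q^{\,1-N}$, and inserting \eqref{ind_est_2}, \eqref{ind_est_4} gives
\[
\ell_q^{\,1-N}\bigl(\|v_q\|_{L-1}\|R_q\|_2 + \|v_q\|_1\|R_q\|_L\bigr) \lesssim \ell_q^{-N}(\ell_q\lambda_q)\,\delta_q^{1/2}\delta_{q+1}\lambda_q^{L-2\alpha},
\]
and $\ell_q\lambda_q = (\lambda_q/\lambda_{q+1})^{1/2} < 1$ absorbs the loss. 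The same choice $M=L-2$ handles the endpoint $N=L-1$ of \eqref{smoli_3}. For the mollified main term $(D_t R_q)*\zeta_{\ell_q}$ at order $N+L-1$ you should also say explicitly that $N$ derivatives land on the mollifier (as in your second block), since $\|D_t R_q\|_{N+L-1}$ is not itself controlled inductively.
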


We also collect some standard estimates on the forward and backward flows of $\bar v_q$. 
To do that, let us first introduce the backwards flow $\Psi_t : \T^2 \times \R \to \T^2$, starting at $t$, defined by
\begin{equation} \label{Flow_t}
    \begin{cases}
        \partial_s \Psi_t(x,s) + \bar v_q(x,s) \cdot \nabla \Psi_t(x,s) = 0, \\ 
        \Psi_t \big|_{s = t}(x) = x.
    \end{cases}
\end{equation}
Moreover, we introduce the forward flow $X_t: \T^2 \times \R \to \T^2$, starting at $t$, defined by 
\begin{equation} \label{Lagr_t}
    \begin{cases}
        \frac{d}{ds}X_t(\gamma, s) = \bar v_q(X_t(\gamma, s), s) \\ 
        X_t(\gamma, t) = \gamma.
    \end{cases}
\end{equation}

\begin{lem} \label{Flow_estim}
Fix any $t \in \mathbb R$. Then for any $|s - t| < \tau_q$, we have the following estimates:
    \begin{align} 
        &\|(\nabla \Psi_t)^{-1}(\cdot, s)\|_N + \|\nabla \Psi_t (\cdot, s)\|_N \lesssim \lambda_q^N, \hspace{4.4cm}\, \forall N \in\{0,1,..., L-1\}, \label{Flow_estim_1}\\ 
        &\|\bar D_t (\nabla \Psi_t)^{-1}(\cdot, s)\|_N + \|\bar D_t \nabla \Psi_t (\cdot, s)\|_N \lesssim \delta_q^{1/2} \lambda_q^{N+1}, \hspace{2.72cm} \forall N \in\{0,1,..., L-1\},  \label{Flow_estim_2} \\
         &\|D X_t(\cdot, s)\|_N  \lesssim \lambda_q^N, \hspace{7.5cm} \forall N \in \{0,1,..., L-1\},  \label{Lagr_estim_1}\\
         &\|(\nabla \Psi_t)^{-1}(\cdot, s)\|_{N+L-1} + \|\nabla \Psi_t(\cdot, s)\|_{N+L-1}  \lesssim \lambda_q^{L-1} \ell_q^{-N},\hspace{2.08cm} \forall N \geq 0, \label{Flow_estim_3}\\ 
        &\|\bar D_t (\nabla \Psi_t)^{-1}(\cdot, s)\|_{N+L-1} + \|\bar D_t \nabla \Psi_t(\cdot, s)\|_{N+L-1}  \lesssim \delta_q^{1/2} \lambda_q^L \ell_q^{-N}, \qquad \,\,\,\,\, \forall N \geq 0, \label{Flow_estim_4}\\
        &\|D X_t(\cdot, s)\|_{N+L-1} \lesssim \lambda_q^{L-1} \ell_q^{-N}, \hspace{5.87cm} \forall N \geq 0.  \label{Lagr_estim_2}
    \end{align}
Here all the implicit constants depend on the parameters $M$, $N$ and $L$.
\end{lem}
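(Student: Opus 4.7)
My plan is to derive all six estimates from the transport inequality of Proposition~\ref{transport_estim}, exploiting the crucial smallness $\|\bar v_q\|_1 \tau_q \lesssim \lambda_{q+1}^{-\alpha}\le 1$ that holds throughout any time interval of length $\tau_q$. I first reduce the statements about $\Psi_t$ to transport estimates on the periodic displacement $\phi(\cdot,s) := \Psi_t(\cdot,s)-x$, which solves
\begin{equation*}
(\partial_s + \bar v_q\cdot\nabla)\phi = -\bar v_q, \qquad \phi|_{s=t}=0.
\end{equation*}
Applying Proposition~\ref{transport_estim} with zero data gives $[\phi]_{M+1}\lesssim |s-t|\,[\bar v_q]_{M+1}\bigl(1+|s-t|[\bar v_q]_1\bigr)$, so together with the moderate-order bound \eqref{smoli_1} and the high-order bound \eqref{smoli_4} this yields \eqref{Flow_estim_1} and \eqref{Flow_estim_3} after using $\tau_q\delta_q^{1/2}\lambda_q\le 1$. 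Since $\|\nabla\Psi_t - \I\|_0\lesssim \tau_q\|\bar v_q\|_1$ is arbitrarily small for $a$ large, the determinant $\det\nabla\Psi_t$ stays uniformly bounded away from zero, so $(\nabla\Psi_t)^{-1}$ may be written via the cofactor formula and its H\"older norms are controlled by the product and quotient rules applied to the same bounds on $\nabla\Psi_t$.

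For \eqref{Flow_estim_2} and \eqref{Flow_estim_4} the key observation is that $\bar D_t \Psi_t \equiv 0$. Commuting $\nabla$ with $\bar D_t$ produces the pointwise identity $\bar D_t \nabla\Psi_t = -\nabla\Psi_t\cdot\nabla\bar v_q$ (as a matrix product), and differentiating $(\nabla\Psi_t)(\nabla\Psi_t)^{-1}=\I$ gives
\begin{equation*}
\bar D_t(\nabla\Psi_t)^{-1} = -(\nabla\Psi_t)^{-1}\,\bigl(\bar D_t\nabla\Psi_t\bigr)\,(\nabla\Psi_t)^{-1}.
\end{equation*}
The H\"older product rule, together with the bounds \eqref{Flow_estim_1} and \eqref{Flow_estim_3} just established and the velocity bounds \eqref{smoli_1} and \eqref{smoli_4}, then yields exactly the right-hand sides claimed, provided one is careful to land precisely one $\ell_q^{-N}$ factor on the top-order term at order $N+L-1$ and keep every other factor at its moderate-order scaling.

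The forward-flow estimates follow from the identity $DX_t(\gamma,s) = \bigl[(\nabla\Psi_t)^{-1}\bigr]\bigl(X_t(\gamma,s),s\bigr)$, obtained by differentiating $\Psi_t(X_t(\gamma,s),s)=\gamma$ in $\gamma$. A direct application of Proposition~\ref{transport_estim} to $X_t-\gamma$ gives $\|DX_t\|_0\lesssim 1$, and the higher-order bounds \eqref{Lagr_estim_1} and \eqref{Lagr_estim_2} then follow by feeding the already established estimates on $(\nabla\Psi_t)^{-1}$ into the composition Proposition~\ref{comp_estim}. The main technical obstacle throughout is the derivative bookkeeping at the high-order level $N+L-1$: each invocation of the product or composition rule must place exactly one factor of $\ell_q^{-N}$ on the top-order term while all other factors retain their moderate-order scaling, a standard but delicate accounting of the type carried out in \cite{vikram}.
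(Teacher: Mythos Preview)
Your proposal is correct and follows exactly the standard approach that the paper defers to \cite[Lemma~3.2]{vikram}: reduce to transport estimates on the displacement $\Psi_t-x$ via Proposition~\ref{transport_estim}, obtain the inverse gradient bounds from $\|\nabla\Psi_t-\I\|_0\ll 1$ and the cofactor/product rule, read off the material derivative from the commutator identity $\bar D_t\nabla\Psi_t=-(\nabla\bar v_q)\nabla\Psi_t$, and transfer to the forward flow through $DX_t=(\nabla\Psi_t)^{-1}\circ X_t$ together with Proposition~\ref{comp_estim}. The paper provides no independent proof here, so there is nothing further to compare.
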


\begin{proof}
For a proof of this lemma, we refer to \cite[Lemma 3.2]{vikram}.
\end{proof}

\section{Newton Iterations} 
\label{section_Newton}

In this section, we present the construction of Newton perturbations. In what follows, we first write down the Euler-Boussinesq-Reynolds system we will have obtained after $n$ perturbation steps, for $n \in \{0, 1, ..., \Gamma - 1\}$. Indeed, it has the following form 
\begin{equation} \label{steps}
    \begin{cases}
        \partial_t v_{q, n} + \div(v_{q, n} \otimes v_{q, n}) + \nabla p_{q,n} - \theta_{q, n} e_2 = \div R_{q, n} + \div S_{q, n} + \div P_{q + 1, n} , \\
        \div v_{q, n} = 0, \\
        \partial_t \theta_{q, n} + \div(v_{q, n} \theta_{q, n}) = - \div T_{q, n} + \div X_{q, n} + \div Y_{q + 1, n},
    \end{cases}
\end{equation}
where 
\begin{itemize}
    \item [(a)] $v_{q, n}$ is the velocity to be defined inductively starting from $v_{q, 0} = v_q$ by adding $n$ perturbations, and similarly $\theta_{q, n}$ is the temperature to be defined inductively starting from $\theta_{q, 0} = \theta_q$ by adding $n$ perturbations; 
    \item [(b)] $p_{q, n}$ is the pressure to be defined starting from $p_{q, 0} = p_q$; 
    \item [(c)] $R_{q, n}$, and $T_{q, n}$ are gluing errors of the $n$-th perturbation, for $n \ge 1$, while $R_{q, 0}$ is and $T_{q, 0}$ are already defined mollified stresses;
    \item [(d)] $S_{q, n}$ and $X_{q, n}$ are inductively defined errors to be erased 
    by highly-oscillatory Nash perturbations, starting from $S_{q, 0} = 0$ and $X_{q, 0} = 0$;
    \item [(e)]$ P_{q + 1, n}$ and $Y_{q + 1, n}$ are inductively defined (sufficiently) small residue errors, starting from $P_{q+1, 0} = R_q - R_{q, 0}$ and $Y_{q+1, 0} = T_q - T_{q, 0}$.
\end{itemize}
Notice that for $n = 0$, \eqref{steps} gives back the Euler-Boussinesq-Reynolds system \eqref{ER}.

We are now ready to describe the construction of Newton perturbations $w_{q+1, n+1}^{(t)}$ and $\theta_{q+1, n+1}^{(t)}$ for the system \eqref{steps}. In what follows, we first decompose the stresses $R_{q, n}$ and $T_{q,n}$ by using the geometric lemmas, Lemma~\ref{geom} and Lemma~\ref{lem:geo2}, respectively. To fix the idea, let $n \in \{0,1,...,\Gamma-1\}$, $k \in \mathbb Z$ and $\zeta \in \Lambda_T$, and $\Psi_k$ be the backward flow characterized by 
$ \partial_t \Psi_k + \bar v_q \cdot \nabla \Psi_k = 0, \,\Psi_k (t_k,x) = x$. Then we define
\begin{equation}\label{def.b.coeff}
b_{\zeta, k, n} = \lambda_q^{-\alpha/2}\delta_{q+1, n}^{1/2} \eta_k \Gamma^{1/2}_\zeta \bigg(\lambda_q^{\alpha}\delta_{q+1, n}^{-1} \nabla \Psi_k T_{q, n} \bigg),
\end{equation}
where the smooth functions $\Gamma_\zeta$ are given by Lemma \ref{lem:geo2}, and $\delta_{q+1, n}$ are defined by 
\begin{equation*}
    \delta_{q+1, n} = \delta_{q+1} \big(\lambda_q/\lambda_{q+1}\big)^{n(1/3 - \beta)}.
\end{equation*}
\noindent Let us also denote by 
\begin{equation*}
    \mathbb Z_{q,n} = \big\{ k \in \mathbb Z \mid k \tau_q \in \mathcal{N}_{\tau_q}(\supp_t R_{q,n} \cap (\bigcup_{k' \in \Z} \supp_t b_{\zeta, k', n}))\big\},
\end{equation*}
where $\mathcal N_{\tau}(B)$ represents the neighbourhood of a set $B$ of size $\tau$. Then for any $t \in \supp_t R_{q,n}$, and $t \in \bigcup_{k' \in \Z} \supp_t b_{\zeta, k', n}$, it holds that 
$\displaystyle \sum_{k \in \mathbb Z_{q,n}} \eta_k^2(t) = 1$.

\noindent Moreover, for $n \in \{0,1,...,\Gamma-1\}$, $k \in \mathbb Z$ and $\xi \in \Lambda_R$, we define
\begin{align}
\label{def.a.coeff}
a_{\xi, k, n} =  \delta_{q+1, n}^{1/2} \eta_k \gamma_\xi \bigg(\nabla \Psi_k \nabla \Psi_k^T & - \nabla \Psi_k \frac{R_{q, n}}{\delta_{q+1,n}} \nabla \Psi_k^T \\
& - \nabla \Psi_k \Big(\sum_{\zeta \in \Lambda_T} \sum_{k' \in \mathbb Z_{q,n}} \delta_{q+1, n}^{-1}  b^2_{\zeta, k', n} (\nabla \Psi_{k'} )^{-1}(\zeta \otimes \zeta) (\nabla \Psi_{k'} )^{-T}\Big) \nabla \Psi_k^T \bigg), \nonumber
\end{align}
where the smooth functions $\gamma_\xi$ are given by Lemma \ref{geom}. To obtain the above expressions for $a_{\xi, k, n}$, and $b_{\zeta, k, n}$, we have employed similar strategies as in \cite{GK}.

\noindent In view of the expression for $\{b_{\zeta, k, n}: \zeta \in \Lambda_T\}$ in \eqref{def.b.coeff}, Proposition \ref{NewIter}, and Lemma \ref{Flow_estim}, it is clear that
$$
\| \nabla \Psi_k \Big(\sum_{\zeta \in \Lambda_T} \sum_{k' \in \mathbb Z_{q,n}} \delta_{q+1, n}^{-1}  b^2_{\zeta, k', n} (\nabla \Psi_{k'} )^{-1}(\zeta \otimes \zeta) (\nabla \Psi_{k'} )^{-T}\Big) \nabla \Psi_k^T \| \lesssim \lambda_q^{-\alpha}
$$
Therefore, arguing as in \cite{vikram}, for sufficiently large $a_0$ and any $\alpha>0$, we conclude that
$$\nabla \Psi_k \nabla \Psi_k^T - \nabla \Psi_k \frac{R_{q, n}}{\delta_{q+1,n}} \nabla \Psi_k^T - \nabla \Psi_k \Big(\sum_{\zeta \in \Lambda_T} \sum_{k' \in \mathbb Z_{q,n}} \delta_{q+1, n}^{-1}  b^2_{\zeta, k', n} (\nabla \Psi_{k'} )^{-1}(\zeta \otimes \zeta) (\nabla \Psi_{k'} )^{-T}\Big) \nabla \Psi_k^T \in B_{1/2}(\I).
$$
In other words, $a_{\xi, k, n}$ is indeed well-defined for $\xi \in \Lambda_R$. Similarly, we can show that $b_{\zeta, k, n}$ is also well-defined for $\zeta \in \Lambda_T$. Indeed, Proposition \ref{NewIter} stated below will ensure that
$\|T_{q, n}\|_0 \leq \delta_{q+1, n} \lambda_q^{-\alpha}$.
Therefore, on $\supp \eta_k$, making use of Lemma \ref{Flow_estim}, we conclude that
\begin{equation*}
    \|\lambda_q^{\alpha}\delta_{q+1, n}^{-1} \nabla \Psi_k T_{q, n} \|_0 \lesssim C_0, \,\, \text{for some constant}\,\, C_0.
\end{equation*}
The above bound allows us to use the Lemma~\ref{lem:geo2} with $N_0 = C_0$.

In view of Lemma~\ref{geom}, it follows that 
\begin{align*}
\sum_{k \in \mathbb Z_{q,n}}  \sum_{\xi \in \Lambda_R} & a^2_{\xi, k, n} (\nabla \Psi_k)^{-1}  \xi \otimes \xi (\nabla \Psi_k)^{-T} 
= \sum_{k \in \mathbb Z_{q,n}} \eta_k^2  (\delta_{q+1, n} \I - R_{q,n}) \\
& \qquad - \sum_{k \in \mathbb Z_{q,n}} \eta_k^2  \sum_{\zeta \in \Lambda_T} \sum_{k' \in \mathbb Z_{q,n}}  b^2_{\zeta, k', n} (\nabla \Psi_{k'} )^{-1}(\zeta \otimes \zeta) (\nabla \Psi_{k'} )^{-T} \\
&= \sum_{k \in \mathbb Z_{q,n}} \eta_k^2  (\delta_{q+1, n} \I - R_{q,n}) - \sum_{\zeta \in \Lambda_T} \sum_{k' \in \mathbb Z_{q,n}}  b^2_{\zeta, k', n} (\nabla \Psi_{k'} )^{-1}(\zeta \otimes \zeta) (\nabla \Psi_{k'} )^{-T},
\end{align*}
therefore, we conclude
\begin{align} \label{decomp}
   & \div \Big[ \sum_{k \in \mathbb Z_{q,n}} \bigg( \sum_{\xi \in \Lambda_R} a^2_{\xi, k, n} (\nabla \Psi_k)^{-1} \xi \otimes \xi (\nabla \Psi_k)^{-T} +  \sum_{\zeta \in \Lambda_T} b^2_{\zeta, k, n} (\nabla \Psi_k)^{-1} \zeta \otimes \zeta (\nabla \Psi_k)^{-T} \bigg)\Big]  \\ \notag
    &\qquad = \div \Big(\sum_{k \in \mathbb Z_{q,n}} \eta_k^2 (\delta_{q+1, n} \I - R_{q,n}) \Big) = - \div R_{q,n}. 
\end{align}
Moreover, in view of Lemma~\ref{lem:geo2}, it follows that 
\begin{equation} 
\label{decomp1}
\div \Big[ \sum_{k \in \mathbb Z_{q,n}} \sum_{\zeta \in \Lambda_T} b^2_{\zeta, k, n}  (\nabla \Psi_k)^{-1} \zeta \Big]  = \div \Big(\sum_{k \in \mathbb Z_{q,n}} \eta_k^2 T_{q,n}\Big) =  \div T_{q,n}. 
\end{equation} 
For ease of notations, we denote
\begin{align*}
 A_{\sigma, k, n} &= a_{\xi, k, n}^2  (\nabla \Psi_k)^{-1} \xi \otimes \xi (\nabla \Psi_k)^{-T} + b_{\zeta, k, n}^2  (\nabla \Psi_k)^{-1} \zeta \otimes \zeta (\nabla \Psi_k)^{-T}, \,\, \text{for}\,\, \sigma \in \Lambda, \\
 &:= A^1_{\xi, k, n} + A^2_{\zeta, k, n}, \\
 B_{\zeta, k, n} &= b^2_{\zeta, k, n} (\nabla \Psi_k)^{-1} \zeta, \,\, \text{for}\,\, \zeta \in \Lambda_T.
\end{align*}

\subsection{Linearized Euler-Boussinesq system}

Let us denote by $w_{k, n + 1}$ and $\theta_{k, n + 1}$ to be solutions of the linearized Euler-Boussinesq equations with temporally oscillatory forcing. For the well-posedness theory of smooth solutions of these equations, consult Appendix \ref{wl-psd}. Note that $w_{k, n + 1}$ and $\theta_{k, n + 1}$ satisfy 
\begin{equation}
 \label{LocalNewt}
\begin{cases}
    \partial_t w_{k, n + 1} + \bar v_q \cdot \nabla w_{k, n + 1} + w_{k, n + 1} \cdot \nabla \bar v_q 
+ \nabla p_{k, n + 1} - \theta_{k, n + 1} e_2 \\
= \displaystyle \sum_{\xi \in\Lambda_R} f_{\xi, k, n+1}(\mu_{q + 1} t) \mathbb P \div A^1_{\xi, k, n}(x,t)
+  \displaystyle \sum_{\zeta \in\Lambda_T} m_{\zeta, k, n+1}(\mu_{q + 1} t) \mathbb P \div A^2_{\zeta, k, n}(x,t), \\
    \div w_{k, n + 1} = 0,   \\
    w_{k, n + 1} \big |_{t = t_k}(x) = \frac{1}{\mu_{q+1}} \left[\displaystyle \sum_{\xi \in\Lambda_R} f^{[1]}_{\xi, k, n+1} (\mu_{q+1} t_k) \mathbb P \div A^1_{\xi, k, n}(x, t_k) 
    +\displaystyle \sum_{\zeta \in\Lambda_T} m^{[1]}_{\zeta, k, n+1} (\mu_{q+1} t_k) \mathbb P \div A^2_{\zeta, k, n}(x, t_k)\right], 
    \end{cases}
\end{equation}
and
\begin{equation} \label{LocalNewt1}
\begin{cases}
\partial_t \theta_{k, n + 1} + \bar v_q \cdot \nabla \theta_{k, n + 1} + w_{k, n+1} \cdot \nabla \bar \theta_q= \displaystyle \sum_{\zeta \in\Lambda_T} m_{\zeta, k, n+1}(\mu_{q + 1} t) \div B_{\zeta, k, n}(x,t),  \\ 
   \theta_{k, n + 1} \big |_{t = t_k}(x) = \frac{1}{\mu_{q+1}}\sum_{\zeta \in\Lambda_T} m^{[1]}_{\zeta, k, n+1} (\mu_{q+1} t_k) \div B_{\zeta, k, n}(x, t_k). 
    \end{cases}
\end{equation}
Here the functions $f_{\xi, k, n+1}:\mathbb R \rightarrow \mathbb R$, and $m_{\zeta, k, n+1}:\mathbb R \rightarrow \mathbb R$ are defined by 
\begin{equation*}
    f_{\xi, k, n+1} := 1 - g^2_{\xi, k, n+1}, \,\, \text{and}\,\, m_{\zeta, k, n+1} := 1 - h^2_{\zeta, k, n+1}, 
\end{equation*}
where 
\begin{equation*}
    g_{\xi, k, n+1} = 
    \begin{cases}
        g_{\xi, e, n+1}, & \text{if } k \text{ is even}, \\ 
        g_{\xi, o, n+1},  & \text{if } k \text{ is odd}.
    \end{cases}
   \quad \text{and} \quad h_{\zeta, k, n+1} = 
    \begin{cases}
        h_{\zeta, e, n+1}, & \text{if } k \text{ is even}, \\ 
        h_{\zeta, o, n+1},  & \text{if } k \text{ is odd}.
    \end{cases}
\end{equation*}
Moreover, the functions $f^{[1]}_{\xi, k, n+1}$ and $m^{[1]}_{\zeta, k, n+1}$ denote the primitives of the functions $f_{\xi, k, n}$ and $m_{\zeta, k, n}$ respectively. Indeed, we have
\begin{equation*}
    f^{[1]}_{\xi, k, n+1} (t) = \int_0^t  f_{\xi, k, n+1} (s) ds, \quad \text{and}\quad m^{[1]}_{\zeta, k, n+1} (t) = \int_0^t  m_{\zeta, k, n+1} (s) ds.
\end{equation*}
Notice that, thanks to the unit $L^2$ norm of $g_{\xi, k, n+1}$ and $h_{\zeta, k, n+1}$, we conclude that $f^{[1]}_{\xi, k, n+1}$ and $m^{[1]}_{\zeta, k, n+1}$ are well-defined $1$-periodic functions.

\subsection{Newton perturbations} 

In view of the above, we can now define $(n+1)$-th Newton perturbations in terms of the velocity field $w_{k, n+1}$ and temperature $\theta_{k, n+1}$ as follows:
\begin{equation*}
w^{(t)}_{q + 1 , n + 1} (x, t) = \sum_{k \in \mathbb Z_{q,n}} \tilde \eta_k (t) w_{k, n + 1}(x, t), \quad \text{and} \quad
\theta^{(t)}_{q + 1 , n + 1} (x, t) = \sum_{k \in \mathbb Z_{q,n}} \tilde \eta_k (t) \theta_{k, n + 1}(x, t)\,.
\end{equation*}
Moreover, we also define
\begin{equation*}
    p^{(t)}_{q + 1 , n + 1} (x, t) = \sum_{k \in \mathbb Z_{q,n}} \tilde \eta_k (t) p_{k, n + 1}(x, t)\,.
\end{equation*}

To compute the new error terms $R_{q,n+1}$, $S_{q,n+1}$, and $P_{q+1,n+1}$ at the level $n+1$, we just need to plug in
the velocity field $v_{q,n+1} = v_{q, n} + w_{q+1, n+1}^{(t)}$ into equation \eqref{steps}. To that context, first note that $w_{q+1, n+1}^{(t)}$ satisfies
\begin{align*}
 \partial_t w^{(t)}_{q+1,n+1} &+ \bar v_q \cdot \nabla w^{(t)}_{q+1,n+1} + w^{(t)}_{q+1,n+1} \cdot \nabla \bar v_q + \nabla p^{(t)}_{q + 1 , n + 1} - \theta^{(t)}_{q+1,n+1} e_2
= \sum_{k \in \mathbb Z_{q,n}} \partial_t \tilde \eta_k w_{k, n+1} \\
   & + \sum_{k \in \mathbb Z_{q,n}} \sum_{\xi \in\Lambda_R} \tilde \eta_k (t) f_{\xi, k,n+1}(\mu_{q + 1} t) \mathbb P \div A^1_{\xi, k, n} 
   + \sum_{k \in \mathbb Z_{q,n}} \sum_{\zeta \in\Lambda_T} \tilde \eta_k (t) m_{\zeta, k,n+1}(\mu_{q + 1} t) \mathbb P \div A^2_{\zeta, k, n}.
\end{align*}
Thanks to the fact that $\tilde \eta_k A_{\sigma, k, n} = A_{\sigma, k, n}$ for all $k \in \mathbb Z$, relation \eqref{decomp} reveals that
\begin{align*}
 &   \sum_{k \in \mathbb Z_{q,n}} \sum_{\xi \in\Lambda_R} \tilde \eta_k (t) f_{\xi, k,n+1}(\mu_{q + 1} t) \mathbb P \div A^1_{\xi, k, n} 
   + \sum_{k \in \mathbb Z_{q,n}} \sum_{\zeta \in\Lambda_T} \tilde \eta_k (t) m_{\zeta, k,n+1}(\mu_{q + 1} t) \mathbb P \div A^2_{\zeta, k, n} \\
  &=\sum_{k \in \mathbb Z_{q,n}} \sum_{\sigma \in\Lambda} \mathbb P \div A_{\sigma, k, n} -  \sum_{k \in \mathbb Z_{q,n}} \sum_{\xi \in\Lambda_R} g_{\xi, k, n+1}^2  \mathbb P \div A^1_{\xi, k, n} 
  - \sum_{k \in \mathbb Z_{q,n}} \sum_{\zeta \in\Lambda_T}  h_{\zeta, k, n+1}^2  \mathbb P \div A^2_{\zeta, k, n} \\ 
    & =  - \mathbb P \div R_{q,n} -  \sum_{k \in \mathbb Z_{q,n}} \sum_{\xi \in\Lambda_R} g_{\xi, k, n+1}^2  \mathbb P \div A^1_{\xi, k, n} 
  - \sum_{k \in \mathbb Z_{q,n}} \sum_{\zeta \in\Lambda_T}  h_{\zeta, k, n+1}^2  \mathbb P \div A^2_{\zeta, k, n}.
\end{align*}
Keeping the above in mind, we conclude that the first equation of the system \eqref{steps}, at the level $(n+1)$, is satisfied with 
\begin{align}
    u_{q, n + 1} &= u_{q, n} + w_{q+1, n + 1}^{(t)} = v_q + \sum_{m =1}^{n+1} w_{q+1, m}^{(t)}, \label{Newvelo} \\
    p_{q, n + 1} &=  p_{q, n} + p_{q+1, n + 1}^{(t)} - \langle w_{q+1, n+1}^{(t)}, \sum_{m=1}^n w_{q+1, m}^{(t)} \rangle - \frac{1}{2} |w_{q+1, n+1}^{(t)}|^2 - \langle w_{q+1, n+1}^{(t)}, v_q - \bar v_q \rangle \label{Newpres} \\
    &- \Delta^{-1} \div \Big[ \div R_{q,n} + \sum_{k \in \mathbb Z_{q,n}} \sum_{\xi \in\Lambda_R} g_{\xi, k, n+1}^2  A^1_{\xi, k, n} + \sum_{k \in \mathbb Z_{q,n}} \sum_{\zeta \in\Lambda_T}  h_{\zeta, k, n+1}^2  A^2_{\zeta, k, n} \Big] \nonumber  \\
    R_{q, n + 1} &= \mathcal{R} \sum_{k \in \mathbb Z_{q,n}} \partial_t \tilde \eta_k w_{k, n+1}, \label{NewStr} \\
     S_{q, n + 1} &= S_{q, n} -  \sum_{k \in \mathbb Z_{q,n}} \sum_{\xi \in\Lambda_R} g_{\xi, k, n+1}^2  A^1_{\xi, k, n}
  - \sum_{k \in \mathbb Z_{q,n}} \sum_{\zeta \in\Lambda_T}  h_{\zeta, k, n+1}^2  A^2_{\zeta, k, n}, \label{NewNash} \\
    P_{q + 1, n + 1} &= P_{q + 1, n} + w_{q+1,n+1}^{(t)} \mathring \otimes w_{q+1, n+1}^{(t)} + \sum_{m = 1}^{n} \big (w_{q+1, n+1}^{(t)} \mathring \otimes  w_{q+1, m}^{(t)} +  w_{q+1, m}^{(t)} \mathring \otimes w_{q+1, n+1}^{(t)} \big ) \notag \\ 
    & \qquad \qquad + (v_q - \bar v_q) \mathring \otimes w_{q+1, n+1}^{(t)} + w_{q+1, n+1}^{(t)} \mathring \otimes (v_q - \bar v_q). \label{SmalStr}
\end{align}
Since $w_{k,n+1}$ has mean zero, the new stress term $R_{q,n+1}$ defined in \eqref{NewStr} is well-defined.  

Similarly, we plug in $\theta_{q,n+1} = \theta_{q, n} + \theta_{q+1, n+1}^{(t)}$ into third equation of \eqref{steps} to compute the new error terms $T_{q,n+1}$, $X_{q,n+1}$, and $Y_{q+1,n+1}$. Notice that $\theta_{q+1, n+1}^{(t)}$ satisfies
\begin{align*}
    \partial_t \theta^{(t)}_{q+1,n+1} &+ \bar v_q \cdot \nabla \theta^{(t)}_{q+1,n+1} 
    + w^{(t)}_{q+1,n+1} \cdot \nabla \bar \theta_q \\
   & \qquad = \sum_{k \in \mathbb Z_{q,n}} \sum_{\zeta \in\Lambda_T} \tilde \eta_k (t) m_{\zeta, k,n+1}(\mu_{q + 1} t) \div B_{\zeta, k, n}
 + \sum_{k \in \mathbb Z_{q,n}} \partial_t \tilde \eta_k \theta_{k, n+1}.
\end{align*}
As before, we conclude that the third equation of the system \eqref{steps}, at the level $(n+1)^{\text{th}}$, is satisfied with 
\begin{align} 
\theta_{q, n + 1} &= \theta_{q, n} + \theta_{q+1, n + 1}^{(t)} = \theta_q + \sum_{m =1}^{n+1} \theta_{q+1, m}^{(t)}, \label{Newvelo1}\\
T_{q, n + 1} &= \mathcal{R} \sum_{k \in \mathbb Z_{q,n}} \partial_t \tilde \eta_k \theta_{k, n+1}, \label{NewStr1} \\
X_{q, n + 1} &= X_{q, n} - \sum_{k \in \mathbb Z_{q,n}} \sum_{\zeta \in \Lambda_T}   
h^2_{\zeta, k, n+1} B_{\zeta, k, n}, \label{NewNash1} \\
Y_{q + 1, n + 1} &=  Y_{q + 1, n} + w_{q+1,n+1}^{(t)}  \theta_{q+1, n+1}^{(t)} + \sum_{m = 1}^{n} \big (w_{q+1, n+1}^{(t)} \theta_{q+1, m}^{(t)} +  w_{q+1, m}^{(t)} \theta_{q+1, n+1}^{(t)} \big ) \notag \\ 
    & \qquad \qquad + (v_q - \bar v_q) \, \theta_{q+1, n+1}^{(t)} + w_{q+1, n+1}^{(t)} \,(\theta_q - \bar \theta_q). \label{SmalStr1}
\end{align}
Again, since $\theta_{k,n+1}$ has mean zero, the new stress $T_{q,n+1}$ in \eqref{NewStr1} is well-defined. We are now in a position to state and prove the main inductive proposition related to the Newton perturbations.

\begin{prop} \label{NewIter}
Assume that the Reynolds stresses $R_{q, n}$ and $T_{q, n}$ satisfy 
\begin{align} 
&\|R_{q, n}\|_{N}, \, \|T_{q, n}\|_{N} \leq \delta_{q+1, n} \lambda_q^{N - \alpha}, \hspace{4.5cm}\forall N \in \{0, 1,..., L-1\}, \label{NewIter_1} \\
&\|\bar D_t R_{q, n}\|_N, \, \|\bar D_t T_{q, n}\|_N \leq \delta_{q+1,n} \tau_q^{-1} \lambda_q^{N - \alpha}, \hspace{3.15cm} \forall N \in \{0, 1,..., L-1\}, \label{NewIter_2} \\
&\|R_{q, n}\|_{N+L-1}, \, \|T_{q, n}\|_{N+L-1} \lesssim \delta_{q+1, n} \lambda_q^{L-1 -\alpha} \ell_q^{-N}, \hspace{2.05 cm}\, \forall N \geq 0, \label{NewIter_3} \\
&\|\bar D_t R_{q,n}\|_{N + L -1}, \, \|\bar D_t T_{q,n}\|_{N + L -1} \lesssim \delta_{q+1, n} \tau_q^{-1} \lambda_q^{L-1 - \alpha} \ell_{q}^{-N}, \qquad \forall N \geq 0. \label{NewIter_4}
\end{align}
Here all the implicit constants depend on $n$, $\Gamma$, $M$, $\alpha$ and $N$. Moreover, assume that 
\begin{eqnarray} \label{NewIter_5}
    \supp_t R_{q,n}, \,\, \supp_t T_{q,n} 
    &\subset& [-2 + (\de_q^{1/2}\la_q)^{-1} - 3n\tau_q, -1 -(\de_q^{1/2}\la_q)^{-1} + 3n\tau_q] \\ 
    && \cup [1 + (\de_q^{1/2}\la_q)^{-1} - 3n \tau_q, 2 - (\de_q^{1/2}\la_q)^{-1} + 3n \tau_q], \nonumber
\end{eqnarray}
Then the Reynolds stresses $R_{q, n+1}, T_{q, n+1}$ also satisfy \eqref{NewIter_1}-\eqref{NewIter_5} with $n$ replaced by $n+1$. 
\end{prop}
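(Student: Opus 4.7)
The plan is to reduce \eqref{NewIter_1}--\eqref{NewIter_5} to three subtasks: (i) amplitude bounds on the coefficients $a_{\xi,k,n}$, $b_{\zeta,k,n}$ (and hence on $A_{\sigma,k,n}$, $B_{\zeta,k,n}$); (ii) size control of the linearized solutions $w_{k,n+1}$, $\theta_{k,n+1}$ that exploits the zero-mean temporal forcing; and (iii) application of the inverse divergence $\mathcal{R}$ together with the cut-off derivative $\partial_t\tilde\eta_k$.

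For step (i), combining Proposition \ref{comp_estim} with the flow bounds of Lemma \ref{Flow_estim} and the inductive hypotheses \eqref{NewIter_1}--\eqref{NewIter_4}, one obtains on $\supp\eta_k$ the bounds
\begin{align*}
\|a_{\xi,k,n}\|_N + \|b_{\zeta,k,n}\|_N &\lesssim \delta_{q+1,n}^{1/2}\lambda_q^N, \\
\|\bar D_t a_{\xi,k,n}\|_N + \|\bar D_t b_{\zeta,k,n}\|_N &\lesssim \delta_{q+1,n}^{1/2}\tau_q^{-1}\lambda_q^N,
\end{align*}
together with analogous high-frequency $\ell_q^{-N'}$ estimates for $N'\ge 0$. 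The normalising factors inside $\Gamma_\zeta$ and $\gamma_\xi$ are calibrated so their arguments lie in the domains prescribed by Lemmas \ref{geom} and \ref{lem:geo2}; these bounds then propagate to $A_{\sigma,k,n}$ and $B_{\zeta,k,n}$ via the Leibniz rule and Lemma \ref{Flow_estim}.

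For step (ii), the crucial observation is that $f_{\xi,k,n+1}=1-g_{\xi,k,n+1}^2$ and $m_{\zeta,k,n+1}=1-h_{\zeta,k,n+1}^2$ have zero temporal mean by Lemma \ref{osc_prof}, so their primitives $f^{[1]}_{\xi,k,n+1}$, $m^{[1]}_{\zeta,k,n+1}$ are bounded and $1$-periodic. The initial data in \eqref{LocalNewt}--\eqref{LocalNewt1} is chosen precisely so that one integration by parts in time converts the forcing into
\begin{align*}
w_{k,n+1}(x,t) = \tfrac{1}{\mu_{q+1}}\sum_{\xi\in\Lambda_R} f^{[1]}_{\xi,k,n+1}(\mu_{q+1}t)\mathbb{P}\div A^1_{\xi,k,n}(x,t) + \tfrac{1}{\mu_{q+1}}\sum_{\zeta\in\Lambda_T} m^{[1]}_{\zeta,k,n+1}(\mu_{q+1}t)\mathbb{P}\div A^2_{\zeta,k,n}(x,t) + \mathcal{E}_w,
\end{align*}
and analogously for $\theta_{k,n+1}$. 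The well-posedness of Appendix \ref{wl-psd}, the smallness $\|\bar v_q\|_1\tau_q\lesssim\lambda_{q+1}^{-\alpha}$, and a Gr\"onwall argument on $|t-t_k|\le\tau_q$ then control $\mathcal{E}_w$---which collects terms produced by $\bar D_t A^i_{\sigma,k,n}$, the stretching $w_{k,n+1}\cdot\nabla\bar v_q$, and the coupling $\theta_{k,n+1}e_2$---at the same order as the principal $\mu_{q+1}^{-1}$ term.

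For step (iii), using $\|\partial_t\tilde\eta_k\|_0\lesssim\tau_q^{-1}$ together with the $\lambda_q^{-1}$ gain from $\mathcal{R}$ via Proposition \ref{prop.inv.div} and the Calder\'on--Zygmund boundedness, one arrives at $\|R_{q,n+1}\|_N+\|T_{q,n+1}\|_N\lesssim(\mu_{q+1}\tau_q\lambda_q)^{-1}\delta_{q+1,n}\lambda_q^{N+1-\alpha}$. Substituting $\tau_q=(\delta_q^{1/2}\lambda_q\lambda_{q+1}^\alpha)^{-1}$ and $\mu_{q+1}=\delta_{q+1}^{1/2}\lambda_q^{2/3}\lambda_{q+1}^{1/3+4\alpha}$ reveals that the prefactor matches $\delta_{q+1,n+1}/\delta_{q+1,n}=(\lambda_q/\lambda_{q+1})^{1/3-\beta}$ with an extra $\lambda_{q+1}^{-3\alpha}$ margin, giving \eqref{NewIter_1} for $\alpha$ small and $a$ large. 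The material derivative bounds \eqref{NewIter_2}, \eqref{NewIter_4} follow by commuting $\bar D_t$ past $\mathcal{R}$ via Proposition \ref{CZ_comm} and invoking Proposition \ref{transport_estim}; the higher derivative bounds \eqref{NewIter_3} follow by the interpolation inequality. Finally, \eqref{NewIter_5} holds since $\supp\partial_t\tilde\eta_k\subset(t_k-\tau_q,t_k+\tau_q)$ and membership in $\mathbb{Z}_{q,n}$ enlarges the support by at most $3\tau_q$ per Newton step. The principal obstacle lies in step (ii): a naive Duhamel estimate on the coupling term $\theta_{k,n+1}e_2$ would forfeit the time-oscillation gain for the velocity equation, so \eqref{LocalNewt}--\eqref{LocalNewt1} must be analysed as a coupled system and the $\mu_{q+1}^{-1}$ gain propagated through the velocity-temperature coupling---either by iterated integration-by-parts using higher primitives of $f,m$ or by a careful coupled Gr\"onwall bound. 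This coupled estimate constitutes the essential new difficulty relative to the decoupled analysis in \cite{vikram}.
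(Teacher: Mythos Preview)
Your three-step outline matches the paper's architecture, and steps (i) and (ii) are essentially correct---including your identification of the coupled Gr\"onwall for the $(w_{k,n+1},\theta_{k,n+1})$ system as the crux. The gap is in step (iii): the ``$\lambda_q^{-1}$ gain from $\mathcal{R}$ via Proposition~\ref{prop.inv.div}'' is not available, because $w_{k,n+1}$ and $\theta_{k,n+1}$ are not of the stationary-phase form $a(x)e^{ik\cdot\Psi}$. They are solutions of transport-type equations with no explicit oscillatory phase, so the stationary-phase lemma does not apply, and there is no general Calder\'on--Zygmund statement that lets $\mathcal{R}$ gain a full $\lambda_q^{-1}$ on an arbitrary $C^\alpha$ function.

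The paper's resolution is to work from the outset at the level of the \emph{stream function} $\psi_{k,n+1}$ (with $w_{k,n+1}=\nabla^\perp\psi_{k,n+1}$) and the \emph{anti-divergence} $\varphi_{k,n+1}=\mathcal{R}\theta_{k,n+1}$. One derives a coupled system \eqref{psi_eqn}--\eqref{psi_eqn1} for $(\psi_{k,n+1},\varphi_{k,n+1})$ in which the forcing terms involve the order-zero operators $\Delta^{-1}\nabla^\perp\cdot\div$ applied to $A_{\sigma,k,n}$ (respectively $(\I-\nabla^\perp(\nabla^\perp)^{-1})$ applied to $B_{\zeta,k,n}$), so no derivative is lost. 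Your three-part decomposition and the coupled Gr\"onwall are then carried out on $\mathcal{S}_{k,n+1}=(\psi_{k,n+1},\varphi_{k,n+1})$ jointly (Lemma~\ref{psi_estim}), yielding $\|\mathcal{S}_{k,n+1}\|_{N+\alpha}\lesssim\mu_{q+1}^{-1}\delta_{q+1,n}\lambda_q^N\ell_q^{-\alpha}$. Since $R_{q,n+1}=\mathcal{R}\nabla^\perp\sum_k\partial_t\tilde\eta_k\,\psi_{k,n+1}$ and $T_{q,n+1}=\sum_k\partial_t\tilde\eta_k\,\varphi_{k,n+1}$, and $\mathcal{R}\nabla^\perp$ is an order-zero Calder\'on--Zygmund operator, the final bounds follow directly: the ``$\lambda_q^{-1}$ gain'' is realized by estimating one antiderivative of $(w,\theta)$ from the start, not by applying $\mathcal{R}$ afterwards. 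If you instead estimate $w_{k,n+1}$ and then apply $\mathcal{R}$, you cannot recover this gain on the error term $\mathcal{E}_w$, since $\mathcal{E}_w$ has no divergence or stationary-phase structure to exploit.

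Two minor points: the paper obtains \eqref{NewIter_3} not by interpolation but by the same direct argument using the $\ell_q^{-N}$-tail bounds in Lemma~\ref{psi_estim}; and for \eqref{NewIter_2} it uses, crucially, that on $\supp\partial_t\tilde\eta_k$ the amplitudes $A_{\sigma,k,n},B_{\zeta,k,n}$ vanish, which sharpens the $\bar D_t\mathcal{S}_{k,n+1}$ bound by a factor $(\mu_{q+1}\tau_q)^{-1}$ there (see \eqref{psi_estim_5}--\eqref{psi_estim_6}).
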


\begin{rem}
We remark that the claim of \eqref{NewIter_5} is immediate, in view of the definitions of $\tilde \eta_k$, $\mathbb Z_{q,n}$ and the fact that the union $\bigcup_{k'} \supp_t b_{\xi, k', n}$ is a $\tau_q$ neighbourhood of the error $T_{q,n}$. 
Indeed, we have 
\begin{equation*}
    \supp_t R_{q, n+1}, \,\,  \supp_t T_{q, n+1} \subset \overline{\mathcal{N}_{3\tau_q} (\supp_t R_{q, n} \cap \supp_t T_{q, n})}.
\end{equation*}
\end{rem}

\subsection{Proof of the Proposition~\ref{NewIter}}

In order to furnish the proof of the Proposition~\ref{NewIter}, we first need to obtain below mentioned estimates for $a_{\xi, k, n}$, $b_{\zeta, k, n}$, $A_{\sigma, k, n}$, and $B_{\zeta, k, n}$. In what follows, we begin with the following lemma. 
\begin{lem} 
\label{a_estim}
The following estimates, thanks to the assumptions of Proposition \ref{NewIter}, hold: 
\begin{align} 
&\|a_{\xi, k, n}\|_N, \, \|b_{\zeta, k, n}\|_N  \lesssim \delta_{q+1, n}^{1/2} \lambda_q^N, \hspace{4.4cm} \,\forall N \in \{0,1,..., L-1\}, \label{a_estim_1} \\
&\|\bar D_t a_{\xi, k, n}\|_N, \, \|\bar D_t b_{\zeta, k, n}\|_N  \lesssim \delta_{q+1, n}^{1/2} \tau_q^{-1} \lambda_q^N, \hspace{3.0 cm} \, \forall N \in \{0,1,..., L-1\}, \label{a_estim_2} \\
&\|a_{\xi, k, n}\|_{N+L-1}, \,  \|b_{\zeta, k, n}\|_{N+L-1} \lesssim \delta_{q+1, n}^{1/2} \lambda_q^{L-1} \ell_q^{-N}, \hspace{2.0cm} \forall N \geq 0, \label{a_estim_3} \\
&\|\bar D_t a_{\xi, k, n}\|_{N+L-1}, \, \|\bar D_t b_{\zeta, k, n}\|_{N+L-1} \lesssim \delta_{q+1, n}^{1/2} \lambda_q^{L- 1}  \tau_q^{-1} \ell_q^{-N}, \quad \,\,\,\, \forall N \geq 0, \label{a_estim_4}
\end{align}
where all the implicit constants depend on $n$, $\Gamma$, $M$, $\alpha$, and $N$.
\end{lem}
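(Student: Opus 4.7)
The plan is to prove the four pairs of estimates in the order $b_{\zeta,k,n}\to a_{\xi,k,n}\to (A_{\sigma,k,n},B_{\zeta,k,n})$, since the definition \eqref{def.a.coeff} of $a_{\xi,k,n}$ involves $b_{\zeta,k',n}$, while $A_{\sigma,k,n}$ and $B_{\zeta,k,n}$ are products of $a,b,(\nabla\Psi_k)^{-1}$ and constants whose bounds will follow from Leibniz. The single workhorse is the composition estimate Proposition~\ref{comp_estim}, applied either to $\Gamma_\zeta^{1/2}$ (smooth on $\mathcal{W}_{N_0}$ by Lemma~\ref{lem:geo2}, since $\Gamma_\zeta$ is affine and takes values in $[N_0,\infty)$) or to $\gamma_\xi$ (smooth on $B_{1/2}(\I)$ by Lemma~\ref{geom}), combined with the flow estimates of Lemma~\ref{Flow_estim} and the stress hypotheses \eqref{NewIter_1}--\eqref{NewIter_4}.

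For $b_{\zeta,k,n}=\lambda_q^{-\alpha/2}\delta_{q+1,n}^{1/2}\eta_k\,\Gamma_\zeta^{1/2}(F)$ with $F=\lambda_q^{\alpha}\delta_{q+1,n}^{-1}\nabla\Psi_k\,T_{q,n}$, the $C^0$ bound $\|F\|_0\lesssim 1$ is immediate from \eqref{NewIter_1} and \eqref{Flow_estim_1}, making the composition legitimate. The standard product estimate gives $\|\nabla\Psi_k\,T_{q,n}\|_N\lesssim\delta_{q+1,n}\lambda_q^{N-\alpha}$, hence $\|F\|_N\lesssim\lambda_q^N$ for $N\le L-1$; Proposition~\ref{comp_estim} then yields \eqref{a_estim_1}, and the high-regularity estimate \eqref{a_estim_3} follows identically from \eqref{NewIter_3} and \eqref{Flow_estim_3}. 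For the material-derivative bounds I would use the chain rule $\bar D_t(\Gamma_\zeta^{1/2}(F))=(\Gamma_\zeta^{1/2})'(F)\,\bar D_t F$: each constituent of $\bar D_t F$, namely $\bar D_t\nabla\Psi_k$ and $\bar D_t T_{q,n}$, contributes one $\tau_q^{-1}$ factor via \eqref{Flow_estim_2} and \eqref{NewIter_2} (using $\delta_q^{1/2}\lambda_q\le\tau_q^{-1}$), while the term where $\bar D_t$ falls on $\eta_k$ is bounded directly by $|\partial_t\eta_k|\lesssim\tau_q^{-1}$.

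For $a_{\xi,k,n}=\delta_{q+1,n}^{1/2}\eta_k\,\gamma_\xi(G)$, the argument $G$ decomposes as a triple product in $\nabla\Psi_k$, a contribution from $R_{q,n}/\delta_{q+1,n}$, and a contribution built from the just-controlled $b_{\zeta,k',n}^2$. The containment $G\in B_{1/2}(\I)$ that justifies invoking $\gamma_\xi$ has already been verified in the text preceding the lemma. Using the step-one bound $\|b_{\zeta,k',n}^2\|_N\lesssim\delta_{q+1,n}\lambda_q^{N-\alpha}$ together with \eqref{NewIter_1}, \eqref{Flow_estim_1}, Leibniz yields $\|G\|_N\lesssim\lambda_q^N$ and $\|\bar D_t G\|_N\lesssim\tau_q^{-1}\lambda_q^N$; Proposition~\ref{comp_estim} then gives \eqref{a_estim_1}--\eqref{a_estim_4} for $a_{\xi,k,n}$ after multiplying by the prefactor $\delta_{q+1,n}^{1/2}\eta_k$. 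The bounds for $A_{\sigma,k,n}$ and $B_{\zeta,k,n}$ follow by a final application of Leibniz from the now-established estimates on $a,b,\nabla\Psi_k$, and $(\nabla\Psi_k)^{-1}$, the latter bounded by Lemma~\ref{Flow_estim}.

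The main technical point is the high-regularity estimates at level $N+L-1$: naive iteration of the composition estimate could accumulate several $\ell_q^{-N}$ losses because derivatives can fall on multiple factors inside $F$ or $G$. To keep the loss linear I would apply the second (sharper) form of Proposition~\ref{comp_estim}, which isolates the top derivative on a single factor and pays only $[u]_1^M$ on the others, then invoke the interpolation inequality from Section~\ref{pre} to redistribute intermediate derivatives. A secondary subtlety, already noted before the statement, is that the $C^0$ smallness of the triple-sum piece of $G$ must be uniform in $k,\xi,\zeta$ and over all $n\le\Gamma-1$; this is the reason the constant $a_0$ in Proposition~\ref{Main_prop} must be chosen sufficiently large once $\Gamma$ is fixed.
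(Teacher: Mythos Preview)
Your proposal is correct and follows essentially the same route as the paper: first bound $b_{\zeta,k,n}$ via Proposition~\ref{comp_estim} applied to $\Gamma_\zeta^{1/2}$ together with Lemma~\ref{Flow_estim} and the hypotheses \eqref{NewIter_1}--\eqref{NewIter_4}, then feed the resulting bound on $b_{\zeta,k',n}^2$ into the argument $G$ of $\gamma_\xi$ to handle $a_{\xi,k,n}$, and for the material derivatives use the chain rule with $|\partial_t\eta_k|\lesssim\tau_q^{-1}$, \eqref{Flow_estim_2}, and \eqref{NewIter_2}. The only superfluous piece is your final paragraph on $A_{\sigma,k,n}$ and $B_{\zeta,k,n}$, which in the paper is deferred to the separate Corollary~\ref{a_cor}.
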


\begin{proof}

First we deal with terms related to $b_{\zeta, k, n}$. In fact, in view of the estimates of Lemma \ref{Flow_estim} and 
$\supp b_{\zeta, k, n} \subset \supp \eta_k \times \mathbb T^2$, we have (on $\supp \eta_k$)
    \begin{equation*}
        \left\| \lambda_q^{\alpha} \delta_{q+1,n}^{-1} \nabla \Psi_k \,T_{q,n} \right\|_0 \lesssim 1.
    \end{equation*}
Making use of Proposition \ref{comp_estim}, on $\supp b_{\zeta, k, n}$, we get
    \begin{eqnarray*}
        \|b_{\zeta, k, n}\|_N & \lesssim & \lambda_q^{-\alpha/2} \delta_{q+1, n}^{1/2} \|\ \lambda_q^{\alpha} \delta_{q+1,n}^{-1} \nabla \Psi_k \,T_{q,n}\|_N \\ 
        & \lesssim & \lambda_q^{-\alpha/2} \delta_{q + 1, n}^{1/2} \lambda_q^{\alpha} \delta_{q+1,n}^{-1} \big ( \|\nabla \Psi_k\|_N \| T_{q, n}\|_0 + \| \nabla \Psi_k\|_0 \| T_{q,n}\|_N \big), 
    \end{eqnarray*}
    for any $N > 0$. Therefore, Lemma~\ref{Flow_estim} and the assumptions of proposition \ref{NewIter} confirms the estimates related to $b_{\zeta, k, n}$ in \eqref{a_estim_1} and \eqref{a_estim_3}. Note that, same arguments also conclude that $b_{\zeta, k, n}^2$ satisfies the same estimates as $b_{\zeta, k, n}$ with $\delta_{q, n+1}^{1/2}$ replaced by $\delta_{q, n+1}$. To deal with the term $a_{\xi, k, n}$, let us first introduce the following short-hand notation:
    \begin{align*}
    \mathcal{E}_{\zeta, k, k', n}:= \nabla \Psi_k \Big(\sum_{\zeta \in \Lambda_T} \sum_{k' \in \mathbb Z_{q,n}} \delta_{q+1, n}^{-1}  b^2_{\zeta, k', n} (\nabla \Psi_{k'} )^{-1}(\zeta \otimes \zeta) (\nabla \Psi_{k'} )^{-T}\Big) \nabla \Psi_k^T.
    \end{align*}
Then, on $\supp \eta_k$, we have
    \begin{align*}
        \Big\|\nabla \Psi_k \nabla \Psi_k^T - \nabla \Psi_k \frac{R_{q,n}}{\delta_{q+1,n}} \nabla \Psi_k^T\Big\|_0 &\lesssim 1, \quad \|\mathcal{E}_{\xi, k, k', n}\|_0 \lesssim 1.
    \end{align*}
Therefore, again, making use of the Proposition~\ref{comp_estim}, we obtain
    \begin{align*}
        \|a_{\xi, k, n}\|_N & \lesssim \delta_{q+1, n}^{1/2} \Big(\|\nabla \Psi_k \nabla \Psi_k^T - \nabla \Psi_k \frac{R_{q,n}}{\delta_{q+1,n}} \nabla \Psi_k^T \|_N + \|\mathcal{E}_{\zeta, k, k', n}\|_N \Big)\\
        & \qquad \lesssim \delta_{q + 1, n}^{1/2} \Big ( \|\nabla \Psi_k\|_N +  \| \nabla \Psi_k\|_N \|\frac{R_{q,n}}{\delta_{q+1,n}}\|_0 + \| \nabla \Psi_k\|_0 \|\frac{R_{q,n}}{\delta_{q+1,n}}\|_N \\
        &  \qquad \quad+ \|\nabla \Psi_k\|_N \Big\|\sum_{\zeta \in \Lambda_T} \sum_{k' \in \mathbb Z_{q,n}} \delta_{q+1, n}^{-1}  b^2_{\zeta, k', n} (\nabla \Psi_{k'} )^{-1}(\zeta \otimes \zeta) (\nabla \Psi_{k'} )^{-T} \Big\|_0 \\
        & \qquad \qquad+ \delta_{q+1, n}^{-1}  \|\nabla \Psi_k\|^2_0 \| b^2_{\zeta, k', n}\|_N + \delta_{q+1, n}^{-1} \| \nabla \Psi_k\|_0 \| \nabla \Psi_k\|_N \| b^2_{\zeta, k', n}\|_{0} \Big), 
    \end{align*}
    for any $N > 0$. Once again, Lemma \ref{Flow_estim} and the assumptions of Proposition \ref{NewIter} gives the rest of the required estimates in \eqref{a_estim_1} and \eqref{a_estim_3}. 

For the material derivative estimates, we first note that
    \begin{eqnarray*}
      \|\bar D_t b_{\zeta, k, n}\|_N &\lesssim & \lambda_q^{-\alpha/2} \delta_{q+1, n}^{1/2}|\partial_t \eta_k| \|\Gamma^{1/2}_\zeta \big(\lambda_q^{\alpha}\delta_{q+1, n}^{-1} \nabla \Psi_k T_{q, n} \big) \|_N \\
         &+&  \lambda_q^{-\alpha/2} \delta_{q+1,n}^{1/2} \|D\Gamma^{1/2}_\zeta \big(\lambda_q^{\alpha}\delta_{q+1, n}^{-1} \nabla \Psi_k T_{q, n} \big)\|_N \|\bar D_t \big(\lambda_q^{\alpha}\delta_{q+1, n}^{-1} \nabla \Psi_k T_{q, n} \big) \|_0 \\
        & + &\lambda_q^{-\alpha/2}  \delta_{q+1,n}^{1/2} \|D\Gamma^{1/2}_\zeta \big(\lambda_q^{\alpha}\delta_{q+1, n}^{-1} \nabla \Psi_k T_{q, n} \big)\|_0 \|\bar D_t \big(\lambda_q^{\alpha}\delta_{q+1, n}^{-1} \nabla \Psi_k T_{q, n} \big) \|_N.
    \end{eqnarray*}
Moreover,
\begin{eqnarray*}
      \|\bar D_t b^2_{\zeta, k, n}\|_N &\lesssim & \lambda_q^{-\alpha} \delta_{q+1, n}|\partial_t \eta^2_k| \|\Gamma_\zeta \big(\lambda_q^{\alpha}\delta_{q+1, n}^{-1} \nabla \Psi_k T_{q, n} \big) \|_N \\
         &+&  \lambda_q^{-\alpha} \delta_{q+1,n} \|D\Gamma_\zeta \big(\lambda_q^{\alpha}\delta_{q+1, n}^{-1} \nabla \Psi_k T_{q, n} \big)\|_N \|\bar D_t \big(\lambda_q^{\alpha}\delta_{q+1, n}^{-1} \nabla \Psi_k T_{q, n} \big) \|_0 \\
        & + &\lambda_q^{-\alpha}  \delta_{q+1,n} \|D\Gamma_\zeta \big(\lambda_q^{\alpha}\delta_{q+1, n}^{-1} \nabla \Psi_k T_{q, n} \big)\|_0 \|\bar D_t \big(\lambda_q^{\alpha}\delta_{q+1, n}^{-1} \nabla \Psi_k T_{q, n} \big) \|_N.
\end{eqnarray*}
Furthermore,
    \begin{align*}
&\|\bar D_t a_{\xi, k, n}\|_N \lesssim \delta_{q+1, n}^{1/2}|\partial_t \eta_k| \|\gamma_\xi \big(\nabla \Psi_k \nabla \Psi_k^T - \nabla \Psi_k \frac{R_{q,n}}{\delta_{q+1,n}} \nabla \Psi_k^T - \mathcal{E}_{\zeta, k, k', n} \|_N \\
&+  \delta_{q+1,n}^{1/2} \|D\gamma_\xi \big( \nabla \Psi_k \nabla \Psi_k^T - \nabla \Psi_k \frac{R_{q,n}}{\delta_{q+1,n}} \nabla \Psi_k^T - \mathcal{E}_{\zeta, k, k', n} \big)\|_N 
\|\bar D_t \big(\nabla \Psi_k \nabla \Psi_k^T - \nabla \Psi_k \frac{R_{q,n}}{\delta_{q+1,n}} \nabla \Psi_k^T  - \mathcal{E}_{\zeta, k, k', n}  \big)\|_0 \\
& +  \delta_{q+1,n}^{1/2} \|D\gamma_\xi \big( \nabla \Psi_k \nabla \Psi_k^T - \nabla \Psi_k \frac{R_{q,n}}{\delta_{q+1,n}} \nabla \Psi_k^T - \mathcal{E}_{\zeta, k, k', n} \big)\|_0 
 \|\bar D_t \big(\nabla \Psi_k \nabla \Psi_k^T - \nabla \Psi_k \frac{R_{q,n}}{\delta_{q+1,n}} \nabla \Psi_k^T - \mathcal{E}_{\zeta, k, k', n}  \big)\|_N.
    \end{align*}
In the above expressions, all the terms not involving the material derivatives can be handled as before, thanks to Proposition \ref{comp_estim}. For the rest of the terms, we calculate
    \begin{align*}
        \|\bar D_t \big( \nabla \Psi_k \frac{R_{q,n}}{\delta_{q+1,n}} \nabla \Psi_k^T  \big) \|_N & \lesssim  \|\bar D_t \frac{R_{q,n}}{\delta_{q+1,n}}\|_N  + \|\bar D_t \frac{R_{q,n}}{\delta_{q+1,n}}\|_0 \|\nabla \Psi_k\|_N + \| \frac{R_{q,n}}{\delta_{q+1,n}}\|_N \|\bar D_t \nabla \Psi_k\|_0  \\ 
         & \qquad + \| \frac{R_{q,n}}{\delta_{q+1,n}}\|_0 \|\bar D_t \nabla \Psi_k\|_N + \| \frac{R_{q,n}}{\delta_{q+1,n}}\|_0 \|\bar D_t \nabla \Psi_k\|_0 \|\nabla \Psi_k\|_N, \\
         \|\bar D_t \mathcal{E}_{\zeta, k, k', n} \|_N & \lesssim \delta_{q+1,n}^{-1} \Big(\|\bar D_t c^2_{\zeta, k, n} \|_N  + \|\bar D_t c^2_{\zeta, k, n} \|_0 \|\nabla \Psi_k\|_N + \| c^2_{\zeta, k, n}\|_N \|\bar D_t \nabla \Psi_k\|_0\\ 
        & \qquad + \|c^2_{\zeta, k, n}\|_0 \|\bar D_t \nabla \Psi_k\|_N + \| c^2_{\zeta, k, n}\|_0 \|\bar D_t \nabla \Psi_k\|_0 \|\nabla \Psi_k\|_N \Big), \\
        \|\bar D_t \big( \nabla \Psi_k \nabla \Psi_k^T \big)\|_N &\lesssim \|\bar D_t \nabla \Psi_k\|_N + \|\bar D_t \nabla \Psi_k\|_0 \|\nabla \Psi_k\|_N. 
    \end{align*}
Therefore, Lemmas \ref{smoli_estim} and \ref{Flow_estim}, together with the assumptions of Proposition \ref{NewIter}, confirm the estimates \eqref{a_estim_2} and \eqref{a_estim_4}.
\end{proof}

\begin{cor} \label{a_cor}
The following estimates, thanks to the assumptions of Proposition \ref{NewIter}, hold:
\begin{align*}
&\|A_{\sigma, k, n}\|_N, \, \|B_{\zeta, k, n}\|_N \lesssim \delta_{q+1, n} \lambda_q^N, \hspace{4.2cm}\,  \forall N \in \{0,1,..., L-1\}, \\
&\|\bar D_t A_{\sigma, k, n} \|_N, \,  \|\bar D_t B_{\zeta, k, n} \|_N  \lesssim \delta_{q+1, n} \tau_q^{-1} \lambda_q^N, \hspace{2.9cm}  \forall N \in \{0,1,..., L-1\},\\
&\|A_{\sigma, k, n}\|_{N+L-1}, \,  \|B_{\zeta, k, n}\|_{N+L-1} \lesssim \delta_{q+1, n} \lambda_q^{L-1} \ell_q^{-N},  \hspace{1.7cm}\,\, \,\forall N \geq 0,\\
&\|\bar D_t A_{\sigma, k, n} \|_{N+ L - 1}, \, \|\bar D_t B_{\zeta, k, n} \|_{N+ L - 1} \lesssim \delta_{q+1, n} \lambda_q^{L-1} \tau_q^{-1} \ell_q^{-N}, \quad \,\, \forall N \geq 0.
\end{align*}
Here all the implicit bounds are dependent on $n$, $\Gamma$, $M$, $\alpha$, and $N$.
\end{cor}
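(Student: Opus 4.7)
The plan is to deduce Corollary~\ref{a_cor} directly from Lemma~\ref{a_estim} together with the flow estimates of Lemma~\ref{Flow_estim}, using only the standard Hölder product rule and the material derivative's Leibniz rule. Since
\[
A_{\sigma,k,n} = a_{\xi,k,n}^2\,(\nabla\Psi_k)^{-1}\,\xi\otimes\xi\,(\nabla\Psi_k)^{-T} + b_{\zeta,k,n}^2\,(\nabla\Psi_k)^{-1}\,\zeta\otimes\zeta\,(\nabla\Psi_k)^{-T},
\]
and $B_{\zeta,k,n} = b_{\zeta,k,n}^2 (\nabla\Psi_k)^{-1}\zeta$, every quantity to be controlled is a product of scalar coefficients $a_{\xi,k,n}^2$ or $b_{\zeta,k,n}^2$ with factors built from $(\nabla\Psi_k)^{-1}$ (the constant vectors $\xi,\zeta\in\Z^2$ contribute only multiplicative constants).

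First, I would upgrade Lemma~\ref{a_estim} to $a_{\xi,k,n}^2$ and $b_{\zeta,k,n}^2$. As already remarked in the proof of Lemma~\ref{a_estim}, applying the product rule $\|fg\|_{N}\lesssim \|f\|_N\|g\|_0 + \|f\|_0\|g\|_N$ with $f=g=a_{\xi,k,n}$ (resp.\ $b_{\zeta,k,n}$) immediately yields the same bounds as in \eqref{a_estim_1}--\eqref{a_estim_4} but with $\delta_{q+1,n}^{1/2}$ replaced by $\delta_{q+1,n}$, since the $\|\cdot\|_0$-bound on $a_{\xi,k,n}$ and $b_{\zeta,k,n}$ is $\lesssim\delta_{q+1,n}^{1/2}$ (which absorbs an extra factor without any $\lambda_q$ or $\ell_q^{-1}$ loss). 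The same reasoning, using $\bar D_t(fg) = (\bar D_t f)g + f(\bar D_t g)$, lifts the material derivative estimates to $a^2$ and $b^2$.

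Next, I would combine these with the bounds on $(\nabla\Psi_k)^{-1}$ from Lemma~\ref{Flow_estim}. For the spatial Hölder bounds, apply the product rule again: for $N\in\{0,\dots,L-1\}$ we get
\[
\|A_{\sigma,k,n}\|_N \lesssim \|a_{\xi,k,n}^2\|_N \|(\nabla\Psi_k)^{-1}\|_0^2 + \|a_{\xi,k,n}^2\|_0\bigl(\|(\nabla\Psi_k)^{-1}\|_N\|(\nabla\Psi_k)^{-1}\|_0\bigr) + (\text{analogous } b\text{-terms}),
\]
and by \eqref{Flow_estim_1} each $\|(\nabla\Psi_k)^{-1}\|_N\lesssim \lambda_q^N$ with $\|(\nabla\Psi_k)^{-1}\|_0\lesssim 1$, so the right-hand side is $\lesssim \delta_{q+1,n}\lambda_q^N$, as required. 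The top-regularity estimate in the range $N+L-1$ proceeds identically, now invoking \eqref{Flow_estim_3} to trade one $\lambda_q$ for $\ell_q^{-1}$. The same computation with one factor removed gives the bounds on $B_{\zeta,k,n}$.

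For the material derivative bounds, I apply $\bar D_t$ via the Leibniz rule, obtaining three types of contributions: $(\bar D_t a^2)(\nabla\Psi_k)^{-1}\otimes(\nabla\Psi_k)^{-T}$, $a^2\,(\bar D_t(\nabla\Psi_k)^{-1})\otimes(\nabla\Psi_k)^{-T}$, and its symmetric counterpart (plus the $b$-versions). Each factor is then estimated in $\|\cdot\|_N$ by Lemma~\ref{a_estim} (upgraded to $a^2,b^2$) and Lemma~\ref{Flow_estim}; the key numerology is that both $\|\bar D_t a^2\|_0\lesssim\delta_{q+1,n}\tau_q^{-1}$ and $\|\bar D_t(\nabla\Psi_k)^{-1}\|_0\lesssim \delta_q^{1/2}\lambda_q$, and the worst of the two is controlled by $\delta_{q+1,n}\tau_q^{-1}$ since $\delta_q^{1/2}\lambda_q\lesssim \tau_q^{-1}\lambda_q^{\alpha}\lesssim \tau_q^{-1}$ up to the same small $\lambda_q^\alpha$ loss that is absorbed by the $\lambda_q^{-\alpha}$ present in the definition of $b_{\zeta,k,n}$. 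The very same scheme, with one factor of $(\nabla\Psi_k)^{-1}$ removed, produces the bounds on $\bar D_t B_{\zeta,k,n}$.

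No genuine obstacle is expected: the result is a mechanical consequence of Lemma~\ref{a_estim} and Lemma~\ref{Flow_estim} via the Hölder product rule and the material-derivative Leibniz rule. The only thing to watch carefully is that the $\lambda_q^{-\alpha}$ factor built into $b_{\zeta,k,n}$ is what makes the material derivative estimate for $\bar D_t(\nabla\Psi_k)^{-1}$ (which costs an extra $\delta_q^{1/2}\lambda_q$) compatible with the target bound $\delta_{q+1,n}\tau_q^{-1}\lambda_q^N$; this is already encoded in our choice $\tau_q=(\delta_q^{1/2}\lambda_q\lambda_{q+1}^\alpha)^{-1}$.
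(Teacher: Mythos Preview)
Your proposal is correct and follows essentially the same route as the paper: upgrade the estimates of Lemma~\ref{a_estim} from $a_{\xi,k,n},b_{\zeta,k,n}$ to their squares via the product rule, then combine with the flow bounds of Lemma~\ref{Flow_estim} through the H\"older product rule and the Leibniz rule for $\bar D_t$. One small remark: the comparison $\delta_q^{1/2}\lambda_q\le\tau_q^{-1}$ holds directly from $\tau_q^{-1}=\delta_q^{1/2}\lambda_q\lambda_{q+1}^{\alpha}$, so no $\lambda_q^{-\alpha}$ absorption is needed there.
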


\begin{proof}
Observe that the same reasoning applied in the proof of Lemma \ref{a_estim} can be used to deduce that $a_{\xi, k, n}^2$
satisfies analogous estimates to those of $a_{\xi, k, n}$, with $\delta_{q, n+1}^{1/2}$ replaced by $\delta_{q, n+1}$ in all instances. Then we can estimate
    \begin{equation*}
        \|A_{\sigma, k, n}\|_{N} \lesssim \|a^2_{\xi, k, n}\|_N  \|(\nabla \Psi_k)^{-1}\|_0^2 + \|a^2_{\xi, k, n}\|_0  \|(\nabla \Psi_k)^{-1}\|_N \|(\nabla \Psi_k)^{-1}\|_0,
    \end{equation*}
    and 
    \begin{align*}
        \|\bar D_t A_{\sigma, k, n}\|_{N} & \lesssim \|\bar D_t a^2_{\xi, k, n}\|_N \|(\nabla \Psi_k)^{-1}\|_0^2 + \|\bar D_t a^2_{\xi, k, n}\|_0 \|(\nabla \Psi_k)^{-1}\|_N  \|(\nabla \Psi_k)^{-1}\|_0  \\ 
        & \quad + \|a^2_{\xi, k, n}\|_N \|\bar D_t (\nabla \Psi_k)^{-1}\|_0 \|(\nabla \Psi_k)^{-1}\|_0 + \|a^2_{\xi, k, n}\|_0 \|\bar D_t (\nabla \Psi_k)^{-1}\|_N \|(\nabla \Psi_k)^{-1}\|_0 \\ 
        &\qquad + \|a^2_{\xi, k, n}\|_0 \|\bar D_t (\nabla \Psi_k)^{-1}\|_0 \|(\nabla \Psi_k)^{-1}\|_N.
    \end{align*}
    Similarly, we have
    \begin{equation*}
        \|B_{\zeta, k, n}\|_{N} \lesssim \|b^2_{\zeta, k, n}\|_N  \|(\nabla \Psi_k)^{-1}\|_0 + \|b^2_{\zeta, k, n}\|_0  \|(\nabla \Psi_k)^{-1}\|_N,
    \end{equation*}
    and 
    \begin{align*}
        \|\bar D_t B_{\zeta, k, n}\|_{N} &\lesssim \|\bar D_t b^2_{\zeta, k, n}\|_N \|(\nabla \Psi_k)^{-1}\|_0 + \|\bar D_t b^2_{\zeta, k, n}\|_0 \|(\nabla \Psi_k)^{-1}\|_N  \\ 
        &\qquad \qquad + \|b^2_{\zeta, k, n}\|_N \|\bar D_t (\nabla \Psi_k)^{-1}\|_0 + \|b^2_{\zeta, k, n}\|_0 \|\bar D_t (\nabla \Psi_k)^{-1}\|_N.
    \end{align*}
In view of Lemma~\ref{Flow_estim}, we conclude the proof of the corollary.
\end{proof}

To obtain the required estimates in Proposition~\ref{NewIter}, we also need a technical lemma concerning the Newton perturbations. To state it precisely, let $\psi_{k, n+1}$ and $\varphi_{k, n+1}$ represent the stream functions associated with $w_{k, n+1}$ and $\theta_{k, n+1}$ respectively. More precisely, we define $w_{k, n+1} = \nabla^{\perp} \psi_{k, n+1}$, and 
$\varphi_{k, n+1} = \mathcal{R} \theta_{k, n+1}$
Then, we have 
\begin{equation*}
    R_{q, n+1} = \mathcal{R} \nabla^{\perp} \sum_{k \in \mathbb Z_{q,n}} \partial_t \tilde \eta_k \psi_{k, n+1}, \,\, \text{and} \,\, T_{q, n+1} =  \sum_{k \in \mathbb Z_{q,n}} \partial_t \tilde \eta_k \varphi_{k, n+1}.
\end{equation*}
Keeping in mind that $\mathcal{R} \nabla^{\perp}$ is of Calder\'on-Zygmund type, we need to obtain estimates for the stream functions in order to prove the Proposition~\ref{NewIter}. To that context, notice that $\psi_{k, n+1}$ and $\varphi_{k, n+1}$ satisfy 
\begin{equation} \label{psi_eqn}
    \begin{cases}
        \partial_t \psi_{k, n + 1} + \bar v_q \cdot \nabla \psi_{k, n + 1} - 2 \Delta^{-1} \nabla^\perp \cdot \div (\psi_{k, n+1} \nabla^\perp \bar v_q) 
    - \Delta^{-1} \nabla^\perp \cdot \div(\varphi_{k, n + 1}) e_2 \\
    = \displaystyle \sum_{\xi \in \Lambda_R} f_{\xi, k, n}(\mu_{q+1} t) \Delta^{-1} \nabla^\perp \cdot \div A^1_{\xi, k, n} + \sum_{\zeta \in \Lambda_T} m_{\zeta, k, n}(\mu_{q+1} t) \Delta^{-1} \nabla^\perp \cdot \div A^2_{\zeta, k, n} \\ 
        \psi_{k, n+1} \big|_{t = t_k} \\
        = \frac{1}{\mu_{q+1}} \left[ \displaystyle \sum_{\xi \in \Lambda_R} f_{\xi, k, n}^{[1]}(\mu_{q+1} t_k) \Delta^{-1} \nabla^\perp \cdot \div A^1_{\xi, k, n} +
        \sum_{\zeta \in \Lambda_T} m_{\zeta, k, n}^{[1]}(\mu_{q+1} t_k) \Delta^{-1} \nabla^\perp \cdot \div A^2_{\zeta, k, n} \right]\Big|_{t = t_k},
    \end{cases}
\end{equation}
and
\begin{equation} \label{psi_eqn1}
    \begin{cases}
        \partial_t \varphi_{k, n + 1} + \bar v_q \cdot \nabla \varphi_{k, n + 1} - \varphi_{k, n+1} \cdot \nabla \bar v_q 
        -\psi_{k, n+1} \nabla^\perp \bar \theta_q + \nabla^\perp q_{k,n}= \displaystyle \sum_{\zeta \in \Lambda_T} m_{\zeta, k, n}(\mu_{q+1} t) B_{\zeta, k, n} \\ 
        \nabla^\perp \cdot \varphi_{k, n + 1} =0, \\
        \varphi_{k, n+1} \big|_{t = t_k} = \frac{1}{\mu_{q+1}} \displaystyle \sum_{\zeta \in \Lambda_T} m_{\zeta, k, n}^{[1]}(\mu_{q+1} t_k) (\text{Id} - \nabla^{\perp} (\nabla^{\perp})^{-1}) B_{\zeta, k, n}\big|_{t = t_k},
    \end{cases}
\end{equation}
where the scalar function $q_{k,n}$ and the curl-free vector filed $\varphi_{k, n+1}$ are unknowns in \eqref{psi_eqn1}. Notice that $(\nabla^{\perp})^{-1} = \Delta^{-1} \nabla^\perp \cdot$, and $\nabla^\perp q_{k,n}$ can be expressed as a Fourier multiplier:
\begin{align}
&\nabla^\perp q_{k,n} = \nabla^\perp  \Delta^{-1} \nabla^\perp \cdot (\sum_{\zeta \in \Lambda_T} m_{\zeta, k, n}(\mu_{q+1} t) B_{\zeta, k, n} 
- \bar v_q \cdot \nabla \varphi_{k, n + 1} + \varphi_{k, n+1} \cdot \nabla \bar v_q 
        +\psi_{k, n+1} \nabla^\perp \bar \theta_q ) \nonumber \\
        &\,\,= \nabla^\perp  \Delta^{-1} \nabla^\perp \cdot (\sum_{\zeta \in \Lambda_T} m_{\zeta, k, n}(\mu_{q+1} t) B_{\zeta, k, n} + \varphi_{k, n+1} \cdot \nabla \bar v_q +\psi_{k, n+1} \nabla^\perp \bar \theta_q ) -  \Delta^{-1} \nabla^\perp \div(\varphi_{k, n + 1}  (\nabla^\perp \bar v_q)^T) \nonumber \\
        &\,\,:=  \nabla^{\perp} (\nabla^{\perp})^{-1} \sum_{\zeta \in \Lambda_T} m_{\zeta, k, n}(\mu_{q+1} t) B_{\zeta, k, n}
        + q_{1,k,n}. \label{NEW}
\end{align}
We are now in a position to state the technical lemma. In what follows, we first define a vector function $\mathcal{S}_{k, n+1} := (\psi_{k, n+1}, \varphi_{k, n+1})$, and define $\| \mathcal{S}_{k, n+1}\|_m:= \| \psi_{k, n+1}\|_m + \| \varphi_{k, n+1}\|_m$ for any $m\ge 0$.

\begin{lem} \label{psi_estim}
The following estimates, thanks to the assumptions of Proposition \ref{NewIter}, hold on the $\supp \tilde \eta_k$:
\begin{align} 
&\|\mathcal{S}_{k, n+1}\|_{N+\alpha} \lesssim \frac{\delta_{q+1, n} \lambda_q^{N} \ell_q^{-\alpha}}{\mu_{q+1}}, \hspace{2.1cm} \forall N \in \{0,1,..., L-1\}, \label{psi_estim_1}\\ 
&\|\bar D_t \mathcal{S}_{k, n+1}\|_{N+\alpha} \lesssim  \delta_{q+1, n} \lambda_q^N \ell_q^{-\alpha}, \hspace{1.8cm} \forall N \in \{0, 1,..., L-1\}, \label{psi_estim_2} \\
&\|\mathcal{S}_{k, n+1}\|_{N + L - 1 + \alpha} \lesssim \frac{\delta_{q+1, n} \lambda_q^{L-1} \ell_q^{-N-\alpha}}{\mu_{q+1}},  \quad \,\,\, \forall N \geq 0,  \label{psi_estim_3} \\
&\|\bar D_t \mathcal{S}_{k, n+1}\|_{N+ L - 1 + \alpha} \lesssim \delta_{q+1, n}  \lambda_q^{L-1} \ell_q^{-N-\alpha}, \,\,\,\, \forall N \geq 0. \label{psi_estim_4}
\end{align}
Moreover, on the $\supp \partial_t \tilde \eta_k$, we have 
\begin{align} 
&\|\bar D_t \mathcal{S}_{k, n+1}\|_{N+\alpha} \lesssim \frac{\delta_{q+1, n} \lambda_q^N \ell_q^{-\alpha}}{\mu_{q+1}\tau_q}, \hspace{2cm} \forall N \in \{0, 1,..., L-1\}, \label{psi_estim_5} \\
&\|\bar D_t \mathcal{S}_{k, n+1}\|_{N+L -1 + \alpha} \lesssim \frac{\delta_{q+1, n}  \lambda_q^{L-1} \ell_q^{-N-\alpha}}{\mu_{q+1} \tau_q}, \quad \,\,\forall N \geq 0.  \label{psi_estim_6}
\end{align}
Here all the implicit constants depend on $n$, $\Gamma$, $M$, $\alpha$, and $N$.
\end{lem}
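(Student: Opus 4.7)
The proof of Lemma~\ref{psi_estim} is a simultaneous Duhamel analysis of the coupled system \eqref{psi_eqn}--\eqref{psi_eqn1} in Lagrangian coordinates, combined with the integration-by-parts-in-time trick that extracts the $\mu_{q+1}^{-1}$ gain from the oscillatory forcing. First I would introduce Lagrangian variables along the forward flow $X_k$ of $\bar v_q$ starting at $t=t_k$ (see Lemma~\ref{Flow_estim}): setting $\widetilde\psi(\gamma,t)=\psi_{k,n+1}(X_k(\gamma,t),t)$ and $\widetilde\varphi(\gamma,t)=\varphi_{k,n+1}(X_k(\gamma,t),t)$ removes the transport term $\bar v_q\cdot\nabla$ and turns \eqref{psi_eqn}--\eqref{psi_eqn1} into an ODE-in-$t$ for $(\widetilde\psi,\widetilde\varphi)$, driven by the composed oscillatory forcings $f_{\xi,k,n+1}(\mu_{q+1}s)\,\Delta^{-1}\nabla^\perp\!\cdot\!\div A^1_{\xi,k,n}\circ X_k$ (and its $m$-analogues for $A^2$ and $B$), and by zero-order coupling operators applied to $\mathcal S_{k,n+1}$ (namely the pulled-back versions of $\Delta^{-1}\nabla^\perp\cdot\div(\psi\nabla^\perp\bar v_q)$, $\Delta^{-1}\nabla^\perp\cdot\div(\varphi)e_2$, $\varphi\cdot\nabla\bar v_q$, $\psi\nabla^\perp\bar\theta_q$, together with the Leray correction $q_{1,k,n}$ from \eqref{NEW}). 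Duhamel then produces an explicit integral representation against the composed oscillatory factors.

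Second, I would integrate by parts in $s$ in the Duhamel integral using the primitives $f^{[1]}_{\xi,k,n+1}$ and $m^{[1]}_{\zeta,k,n+1}$. By the precise choice of initial data in \eqref{LocalNewt}--\eqref{LocalNewt1}, the boundary contribution at $s=t_k$ cancels exactly, and what remains is $\mu_{q+1}^{-1}$ times an oscillatory primitive multiplied either by the forcing itself at time $t$ or by its material derivative $\bar D_t A^1,\bar D_t A^2,\bar D_t B$ composed along $X_k$. This produces the desired $\mu_{q+1}^{-1}$ prefactor, while the composition with $X_k$ and the Leibniz rule cost factors controlled by Corollary~\ref{a_cor} and Lemma~\ref{Flow_estim}. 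The zero-order coupling between $\psi_{k,n+1}$ and $\varphi_{k,n+1}$ is treated as a perturbation: on any interval of length $\tau_q$ around $t_k$ one has $\tau_q\|\bar v_q\|_{1+\alpha}\lec 1$ by the choice of $\tau_q$, so a Gronwall-type absorption on the Picard iteration underlying the well-posedness result of Appendix~\ref{wl-psd} closes the estimates for $\mathcal S_{k,n+1}$ as a coupled pair and lets these lower-order contributions be subsumed into $\delta_{q+1,n}\lambda_q^N\ell_q^{-\alpha}/\mu_{q+1}$.

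For the H\"older-norm bookkeeping that yields \eqref{psi_estim_1} and \eqref{psi_estim_3}, I would apply Calder\'on--Zygmund continuity on $C^\alpha$ to the operators $\Delta^{-1}\nabla^\perp\cdot\div$ and $\mathcal R\nabla^\perp$ that appear throughout, use the commutator estimate of Proposition~\ref{CZ_comm} whenever spatial derivatives must be commuted past them, and invoke Proposition~\ref{transport_estim} together with Proposition~\ref{comp_estim} to convert Lagrangian estimates back into Eulerian ones. The split between the ranges $N\le L-1$ and $N\ge 0$ mirrors Lemma~\ref{smoli_estim}: in the former one uses the sharp bounds on $\bar v_q,\bar\theta_q$ and on $A^1,A^2,B$ without mollification loss, while in the latter one pays $\ell_q^{-N}$ factors inherited from \eqref{Flow_estim_3}--\eqref{Flow_estim_4} and \eqref{a_estim_3}--\eqref{a_estim_4}. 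Estimates \eqref{psi_estim_2} and \eqref{psi_estim_4} for the material derivative follow by reading \eqref{psi_eqn}--\eqref{psi_eqn1} as identities for $\bar D_t\mathcal S_{k,n+1}$: one loses the $\mu_{q+1}^{-1}$ gain coming from the oscillatory forcing, but every other factor scales in the same way.

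Finally, the sharper bounds \eqref{psi_estim_5}--\eqref{psi_estim_6} on $\supp\partial_t\tilde\eta_k$ rely on the structural fact that $a_{\xi,k,n}$ and $b_{\zeta,k,n}$ carry an explicit factor of $\eta_k$ by \eqref{def.a.coeff}--\eqref{def.b.coeff}, so $A^1_{\xi,k,n}$, $A^2_{\zeta,k,n}$ and $B_{\zeta,k,n}$ vanish outside $\supp\eta_k$; by construction of $\tilde\eta_k$ this includes $\supp\partial_t\tilde\eta_k$. There the forcing is identically zero and $\bar D_t\mathcal S_{k,n+1}$ is controlled only by the zero-order coupling operators applied to $\mathcal S_{k,n+1}$, which together with \eqref{psi_estim_1} and $\|\bar v_q\|_{1+\alpha}\lec\delta_q^{1/2}\lambda_q^{1+\alpha}$ yields the claimed bound with the additional $\tau_q^{-1}$ factor, since $\delta_q^{1/2}\lambda_q=\tau_q^{-1}\lambda_{q+1}^{-\alpha}$. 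I expect the main obstacle to be propagating the $\mu_{q+1}^{-1}$ gain across the $\psi\leftrightarrow\varphi$ coupling, which was absent in \cite{vikram}: because $\varphi_{k,n+1}$ enters the equation for $\psi_{k,n+1}$ through a zero-order operator and vice versa, one must estimate both components simultaneously in the Duhamel iteration and carefully avoid losing an extra factor of $\mu_{q+1}^{-1}$ at each bootstrap step, a loss that is prevented precisely by the smallness $\tau_q\|\bar v_q\|_{1+\alpha}\lec 1$ built into the choice of $\tau_q$.
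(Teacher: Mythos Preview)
Your proposal is correct and identifies all the essential ingredients: the integration-by-parts in time against the primitives $f^{[1]},m^{[1]}$ to extract $\mu_{q+1}^{-1}$, the cancellation of the boundary term at $t=t_k$ by the choice of initial data, the Gr\"onwall absorption of the $\psi\leftrightarrow\varphi$ coupling over an interval of length $\tau_q$, the Calder\'on--Zygmund and commutator machinery for the H\"older bookkeeping, and the vanishing of $A,B$ on $\supp\partial_t\tilde\eta_k$ for the improved material-derivative bounds.

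The paper, however, packages the argument slightly differently. Rather than passing to Lagrangian coordinates and integrating by parts inside a Duhamel integral, it performs the same manipulation at the Eulerian level by writing an explicit three-part decomposition $\psi_{k,n+1}=\tilde\psi_1+\tilde\Xi_1+\Xi_1$ (and analogously $\varphi_{k,n+1}=\tilde\psi_2+\tilde\Xi_2+\Xi_2$), where $\Xi_j=\mu_{q+1}^{-1}f^{[1]}\,\mathcal T A$ is the explicit oscillatory ansatz, $\tilde\Xi_j$ solves a transport equation with forcing $-\mu_{q+1}^{-1}f^{[1]}\,\bar D_t\mathcal T A$, and $\tilde\psi_j$ carries the zero-order coupling. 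Each piece is then estimated directly via Proposition~\ref{transport_estim}, and the coupled Gr\"onwall closes on $\|\psi\|_{N+\alpha}+\|\varphi\|_{N+\alpha}$. This avoids composing and decomposing with $X_k$ and the attendant use of Proposition~\ref{comp_estim} to translate norms back, so it is somewhat cleaner; your Lagrangian route would work just as well but requires more care in tracking the flow composition at each step.
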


\begin{proof}
We first decompose $\psi_{k, n+1}$ into three parts as follows:
    \begin{equation} \label{psidecomp}
        \psi_{k, n+1}  = \tilde \psi_1 + \tilde \Xi_1 + \Xi_1,
    \end{equation}
    where $\tilde \psi_1$ solves 
    \begin{equation*}
        \begin{cases}
            \bar D_t \tilde \psi_1 =  2 \Delta^{-1} \nabla^\perp \cdot \div (\psi_{k, n+1} \nabla^\perp \bar v_q) + \Delta^{-1} \nabla^\perp \cdot \div (\varphi_{k, n + 1}) e_2\\
            \tilde \psi_1 \big|_{t = t_k} = 0,
        \end{cases}
    \end{equation*}
    and $\tilde \Xi_1$ solves the equation 
    \begin{equation*}
    \begin{cases}
        \bar D_t \tilde \Xi_1 = - \frac{1}{\mu_{q+1}} \left[ \displaystyle \sum_{\xi \in \Lambda_R} f^{[1]}_{\xi, k, n}(\mu_{q+1} \cdot ) \bar D_t \Delta^{-1} \nabla^\perp \cdot \div A^1_{\xi, k, n}
        + \sum_{\zeta \in \Lambda_T} m^{[1]}_{\zeta, k, n}(\mu_{q+1} \cdot ) \bar D_t \Delta^{-1} \nabla^\perp \cdot \div A^2_{\zeta, k, n} \right] \\ 
        \tilde \Xi_1 \big |_{t = t_k} = 0,
        \end{cases}
    \end{equation*}
    and 
    \begin{equation*}
        \Xi_1 = \frac{1}{\mu_{q+1}} \left[ \sum_{\xi \in \Lambda_R} f_{\xi, k, n}^{[1]}(\mu_{q+1} \cdot) \Delta^{-1} \nabla^\perp \cdot \div A^1_{\xi, k, n} + \sum_{\zeta \in \Lambda_T} m_{\zeta, k, n}^{[1]}(\mu_{q+1} \cdot) \Delta^{-1} \nabla^\perp \cdot \div A^2_{\zeta, k, n} \right]. 
    \end{equation*}
It is easy to verify the above decomposition \eqref{psidecomp}, thanks to the uniqueness of solutions for transport equations. 
Similarly, we can decompose $\varphi_{k, n+1}$ into the following three parts:
    \begin{equation} \label{psidecomp1}
        \varphi_{k, n+1}  = \tilde \psi_2 + \tilde \Xi_2 + \Xi_2,
    \end{equation}
where $\tilde \psi_2$ solves 
    \begin{equation*}
        \begin{cases}
            \bar D_t \tilde \psi_2 =  \varphi_{k, n+1} \cdot \nabla^\perp \bar v_q 
            + \psi_{k, n+1} \nabla^\perp \bar \theta_q + q_{1,k,n}\\
            \tilde \psi_2 \big|_{t = t_k} = 0,
        \end{cases}
    \end{equation*}
$\tilde \Xi_2$ is the solution to 
    \begin{equation*}
    \begin{cases}
        \bar D_t \tilde \Xi_2 = - \frac{1}{\mu_{q+1}} \displaystyle \sum_{\zeta \in \Lambda_T} m^{[1]}_{\zeta, k, n}(\mu_{q+1} \cdot ) \bar D_t  (\text{Id} - \nabla^{\perp} (\nabla^{\perp})^{-1}) B_{\zeta, k, n} \\ 
        \tilde \Xi_2 \big |_{t = t_k} = 0,
        \end{cases}
    \end{equation*}
and 
    \begin{equation*}
        \Xi_2 = \frac{1}{\mu_{q+1}} \sum_{\zeta \in \Lambda_T} m_{\zeta, k, n}^{[1]}(\mu_{q+1} \cdot) (\text{Id} - \nabla^{\perp} (\nabla^{\perp})^{-1}) B_{\zeta, k, n}. 
    \end{equation*}

\noindent \textbf{Estimates for $\tilde \psi_1$ when $N=0$:} 
Making use of the fact that $\Delta^{-1} \nabla^\perp \div$ is a sum of operators of Calder\'on-Zygmund type, we have 
    \begin{equation*}
        \|\bar D_t \tilde \psi_1\|_\alpha \lesssim \|\psi \nabla^\perp \bar v_q\|_\alpha + \|\varphi \|_\alpha \lesssim \|\psi\|_\alpha \|\bar v_q\|_{1 + \alpha} + \|\varphi\|_\alpha,
    \end{equation*}
then, it follows that 
        \begin{equation*}
        \|\tilde \psi_1(\cdot, t)\|_\alpha \lesssim \tau_q^{-1} \int_{t_k}^t \left( \|\psi_{k, n+1}(\cdot, s) \|_\alpha +  \|\varphi_{k, n+1}(\cdot, s) \|_\alpha \right) \,ds.
    \end{equation*}
    
\noindent \textbf{Estimates for $\tilde \psi_2$ when $N=0$:} 
As before, notice that
    \begin{equation*}
        \|\bar D_t \tilde \psi_2\|_\alpha \lesssim \|\psi \nabla^\perp \bar \theta_q + \varphi \nabla^\perp \bar v_q + q_1\|_\alpha  \lesssim \|\psi\|_\alpha \|\bar \theta_q\|_{1 + \alpha} +\|\varphi\|_\alpha \|\bar v_q\|_{1 + \alpha},
    \end{equation*}
and, hence, we have
        \begin{equation*}
        \|\tilde \psi_2(\cdot, t)\|_\alpha \lesssim \tau_q^{-1} \int_{t_k}^t \left( \|\psi_{k, n+1}(\cdot, s) \|_\alpha +  \|\varphi_{k, n+1}(\cdot, s) \|_\alpha \right) \,ds.
    \end{equation*}

\noindent \textbf{Estimates for $\tilde \Xi_1$ when $N = 0$:} 
We can use similar arguments to conclude
    \begin{eqnarray*}
        \|\bar D_t \tilde \Xi_1\|_{\alpha} &\lesssim & \frac{1}{\mu_{q+1}}\sup_\sigma \|\bar D_t A_{\sigma, k, n}\|_{\alpha} + \frac{1}{\mu_{q+1}}\sup_\sigma \|[\bar v_q \cdot \nabla, \Delta^{-1}\nabla^\perp \div] A_{\sigma, k, n}\|_{\alpha} \\ 
        & \lesssim & \frac{1}{\mu_{q+1}}\sup_\sigma \|\bar D_t A_{\sigma, k, n}\|_{\alpha} + \frac{1}{\mu_{q+1}} \|\bar v_q\|_{1+\alpha} \sup_\sigma \|A_{\sigma, k, n}\|_\alpha  
        \lesssim \frac{\delta_{q+1, n}  \lambda_q^\alpha}{\mu_{q+1} \tau_q}
    \end{eqnarray*}
where we have used the commutator estimate of Proposition~\ref{CZ_comm}. Therefore, in view of proposition \ref{transport_estim}, we conclude that, on $\supp \tilde \eta_k$, we have
    \begin{equation*}
        \|\tilde \Xi_1\|_\alpha \lesssim \frac{1}{\mu_{q+1}} \delta_{q+1, n} \lambda_q^{\alpha}.
    \end{equation*}
    
\noindent \textbf{Estimates for $\tilde \Xi_2$ when $N = 0$:} We can simply repeat the previous argument, as in the estimation of the term $\tilde \Xi_1$, to conclude that
 \begin{equation*}
        \|\tilde \Xi_2\|_\alpha \lesssim \frac{1}{\mu_{q+1}} \delta_{q+1, n} \lambda_q^{\alpha}.
    \end{equation*}
    
\noindent \textbf{Estimates for $\Xi_1$ and $\Xi_2$ when $N=0$:} 
To estimate the terms $\Xi_1$ and $\Xi_2$, we simply notice that
    \begin{equation*}
        \|\Xi_1\|_{\alpha} \lesssim \frac{1}{\mu_{q+1}} \sup_\sigma \|A_{\sigma, k, n}\|_{\alpha} \lesssim \frac{\delta_{q+1, n} \lambda_q^{\alpha}}{\mu_{q+1}}, \quad  \|\Xi_2\|_{\alpha} \lesssim \frac{1}{\mu_{q+1}} \sup_\zeta \|B_{\zeta, k, n}\|_{\alpha} \lesssim \frac{\delta_{q+1, n} \lambda_q^{\alpha}}{\mu_{q+1}}.
    \end{equation*}
We can now plug in the above estimates in \eqref{psidecomp} and \eqref{psidecomp1}, to obtain 
    \begin{align*}
        \|\psi_{k, n+1}(\cdot, t)\|_\alpha &\lesssim \frac{\delta_{q+1, n} \lambda_q^{\alpha}}{\mu_{q+1}} + \tau_q^{-1} \int_{t_k}^t \left( \|\psi_{k, n+1}(\cdot, s) \|_\alpha +  \|\varphi_{k, n+1}(\cdot, s) \|_\alpha \right) \,ds,  \\
        \|\varphi_{k, n+1}(\cdot, t)\|_\alpha &\lesssim \frac{\delta_{q+1, n} \lambda_q^{\alpha}}{\mu_{q+1}} + \tau_q^{-1} \int_{t_k}^t \big( \|\psi_{k, n+1}(\cdot, s)\|_\alpha + \|\varphi_{k, n+1}(\cdot, s)\|_\alpha \big) ds,  
    \end{align*}
In view of the above two estimates, Gr\"onwall's inequality ensures that, on $\supp \tilde \eta_k$, 
    \begin{equation*}
        \|\mathcal{S}_{k, n+1}\|_\alpha \lesssim \frac{\delta_{q+1, n} \lambda_q^{\alpha}}{\mu_{q+1}}.
    \end{equation*}

\noindent \textbf{Estimates for $\tilde \psi_1$ when $N \geq 1$:} 
To obtain the higher order estimates, we fix a multi-index $\theta$, with $|\theta| = N$. Then, we have
    \begin{equation*}
        \|\bar D_t \partial^\theta \tilde \psi_1\|_\alpha \lesssim \|\partial^\theta \bar D_t  \tilde \psi_1\|_\alpha + \|[\bar v_q \cdot \nabla, \partial^\theta] \tilde \psi_1\|_\alpha. 
    \end{equation*}
For the first term, we have
    \begin{align*}
        \|\partial^\theta \bar D_t  \tilde \psi_1\|_\alpha &\lesssim \|\partial^\theta(\psi_{k, n+1} \nabla^\perp \bar v_q)\|_\alpha + \| \partial^\theta \varphi_{k, n+1} \|_\alpha \\
        & \lesssim\|\bar v_q\|_{1+\alpha} \|\psi_{k, n+1}\|_{N+\alpha} + \|\bar v_q\|_{N+ 1+\alpha}\|\psi_{k, n+1}\|_{\alpha}
        + \|\varphi_{k, n+1}\|_{N+\alpha}\\
        & \lesssim\|\bar v_q\|_{1+\alpha} \left(\|\psi_{k, n+1}\|_{N+\alpha} + \|\varphi_{k, n+1}\|_{N+\alpha} \right)+ \|\bar v_q\|_{N+ 1+\alpha}\|\psi_{k, n+1}\|_{\alpha},
    \end{align*}
while for the other term, we have
    \begin{eqnarray*}
        \|[\bar v_q \cdot \nabla, \partial^\theta] \tilde \psi_1\|_\alpha &\lesssim& \|\bar v_q\|_{N+\alpha}\|\tilde \psi_1\|_{1+\alpha} + \|\bar v_q\|_{1+\alpha}\|\tilde \psi_1\|_{N+\alpha} \\ 
        &\lesssim & \|\bar v_q\|_{1+\alpha}\|\tilde \psi_1\|_{N+\alpha} + \|\bar v_q\|_{N+ 1+\alpha}\|\tilde \psi_1\|_{\alpha},
    \end{eqnarray*}
Therefore, thanks to proposition \ref{transport_estim}, we have    
\begin{align*}
\|\tilde \psi_1 (\cdot, t)\|_{N+ \alpha} & \lesssim \frac{\delta_{q+1, n} \lambda_q^{\alpha}\tau_q }{\mu_{q+1}} \|\bar v_q\|_{N+1+\alpha} + \|\bar v_q\|_{1+\alpha} \int_{t_k}^t \left(\|\psi_{k, n+1}\|_{N+\alpha} + \|\varphi_{k, n+1}\|_{N+\alpha} \right) \,ds \\
   & \qquad \qquad + \|\bar v_q\|_{1+\alpha} \int_{t_k}^t \|\tilde \psi_1(\cdot, s) \|_{N+\alpha}ds.
    \end{align*}
A simple application of Gr\"onwall's inequality yields
    \begin{equation*}
        \|\tilde \psi_1 (\cdot, t)\|_{N+ \alpha} \lesssim \frac{\delta_{q+1, n} \lambda_q^{\alpha} \tau_q}{\mu_{q+1}} \|\bar v_q\|_{N+1+\alpha}  + \tau_q^{-1} \int_{t_k}^t \left(\|\psi_{k, n+1}\|_{N+\alpha} + \|\varphi_{k, n+1}\|_{N+\alpha} \right) \,ds.
    \end{equation*}

\noindent \textbf{Estimates for $\tilde \psi_2$ when $N \geq 1$:} 
As before, notice that
    \begin{equation*}
        \|\bar D_t \partial^\theta \tilde \psi_2\|_\alpha \lesssim \|\partial^\theta \bar D_t  \tilde \psi_2\|_\alpha + \|[\bar v_q \cdot \nabla, \partial^\theta] \tilde \psi_2\|_\alpha.
    \end{equation*}
For the first term, we have
    \begin{align*}
       & \|\partial^\theta \bar D_t  \tilde \psi_2\|_\alpha \lesssim \|\partial^\theta(\psi_{k, n+1} \nabla^\perp \bar \theta_q)\|_\alpha + \|\partial^\theta(\varphi_{k, n+1} \nabla^\perp \bar v_q) + \partial^\theta q_1 \|_\alpha  \\
        & \quad \lesssim \|\bar \theta_q\|_{1+\alpha} \|\psi_{k, n+1}\|_{N+\alpha} + \|\bar \theta_q\|_{N+ 1+\alpha}\|\psi_{k, n+1}\|_{\alpha}
        + \|\bar v_q\|_{1+\alpha} \|\varphi_{k, n+1}\|_{N+\alpha} + \|\bar v_q\|_{N+ 1+\alpha}\|\varphi_{k, n+1}\|_{\alpha},
    \end{align*}
while for the last term, we have
    \begin{eqnarray*}
        \|[\bar v_q \cdot \nabla, \partial^\theta] \tilde \psi_2\|_\alpha \lesssim \|\bar v_q\|_{N+\alpha}\|\tilde \psi_2\|_{1+\alpha} + \|\bar v_q\|_{1+\alpha}\|\tilde \psi_2\|_{N+\alpha} 
\|\bar v_q\|_{1+\alpha}\|\tilde \psi_2\|_{N+\alpha} + \|\bar v_q\|_{N+ 1+\alpha}\|\tilde \psi_2\|_{\alpha}.
    \end{eqnarray*}
Therefore, as before, we conclude that
    \begin{align*}
        \|\tilde \psi_2 (\cdot, t)\|_{N+ \alpha} &\lesssim  \frac{\delta_{q+1, n} \lambda_q^{\alpha}\tau_q }{\mu_{q+1}} \|\bar v_q\|_{N+1+\alpha} + \frac{\delta_{q+1, n} \lambda_q^{\alpha}\tau_q }{\mu_{q+1}} \|\bar \theta_q\|_{N+1+\alpha} + \|\bar v_q\|_{1+\alpha} \int_{t_k}^t  \|\varphi_{k, n+1}\|_{N+\alpha}  \,ds \\ 
& + \|\bar v_q\|_{1+\alpha} \int_{t_k}^t \|\tilde \psi_2(\cdot, s) \|_{N+\alpha}ds + \|\bar \theta_q\|_{1+\alpha} \int_{t_k}^t \|\psi_{k, n+1}\|_{N+\alpha}\,ds.
    \end{align*}
Again, a simple application of Gr\"onwall's inequality implies
    \begin{equation*}
        \|\tilde \psi_2 (\cdot, t)\|_{N+ \alpha} \lesssim \frac{\delta_{q+1, n} \lambda_q^{\alpha} \tau_q}{\mu_{q+1}} \left( \|\bar v_q\|_{N+1+\alpha}  +  \|\bar \theta_q\|_{N+1+\alpha} \right)+ \tau_q^{-1} \int_{t_k}^t \left(\|\psi_{k, n+1}\|_{N+\alpha} + \|\varphi_{k, n+1}\|_{N+\alpha} \right) \,ds.
    \end{equation*}

\noindent \textbf{Estimates for $\tilde \Xi_1$ when $N \geq 1$:} 
We compute
    \begin{eqnarray*}
        \|\bar D_t \partial^\theta \tilde \Xi_1\|_\alpha &\lesssim& \|\bar D_t \tilde \Xi_1\|_{N+\alpha} + \|[\bar v_q \cdot \nabla, \partial^\theta] \tilde \Xi_1\|_\alpha  \\ 
        &\lesssim & \frac{1}{\mu_{q+1}}\sup_\sigma \big( \|\bar D_t A_{\sigma, k, n}\|_{N+\alpha} + \|[\bar v_q \cdot \nabla, \Delta^{-1} \nabla^\perp \div] A_{\sigma, k, n}\|_{N+\alpha} \big) + \|[\bar v_q \cdot \nabla, \partial^\theta] \tilde \Xi_1\|_\alpha \\ 
         &\lesssim & \frac{1}{\mu_{q+1}}\sup_\sigma \big( \|\bar D_t A_{\sigma, k, n}\|_{N+\alpha} + \|\bar v_q\|_{1+\alpha} \|A_{\sigma, k, n}\|_{N+\alpha} + \|\bar v_q\|_{N+1 + \alpha} \|A_{\sigma, k, n}\|_\alpha \big) \\ 
         && + \|\bar v_q\|_{N+1+\alpha} \|\tilde \Xi_1\|_\alpha + \|\bar v_q\|_{1+\alpha} \|\tilde \Xi_1\|_{N+ \alpha}, 
    \end{eqnarray*}
where we have used the Proposition~\ref{CZ_comm}, and treated the commutator term $[\bar v_q \cdot \nabla, \partial^\theta]$ as before. Then, thanks to Proposition~\ref{transport_estim} and Gr\"onwall's inequality, we conclude
    \begin{equation*}
        \|\tilde \Xi_1\|_{N+\alpha} \lesssim \frac{1}{\mu_{q+1}} \sup_\sigma \big( \tau_q \|\bar D_t A_{\sigma, k, n} \|_{N+\alpha} + \|A_{\sigma, k, n}\|_{N+\alpha}\big) + \frac{\delta_{q+1, n} \lambda_q^\alpha \tau_q}{\mu_{q+1}} \|\bar v_q\|_{N+1+\alpha}.
    \end{equation*}

\noindent \textbf{Estimates for $\tilde \Xi_2$ when $N \geq 1$:} A similar calculation reveals that
 \begin{equation*}
        \|\tilde \Xi_2\|_{N+\alpha} \lesssim \frac{1}{\mu_{q+1}} \sup_\zeta \big( \tau_q \|\bar D_t B_{\zeta, k, n} \|_{N+\alpha} + \|B_{\zeta, k, n}\|_{N+\alpha}\big) + \frac{\delta_{q+1, n} \lambda_q^\alpha \tau_q}{\mu_{q+1}} \|\bar v_q\|_{N+1+\alpha}.
    \end{equation*}

\noindent \textbf{Estimates for $\Xi_1$ and $\Xi_2$ when $N \geq 1$:} 
Finally, we note that
    \begin{equation*}
        \|\Xi_1\|_{N+\alpha} \lesssim \frac{1}{\mu_{q+1}} \sup_\sigma \|A_{\sigma, k, n}\|_{N+\alpha}, \quad \|\Xi_2\|_{N+\alpha} \lesssim \frac{1}{\mu_{q+1}} \sup_\zeta \|B_{\zeta, k, n}\|_{N+\alpha}. 
    \end{equation*}
We can now plug in the above estimates in \eqref{psidecomp} and \eqref{psidecomp1}, to obtain 
    \begin{align*}
       & \|\psi_{k, n+1}(\cdot, t)\|_{N+\alpha} \lesssim  \frac{1}{\mu_{q+1}}\sup_\sigma \big( \tau_q \|\bar D_t A_{\sigma, k, n}\|_{N+\alpha} + \|A_{\sigma, k, n}\|_{N+\alpha} + \delta_{q+1, n} \lambda_q^\alpha \tau_q \|\bar v_q\|_{N+1+\alpha} \big) \\ 
        & \qquad \qquad + \tau_q^{-1} \int_{t_k}^t \left(\|\psi_{k, n+1}(\cdot, s)\|_{N+\alpha} + \|\varphi_{k, n+1}(\cdot, s)\|_{N+\alpha} \right) \,ds, \\
       & \|\varphi_{k, n+1}(\cdot, t)\|_{N+\alpha} \lesssim  \frac{1}{\mu_{q+1}}\sup_\zeta \big( \tau_q \|\bar D_t B_{\zeta, k, n}\|_{N+\alpha} + \|B_{\zeta, k, n}\|_{N+\alpha} \big) \\
        & \qquad \qquad + \frac{\delta_{q+1, n} \lambda_q^\alpha \tau_q}{\mu_{q+1}} \left( \|\bar v_q\|_{N+1+\alpha} + \|\bar \theta_q\|_{N+1+\alpha}\right) + \tau_q^{-1} \int_{t_k}^t \left(\|\psi_{k, n+1}(\cdot, s)\|_{N+\alpha} + \|\varphi_{k, n+1}(\cdot, s)\|_{N+\alpha} \right) \,ds.
    \end{align*}
Thus, a simple application of Gr\"onwall's inequality reveals that
    \begin{align*}
        \|\mathcal{S}_{k, n+1}\|_{N+\alpha} & \lesssim \frac{1}{\mu_{q+1}}\sup_\sigma \big( \tau_q \|\bar D_t A_{\sigma, k, n}\|_{N+\alpha} + \|A_{\sigma, k, n}\|_{N+\alpha} + \delta_{q+1, n} \lambda_q^\alpha \tau_q \|\bar v_q\|_{N+1+\alpha} \big)\\
        & \quad +  \frac{1}{\mu_{q+1}}\sup_\zeta \big( \tau_q \|\bar D_t B_{\zeta, k, n}\|_{N+\alpha} + \|B_{\zeta, k, n}\|_{N+\alpha} + \delta_{q+1, n} \lambda_q^\alpha \tau_q \|\bar \theta_q\|_{N+1+\alpha} \big).
    \end{align*}
Hence, thanks to Lemma~\ref{smoli_1} and Corollary~\ref{a_cor}, the estimates \eqref{psi_estim_1} and \eqref{psi_estim_3} follow. 

Finally, for the material derivative estimates of $\mathcal{S}_{k, n+1}$, we recall \eqref{psi_eqn} and \eqref{psi_eqn} to conclude 
\begin{align*}
        \|\bar D_t \psi_{k, n+1}(\cdot, t)\|_{N+\alpha}  &\lesssim  \|\psi_{k, n+1} \nabla^\perp \bar v_q\|_{N+\alpha} + \sup_\sigma \|A_{\sigma, k, n}(\cdot, t)\|_{N+\alpha} \\ 
         &\lesssim  \|\psi_{k, n+1}\|_{N+\alpha} \| \bar v_q\|_{1+\alpha} + \|\psi_{k, n+1}\|_{\alpha} \|\bar v_q\|_{N+1+\alpha} + \sup_\sigma \|A_{\sigma, k, n}(\cdot, t)\|_{N+\alpha}. \\
            \|\bar D_t \varphi_{k, n+1}(\cdot, t)\|_{N+\alpha}  &\lesssim  \|\psi_{k, n+1} \nabla^\perp \bar \theta_q\|_{N+\alpha} + \|\varphi_{k, n+1} \nabla^\perp \bar v_q\|_{N+\alpha} + \sup_\zeta \|B_{\zeta, k, n}(\cdot, t)\|_{N+\alpha} \\ 
         & \lesssim  \|\psi_{k, n+1}\|_{N+\alpha} \| \bar \theta_q\|_{1+\alpha} + \|\psi_{k, n+1}\|_{\alpha} \|\bar \theta_q\|_{N+1+\alpha} \\ 
         &\qquad + \|\varphi_{k, n+1}\|_{N+\alpha} \| \bar v_q\|_{1+\alpha} + \|\varphi_{k, n+1}\|_{\alpha} \|\bar v_q\|_{N+1+\alpha} + \sup_\zeta \|B_{\zeta, k, n}(\cdot, t)\|_{N+\alpha}.
    \end{align*}
Hence, thanks to Corollary~\ref{a_cor} and the estimates in \eqref{psi_estim_1} and \eqref{psi_estim_3}, we have the desired 
estimates \eqref{psi_estim_2} and \eqref{psi_estim_4}. Moreover, the estimates \eqref{psi_estim_5} and \eqref{psi_estim_6} follow likewise thanks to the observation that $A_{\sigma, k, n} = B_{\zeta, k, n}= 0$ on $\supp \partial_t \tilde \eta_k$. 
\end{proof}

We are now prepared to demonstrate the main Proposition \ref{NewIter}.

\begin{proof} [Proof of Proposition \ref{NewIter}]
Since $\mathcal R \nabla^\perp$ is of Calder\'on-Zygmund type, and the set $\{\tilde \eta_k\}$ is locally finite, we conclude that 
\begin{align*}
    \|R_{q, n+1}\|_N & \lesssim \|R_{q, n+1}\|_{N+\alpha} \lesssim \tau_q^{-1} \sup_{k \in \mathbb Z_{q,n}} \|\psi_{k, n+1}\|_{N+\alpha} \\
    \|T_{q, n+1}\|_N & \lesssim \|T_{q, n+1}\|_{N+\alpha} \lesssim \tau_q^{-1} \sup_{k \in \mathbb Z_{q,n}} \|\varphi_{k, n+1}\|_{N+\alpha}.
\end{align*}
Hence for $N \in \{0,1,..., L-1\}$, in view of the Lemma~\ref{psi_estim}, there exists a constant $C$ (independent of $a > a_0$ and $q$) such that 
\begin{equation*}
    \|R_{q, n+1}\|_N, \, \|T_{q, n+1}\|_N \leq C \delta_{q+1, n}\bigg(\frac{\lambda_q}{\lambda_{q+1}}\bigg)^{1/3 - \beta} (\lambda_{q+1} \ell_q)^{-\alpha} \lambda_{q+1}^{-2\alpha} \lambda_q^N \leq (C \lambda_{q+1}^{-\alpha })\delta_{q+1, n+1} \lambda_q^{-\alpha} \lambda_q^N,
\end{equation*}
Therefore, the estimates \eqref{NewIter_1} follow by choosing sufficiently large $a_0$. Similarly, the estimates \eqref{NewIter_3} follow from Lemma~\ref{psi_estim}. For the estimates of the material derivative, on $\supp \partial_t \tilde \eta_k$, we have
\begin{align*}
    &\|\bar D_t R_{q, n+1}\|_{N+\alpha} \lesssim \sup_{k \in \mathbb Z_{q,n}} \big( \|\bar D_t(\partial_t \tilde \eta_k \psi_{k, n+1})\|_{N+\alpha} + \|[\bar v_q \cdot \nabla, \mathcal{R} \nabla^\perp] \partial_t \tilde \eta_k \psi_{k, n+1}\|_{N+\alpha} \big) \\ 
    &\lesssim \sup_{k \in \mathbb Z_{q,n}} \big( \tau_q^{-2} \|\psi_{k, n+1}\|_{N+\alpha} + \tau_q^{-1} \|\bar D_t \psi_{k, n+1}\|_{N+\alpha}  
    + \tau_q^{-1} \|\bar v_q\|_{1+\alpha}\|\psi_{k, n+1}\|_{N+\alpha} + \tau_q^{-1} \|\bar v_q\|_{N+1+\alpha} \|\psi_{k,n+1}\|_\alpha \big), \\
  & \|\bar D_t T_{q, n+1}\|_{N+\alpha} \lesssim \sup_{k \in \mathbb Z_{q,n}} \big( \|\bar D_t(\partial_t \tilde \eta_k \varphi_{k, n+1})\|_{N+\alpha} + \|[\bar v_q \cdot \nabla, \mathcal{R} \div] \partial_t \tilde \eta_k \varphi_{k, n+1}\|_{N+\alpha} \big) \\ 
    &\lesssim  \sup_{k \in \mathbb Z_{q,n}} \big( \tau_q^{-2} \|\varphi_{k, n+1}\|_{N+\alpha} + \tau_q^{-1} \|\bar D_t \varphi_{k, n+1}\|_{N+\alpha} 
    + \tau_q^{-1} \|\bar v_q\|_{1+\alpha}\|\varphi_{k, n+1}\|_{N+\alpha} + \tau_q^{-1} \|\bar v_q\|_{N+1+\alpha} \|\varphi_{k,n+1}\|_\alpha \big),
\end{align*}
where we have used again the Proposition~\ref{CZ_comm}. Hence, for $N \in \{0,1,...,L-1\}$, Lemmas \ref{smoli_estim} and \ref{psi_estim} imply that
\begin{align*}
    \|\bar D_t R_{q, n+1}\|_{N} &\leq C \tau_q^{-1} \delta_{q+1, n} \bigg(\frac{\lambda_q}{\lambda_{q+1}}\bigg)^{1/3 - \beta} (\lambda_{q+1} \ell_q)^{-\alpha} \lambda_{q+1}^{-2\alpha} \lambda_q^N, \\
     \|\bar D_t T_{q, n+1}\|_{N} &\leq C \tau_q^{-1} \delta_{q+1, n} \bigg(\frac{\lambda_q}{\lambda_{q+1}}\bigg)^{1/3 - \beta} (\lambda_{q+1} \ell_q)^{-\alpha} \lambda_{q+1}^{-2\alpha} \lambda_q^N.
\end{align*}
Note that here the constant $C$ is independent of the parameters $a > a_0$ and $q$. Finally, by choosing $a_0$ sufficiently large, we conclude the proof of the proposition. 
\end{proof}

\subsection{Estimates for the Newton perturbations}

We now derive estimates for the Newton perturbations for velocity and temperature. First note that
\begin{equation*}
    w_{q+1}^{(t)} = \sum_{n = 1}^\Gamma w_{q+1, n}^{(t)} \,\, \text{and} \,\, \theta_{q+1}^{(t)} = \sum_{n = 1}^\Gamma \theta_{q+1, n}^{(t)}.
\end{equation*}
Since Proposition \ref{NewIter} holds, and as we have previously noted, the conditions required for its application are indeed satisfied by $R_{q, 0}$ and $T_{q, 0}$, we can conclude that the results derived in Lemma \ref{psi_estim} are also valid for all indices $n \in \{0, 1, ... ,\Gamma -1\}$. Therefore we have the following lemma:
\begin{lem} \label{w_t_estim}
We have the following estimates: 
\begin{align}
&\|w_{q+1}^{(t)}\|_N, \, \|\theta_{q+1}^{(t)}\|_N \lesssim \frac{\delta_{q+1} \lambda_q^{N + 1} \ell_q^{-\alpha}}{\mu_{q+1}}, \hspace{3.0cm} \forall N \in \{0,1,...,L-2\}, \label{w_t_estim_1}\\
& \|\bar D_t w_{q+1}^{(t)}\|_{N}, \, \|\bar D_t \theta_{q+1}^{(t)}\|_{N} \lesssim \delta_{q+1} \lambda_q^{N+1} \ell_q^{-\alpha}, \hspace{2.3cm} \forall N \in  \{0,1,...,L-2\}, \label{w_t_estim_2} \\
&\|w_{q+1}^{(t)}\|_{N+L -2 }, \, \|\theta_{q+1}^{(t)}\|_{N+L -2 } \lesssim \frac{\delta_{q+1} \lambda_q^{L-1} \ell_q^{-N-\alpha}}{\mu_{q+1}}, \qquad \quad \forall N \geq 0, \label{w_t_estim_3} \\
&\|\bar D_t w_{q+1}^{(t)}\|_{N+L-2}, \, \|\bar D_t \theta_{q+1}^{(t)}\|_{N+L-2} \lesssim \delta_{q+1} \lambda_q^{L-1} \ell_q^{-N - \alpha}, \quad \forall N \geq 0, \label{w_t_estim_4}
\end{align}
Here all the implicit bounds are dependent on $\Gamma$, $M$, $\alpha$ and $N$. Moreover, we have
    \begin{eqnarray}
        \supp_t w_{q+1}^{(t)}, \,\, \supp_t \theta_{q+1}^{(t)} &\subset& [-2 + (\delta_q^{1/2} \lambda_q)^{-1} - 3\Gamma \tau_q, -1 - (\delta_q^{1/2} \lambda_q)^{-1} + 3\Gamma \tau_q] \\ 
        && \qquad \cup [1 + (\delta_q^{1/2} \lambda_q)^{-1} - 3 \Gamma \tau_q, 2 - (\delta_q^{1/2} \lambda_q)^{-1} + 3 \Gamma \tau_q]. \nonumber
    \end{eqnarray}
\end{lem}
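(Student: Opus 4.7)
The plan is to pass from the stream function bounds of Lemma~\ref{psi_estim} to bounds on each building block $w_{k,n+1}=\nabla^\perp\psi_{k,n+1}$ and $\theta_{k,n+1}=\div\varphi_{k,n+1}$ (each costing one derivative), then sum over $k\in\mathbb Z_{q,n}$ using the bounded temporal overlap of $\{\tilde\eta_k\}$, and finally sum over $n\in\{0,\ldots,\Gamma-1\}$. A preliminary step is to check that Proposition~\ref{NewIter} applies at every level $n$: the base case $n=0$ follows from Lemma~\ref{smoli_estim} combined with \eqref{ind_est_4}--\eqref{ind_est_6} (the required replacement of $\lambda_q^{-2\alpha}$ by $\lambda_q^{-\alpha}$ is absorbed by choosing $a_0$ large), and the inductive step is the conclusion of Proposition~\ref{NewIter}. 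Consequently the stream function estimates of Lemma~\ref{psi_estim} hold uniformly in $n\in\{0,\ldots,\Gamma-1\}$.

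For the $C^N$ bounds \eqref{w_t_estim_1} and \eqref{w_t_estim_3}, applying Lemma~\ref{psi_estim} at derivative index $N+1+\alpha$ gives, for each $k$ and $n$,
\begin{equation*}
\|w_{k,n+1}\|_N+\|\theta_{k,n+1}\|_N \lec \|\mathcal{S}_{k,n+1}\|_{N+1+\alpha} \lec \frac{\delta_{q+1,n}\,\lambda_q^{N+1}\,\ell_q^{-\alpha}}{\mu_{q+1}}
\end{equation*}
for $N\in\{0,\ldots,L-2\}$, and analogously in the high-regularity range. Summing over the locally finite family $k\in\mathbb Z_{q,n}$ costs only a constant, while the sum over $n\in\{0,\ldots,\Gamma-1\}$ is controlled by $\Gamma\sup_n\delta_{q+1,n}\leq \Gamma\,\delta_{q+1}$, since $\delta_{q+1,n}=\delta_{q+1}(\lambda_q/\lambda_{q+1})^{n(1/3-\beta)}\leq\delta_{q+1}$ and $\Gamma$ depends only on $\beta$. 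This yields \eqref{w_t_estim_1} and \eqref{w_t_estim_3}.

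For the material derivative bounds \eqref{w_t_estim_2} and \eqref{w_t_estim_4}, I would expand
\begin{equation*}
\bar D_t w^{(t)}_{q+1,n+1}=\sum_{k\in\mathbb Z_{q,n}}(\partial_t\tilde\eta_k)\,w_{k,n+1}+\sum_{k\in\mathbb Z_{q,n}}\tilde\eta_k\,\bar D_t w_{k,n+1},
\end{equation*}
together with the schematic commutator $[\bar D_t,\nabla^\perp]\psi\sim \nabla\bar v_q\cdot\nabla\psi$ (and its scalar analog for $\div\varphi_{k,n+1}$). In the first sum, the factor $|\partial_t\tilde\eta_k|\lec\tau_q^{-1}$ is compensated by the sharper estimates \eqref{psi_estim_5}--\eqref{psi_estim_6}, which are valid on $\supp\partial_t\tilde\eta_k$ and provide the extra gain $(\mu_{q+1}\tau_q)^{-1}$. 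In the second sum, the bounds \eqref{psi_estim_2} and \eqref{psi_estim_4} combined with $\|\bar v_q\|_{1+\alpha}\tau_q\lec 1$ from Section~\ref{Prelim} directly yield the claimed size. The main technical bookkeeping occurs in the high-regularity range \eqref{w_t_estim_4}, where one must balance $\lambda_q$ against $\ell_q^{-1}$ in the product rule applied to the commutator; this is handled by the interpolation inequality together with the top-order bounds from Lemmas~\ref{smoli_estim} and~\ref{psi_estim}. Finally, the temporal support containment is immediate from iterating \eqref{NewIter_5} over the $\Gamma$ Newton steps: each step enlarges the support by at most $3\tau_q$, and $w^{(t)}_{q+1,n+1}$, $\theta^{(t)}_{q+1,n+1}$ are supported within $\bigcup_k\supp\tilde\eta_k\subset \mathcal{N}_{\tau_q}(\supp R_{q,n}\cap\supp T_{q,n})$.
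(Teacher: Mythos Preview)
Your approach is essentially the paper's: reduce to the stream function bounds of Lemma~\ref{psi_estim} at one extra derivative, use local finiteness of $\{\tilde\eta_k\}$ and $\delta_{q+1,n}\le\delta_{q+1}$, and for the material derivative expand $\bar D_t(\tilde\eta_k\nabla^\perp\psi_{k,n+1})$ into the cutoff term, the transported term, and the commutator $[\bar v_q\cdot\nabla,\nabla^\perp]\psi$.

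There is one slip in your handling of the first sum $\sum_k(\partial_t\tilde\eta_k)w_{k,n+1}$: the estimates \eqref{psi_estim_5}--\eqref{psi_estim_6} bound $\bar D_t\mathcal S_{k,n+1}$, not $\mathcal S_{k,n+1}$ itself, so they do not apply to this term. The correct (and simpler) argument, which is what the paper does, is to use the plain bound \eqref{psi_estim_1} together with the parameter relation $\mu_{q+1}>\tau_q^{-1}$ recorded in Section~\ref{Prelim}; then $\tau_q^{-1}\cdot\mu_{q+1}^{-1}\delta_{q+1,n}\lambda_q^{N+1}\ell_q^{-\alpha}\le \delta_{q+1,n}\lambda_q^{N+1}\ell_q^{-\alpha}$ as required. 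With this correction your argument matches the paper's proof.
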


\begin{proof}
First note that, $\delta_{q+1, n} \leq \delta_{q+1}$ for all $n$ and it is enough to give estimates for $w_{q+1, n+1}^{(t)}$. To obtain estimates \eqref{w_t_estim_1} and \eqref{w_t_estim_3}, we simply use Lemma \ref{psi_estim}. Indeed, we have
    \begin{equation*}
        \|w_{q+1, n+1}^{(t)}\|_N \lesssim \sup_{k \in \mathbb Z_{q,n}} \|\psi_{k, n+1}\|_{N+1+\alpha}, \quad \|\theta_{q+1, n+1}^{(t)}\|_N \lesssim \sup_{k \in \mathbb Z_{q,n}} \|\varphi_{k, n+1}\|_{N+1+\alpha}.
    \end{equation*}
For the other estimates, we calculate 
    \begin{align*}
        \bar D_t w_{q+1, n+1}^{(t)} &= \sum_{k \in \mathbb Z_{q,n}} \big( \partial_t \tilde \eta_k \nabla^\perp \psi_{k, n+1} + \tilde \eta_k \nabla^\perp \bar D_t \psi_{k, n+1} - \tilde \eta_k \nabla^\perp \bar v_q \nabla \psi_{k, n+1} \big), \\
         \bar D_t \theta_{q+1, n+1}^{(t)} &= \sum_{k \in \mathbb Z_{q,n}} \big( \partial_t \tilde \eta_k \div \varphi_{k, n+1} + \tilde \eta_k \div \bar D_t \varphi_{k, n+1} - \tilde \eta_k \nabla \bar v_q : \nabla \varphi_{k, n+1} \big).
    \end{align*}
Therefore, it follows that 
    \begin{align*}
        \|\bar D_t w_{q+1, n+1}^{(t)}\|_N & \lesssim \sup_{k \in \mathbb Z_{q,n}} \big( \tau_q^{-1} \|\psi_{k, n+1}\|_{N+1+\alpha} + \|\bar D_t \psi_{k, n+1}\|_{N + 1 + \alpha} \\
        & \qquad \qquad + \|\bar v_q\|_{N+1} \|\psi_{k, n}\|_{1+\alpha} + \|\bar v_q\|_1 \|\psi_{k, n+1}\|_{N+1 + \alpha} \big). \\
        \|\bar D_t \theta_{q+1, n+1}^{(t)}\|_N & \lesssim \sup_{k \in \mathbb Z_{q,n}} \big( \tau_q^{-1} \|\varphi_{k, n+1}\|_{N+1+\alpha} + \|\bar D_t \varphi_{k, n+1}\|_{N + 1 + \alpha} \\
        & \qquad \qquad + \|\bar v_q\|_{N+1} \|\varphi_{k, n}\|_{1+\alpha} + \|\bar v_q\|_1 \|\varphi_{k, n+1}\|_{N+1 + \alpha} \big).
    \end{align*}
Again, thanks to Lemmas \ref{smoli_estim} and \ref{psi_estim}, the proof of \eqref{w_t_estim_2} and \eqref{w_t_estim_4} follows. Finally, thanks to the definition of $w_{q+1}^{(t)}$ and Proposition~\ref{NewIter}, we obtain the desired temporal supports.
\end{proof}

For later purpose, namely for the Nash iteration step, we need to make use of the backward flow $\tilde \Psi_t$ starting at $t \in \mathbb R$ associated to $\bar v_{q, \Gamma} = \bar v_{q} + w_{q+1}^{(t)}$. Indeed, $\tilde \Psi_t$ satisfies 
\begin{equation} \label{Flow_t_gam}
    \begin{cases}
        \partial_s \tilde \Psi_t(x,s) + \bar v_{q, \Gamma}(x,s) \cdot \nabla \tilde \Psi_t (x,s) = 0, \\ 
        \tilde \Psi_t(x, t) = x.
    \end{cases}
\end{equation}
We also write down the forward flow, given by
\begin{equation} \label{Lagr_t_gam}
    \begin{cases}
        \frac{d}{ds} \tilde X_t(\alpha, s) = \bar v_{q, \Gamma}(X_t(\alpha, s),s), \\ 
        X_t(\alpha, t) = \alpha.
    \end{cases}
\end{equation}
Notice that, for $N \in \{1, 2,..., L-2\}$, we have
\begin{equation*}
    \|w_{q+1}^{(t)}\|_N, \,\,  \|\theta_{q+1}^{(t)}\|_N \lesssim \delta_{q}^{1/2} \lambda_q^N \frac{\delta_{q+1} \lambda_q \ell_q^{-\alpha}}{\mu_{q+1} \delta_q^{1/2}} \lesssim \delta_q^{1/2} \lambda_q^N \bigg( \frac{\delta_{q+1}}{\delta_q} \bigg)^{1/2} \bigg( \frac{\lambda_q}{\lambda_{q+1}}\bigg)^{1/3}. 
\end{equation*}
This means that $\bar v_{q, \Gamma}$ and $\bar \theta_{q, \Gamma}$ are small perturbations of $\bar v_q$ and $\bar \theta_q$ respectively. This also implies the following corollary.

\begin{cor} \label{Gamma_velo_estim}
We have the following estimates:
\begin{align*}
        &\|\bar v_{q, \Gamma} \|_N, \,\|\bar \theta_{q, \Gamma} \|_N \lesssim \delta_{q}^{1/2} \lambda_q^N, \hspace{2.1cm}\,\, \qquad \forall N \in \{1,2,...,L-2\}, \\
        &\|\bar v_{q, \Gamma}\|_{N + L-2}, \,\|\bar \theta_{q, \Gamma}\|_{N + L-2} \lesssim \delta_{q}^{1/2} \lambda_q^{L-2} \ell_q^{-N}, \quad \forall N \geq 0.
    \end{align*}
Here all the implicit bounds are dependent on $\Gamma$, $M$, $\alpha$, and $N$.
\end{cor}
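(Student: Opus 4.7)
The plan is entirely routine: write $\bar v_{q,\Gamma}=\bar v_q + w_{q+1}^{(t)}$ and $\bar \theta_{q,\Gamma}=\bar \theta_q + \theta_{q+1}^{(t)}$, apply the triangle inequality, and bound each piece using estimates already established.

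For the base piece, Lemma~\ref{smoli_estim} (specifically \eqref{smoli_1} and \eqref{smoli_4}) gives
\[
\|\bar v_q\|_N + \|\bar\theta_q\|_N \lesssim \delta_q^{1/2}\lambda_q^N \quad \forall N\in\{1,\dots,L\},
\]
and $\|\bar v_q\|_{N+L-2}+\|\bar\theta_q\|_{N+L-2}\lesssim \delta_q^{1/2}\lambda_q^{L-2}\ell_q^{-N}$ for all $N\ge 0$. (The latter follows from \eqref{smoli_4} after a cheap use of $\ell_q^{-2}=\lambda_q\lambda_{q+1}\ge \lambda_q^2$ to pass from the index $L$ to $L-2$.) These already have exactly the form claimed in the corollary.

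For the perturbative piece, Lemma~\ref{w_t_estim} yields, for $N\in\{1,\dots,L-2\}$,
\[
\|w_{q+1}^{(t)}\|_N + \|\theta_{q+1}^{(t)}\|_N \lesssim \frac{\delta_{q+1}\lambda_q^{N+1}\ell_q^{-\alpha}}{\mu_{q+1}}.
\]
Plugging in $\ell_q=(\lambda_q\lambda_{q+1})^{-1/2}$, $\mu_{q+1}=\delta_{q+1}^{1/2}\lambda_q^{2/3}\lambda_{q+1}^{1/3}\lambda_{q+1}^{4\alpha}$ and $\delta_q=\lambda_q^{-2\beta}$—exactly the computation performed in the paragraph immediately preceding the statement of the corollary—this is bounded by
\[
\delta_q^{1/2}\lambda_q^N\bigg(\frac{\delta_{q+1}}{\delta_q}\bigg)^{1/2}\bigg(\frac{\lambda_q}{\lambda_{q+1}}\bigg)^{1/3}\lambda_{q+1}^{-3\alpha}.
\]
Since $\delta_{q+1}\le\delta_q$, $\lambda_q\le\lambda_{q+1}$, and $a_0$ is taken large, this factor is at most $1$, so the perturbative contribution is absorbed into $\delta_q^{1/2}\lambda_q^N$. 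Likewise, \eqref{w_t_estim_3} gives
\[
\|w_{q+1}^{(t)}\|_{N+L-2}+\|\theta_{q+1}^{(t)}\|_{N+L-2}\lesssim \frac{\delta_{q+1}\lambda_q^{L-1}\ell_q^{-N-\alpha}}{\mu_{q+1}},
\]
and the same parameter comparison shows this is dominated by $\delta_q^{1/2}\lambda_q^{L-2}\ell_q^{-N}$.

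There is no genuine obstacle here—the corollary is just the statement that the Newton correction is a smaller-amplitude, comparable-frequency perturbation of $\bar v_q$ (and similarly for $\bar\theta_q$). All the real work was done in Lemma~\ref{psi_estim} and Lemma~\ref{w_t_estim}; the present corollary simply records the resulting triangle-inequality bounds in the form that will be convenient when we estimate the flow $\tilde \Psi_t$ of $\bar v_{q,\Gamma}$ in the Nash step.
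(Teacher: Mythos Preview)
Your proof is correct and follows essentially the same approach as the paper: the paper does not give a separate proof of the corollary but simply records, in the paragraph immediately preceding it, the parameter comparison showing $\|w_{q+1}^{(t)}\|_N,\|\theta_{q+1}^{(t)}\|_N \lesssim \delta_q^{1/2}\lambda_q^N (\delta_{q+1}/\delta_q)^{1/2}(\lambda_q/\lambda_{q+1})^{1/3}$, and declares that ``this also implies the following corollary.'' Your write-up is a correct fleshing-out of that remark via the triangle inequality, Lemma~\ref{smoli_estim}, and Lemma~\ref{w_t_estim}.
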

Next, we collect some standard estimates related to the above mentioned forward and backward flows. The proof is similar to that of Lemma \ref{Flow_estim}, and we skip the details.
\begin{cor} \label{Flow_gam_estim}
For any fix $t \in \mathbb R$, and any $|s - t| < \tau$ with $\tau \leq  \|\bar v_{q, \Gamma}\|_1^{-1}$, we have the following estimates: 
\begin{align} 
& \|(\nabla \tilde \Psi_t)^{-1}(\cdot, s)\|_N + \|\nabla \tilde \Psi_t (\cdot, s)\|_N \lesssim \lambda_q^N, \hspace{4.4cm}\, \forall N \in\{0,1,..., L-3\}, \label{Flow_gam_estim_1} \\
&\|\bar D_{t, \Gamma} (\nabla \tilde \Psi_t)^{-1}(\cdot, s)\|_N + \|\bar D_{t, \Gamma} \nabla \tilde \Psi_t (\cdot, s)\|_N \lesssim \delta_q^{1/2} \lambda_q^{N+1}, \hspace{2.2cm} \forall N \in\{0,1,..., L-3\}, \label{Flow_gam_estim_2} \\
&\|D \tilde X_t(\cdot, s)\|_N  \lesssim \lambda_q^N, \hspace{7.5cm} \forall N \in \{0,1,..., L-3\},  \label{Flow_gam_estim_3} \\
&\|(\nabla \tilde \Psi_t)^{-1}(\cdot, s)\|_{N+L-3} + \|\nabla \tilde \Psi_t(\cdot, s)\|_{N+L-3}  \lesssim \lambda_q^{L-3} \ell_q^{-N}, \, \hspace{2.0cm} \forall N \geq 0,  \label{Flow_gam_estim_4} \\
&\|\bar D_{t, \Gamma} (\nabla \tilde \Psi_t)^{-1}(\cdot, s)\|_{N+L-3} + \|\bar D_{t, \Gamma} \nabla \tilde \Psi_t(\cdot, s)\|_{N+L-3}  \lesssim \delta_q^{1/2} \lambda_q^{L-2} \ell_q^{-N}, \, \,  \forall N \geq 0,  \label{Flow_gam_estim_5} \\
&\|D \tilde X_t(\cdot, s)\|_{N+L-3} \lesssim \lambda_q^{L-3} \ell_q^{-N}, \hspace{5.8cm}\, \forall N \geq 0. \label{Flow_gam_estim_6}
\end{align}
where the implicit constants depend only on $\Gamma$, $M$, $\alpha$, and $N$.  $\bar D_{t, \Gamma} = \partial_t + \bar v_{q, \Gamma} \cdot \nabla.$
\end{cor}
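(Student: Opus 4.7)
The plan is to mirror the proof of Lemma~\ref{Flow_estim} (given in \cite[Lemma 3.2]{vikram}), replacing the velocity field $\bar v_q$ by $\bar v_{q,\Gamma} = \bar v_q + w_{q+1}^{(t)}$ throughout. The essential input is Corollary~\ref{Gamma_velo_estim}, which supplies the same family of H\"older bounds on $\bar v_{q,\Gamma}$ as Lemma~\ref{smoli_estim} provides for $\bar v_q$, but with the top derivative order decreased from $L$ to $L-2$. This deficit of two derivatives is exactly what produces the reduction from $L-1$ to $L-3$ in the statement.

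First, for the low-order estimates \eqref{Flow_gam_estim_1} and the top-order estimates \eqref{Flow_gam_estim_4}, differentiate the defining relation $\partial_s \tilde \Psi_t + \bar v_{q,\Gamma} \cdot \nabla \tilde \Psi_t = 0$ to obtain transport equations of the form
\begin{equation*}
\partial_s \nabla \tilde \Psi_t + \bar v_{q,\Gamma} \cdot \nabla (\nabla \tilde \Psi_t) = -(\nabla \bar v_{q,\Gamma})^T \nabla \tilde \Psi_t,
\end{equation*}
and a corresponding equation for $(\nabla \tilde \Psi_t)^{-1}$. Under the assumption $|s-t|\,\|\bar v_{q,\Gamma}\|_1 \leq 1$, Proposition~\ref{transport_estim} applies, and iterating it with the source bounds furnished by Corollary~\ref{Gamma_velo_estim} yields the claimed estimates. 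For the material derivative bounds \eqref{Flow_gam_estim_2} and \eqref{Flow_gam_estim_5}, one applies $\bar D_{t,\Gamma}$ to the above transport equations; the commutator $[\bar D_{t,\Gamma}, \partial_j]$ produces terms involving $\nabla \bar v_{q,\Gamma}$, which are again controlled via Corollary~\ref{Gamma_velo_estim} together with the just-established bounds on $\nabla \tilde \Psi_t$.

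For the forward flow estimates \eqref{Flow_gam_estim_3} and \eqref{Flow_gam_estim_6}, exploit the identity $\tilde \Psi_t(\tilde X_t(\alpha,s),s) = \alpha$, which gives $D\tilde X_t(\alpha,s) = (\nabla \tilde \Psi_t)^{-1}(\tilde X_t(\alpha,s),s)$. Then Proposition~\ref{comp_estim} on composition of H\"older functions, combined with the estimates just derived for $(\nabla \tilde \Psi_t)^{-1}$ and the bound $\|D\tilde X_t\|_0 \lesssim 1$ (which itself follows from a short Gr\"onwall argument on the ODE defining $\tilde X_t$), delivers the desired inequalities.

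The only genuine point that merits care is the propagation of the two-derivative deficit: in the proof of Lemma~\ref{Flow_estim} one uses the bound $\|\bar v_q\|_{N+L-1} \lesssim \delta_q^{1/2} \lambda_q^{L-1}\ell_q^{-N}$ available for all $N \geq 0$, whereas here the analogous statement from Corollary~\ref{Gamma_velo_estim} reads $\|\bar v_{q,\Gamma}\|_{N+L-2} \lesssim \delta_q^{1/2} \lambda_q^{L-2}\ell_q^{-N}$. Reindexing accordingly throughout the inductive application of Proposition~\ref{transport_estim} forces the top derivative ranges to shift from $L-1$ down to $L-3$, precisely as stated. Since the argument is otherwise identical to that of Lemma~\ref{Flow_estim}, we omit the remaining routine verifications.
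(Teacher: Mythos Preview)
Your proposal is correct and matches the paper's approach: the paper itself simply states that the proof is similar to that of Lemma~\ref{Flow_estim} and skips the details. Your additional explanation of the two-derivative deficit (Corollary~\ref{Gamma_velo_estim} controlling $\bar v_{q,\Gamma}$ only up to order $L-2$, forcing the flow bounds down to order $L-3$) is exactly the mechanism implicit in the paper's omission.
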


\begin{rem} \label{remark_flow_bd}
We remark that, by choosing $a_0$ sufficiently large, we can make the bound of $\|\nabla \tilde \Psi\|_0$ independent of the parameters of the construction.
\end{rem}

For later purpose, we also report the following elementary stability lemma which plays a pivotal role in the next section. For a proof, consult \cite[Lemma 3.12]{vikram}.
\begin{lem} \label{flow_stabil}
Let us fix $t \in \mathbb R$ and $\tau \leq (\|\bar v_{q, \Gamma}\|_1 + \|\bar v_q\|_1)^{-1}$. Moreover, let $\tilde \Psi_t$ is given by \eqref{Flow_t_gam}, and $\Psi_t$ is given by \eqref{Flow_t}. Then, for $N \in \{0,1,...,L-4\}$, and any $|s-t| < \tau$, we have
    \begin{equation}
        \|\nabla \Psi_t(\cdot, s) - \nabla \tilde \Psi_t(\cdot, s)\|_N + \|(\nabla \Psi_t(\cdot, s))^{-1} - (\nabla \tilde \Psi_t(\cdot, s))^{-1}\|_N \lesssim \tau \frac{\delta_{q+1} \lambda_q^2 \ell_q^{-\alpha}}{\mu_{q+1}}\lambda_q^{N}, 
    \end{equation}
    while for $N \geq 0$,
    \begin{equation}
        \|\nabla \Psi_t(\cdot, s) - \nabla \tilde \Psi_t(\cdot, s)\|_{N + L-4} + \|(\nabla \Psi_t(\cdot, s))^{-1} - (\nabla \tilde \Psi_t(\cdot, s))^{-1}\|_{N+ L-4} \lesssim \tau \frac{\delta_{q+1} \lambda_q^2 \ell_q^{-\alpha}}{\mu_{q+1}}\lambda_q^{L-4} \ell_q^{-N}. 
    \end{equation}
Here all the implicit constants depend on $\Gamma$, $M$, $\al$ and $N$. 
\end{lem}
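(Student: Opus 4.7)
The plan is to derive an inhomogeneous transport equation for the matrix difference $D := \nabla \Psi_t - \nabla \tilde\Psi_t$, solve it via Proposition~\ref{transport_estim}, and then convert bounds on $D$ to bounds on the inverse difference by an algebraic identity. Differentiating \eqref{Flow_t} and \eqref{Flow_t_gam} in space gives
\begin{align*}
(\partial_s + \bar v_q \cdot \nabla)(\nabla \Psi_t) + (\nabla \Psi_t)\,\nabla \bar v_q &= 0, \\
(\partial_s + \bar v_{q,\Gamma} \cdot \nabla)(\nabla \tilde\Psi_t) + (\nabla \tilde\Psi_t)\,\nabla \bar v_{q,\Gamma} &= 0,
\end{align*}
with $\nabla\Psi_t(\cdot,t)=\nabla\tilde\Psi_t(\cdot,t)=\I$. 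Subtracting, and using $\bar v_{q,\Gamma}-\bar v_q = w^{(t)}_{q+1}$, I obtain
\[
(\partial_s + \bar v_q \cdot \nabla) D + D\,\nabla \bar v_q \;=\; w^{(t)}_{q+1}\cdot \nabla(\nabla\tilde\Psi_t) + (\nabla\tilde\Psi_t)\,\nabla w^{(t)}_{q+1}, \qquad D\big|_{s=t}=0.
\]

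Next, I would apply Proposition~\ref{transport_estim} with the drift $\bar v_q$, noting that $|s-t|\|\bar v_q\|_1 \le 1$ by our choice of $\tau$; the linear term $D\,\nabla\bar v_q$ is harmless because it produces only a Gr\"onwall factor. For the low-derivative range $N\in\{0,1,\dots,L-4\}$, combine the estimate $\|\nabla w^{(t)}_{q+1}\|_N \lesssim \delta_{q+1}\lambda_q^{N+2}\ell_q^{-\alpha}/\mu_{q+1}$ from Lemma~\ref{w_t_estim} (shifted by one spatial derivative, hence the cap at $L-4$) with $\|\nabla\tilde\Psi_t\|_N\lesssim \lambda_q^N$ from Corollary~\ref{Flow_gam_estim} and the Moser-type product estimate to bound the forcing in $C^N$ by $\delta_{q+1}\lambda_q^{N+2}\ell_q^{-\alpha}/\mu_{q+1}$. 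Integrating over $|s-t|<\tau$ yields
\[
\|D(\cdot,s)\|_N \lesssim \tau\,\frac{\delta_{q+1}\lambda_q^2\ell_q^{-\alpha}}{\mu_{q+1}}\,\lambda_q^N.
\]
The high-derivative estimate at level $N+L-4$ is obtained along the same lines by invoking the $(N+L-3)$-level bounds of Lemma~\ref{w_t_estim} together with the high-order bounds on $\nabla \tilde\Psi_t$ and $\nabla\bar v_q$ from Corollary~\ref{Flow_gam_estim} and Lemma~\ref{smoli_estim}; the loss of two derivatives in translating from $\Psi_t$ to $\nabla\Psi_t$ and from $w^{(t)}_{q+1}$ to $\nabla w^{(t)}_{q+1}$ is what fixes the threshold at $L-4$.

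For the inverse matrices, I use the algebraic identity
\[
(\nabla\Psi_t)^{-1} - (\nabla\tilde\Psi_t)^{-1} \;=\; (\nabla\Psi_t)^{-1}\bigl[\nabla\tilde\Psi_t - \nabla\Psi_t\bigr](\nabla\tilde\Psi_t)^{-1},
\]
together with the bounds $\|(\nabla\Psi_t)^{-1}\|_N,\|(\nabla\tilde\Psi_t)^{-1}\|_N \lesssim \lambda_q^N$ of Lemma~\ref{Flow_estim} and Corollary~\ref{Flow_gam_estim}, applied via the Moser product estimate. This transfers the bound on $D$ directly to the inverse difference and completes both inequalities. I expect the main obstacle to be bookkeeping in the high-derivative estimate: ensuring that when one allocates up to $N+L-3$ derivatives among $w^{(t)}_{q+1}$, $\nabla\tilde\Psi_t$, and $\nabla\bar v_q$ in the Moser-type product bounds, the loss remains a clean $\ell_q^{-N}$ rather than a worse power of $\ell_q^{-1}$, which is what forces the $L-4$ threshold and requires invoking the interpolation inequality carefully to trade between small-scale and large-scale contributions.
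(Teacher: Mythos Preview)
Your approach is correct and is essentially the standard one: the paper does not give its own proof but refers to \cite[Lemma~3.12]{vikram}, where the argument proceeds exactly as you outline---derive the transport equation for $D=\nabla\Psi_t-\nabla\tilde\Psi_t$ with drift $\bar v_q$ and forcing $w_{q+1}^{(t)}\cdot\nabla(\nabla\tilde\Psi_t)+(\nabla\tilde\Psi_t)\nabla w_{q+1}^{(t)}$, apply Proposition~\ref{transport_estim} together with Gr\"onwall to absorb the linear term $D\,\nabla\bar v_q$, and then pass to the inverse difference via the algebraic identity $A^{-1}-B^{-1}=A^{-1}(B-A)B^{-1}$. Your bookkeeping on the derivative thresholds (the cap at $L-4$ coming from needing $\|\nabla\tilde\Psi_t\|_{N+1}$ and $\|w_{q+1}^{(t)}\|_{N+1}$) is also correct.
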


\subsection{Final output of the Newton step}

Here we summarize the Newton step and write down the final Euler-Boussinesq-Reynolds system after $\Gamma$ steps of the Newton iterations. In that context, we have  
\begin{align}
\label{aftergamma}
    \partial_t v_{q, \Gamma} + \div(v_{q, \Gamma} \otimes v_{q, \Gamma}) + \nabla p_{q,\Gamma} - \theta_{q, \Gamma} e_2&= \div S_{q, \Gamma} + \div\big( R_{q, \Gamma} + P_{q + 1, \Gamma} \big), \\
      \partial_t \theta_{q, \Gamma} + \div(v_{q, \Gamma} \theta_{q, \Gamma}) &= \div X_{q, \Gamma} + \div\big( T_{q, \Gamma} + Y_{q + 1, \Gamma} \big), \nonumber \\
      \div v_{q, \Gamma} &= 0, \nonumber
\end{align}
where notice that 
\begin{itemize}
    \item [(a)]The velocity and temperature, after $\Gamma$ steps, are defined by 
    \begin{equation}
        v_{q, \Gamma} = v_q + w_{q+1}^{(t)} = v_q + \sum_{n = 1}^\Gamma w_{q+1, n}^{(t)}, \quad  \theta_{q, \Gamma} = \theta_q + \theta_{q+1}^{(t)} = \theta_q + \sum_{n = 1}^\Gamma \theta_{q+1, n}^{(t)};
    \end{equation}
    
    \item [(b)]Thanks to the inductive definition \eqref{Newpres} of $p_{q, \Gamma}$, we have
    \begin{align} \label{p_q_Gam}
        p_{q, \Gamma} & =  p_q + \sum_{n = 1}^\Gamma p_{q+1, n}^{(t)} - \sum_{n = 0}^{\Gamma-1} \Delta^{-1} \div \div \big( R_{q, n} + \sum_{k \in \mathbb Z_{q,n}} \sum_{\xi \in\Lambda_R} g_{\xi, k, n+1}^2  A^1_{\xi, k, n} + \sum_{k \in \mathbb Z_{q,n}} \sum_{\zeta \in\Lambda_T}  h_{\zeta, k, n+1}^2  A^2_{\zeta, k, n} \big) \nonumber \\
        & \qquad \qquad - \frac{|w_{q+1}^{(t)}|^2}{2} + \langle \bar v_q - v_q, w_{q+1}^{(t)} \rangle;
    \end{align}

    \item [(c)]The error $S_{q, \Gamma}$ will be reduced by the Nash perturbation: 
    \begin{align}
      S_{q, \Gamma} &=  - \sum_{n=0}^{\Gamma -1} \sum_{\zeta \in \Lambda_R} \sum_{k \in \mathbb Z_{q,n}} g_{\xi, k, n+1}^2 a_{\xi, k, n}^2 (\nabla \Psi_k)^{-1} \xi  \otimes \xi (\nabla \Psi_k)^{-T} \\
      & \qquad - \sum_{n=0}^{\Gamma -1} \sum_{\zeta \in \Lambda_T} \sum_{k \in \mathbb Z_{q,n}}h_{\zeta, k, n+1}^2 b_{\zeta, k, n}^2 (\nabla \Psi_k)^{-1} \zeta  \otimes \zeta (\nabla \Psi_k)^{-T}; \nonumber
    \end{align}
   
 \item [(d)]The error $X_{q, \Gamma}$ will be reduced by the Nash perturbation: 
    \begin{eqnarray}
        X_{q, \Gamma} = - \sum_{n=0}^{\Gamma -1} \sum_{\zeta \in \Lambda} \sum_{k \in \mathbb Z_{q,n}} h^2_{\zeta, k, n+1} b_{\zeta, k, n} =  - \sum_{\zeta, k, n} h^2_{\zeta, k, n+1}  c^2_{\zeta, k, n} (\nabla \Psi_k)^{-1} \zeta ; 
    \end{eqnarray}

    \item [(e)]The errors $R_{q, \Gamma}$ and $T_{q, \Gamma}$, in view of Proposition~\ref{NewIter}, are sufficiently small to be incorporated into $R_{q+1}$;

    \item [(f)]In view of \eqref{SmalStr}, we can write the residual error $P_{q+1, \Gamma}$ as 
    \begin{equation} \label{big_P_err}
    P_{q+1, \Gamma} = R_{q} - R_{q, 0} + w_{q+1}^{(t)} \mathring \otimes w_{q+1}^{(t)} + (v_q - \bar v_q) \mathring \otimes   w_{q+1}^{(t)} +  w_{q+1}^{(t)} \mathring \otimes (v_q - \bar v_q); 
    \end{equation}

 \item [(g)]In view of \eqref{SmalStr1}, we can write the residual error $Y_{q+1, \Gamma}$ as 
    \begin{equation} \label{big_P_err}
    Y_{q+1, \Gamma} = T_{q} - T_{q, 0} + w_{q+1}^{(t)} \theta_{q+1}^{(t)} + (v_q - \bar v_q) \theta_{q+1}^{(t)} +  w_{q+1}^{(t)} (\theta_q - \bar \theta_q).
    \end{equation}
\end{itemize}
Finally notice that, thanks to Proposition~\ref{NewIter} and Lemma~\ref{w_t_estim}, it holds that 
\begin{align*}
    & \supp_t S_{q, \Gamma} \cup \supp_t R_{q, \Gamma} \cup \supp_t P_{q+1, \Gamma} \cup  \supp_t X_{q, \Gamma} \cup \supp_t T_{q, \Gamma} \cup \supp_t Y_{q+1, \Gamma} \\
      &\subset [-2 + (\delta_q^{1/2} \lambda_q)^{-1} - 3\Gamma \tau_q, -1 - (\delta_q^{1/2} \lambda_q)^{-1} + 3\Gamma \tau_q]  
       \cup [1 + (\delta_q^{1/2} \lambda_q)^{-1} - 3 \Gamma \tau_q, 2 - (\delta_q^{1/2} \lambda_q)^{-1} + 3 \Gamma \tau_q]. 
        \end{align*}

\section{Nash Iterations} 
\label{section_Nash}

It is well-known that any typical convex integration scheme also suffers from loss of (material) derivative problem and requires another regularization procedure to resolve this problem. Therefore, in this section, we first follow closely \cite{IsPhD} and fix the length scale for the mollification of Reynolds stresses along the flow.

\subsection{Regularization of Reynolds stress} 
\label{Section_Moli_Along_flow} 
Let us take a standard temporal mollifier $\varrho$ on $\R$ and denote by $\tilde X_t$, the Lagrangian flow of $ \bar v_{q,\Gamma}$ starting at $t$ (cf. \eqref{Lagr_t_gam}). Then we fix the material mollification scale
\begin{equation*}
    \ell_{t, q} = \delta_{q}^{-1/2}\lambda_q^{-1/3} \lambda_{q+1}^{-2/3}.
\end{equation*}
It is easy to verify that, for all $\alpha > 0$ sufficiently small, we have
\begin{equation*}
    \delta_{q}^{1/2} \lambda_q < \ell_{t,q}^{-1} < \delta_{q+1}^{1/2} \lambda_{q+1}, \quad  \ell_{t, q} < \mu_{q+1}^{-1} <  \tau_q.
\end{equation*}
Next, following \cite{IsPhD}, we define 
\begin{align} 
    \bar R_{q,n}(x, t) &= \int_{-\ell_{t,q}}^{\ell_{t, q}} R_{q,n} (\tilde X_t(x, t + s), t + s) \varrho_{\ell_{t, q}}(s) ds, \label{tmoli_def}\\
     \bar T_{q,n}(x, t) &= \int_{-\ell_{t,q}}^{\ell_{t, q}} T_{q,n} (\tilde X_t(x, t + s), t + s) \varrho_{\ell_{t, q}}(s) ds. \label{tmoli_def_01}
\end{align}

We now collect the main estimates on $\bar R_{q,n}$ and $\bar T_{q,n}$. For a proof, one may consult \cite{IsPhD}, \cite{BDLS16}, or \cite[Lemma 4.1]{vikram}.
\begin{lem} 
\label{tmoli_estim}
We have the following estimates:
\begin{align} 
   & \|\bar R_{q,n}\|_N  \lesssim \delta_{q+1,n} \lambda_q^{N-\alpha}, \hspace{3.9cm} \|\bar T_{q,n}\|_N \lesssim \delta_{q+1,n} \lambda_q^{N-\alpha}, \nonumber \\
   &\hspace{11.1cm} \forall N\in\{0,1,...,L-2\}. \label{tmoli1} \\
   & \|\bar D_{t, \Gamma} \bar R_{q,n} \|_N \lesssim \tau_q^{-1} \delta_{q+1,n} \lambda_q^{N- \alpha}, \hspace{2.6cm}  \|\bar D_{t, \Gamma} \bar T_{q,n} \|_N \lesssim \tau_q^{-1} \delta_{q+1,n} \lambda_q^{N- \alpha}, \nonumber \\
   & \hspace{11.1cm} \forall N\in \{0,1,...,L-2\}. \label{tmoli2} \\
   & \|\bar D_{t, \Gamma}^2 \bar R_{q,n} \|_N \lesssim \ell_{t,q}^{-1} \tau_q^{-1} \delta_{q+1,n} \lambda_q^{N- \alpha}, \hspace{2.1cm} \|\bar D_{t, \Gamma}^2 \bar T_{q,n} \|_N \lesssim \ell_{t,q}^{-1} \tau_q^{-1} \delta_{q+1,n} \lambda_q^{N- \alpha}, \nonumber \\
   &\hspace{11.1cm} \forall N\in \{0,1,...,L-2\}. \label{tmoli3} \\
   & \|\bar R_{q,n}\|_{N+ L - 2} \lesssim \delta_{q+1,n}\lambda_q^{L-2-\alpha} \ell_q^{-N}, \hspace{2.2cm} \|\bar T_{q,n}\|_{N+ L - 2} \lesssim \delta_{q+1,n}\lambda_q^{L-2-\alpha} \ell_q^{-N}, \nonumber \\
   & \hspace{13.1cm} \forall N \geq 0. \label{tmoli4} \\
    & \|\bar D_{t, \Gamma} \bar R_{q,n} \|_{N+L-2} \lesssim \tau_q^{-1} \delta_{q+1,n} \lambda_{q}^{L-2-\alpha} \ell_q^{-N}, \hspace{0.9cm} \|\bar D_{t, \Gamma} \bar T_{q,n} \|_{N+L-2} \lesssim \tau_q^{-1} \delta_{q+1,n} \lambda_{q}^{L-2-\alpha} \ell_q^{-N},\nonumber \\
     &\hspace{13.1cm} \forall N \geq 0. \label{tmoli5} \\
   & \|\bar D_{t, \Gamma}^2 \bar R_{q,n} \|_{N+L-2} \lesssim \ell_{t,q}^{-1} \tau_q^{-1} \delta_{q+1,n} \lambda_{q}^{L-2-\alpha} \ell_q^{-N}, \quad \|\bar D_{t, \Gamma}^2 \bar T_{q,n} \|_{N+L-2} \lesssim \ell_{t,q}^{-1} \tau_q^{-1} \delta_{q+1,n} \lambda_{q}^{L-2-\alpha} \ell_q^{-N}, \nonumber \\
   & \hspace{13.1cm} \forall N \geq 0, \label{tmoli6}
\end{align}
where all the implicit bounds are dependent on $\Gamma$, $M$, $\alpha$ and $N$.
\end{lem}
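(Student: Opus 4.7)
The plan is to establish both families of estimates in parallel, since the definitions \eqref{tmoli_def} and \eqref{tmoli_def_01} are structurally identical and the hypotheses from Proposition \ref{NewIter} are symmetric in $R_{q,n}$ and $T_{q,n}$. The central mechanism is a commutation identity exploiting the Lagrangian nature of the mollification: for $f_s(x,t) := R_{q,n}(\tilde X_t(x, t+s), t+s)$ I will show
\begin{equation*}
\bar D_{t,\Gamma} f_s(x,t) \;=\; \partial_s f_s(x,t) \;=\; (\bar D_{t,\Gamma} R_{q,n})(\tilde X_t(x, t+s), t+s),
\end{equation*}
obtained by differentiating $f_s$ along a characteristic $(x(\tau),\tau)$ of $\bar v_{q,\Gamma}$ and using the semigroup property $\tilde X_\tau(x(\tau),\tau+s) = x(\tau+s)$ of the flow. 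I will use this identity in two distinct ways, once without integration by parts and once with.

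For the purely spatial bounds \eqref{tmoli1} and \eqref{tmoli4}, I will apply the composition estimate of Proposition \ref{comp_estim} to control $\|R_{q,n}(\tilde X_t(\cdot, t+s), t+s)\|_N$ by $\|R_{q,n}(\cdot, t+s)\|_N$ plus cross-terms involving $\|D\tilde X_t\|_{\le N-1}$, invoking the flow bounds of Corollary \ref{Flow_gam_estim}; integrating against $\varrho_{\ell_{t,q}}$, whose $L^1$ norm is $\lesssim 1$, then transfers the bounds from Proposition \ref{NewIter} to $\bar R_{q,n}$. The $L-2$ cutoff in the index range of the low-regularity estimate reflects the fact that Corollary \ref{Flow_gam_estim} controls $D\tilde X_t$ only up to order $L-3$ before incurring an $\ell_q^{-1}$ loss, so that one derivative is lost in composition relative to the $L-1$ cutoff of the Newton hypotheses.

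For the first material derivative bounds \eqref{tmoli2} and \eqref{tmoli5}, the commutation identity directly yields
\begin{equation*}
\bar D_{t,\Gamma} \bar R_{q,n}(x,t) \;=\; \int (\bar D_{t,\Gamma} R_{q,n})(\tilde X_t(x, t+s), t+s)\,\varrho_{\ell_{t,q}}(s)\,ds.
\end{equation*}
I will split $\bar D_{t,\Gamma} = \bar D_t + w_{q+1}^{(t)}\cdot \nabla$ and apply the $\bar D_t R_{q,n}$ estimates from Proposition \ref{NewIter}, combined with the smallness of $w_{q+1}^{(t)}$ from Lemma \ref{w_t_estim}. A direct comparison shows that the auxiliary contribution $w_{q+1}^{(t)}\cdot \nabla R_{q,n}$ is subdominant to $\tau_q^{-1}\delta_{q+1,n}\lambda_q^{N-\alpha}$ thanks to the largeness of $\mu_{q+1}$ relative to $\delta_q^{1/2}\lambda_q$, so the Newton bound survives composition with $\tilde X_t$ and integration against $\varrho_{\ell_{t,q}}$.

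For the second material derivative bounds \eqref{tmoli3} and \eqref{tmoli6}, I exploit the second face of the commutation identity via integration by parts in $s$: since $\partial_s f_s = \bar D_{t,\Gamma} f_s$ and $\varrho_{\ell_{t,q}}$ is compactly supported, the boundary terms vanish and one gets
\begin{equation*}
\bar D_{t,\Gamma}^2 \bar R_{q,n}(x,t) \;=\; -\int (\bar D_{t,\Gamma} R_{q,n})(\tilde X_t(x, t+s), t+s)\,\varrho'_{\ell_{t,q}}(s)\,ds,
\end{equation*}
so a single material derivative is absorbed into the mollifier, producing the extra factor $\|\varrho'_{\ell_{t,q}}\|_{L^1} \lesssim \ell_{t,q}^{-1}$ above the bound from the previous step. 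A quick scaling check confirms that this mixed approach is sharper than applying two integration by parts to land $\varrho''_{\ell_{t,q}}$ (which would yield $\ell_{t,q}^{-2}$ rather than the desired $\ell_{t,q}^{-1}\tau_q^{-1}$), so the use of the $\bar D_t$ input estimate is essential. The main technical obstacle will be careful bookkeeping of derivative losses at the $L-2$ threshold and verifying, as in the first derivative case, that the $w_{q+1}^{(t)}\cdot \nabla$ piece of $\bar D_{t,\Gamma}$ remains subdominant; both controls reduce to the choice of $\mu_{q+1}$ and the mollification scales.
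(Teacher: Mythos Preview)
Your proposal is correct and follows precisely the standard Lagrangian-mollification argument that the paper defers to via its citations of \cite{IsPhD}, \cite{BDLS16}, and \cite[Lemma 4.1]{vikram}; the paper itself does not supply a proof. Your commutation identity, the splitting $\bar D_{t,\Gamma} = \bar D_t + w_{q+1}^{(t)}\cdot\nabla$ to feed in the Newton hypotheses, and the single integration by parts to land one material derivative on $\varrho_{\ell_{t,q}}$ are exactly the ingredients used in those references, and your bookkeeping of the $L-2$ cutoff against Corollary~\ref{Flow_gam_estim} is accurate.
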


\subsection{Nash perturbation} 
\label{Nash_Construction}
We make use of the shear flow in the directions of $\xi$ given by Lemma~\ref{geom}, and Lemma~\ref{lem:geo2} as building blocks of the Nash perturbations. More precisely, for every $\sigma \in \Lambda$, we define $\mathbb{W}_{\sigma}:\mathbb T_{2\pi}^2 \rightarrow \mathbb R^2$, and, for every $\zeta \in \Lambda_T$, we define $\mathbb{V}_{\zeta}:\mathbb T_{2\pi}^2 \rightarrow \mathbb R$ by
\begin{equation} \label{defW}
    \mathbb{W}_\sigma (x) = \frac{1}{\sqrt{2}} \big ( e^{i \sigma^\perp \cdot x} + e^{- i \sigma^\perp \cdot x} \big) \sigma, \quad
    \mathbb{V}_\zeta (x) = \frac{1}{\sqrt{2}} \big ( e^{i \zeta^\perp \cdot x} + e^{- i \zeta^\perp \cdot x} \big), 
\end{equation}
where $\mathbb T_{2\pi}^2 = \mathbb R^2/(2\pi \mathbb Z)^2$. For ease of notation, we also denote the corresponding stream-function by
\begin{equation*}
    \Phi_\sigma(x) = \frac{i}{\sqrt{2}}\big( e^{i\sigma^\perp \cdot x} - e^{-i \sigma^\perp \cdot x}\big). 
\end{equation*}
It is straightforward to check the following simple properties of the vectors $\mathbb{W}_{\sigma}$, and the scalars $\mathbb{V}_{\zeta}$ presented in the lemma below. 
\begin{lem}
The vector fields $\mathbb{W}_{\sigma}:\mathbb T_{2\pi}^2 \rightarrow \mathbb R^2$, and the scalars $\mathbb{V}_{\zeta}:\mathbb T_{2\pi}^2 \rightarrow \mathbb R$, as defined above, satisfy 
\begin{equation*}
    \begin{cases}
        \div (\mathbb{W}_{\sigma} \otimes \mathbb{W}_{\sigma}) = 0, \\ 
        \div (\mathbb{V}_{\zeta} \mathbb{W}_{\zeta}) = 0, \\ 
        \div \mathbb{W}_{\sigma} = 0.
    \end{cases}
\end{equation*}
and 
\begin{equation*}
    \fint_{\mathbb T_{2\pi} ^2} \mathbb{W}_{\sigma} \otimes \mathbb{W}_{\sigma} = \sigma \otimes \sigma, \quad \fint_{\mathbb T_{2\pi} ^2} \mathbb{V}_{\zeta} \mathbb{W}_{\zeta} = \zeta.
\end{equation*}
\end{lem}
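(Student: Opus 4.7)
The plan is to expand the building blocks in real form via the identity $\tfrac{1}{\sqrt 2}\bigl(e^{i\sigma^\perp\cdot x}+e^{-i\sigma^\perp\cdot x}\bigr)=\sqrt 2\,\cos(\sigma^\perp\cdot x)$, so that
$\mathbb{W}_\sigma(x)=\sqrt 2\,\cos(\sigma^\perp\cdot x)\,\sigma$ and $\mathbb{V}_\zeta(x)=\sqrt 2\,\cos(\zeta^\perp\cdot x)$. Once this is in place, every claim reduces to the single two-dimensional orthogonality relation $v\cdot v^\perp=0$: each spatial derivative of the phase $\sigma^\perp\cdot x$ will ultimately be contracted against the constant vector $\sigma$, producing a vanishing inner product. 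These are of course the classical Beltrami-style shear flows; I would emphasize this structural observation up front and then verify each identity.

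Next I would dispatch the divergence identities. For $\div\mathbb{W}_\sigma$, a one-line chain-rule computation gives $\div\mathbb{W}_\sigma=-\sqrt 2\,\sin(\sigma^\perp\cdot x)\,(\sigma\cdot\sigma^\perp)=0$. For $\div(\mathbb{W}_\sigma\otimes\mathbb{W}_\sigma)$, I would first rewrite the tensor as $\mathbb{W}_\sigma\otimes\mathbb{W}_\sigma=2\cos^2(\sigma^\perp\cdot x)\,\sigma\otimes\sigma$ and note that taking the divergence in the second index produces $\partial_j[2\cos^2(\sigma^\perp\cdot x)]\sigma_j=-4\cos(\sigma^\perp\cdot x)\sin(\sigma^\perp\cdot x)(\sigma\cdot\sigma^\perp)=0$. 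The third identity $\div(\mathbb{V}_\zeta\mathbb{W}_\zeta)=0$ follows by the identical argument applied to $\mathbb{V}_\zeta\mathbb{W}_\zeta=2\cos^2(\zeta^\perp\cdot x)\,\zeta$, now using $\zeta\cdot\zeta^\perp=0$.

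For the mean-value formulas I would invoke the double-angle identity $2\cos^2\theta=1+\cos(2\theta)$. Since $\sigma,\zeta\in\Z^2\setminus\{0\}$, the frequency $2\sigma^\perp$ (respectively $2\zeta^\perp$) is a nonzero lattice vector, so $\cos(2\sigma^\perp\cdot x)$ is a nontrivial Fourier character on $\T^2_{2\pi}$ with vanishing mean. Hence
\[
\fint_{\T^2_{2\pi}}2\cos^2(\sigma^\perp\cdot x)\,dx=1,
\]
and the announced identities $\fint\mathbb{W}_\sigma\otimes\mathbb{W}_\sigma=\sigma\otimes\sigma$ and $\fint\mathbb{V}_\zeta\mathbb{W}_\zeta=\zeta$ follow immediately by pulling the constant tensors $\sigma\otimes\sigma$ and $\zeta$ out of the integral.

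There is no substantial obstacle in this lemma; the content collapses to $v\cdot v^\perp=0$ in $\R^2$ together with one elementary Fourier computation. The only point requiring minor care is the hypothesis $\sigma,\zeta\neq 0$ guaranteeing that $2\sigma^\perp$ and $2\zeta^\perp$ remain nonzero lattice vectors, which is built into the definitions of $\Lambda_R$ and $\Lambda_T$ supplied by Lemmas~\ref{geom} and~\ref{lem:geo2}.
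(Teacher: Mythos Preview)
Your proof is correct and is precisely the direct verification the paper has in mind; the paper itself omits the proof entirely, declaring the identities ``straightforward to check,'' and your computation via $\mathbb{W}_\sigma=\sqrt{2}\cos(\sigma^\perp\cdot x)\,\sigma$, the orthogonality $\sigma\cdot\sigma^\perp=0$, and the double-angle formula is exactly that check.
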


With the help of the above mentioned building blocks, we are ready to describe the Nash perturbations. To that context, let us first denote by $\tilde\Psi_k$, the backwards flow of $\bar v_{q,\Ga}$ starting from $t = t_k$:
\begin{equation*}
    \begin{cases}
        \partial_t \td\Psi_k + \bar v_{q,\Ga} \cdot \nabla \td\Psi_k = 0, \\ 
        \Psi \big|_{t = t_k} = x.
    \end{cases}
\end{equation*}
Next, we define 
\begin{equation*}
\bar b_{\zeta, k, n} = \lambda_q^{-\alpha/2}\delta_{q+1, n}^{1/2} \eta_k \Gamma^{1/2}_\zeta \bigg(\lambda_q^{\alpha}\delta_{q+1, n}^{-1} \nabla \tilde \Psi_k \bar T_{q, n} \bigg),
\end{equation*}
\begin{align*}
\bar a_{\xi, k, n} =  \delta_{q+1, n}^{1/2} & \eta_k \gamma_\xi \bigg(\nabla \tilde \Psi_k \nabla \tilde \Psi_k^T - \nabla \tilde \Psi_k \frac{\bar R_{q, n}}{\delta_{q+1,n}} \nabla \tilde \Psi_k^T \\
& \qquad - \nabla \tilde \Psi_k \Big(\sum_{\zeta \in \Lambda_T} \sum_{k' \in \mathbb Z_{q,n}} \delta_{q+1, n}^{-1}  \bar b^2_{\zeta, k', n} (\nabla \tilde \Psi_{k'} )^{-1}(\zeta \otimes \zeta) (\nabla \tilde \Psi_{k'} )^{-T}\Big) \nabla \tilde \Psi_k^T \bigg), \nonumber
\end{align*}
and the principal part of the Nash perturbation by 
\begin{align}
    w_{q+1}^{(p)} &:= \sum_{n = 0}^{\Gamma - 1} \sum_{k \in \mathbb Z_{q,n}} \sum_{\xi\in\Lambda_R} g_{\xi,k,n+1} \bar a_{\xi,k,n} (\na\tilde\Psi_k)^{-1} \mathbb{W}_\xi (\la_{q+1} \tilde\Psi_k) \nonumber \\
& \qquad \qquad + \sum_{n = 0}^{\Gamma - 1} \sum_{k \in \mathbb Z_{q,n}} \sum_{\zeta \in\Lambda_T} h_{\zeta,k,n+1} \bar b_{\zeta,k,n} (\na\tilde\Psi_k)^{-1} \mathbb{W}_\zeta (\la_{q+1} \tilde\Psi_k). \label{new_001}\\
\theta_{q+1}^{(p)} &:= \sum_{n = 0}^{\Gamma - 1} \sum_{k \in \mathbb Z_{q,n}} \sum_{\zeta \in\Lambda_T} h_{\zeta,k,n+1} \bar b_{\zeta,k,n} \mathbb{V}_\zeta (\la_{q+1} \tilde\Psi_k). \label{new_002}
\end{align}
Arguing as before, we first conclude that $\bar a_{\xi, k, n}$ is well-defined. Moreover, with the help of the stream-function, we may write
\begin{equation*}
    (\na\tilde\Psi_k)^{-1} \mathbb{W}_\sigma (\la_{q+1} \tilde\Psi_k) = \frac{1}{\lambda_{q+1}} \nabla^\perp \big( \Phi_\sigma (\lambda_{q+1} \tilde \Psi_k)\big). 
\end{equation*}
However, $w^{(p)}_{q+1}$ may not be divergence-free, and we need a ``corrector'' to make it divergence-free. In what follows, we define the ``corrector'' as follows.
\begin{align}\label{w.c}
    w_{q+1}^{(c)} &:= \frac{1}{\lambda_{q+1}} \sum_{n = 0}^{\Gamma - 1} \sum_{k \in \mathbb Z_{q,n}} \sum_{\xi\in\Lambda_R} g_{\xi,k,n+1} \nabla^\perp \bar a_{\xi, k, n} \Phi_\xi(\lambda_{q+1}\tilde \Psi_k) \\
    & \qquad \qquad + \frac{1}{\lambda_{q+1}} \sum_{n = 0}^{\Gamma - 1} \sum_{k \in \mathbb Z_{q,n}} \sum_{\zeta\in\Lambda_T} h_{\zeta,k,n+1} \nabla^\perp \bar b_{\zeta, k, n} \Phi_\zeta(\lambda_{q+1}\tilde \Psi_k). \nonumber
\end{align}
Then it is clear that the total Nash perturbation for the velocity $w_{q+1}^{(s)} := w_{q+1}^{(p)} + w_{q+1}^{(c)}$ is divergence-free, and
\begin{align} \label{total_Nash}
    w_{q+1}^{(s)} &= \frac{1}{\lambda_{q+1}} \nabla^\perp \bigg( \sum_{n = 0}^{\Gamma - 1} \sum_{k \in \mathbb Z_{q,n}} \sum_{\xi\in\Lambda_R} g_{\xi, k, n+1} \bar a_{\xi, k, n} \Phi_\xi(\lambda_{q+1} \tilde \Psi_k) \bigg) \nonumber \\
 &\qquad \qquad +  \frac{1}{\lambda_{q+1}} \nabla^\perp \bigg( \sum_{n = 0}^{\Gamma - 1} \sum_{k \in \mathbb Z_{q,n}} \sum_{\zeta\in\Lambda_T} h_{\zeta, k, n+1} \bar b_{\zeta, k, n} \Phi_\zeta(\lambda_{q+1} \tilde \Psi_k) \bigg)
\end{align}
Furthermore, we also define the ``mean free corrector'' $\theta_{q+1}^{(c)}(t) := - \int_{\T^2} \theta_{q+1}^{(p)} (x,t)\,dx$. Indeed, in view of this, the total Nash perturbation for the temperature $\theta_{q+1}^{(s)} = \theta_{q+1}^{(p)} + \theta_{q+1}^{(c)}$ is mean free.

We now denote the total perturbation for the velocity and the temperature by 
\begin{equation*}
    w_{q+1} = w_{q+1}^{(t)} + w_{q+1}^{(s)}, \quad \Theta_{q+1} = \theta_{q+1}^{(t)} + \theta_{q+1}^{(s)}.
\end{equation*}
Note that, thanks to Lemma~\ref{osc_prof} and the fact $\supp_t \bar a_{\xi, k, n}, \supp_t \bar b_{\zeta, k, n}, \supp_t \bar b_{\zeta, k, n} \subset \supp_t \eta_k$, we conclude that the terms in both sums have disjoint temporal supports with respect to each pair.

\subsection{New Euler-Boussinesq-Reynolds system} 
We can now write down the Euler-Boussinesq-Reynolds system at the level $q+1$. Indeed, let the velocity $v_{q+1}$ and temperature $\theta_{q+1}$ of proposition \ref{Main_prop} be 
\begin{equation}
    v_{q+1} = v_{q, \Gamma} + w_{q+1}^{(s)} = v_q + w_{q+1}^{(t)} +  w_{q+1}^{(s)} = v_q + w_{q+1},
\end{equation}
\begin{equation}
    \theta_{q+1} = \theta_{q, \Gamma} + \theta_{q+1}^{(s)} = \theta_q + \theta_{q+1}^{(t)} +  \theta_{q+1}^{(s)} = \theta_q + \Theta_{q+1}.
\end{equation}
Then the Euler-Bousinesq-Reynolds system at level $q+1$ takes the form
\begin{eqnarray*}
    \partial_t v_{q+1} + \div (v_{q+1} \otimes v_{q+1}) + \nabla p_{q+1} - \theta_{q+1} e_2= \div R_{q+1}, & \\
    \partial_t \theta_{q+1} + \div (v_{q+1} \theta_{q+1}) = \div T_{q+1}, &
\end{eqnarray*}
with 
\begin{equation}
    p_{q+1} = p_{q, \Gamma}  + \langle \bar v_q - v_q, w_{q+1}^{(s)} \rangle,
\end{equation}
and 
\begin{equation}
    R_{q+1} = R_{q+1, L} + R_{q+1, O} +  R_{q+1, R}, \quad  T_{q+1} = T_{q+1, L} + T_{q+1, O} +  T_{q+1, R},
\end{equation}
where 
\begin{itemize}
\item [(1)] The linear errors, denoted by $R_{q+1, L}$ and $T_{q+1, L}$, are defined by 
    \begin{equation*}
    R_{q+1, L} = \mathcal{R}\big(\bar D_{t, \Gamma} w_{q+1}^{(s)} + w_{q+1}^{(s)}\cdot \nabla \bar v_{q,\Gamma} + \theta_{q+1}^{(s)} e_2 \big), \quad T_{q+1, L} = \mathcal{R}\big(\bar D_{t, \Gamma} \theta_{q+1}^{(s)} + w_{q+1}^{(s)}\cdot \nabla \bar \theta_{q,\Gamma} \big).
     \end{equation*}
\item [(2)] The oscillation errors, denoted by $R_{q+1, O}$ and $T_{q+1, O}$, are defined by
    \begin{equation*}
        R_{q+1, O} = \mathcal{R} \div(S_{q, \Gamma} + w_{q+1}^{(s)} \otimes w_{q+1}^{(s)}), \quad
        T_{q+1, O} = \mathcal{R} \div(X_{q, \Gamma} + w_{q+1}^{(s)} \theta_{q+1}^{(s)}).
    \end{equation*}
\item [(3)] The residual errors, denoted by $R_{q+1, R}$ and $T_{q+1, R}$, are defined by
     \begin{align*}
         R_{q+1, R}&=  R_{q, \Gamma} + P_{q+1, \Gamma} + w_{q+1}^{(s)} \mathring \otimes (v_q - \bar v_q) + (v_q - \bar v_q)\mathring \otimes w_{q+1}^{(s)}, \\
         T_{q+1, R}&=  T_{q, \Gamma} + Y_{q+1, \Gamma} + w_{q+1}^{(s)}  (\theta_q - \bar \theta_q) + (v_q - \bar v_q)\theta_{q+1}^{(s)}
     \end{align*}
\end{itemize}

\begin{rem} 
Notice that $w_{q+1}^{(t)}$, $R_{q, \Gamma}$, $P_{q+1, \Gamma}$, $S_{q,\Gamma}$, $T_{q, \Gamma}$, $Y_{q+1, \Gamma}$, and $X_{q,\Gamma}$ have temporal supports inside the set 
\begin{equation*}
    [-2 + (\delta_q^{1/2} \lambda_q)^{-1} - 3\Gamma \tau_q, -1 - (\delta_q^{1/2} \lambda_q)^{-1} + 3\Gamma \tau_q] \cup [1 + (\delta_q^{1/2} \lambda_q)^{-1} - 3 \Gamma \tau_q, 2 - (\delta_q^{1/2} \lambda_q)^{-1} + 3 \Gamma \tau_q].
\end{equation*}
Therefore, we conclude the same for $w_{q+1}^{(s)}$, $\theta_{q+1}^{(s)}$, and, hence, for $w_{q+1}$, $\Theta_{q+1}$, $R_{q+1}$ and $T_{q+1}$. However, Notice that we can choose $a_0$ large enough to make sure that
\begin{equation*}
    (\delta_{q+1}^{1/2} \lambda_{q+1})^{-1} + 3 \Gamma (\delta_{q}^{1/2} \lambda_q)^{-1} \lambda_{q+1}^{-\alpha} < (\delta_q^{1/2} \lambda_q)^{-1}.
\end{equation*}
Hence, the conditions \eqref{Main_prop_eqn_2} and \eqref{ind_est_6} are true at the level $q+1$.
\end{rem}

\subsection{Estimates for both the perturbations}
We begin by collecting estimates on the amplitudes of both Nash perturbations in the following lemma. The proof of the lemma follows closely the proof of Lemma~\ref{a_estim}, with the help of Corollary~\ref{Flow_gam_estim} and Lemma \ref{tmoli_estim}.

\begin{lem} \label{a_bar_estim}
The following estimates are valid: 
\begin{align} 
    &\|\bar a_{\xi, k, n}\|_N \lesssim \delta_{q+1, n}^{1/2} \lambda_q^N, \hspace{3.0cm} \|\bar b_{\zeta, k, n}\|_N \lesssim \delta_{q+1, n}^{1/2} \lambda_q^N, \hspace{1.4cm} \,\forall N \in \{0,1,..., L-3\}. \label{a_bar_estim_1} \\
    &\|\bar D_{t, \Gamma} \bar a_{\xi, k, n}\|_N \lesssim \delta_{q+1, n}^{1/2} \tau_q^{-1} \lambda_q^N, \qquad \qquad \quad \|\bar D_{t, \Gamma} \bar b_{\zeta, k, n}\|_N \lesssim \delta_{q+1, n}^{1/2} \tau_q^{-1} \lambda_q^N, \,\,\, \forall N \in \{0,1,..., L-3\}. \label{a_bar_estim_2}\\
    &\|\bar a_{\xi, k, n}\|_{N+L-3} \lesssim \delta_{q+1, n}^{1/2} \lambda_q^{L-3} \ell_q^{-N}, \hspace{1.3cm}\, \|\bar b_{\zeta, k, n}\|_{N+L-3} \lesssim \delta_{q+1, n}^{1/2} \lambda_q^{L-3} \ell_q^{-N}, \hspace{1.4cm} \forall N \geq 0. \label{a_bar_estim_3} \\
    &\|\bar D_{t, \Gamma} \bar a_{\xi, k, n}\|_{N+L-3}  \lesssim \delta_{q+1, n}^{1/2} \lambda_q^{L- 3}  \tau_q^{-1} \ell_q^{-N}, \,\, \|\bar D_{t, \Gamma} \bar b_{\zeta, k, n}\|_{N+L-3}  \lesssim \delta_{q+1, n}^{1/2} \lambda_q^{L- 3}  \tau_q^{-1} \ell_q^{-N}, \,\,\, \forall N \geq 0, \label{a_bar_estim_4}
\end{align}
where all the implicit bounds are dependent on $\Gamma$, $M$, $\al$ and $N$.
\end{lem}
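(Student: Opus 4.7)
The plan is to mirror, essentially verbatim, the proof of Lemma~\ref{a_estim}. The quantities $\bar a_{\xi,k,n}$ and $\bar b_{\zeta,k,n}$ are defined by exactly the same formulas as $a_{\xi,k,n}$ and $b_{\zeta,k,n}$, with $\Psi_k$, $R_{q,n}$, $T_{q,n}$, and $\bar D_t$ systematically replaced by $\tilde\Psi_k$, $\bar R_{q,n}$, $\bar T_{q,n}$, and $\bar D_{t,\Gamma}$. Correspondingly, every appeal to Lemma~\ref{Flow_estim} in the proof of Lemma~\ref{a_estim} will be replaced by the analogous estimate from Corollary~\ref{Flow_gam_estim}, and every appeal to the assumptions \eqref{NewIter_1}--\eqref{NewIter_4} of Proposition~\ref{NewIter} will be replaced by the corresponding bound in Lemma~\ref{tmoli_estim}.

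First I would check that $\bar a_{\xi,k,n}$ and $\bar b_{\zeta,k,n}$ are well defined. On $\supp\eta_k$, Lemma~\ref{tmoli_estim} gives $\|\bar T_{q,n}\|_0\lesssim \delta_{q+1,n}\lambda_q^{-\alpha}$, which combined with Corollary~\ref{Flow_gam_estim} implies $\|\lambda_q^{\alpha}\delta_{q+1,n}^{-1}\nabla\tilde\Psi_k\bar T_{q,n}\|_0\lesssim 1$, so $\Gamma_\zeta^{1/2}$ can be applied and $\|\bar b_{\zeta,k,n}^2\|_0\lesssim \delta_{q+1,n}\lambda_q^{-\alpha}$. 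Repeating the Lemma~\ref{a_estim} argument with this bound, together with the analogous bound for $\bar R_{q,n}/\delta_{q+1,n}$, shows that for sufficiently large $a_0$ the argument of $\gamma_\xi$ lies in $B_{1/2}(\I)$.

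For the spatial estimates \eqref{a_bar_estim_1} and \eqref{a_bar_estim_3}, I would apply Proposition~\ref{comp_estim} to the compositions $\Gamma_\zeta^{1/2}(\cdot)$ and $\gamma_\xi(\cdot)$, then use the Leibniz rule on the products $\nabla\tilde\Psi_k\bar T_{q,n}$ and $\nabla\tilde\Psi_k(\bar R_{q,n}/\delta_{q+1,n})\nabla\tilde\Psi_k^T$ (and the analogous product built from $\bar b_{\zeta,k',n}^2$, whose estimates have just been derived). The relevant factors are controlled by Corollary~\ref{Flow_gam_estim} and Lemma~\ref{tmoli_estim}, yielding the claimed bounds. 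The critical point is that the validity range becomes $\{0,1,\ldots,L-3\}$ (with the corresponding high-frequency range), which is exactly the intersection of the ranges afforded by the new inputs: Corollary~\ref{Flow_gam_estim} is valid for $N\le L-3$, while Lemma~\ref{tmoli_estim} is valid for $N\le L-2$; the bottleneck $L-3$ originates from the fact that $\bar v_{q,\Gamma}$ has two fewer orders of available regularity than $\bar v_q$.

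For the material derivative estimates \eqref{a_bar_estim_2} and \eqref{a_bar_estim_4}, I would write $\bar D_{t,\Gamma}\bar b_{\zeta,k,n}$ and $\bar D_{t,\Gamma}\bar a_{\xi,k,n}$ via the chain rule. This produces a term carrying $\partial_t\eta_k$ (of size $\tau_q^{-1}$, handled by the spatial estimates just obtained) and a composition term involving $\bar D_{t,\Gamma}$ applied to the argument of $\Gamma_\zeta^{1/2}$ or $\gamma_\xi$. Expanding by Leibniz produces terms of the form $\bar D_{t,\Gamma}\nabla\tilde\Psi_k$, controlled by \eqref{Flow_gam_estim_2} and \eqref{Flow_gam_estim_5}, and $\bar D_{t,\Gamma}\bar R_{q,n}$, $\bar D_{t,\Gamma}\bar T_{q,n}$, controlled by \eqref{tmoli2} and \eqref{tmoli5}. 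The main obstacle is purely bookkeeping: one has a number of product terms, and in the high-frequency ranges one must carefully split derivatives between the factors to avoid losing powers of $\ell_q^{-1}$. Once all the products are estimated with Proposition~\ref{comp_estim} at the correct order and the worst contribution is collected, the resulting bounds match those claimed, with $\tau_q^{-1}$ arising uniformly either from $|\partial_t\eta_k|$, from $\bar D_{t,\Gamma}\nabla\tilde\Psi_k/\nabla\tilde\Psi_k\sim \delta_q^{1/2}\lambda_q\le \tau_q^{-1}$, or from the material derivative of $\bar R_{q,n}, \bar T_{q,n}$.
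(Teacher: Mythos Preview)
Your proposal is correct and matches the paper's approach exactly: the paper states only that ``the proof of the lemma follows closely the proof of Lemma~\ref{a_estim}, with the help of Corollary~\ref{Flow_gam_estim} and Lemma~\ref{tmoli_estim},'' which is precisely the substitution scheme you describe. Your identification of the $L-3$ bottleneck coming from Corollary~\ref{Flow_gam_estim} is also correct.
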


We can now give the estimates of the perturbations $w_{q+1}^{(s)}$, $\theta_{q+1}^{(s)}$. In fact, at this point we can also fix the constant $M_0$ and verify the inductive estimates \eqref{ind_est_1} and \eqref{ind_est_2}.

\begin{lem} \label{velo_estim_fin}
There exists a positive constant $M_0$, which depends only on the parameters $\beta$ and $L$, such that 
\begin{equation} 
\label{spatial_pert_est}
\|w_{q+1}^{(s)}\|_{N}, \, \|\theta_{q+1}^{(s)}\|_{N} \leq \frac{M_0}{2} \delta_{q+1}^{1/2} \lambda_{q+1}^N, \,\,\, \forall N \in \{0, 1,..., L\}.
\end{equation}
Moreover, we also have
\begin{equation}
\label{total_pert_est}
\|w_{q+1}\|_N, \, \|\Theta_{q+1}\|_N \leq M_0 \delta_{q}^{1/2} \lambda_{q+1}^N, \,\,\, \forall N \in \{0,1,...,L\}.
\end{equation}
\end{lem}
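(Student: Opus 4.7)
My plan is to estimate the Nash perturbations \eqref{new_001}, \eqref{new_002}, \eqref{w.c}, \eqref{total_Nash} term by term, then to combine with Lemma \ref{w_t_estim} for the Newton contribution. For the principal parts I would treat each summand of $w_{q+1}^{(p)}$ (respectively $\theta_{q+1}^{(p)}$) as a product of a smooth amplitude $\bar a_{\xi,k,n}(\nabla\tilde\Psi_k)^{-1}$ (respectively $\bar b_{\zeta,k,n}$) with a highly oscillatory factor $\mathbb{W}_\xi(\lambda_{q+1}\tilde\Psi_k)$ (respectively $\mathbb{V}_\zeta(\lambda_{q+1}\tilde\Psi_k)$). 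Applying Leibniz in $x$ and Fa\`a di Bruno to the phase, the amplitude is controlled by Lemma \ref{a_bar_estim} (cost $\lambda_q$ at moderate order, $\lambda_q^{L-3}\ell_q^{-j}$ at top order), the Jacobian factor by Corollary \ref{Flow_gam_estim}, and the phase factor by $\|\nabla\tilde\Psi_k\|_0\lesssim 1$ from Remark \ref{remark_flow_bd}. Using $\lambda_q\le\lambda_{q+1}$ and $\ell_q^{-1}=(\lambda_q\lambda_{q+1})^{1/2}$, a direct check gives $\lambda_q^{L-3}\ell_q^{-j}\le\lambda_{q+1}^{L-3+j}\le\lambda_{q+1}^L$ for $0\le j\le 3$, so every summand is bounded by a universal constant times $\delta_{q+1}^{1/2}\lambda_{q+1}^N$ throughout $0\le N\le L$.

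For the correctors, the expression \eqref{w.c} for $w_{q+1}^{(c)}$ carries an explicit $\lambda_{q+1}^{-1}$ factor combined with a derivative of $\bar a$ (or $\bar b$), giving a gain of $\lambda_q/\lambda_{q+1}$ over the principal part, so its contribution is strictly smaller at every order. The temperature corrector $\theta_{q+1}^{(c)}(t)=-\int_{\T^2}\theta_{q+1}^{(p)}\,dx$ is purely time-dependent, with $\|\theta_{q+1}^{(c)}\|_0\le\|\theta_{q+1}^{(p)}\|_0$ and vanishing spatial H\"older seminorms. The disjointness of temporal supports from Lemma \ref{osc_prof} (together with the locally finite nature of $\{\eta_k\}$) ensures that only boundedly many indices $(k,n,\xi)$ contribute at each fixed $t$, so the sums do not inflate the bound. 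Choosing $M_0$ to be at least twice the universal constant produced at this stage yields \eqref{spatial_pert_est}.

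For \eqref{total_pert_est} I write $w_{q+1}=w_{q+1}^{(t)}+w_{q+1}^{(s)}$ and likewise for $\Theta_{q+1}$. For $N\le L-2$, Lemma \ref{w_t_estim} together with the choice $\mu_{q+1}=\delta_{q+1}^{1/2}\lambda_q^{2/3}\lambda_{q+1}^{1/3}\lambda_{q+1}^{4\alpha}$ yields $\|w_{q+1}^{(t)}\|_N\lesssim\delta_{q+1}^{1/2}(\lambda_q/\lambda_{q+1})^{1/3}\lambda_{q+1}^{N-3\alpha}\ll\delta_{q+1}^{1/2}\lambda_{q+1}^N$ for $a_0$ large; for $L-1\le N\le L$ the same conclusion follows by interpolating between \eqref{w_t_estim_1} and \eqref{w_t_estim_3}. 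Since $\delta_{q+1}^{1/2}\le\delta_q^{1/2}$, adding these bounds to \eqref{spatial_pert_est} and possibly enlarging $M_0$ once more gives \eqref{total_pert_est}.

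The main bookkeeping obstacle is at top derivative orders near $L$, where the amplitude estimates from Lemma \ref{a_bar_estim} carry the factor $\ell_q^{-(N-L+3)}$: one must verify, for each Leibniz/Fa\`a di Bruno combination, that the surplus powers of $\ell_q^{-1}=(\lambda_q\lambda_{q+1})^{1/2}$ are absorbed by $\lambda_{q+1}^N$. This is a routine exponent count using $\lambda_q\le\lambda_{q+1}$, but it is what forces $M_0$ to depend only on $\beta$ and $L$ (and not on $a$ or $q$).
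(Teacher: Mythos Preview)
Your proposal is correct and follows essentially the same approach as the paper: both arguments exploit the disjoint temporal supports from Lemma~\ref{osc_prof} to reduce to a single summand, estimate the amplitude via Lemma~\ref{a_bar_estim} and Corollary~\ref{Flow_gam_estim}, control the oscillatory phase via the composition estimate (your Fa\`a di Bruno is the paper's Proposition~\ref{comp_estim}), treat the correctors by the obvious $\lambda_q/\lambda_{q+1}$ gain (resp.\ the trivial $\|\theta_{q+1}^{(c)}\|_0\le\|\theta_{q+1}^{(p)}\|_0$), and then absorb the Newton part using Lemma~\ref{w_t_estim}. The paper defers the top-order exponent count to \cite{vikram} and packages the Newton estimate via the unified bound $\|\theta_{q+1}^{(t)}\|_N\lesssim\frac{\delta_{q+1}\lambda_q}{\mu_{q+1}}\ell_q^{-N-\alpha}$ rather than interpolation, but these are cosmetic differences.
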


\begin{proof}
We can follow \cite{vikram} to establish the bounds related to the velocity perturbation $w_{q+1}$. Note that in the proof, for a constant $C_L$, we need to choose $M_0 = 2 C_L \sup_{\xi, k, n} |g_{\xi, k, n}|$ to get the desired estimates for the velocity perturbation. Now we only present brief details about the estimates related to the temperature perturbation $\Theta_{q+1}$. Notice that, the corrector term satisfies $|\theta^{(c)}_{q+1}(t)| \lesssim \|\theta_{q+1}^{(p)}\|_0 $, while thanks to disjoint temporal supports of $\{h_{\zeta, k, n+1} \bar b_{\zeta, k, n}\}_{\zeta, k, n}$, we have from \eqref{new_002} 
\begin{eqnarray*}
    [\theta_{q+1}^{(p)}]_N &\leq& \sup_{\zeta, k, n} |h_{\zeta, k, n+1}| [\bar b_{\zeta, k, n} \mathbb{V}_\zeta(\lambda_{q+1} \tilde \Psi_k)]_{N} \\ 
    & \lesssim & \sup_{\zeta, k, n} |h_{\zeta, k, n+1}| \big([\mathbb{V}_\zeta(\lambda_{q+1} \tilde \Psi_k)]_{N} \|\bar b_{\zeta, k, n}\|_0  + \|\mathbb{V}_\zeta(\lambda_{q+1} \tilde \Psi_k)\|_0 \|\bar b_{\zeta, k, n}\|_{N}\big).
\end{eqnarray*}
We can make use of proposition \ref{comp_estim} to conclude
\begin{eqnarray*}
    [\mathbb{V}_\zeta(\lambda_{q+1} \tilde \Psi_k)]_{N} & \lesssim & \big(\|D\mathbb{V}_\zeta(\lambda_{q+1}\cdot)\|_{N-1} \|\nabla \tilde \Psi\|_0^{N}    + [\mathbb{V}_\zeta(\lambda_{q+1}\cdot)]_1 \|\nabla \tilde \Psi_k\|_{N-1}) \\ 
    & \lesssim & (\lambda_{q+1}^{N+1} + \lambda_{q+1} \ell_q^{-N}).
\end{eqnarray*}
Now we can follow again \cite{vikram}, choose $a_0$ large enough, and 
$$
M_0 = \max\big\{2 C_L \sup_{\xi, k, n} |g_{\xi, k, n}|, 2 D_L \sup_{\zeta, k, n} |h_{\zeta, k, n}| \big\},
$$
where $C_L, D_L$ are constants depending on $L$ only, to conclude the proof of the estimate \eqref{spatial_pert_est}.

Finally, taking advantage of lemma \ref{w_t_estim}, we conclude that 
\begin{equation*}
    \|\theta_{q+1}^{(t)}\|_N \lesssim \frac{\delta_{q+1}\lambda_q}{\mu_{q+1}} \ell_q^{-N-\alpha} \lesssim  \delta_{q+1}^{1/2}\bigg(\frac{\lambda_q}{\lambda_{q+1}}\bigg)^{1/3} \lambda_{q+1}^{N}.
\end{equation*}
Therefore, by selecting $a_0$ to be sufficiently large, we can complete the proof of the estimate \eqref{total_pert_est}.
\end{proof}

\begin{cor} \label{velo_corrol_fin}
The following estimates hold: 
\begin{equation} \label{propag_velo_1}
    \|v_{q+1}\|_0, \, \|\theta_{q+1}\|_0 \leq M (1 - \delta_{q+1}^{1/2}), 
\end{equation}
\begin{equation} \label{propag_velo_2}
    \|v_{q+1}\|_N, \, \|\theta_{q+1}\|_N \leq M \delta_{q+1}^{1/2} \lambda_{q+1}^N, \,\,\, \forall N \in \{1,2,...,L\}
\end{equation}
\begin{equation} \label{propag_velo_3}
    \|v_{q+1} - v_q\|_0 + \frac{1}{\lambda_{q+1}} \|v_{q+1} - v_q\|_1 + \|\theta_{q+1} - \theta_q\|_0 + \frac{1}{\lambda_{q+1}} \|\theta_{q+1} - \theta_q\|_1 \leq 2 M \delta_{q+1}^{1/2}.
\end{equation}
\end{cor}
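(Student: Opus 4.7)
The strategy is to exploit the decompositions $v_{q+1} = v_q + w_{q+1}$ with $w_{q+1} = w_{q+1}^{(t)} + w_{q+1}^{(s)}$ and $\theta_{q+1} = \theta_q + \Theta_{q+1}$ with $\Theta_{q+1} = \theta_{q+1}^{(t)} + \theta_{q+1}^{(s)}$, apply the triangle inequality, and then invoke the inductive bounds \eqref{ind_est_1}--\eqref{ind_est_2} on $(v_q,\theta_q)$ together with the perturbation estimates already established in Lemma \ref{velo_estim_fin} and Lemma \ref{w_t_estim}. The three claims will then reduce to checking that residual smallness factors of the form $(\lambda_q/\lambda_{q+1})^\sigma$ and $\lambda_{q+1}^{-c\alpha}$ can be absorbed by taking $a_0$ sufficiently large and imposing $M \geq 2M_0$. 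Throughout the argument the velocity and temperature are treated in parallel, so I will describe only $v_{q+1}$; the temperature case is identical after replacing Lemma \ref{velo_estim_fin} and Lemma \ref{w_t_estim} by their scalar analogues.

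I would begin with \eqref{propag_velo_3}, which is the most immediate since $v_{q+1}-v_q = w_{q+1}$ and $\theta_{q+1}-\theta_q = \Theta_{q+1}$. By \eqref{spatial_pert_est}, the Nash pieces $w_{q+1}^{(s)}$ and $\theta_{q+1}^{(s)}$ contribute at most $\tfrac{M_0}{2}\delta_{q+1}^{1/2}\lambda_{q+1}^N$ for $N\in\{0,1\}$, while the Newton pieces $w_{q+1}^{(t)}$ and $\theta_{q+1}^{(t)}$ carry, by Lemma \ref{w_t_estim}, the extra small factor $(\lambda_q/\lambda_{q+1})^{1/3}\lambda_{q+1}^{-4\alpha}\ell_q^{-\alpha}$ relative to $\delta_{q+1}^{1/2}\lambda_{q+1}^N$; this factor tends to zero as $a_0\to\infty$. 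Summing the four contributions gives $\|v_{q+1}-v_q\|_0 + \lambda_{q+1}^{-1}\|v_{q+1}-v_q\|_1 + \|\theta_{q+1}-\theta_q\|_0 + \lambda_{q+1}^{-1}\|\theta_{q+1}-\theta_q\|_1 \leq 2M_0\delta_{q+1}^{1/2}$, which is $\leq 2M\delta_{q+1}^{1/2}$ as soon as $M \geq M_0$.

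For \eqref{propag_velo_1}, the triangle inequality together with \eqref{ind_est_1} and the sharp bound $\|w_{q+1}\|_0 \leq M_0 \delta_{q+1}^{1/2}$ (which follows from the same observation as above) produces $\|v_{q+1}\|_0 \leq M(1-\delta_q^{1/2}) + M_0\delta_{q+1}^{1/2}$, and this is dominated by $M(1-\delta_{q+1}^{1/2})$ provided $(M+M_0)\delta_{q+1}^{1/2} \leq M\delta_q^{1/2}$, i.e.\ $(1 + M_0/M)(\lambda_q/\lambda_{q+1})^\beta \leq 1$, which is true for $a_0$ large since $b>1$. For \eqref{propag_velo_2} with $N\in\{1,\ldots,L\}$, the triangle inequality combined with \eqref{ind_est_2} and Lemma \ref{velo_estim_fin} gives $\|v_{q+1}\|_N \leq M\delta_q^{1/2}\lambda_q^N + M_0 \delta_{q+1}^{1/2}\lambda_{q+1}^N$; since $\delta_q^{1/2}\lambda_q^N = \delta_{q+1}^{1/2}\lambda_{q+1}^N\cdot(\lambda_q/\lambda_{q+1})^{N-\beta}$ with $N \geq 1 > \beta$, the first term is arbitrarily small relative to $M\delta_{q+1}^{1/2}\lambda_{q+1}^N$ for $a_0$ large, and the second is at most $\tfrac{M}{2}\delta_{q+1}^{1/2}\lambda_{q+1}^N$ once $M \geq 2M_0$, closing the estimate.

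The proof is therefore essentially a bookkeeping exercise; the only slightly delicate step is verifying that every smallness factor generated by $a_0\to\infty$ genuinely beats the ambient constants $M$, $M_0$, and the various implicit constants coming from the previous lemmas. Since $a_0$ in Proposition \ref{Main_prop} is permitted to depend on all other parameters, this presents no real obstruction, and the choice $M \geq 2M_0$ and $a_0$ large (depending on $\beta$, $b$, $L$, $M$, $M_0$) suffices.
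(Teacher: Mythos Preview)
Your argument is correct and is precisely the standard bookkeeping proof; the paper does not supply its own argument but simply refers to \cite{vikram}, where the same triangle-inequality computation with the inductive bounds, Lemma~\ref{velo_estim_fin}, and Lemma~\ref{w_t_estim} is carried out. The only remark is that the stated bound \eqref{total_pert_est} in the paper has $\delta_q^{1/2}$ rather than $\delta_{q+1}^{1/2}$, but you correctly bypass this by splitting $w_{q+1} = w_{q+1}^{(s)} + w_{q+1}^{(t)}$ and using \eqref{spatial_pert_est} together with the extra smallness of the Newton piece to recover the sharp $M_0\delta_{q+1}^{1/2}$ control needed for \eqref{propag_velo_1} and \eqref{propag_velo_3}.
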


\begin{proof}
For a proof, consult \cite{vikram}.
\end{proof}

\subsection{Estimates for the linear errors \texorpdfstring{$R_{q + 1, L}$ and $T_{q+1,L}$}{rrrr}} 

We first write 
\begin{equation*}
    R_{q+1, L} = \underbrace{\mathcal{R} (w_{q+1}^{(s)} \cdot \nabla \bar v_{q, \Gamma})}_{\text{Nash error}} + \underbrace{\mathcal{R}( \bar D_{t, \Gamma} w_{q+1}^{(s)})}_{\text{Transport error}} + \,\mathcal{R} (\theta_{q+1}^{(s)} e_2),
\end{equation*}
and we give details of the estimates separately. However, let us first gather some preliminary estimates on material derivatives.

\begin{lem} 
\label{high_mat_der}
The following bounds are valid: 
\begin{align}
& \|\bar D_{t, \Gamma} \nabla \bar v_{q, \Gamma}\|_{N}, \, \|\bar D_{t, \Gamma} \nabla \bar \theta_{q, \Gamma}\|_{N} \lesssim \delta_q \lambda_q^{N+2}, \hspace{4.1cm} \,\,\,\,\, \forall N\in \{0,1,...,L-4\},  \label{mat_spa_velo_1} \\
&\|\bar D_{t, \Gamma} \nabla \bar v_{q, \Gamma}\|_{N + L -4}, \, \|\bar D_{t, \Gamma} \nabla \bar \theta_{q, \Gamma}\|_{N + L -4} \lesssim \delta_q \lambda_q^{L-2} \ell_q^{-N}, \hspace{2.3cm} \forall N\geq 0, \label{mat_spa_velo_2} \\
&\|\bar D_{t, \Gamma}^2 \bar a_{\xi, k, n}\|_N, \, \|\bar D_{t, \Gamma}^2 \bar b_{\zeta, k, n}\|_N \lesssim \delta_{q+1, n}^{1/2} \tau_q^{-1} \ell_{t,q}^{-1}\lambda_q^N, \hspace{3.1cm} \,\forall N \in \{0,1,..., L-4\}, \label{a_bar_2nd_material} \\
&\|\bar D_{t, \Gamma}^2 \bar a_{\xi, k, n}\|_{N + L-4 }, \, \|\bar D_{t, \Gamma}^2 \bar b_{\zeta, k, n}\|_{N + L-4 }  \lesssim \delta_{q+1, n}^{1/2} \lambda_q^{L-4}  \tau_q^{-1} \ell_{t,q}^{-1}\ell_q^{-N}, \qquad  \forall N \geq 0, \label{a_bar_2nd_material_2}
\end{align}
where all the implicit bounds are dependent on $\Gamma$, $M$, $\alpha$ and $N$. 
\end{lem}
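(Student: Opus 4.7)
The four estimates split into two groups that I would prove in turn: first the bounds on $\bar D_{t,\Gamma} \nabla \bar v_{q,\Gamma}$ and $\bar D_{t,\Gamma} \nabla \bar \theta_{q,\Gamma}$, and then, leveraging those, the bounds on $\bar D_{t,\Gamma}^2 \bar a_{\xi,k,n}$ and $\bar D_{t,\Gamma}^2 \bar b_{\zeta,k,n}$.

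For the first pair, the key identity is the commutator
\[
\bar D_{t,\Gamma} \nabla f = \nabla \bar D_{t,\Gamma} f - (\nabla \bar v_{q,\Gamma})^T \nabla f.
\]
Applying this with $f = \bar v_{q,\Gamma}$ (respectively $\bar \theta_{q,\Gamma}$), the quadratic term $(\nabla \bar v_{q,\Gamma})^T \nabla \bar v_{q,\Gamma}$ (resp.\ $(\nabla \bar v_{q,\Gamma})^T \nabla \bar \theta_{q,\Gamma}$) is directly controlled via Corollary \ref{Gamma_velo_estim}, producing exactly the $\delta_q \lambda_q^{N+2}$ scaling at low derivatives and $\delta_q \lambda_q^{L-2}\ell_q^{-N}$ at high ones. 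For the remaining term $\nabla \bar D_{t,\Gamma} \bar v_{q,\Gamma}$, I split $\bar D_{t,\Gamma} = \bar D_t + w_{q+1}^{(t)} \cdot \nabla$ and $\bar v_{q,\Gamma} = \bar v_q + w_{q+1}^{(t)}$. The contribution $\bar D_t \bar v_q$ is analyzed by mollifying the Euler-Boussinesq-Reynolds equation at level $q$: since $\partial_t \bar v_q = (-(v_q\cdot\nabla)v_q - \nabla p_q + \theta_q e_2 + \div R_q)\ast\zeta_{\ell_q}$, one obtains
\[
\bar D_t \bar v_q = \bar v_q \cdot \nabla \bar v_q - \bigl((v_q\cdot\nabla) v_q\bigr)\ast\zeta_{\ell_q} - \nabla \bar p_q + \bar\theta_q e_2 + \div R_{q,0},
\]
whose commutator piece is bounded via Proposition \ref{CET_comm} and whose remaining pieces fall under Lemma \ref{smoli_estim}. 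The term $\bar D_t w_{q+1}^{(t)}$ is already supplied by Lemma \ref{w_t_estim}, and the cross contributions $w_{q+1}^{(t)}\cdot\nabla \bar v_q$ and $w_{q+1}^{(t)}\cdot\nabla w_{q+1}^{(t)}$ are handled by the product-rule Hölder estimate. For $\bar \theta_{q,\Gamma}$ the argument is entirely analogous, with the mollified transport equation $\partial_t \bar\theta_q + ((v_q\cdot\nabla)\theta_q)\ast\zeta_{\ell_q} = \div T_{q,0}$ replacing the Euler equation.

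For the second pair, I would mimic the structure of the proof of Lemma \ref{a_estim}, applying $\bar D_{t,\Gamma}$ twice to the composition $\gamma_\xi(\text{arg})$ (respectively $\Gamma_\zeta^{1/2}(\text{arg})$) together with the prefactor $\eta_k$. The chain rule produces terms involving $\partial_t^2 \eta_k$ (bounded by $\tau_q^{-2} \le \tau_q^{-1}\ell_{t,q}^{-1}$), mixed products $(\partial_t\eta_k)\,\bar D_{t,\Gamma}(\text{arg})$, a term $D^2\gamma_\xi \cdot \bar D_{t,\Gamma}(\text{arg})\otimes \bar D_{t,\Gamma}(\text{arg})$, and a principal term $D\gamma_\xi \cdot \bar D_{t,\Gamma}^2(\text{arg})$. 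The argument involves $\nabla \tilde\Psi_k$, $\bar R_{q,n}/\delta_{q+1,n}$, and products of $\bar b_{\zeta,k',n}^2 (\nabla\tilde\Psi_{k'})^{-1}$. To handle $\bar D_{t,\Gamma}^2 \nabla \tilde\Psi_k$ I use $\bar D_{t,\Gamma} \tilde\Psi_k = 0$, which implies
\[
\bar D_{t,\Gamma}^2 \nabla \tilde\Psi_k = -\bar D_{t,\Gamma}(\nabla \bar v_{q,\Gamma})\cdot\nabla\tilde\Psi_k + (\nabla \bar v_{q,\Gamma})^2 \cdot \nabla\tilde\Psi_k,
\]
and now the first half of the lemma, combined with Corollary \ref{Flow_gam_estim}, yields $\|\bar D_{t,\Gamma}^2 \nabla\tilde\Psi_k\|_0 \lesssim \delta_q\lambda_q^2 \lesssim \tau_q^{-1}\ell_{t,q}^{-1}$ (using $\delta_q^{1/2}\lambda_q \lesssim \tau_q^{-1}$ and $\ell_{t,q}^{-1} > \tau_q^{-1}$). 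The bounds for $\bar D_{t,\Gamma}^2 \bar R_{q,n}$ and $\bar D_{t,\Gamma}^2 \bar T_{q,n}$ in Lemma \ref{tmoli_estim} then deliver exactly the critical factor $\ell_{t,q}^{-1}\tau_q^{-1}$ that forces $\ell_{t,q}^{-1}$ to appear in the conclusion. Assembling these ingredients via the same bookkeeping as in Lemma \ref{a_estim} (linear piece plus quadratic piece from $D^2\gamma_\xi$, plus Leibniz-type error propagation through products) yields \eqref{a_bar_2nd_material}--\eqref{a_bar_2nd_material_2}.

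The main obstacle I anticipate is the careful tracking of the mollification commutator $\bar v_q \cdot \nabla \bar v_q - ((v_q\cdot\nabla) v_q)\ast\zeta_{\ell_q}$ and its transport analogue for $\theta_q$: verifying that Proposition \ref{CET_comm}, evaluated at the scale $\ell_q = (\lambda_q\lambda_{q+1})^{-1/2}$, produces bounds consistent with the declared rates $\delta_q\lambda_q^{N+2}$ (and $\delta_q\lambda_q^{L-2}\ell_q^{-N}$ at the high end), requires choosing the parameter $M$ in that proposition appropriately and then balancing against the Nash-critical frequency $\lambda_{q+1}$. Once the first pair of estimates is in hand, the second pair is essentially a mechanical extension of Lemma \ref{a_estim}, so effectively all the difficulty is front-loaded into controlling $\bar D_{t,\Gamma} \nabla \bar v_{q,\Gamma}$ and $\bar D_{t,\Gamma}\nabla\bar\theta_{q,\Gamma}$.
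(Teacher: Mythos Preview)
Your proposal is correct and follows essentially the same approach as the paper: for \eqref{mat_spa_velo_1}--\eqref{mat_spa_velo_2} the paper likewise splits along $\bar D_{t,\Gamma}=\bar D_t+w_{q+1}^{(t)}\cdot\nabla$, uses the mollified Euler-Boussinesq-Reynolds equations together with the Constantin--E--Titi commutator (Proposition~\ref{CET_comm}) to control $\bar D_t\bar v_q$ and $\bar D_t\bar\theta_q$, and invokes Lemma~\ref{w_t_estim} for the Newton-perturbation pieces; for \eqref{a_bar_2nd_material}--\eqref{a_bar_2nd_material_2} it performs exactly the chain-rule expansion you describe (their terms $A_1$--$A_4$, $B_1$--$B_4$), bounds $\bar D_{t,\Gamma}^2\nabla\tilde\Psi_k$ by $\delta_q\lambda_q^2\lesssim\tau_q^{-2}$, and appeals to Lemma~\ref{tmoli_estim} for the second material derivatives of $\bar R_{q,n},\bar T_{q,n}$. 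Your identification of the mollification commutator as the only delicate point is accurate.
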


\begin{proof}
For the proof of \eqref{mat_spa_velo_1} and \eqref{mat_spa_velo_2} related to the velocity filed, we simply follow \cite{vikram}. However, for the same related to temperature, we give details below. As usual, we first write    
    \begin{equation*}
        \bar D_{t, \Gamma} \nabla \bar \theta_{q, \Gamma} = \bar D_{t} \nabla \bar \theta_{q} + \bar D_{t} \nabla \theta_{q+1}^{(t)} + \theta_{q+1}^{(t)}\cdot \nabla \nabla\bar \theta_{q, \Gamma}.
    \end{equation*}
We can rewrite the first term as 
    \begin{equation*}
        \bar D_{t} \nabla \bar \theta_{q} = \nabla \bar D_{t} \bar \theta_q - (\nabla \bar \theta_q)^2.
    \end{equation*}
Now to get the desired estimate, we mollify the second equation in Euler-Boussinesq-Reynolds system \eqref{ER} to obtain 
    \begin{equation*}
        \bar D_t \bar \theta_q + \div \big( (v_q \theta_q)*\zeta_{\ell_q} - \bar v_q \bar \theta_q \big) = \div T_{q, 0},     
    \end{equation*}
    and, therefore, 
    \begin{eqnarray*}
        \|\bar D_t \nabla \bar \theta_q\|_N & \lesssim & \|\bar D_t \bar \theta_q\|_{N+1} + \|\bar \theta_q\|_{N+1}\|\bar \theta_q\|_1 \\ 
        & \lesssim & \|(v_q \theta_q)*\zeta_{\ell_q} - \bar v_q  \bar \theta_q\|_{N+2}  + \|T_{q, 0}\|_{N+2} + \|\bar \theta_q\|_{N+1}\|\bar \theta_q\|_1.
    \end{eqnarray*}
Making use of the Constantin-E-Titi commutator estimate (cf. Proposition \ref{CET_comm}), we have 
    \begin{equation*}
        \|(v_q \theta_q)*\zeta_{\ell_q} - \bar v_q  \bar \theta_q\|_{N+2} \lesssim \delta_q \lambda_q^{N+2}, \,\,\, \forall N \in \{0,1,...,L-4\}, 
    \end{equation*}
    \begin{equation*}
        \|(v_q \theta_q)*\zeta_{\ell_q} - \bar v_q  \bar \theta_q\|_{N+L-2} \lesssim \ell_q^{2-N} \delta_q \lambda_q^{L} \lesssim \delta_q \lambda_q^{L-2} \ell_q^{-N}, \,\,\, \forall N \geq 0.
    \end{equation*}
To deal with the second term, we make use of Lemma \ref{smoli_estim}, and we use inductive estimates to bound the last term. To summarize, $\bar D_{t} \nabla \bar \theta_q$ satisfies the desired estimates.
Next, we write
    \begin{equation*}
        \|\bar D_{t} \nabla \theta_{q+1}^{(t)}\|_N \lesssim \|\bar D_t \theta_{q+1}^{(t)}\|_{N+1} + \|\bar \theta_q\|_{N+1}\|\theta_{q+1}^{(t)}\|_1 +  \|\bar \theta_q\|_{1}\|\theta_{q+1}^{(t)}\|_{N+1}.
    \end{equation*}
Making use of Lemma \ref{w_t_estim} and using $\mu_{q+1} > \delta_q^{1/2} \lambda_q$, we obtain
    \begin{align*}
        &\|\bar D_{t} \nabla \theta_{q+1}^{(t)}\|_N \lesssim \delta_{q+1} \lambda_q^{N+2} \ell_q^{-\alpha}, \,\,\, \forall N \in \{0, 1,..., L-4\}, \\
        &\|\bar D_{t} \nabla \theta_{q+1}^{(t)}\|_{N+L-4} \lesssim \delta_{q+1} \lambda_q^{L-2} \ell_q^{-N-\alpha}, \,\,\, \forall N \geq 0.
    \end{align*}
Therefore, $\bar D_t \nabla w_{q+1}^{(t)}$ also satisfies the desired estimates. Finally, observe that
    \begin{equation*}
        \|\theta_{q+1}^{(t)} \cdot \nabla \nabla \bar \theta_{q, \Gamma}\|_N \lesssim \|\theta_{q+1}^{(t)}\|_N \|\bar \theta_{q, \Gamma}\|_2 + \|\theta_{q+1}^{(t)}\|_{0} \|\bar \theta_{q, \Gamma}\|_{N+2},
    \end{equation*}
and in view of Lemma \ref{w_t_estim} and Corollary \ref{Gamma_velo_estim}, we conclude 
    \begin{align*}
       & \|w_{q+1}^{(t)} \cdot \nabla \nabla \bar v_{q, \Gamma}\|_N \lesssim \delta_{q+1} \frac{\delta_q^{1/2} \lambda_q}{\mu_{q+1}} \ell_q^{- \alpha} \lambda_q^{N+2}, \,\,\, \forall N \in \{0,1,...,L-4\}, \\
       & \|w_{q+1}^{(t)} \cdot \nabla \nabla \bar v_{q, \Gamma}\|_{L-4+N} \lesssim \delta_{q+1} \frac{\delta_q^{1/2} \lambda_q}{\mu_{q+1}}  \lambda_q^{L-2} \ell_q^{- N - \alpha}, \,\,\, \forall N \geq 0,
    \end{align*}
Therefore, the estimates \eqref{mat_spa_velo_1} and \eqref{mat_spa_velo_2} are proven. Next, we move on to proving estimates \eqref{a_bar_2nd_material} and \eqref{a_bar_2nd_material_2}. To do so, let us first introduce some notations:
\begin{align*}
\mathcal{X}^1_{k,n} &:= \big( \nabla \tilde \Psi_k \nabla \tilde \Psi_k^T  - \nabla \tilde \Psi_k \frac{\bar R_{q,n}}{\delta_{q+1, n}} \nabla \tilde \Psi_k^T\big) \\
\mathcal{X}^2_{k,n} &:= - \nabla \tilde \Psi_k \Big(\sum_{\zeta \in \Lambda_T} \sum_{k' \in \mathbb Z_{q,n}} \delta_{q+1, n}^{-1}  \bar b^2_{\zeta, k', n} (\nabla \tilde \Psi_{k'} )^{-1}(\zeta \otimes \zeta) (\nabla \tilde \Psi_{k'} )^{-T}\Big) \nabla \tilde \Psi_k^T, \quad 
\mathcal{X}^3_{k,n} := \big(\lambda_q^{\alpha}\delta_{q+1, n}^{-1} \nabla \tilde \Psi_k \bar T_{q, n} \big).
\end{align*}
With the help of the above notations, we can write 
    \begin{align*}
      & \bar D_{t, \Gamma}^2 \bar a_{\xi, k, n} =  \underbrace{\delta_{q+1, n}^{1/2} \partial_t^2 \eta_k \gamma_\xi \big(\mathcal{X}^1_{k,n} + \mathcal{X}^2_{k,n} \big)}_{A_1} 
       +  \underbrace{2\,\delta_{q+1, n}^{1/2} \partial_t \eta_k D\gamma_\xi \big( \mathcal{X}^1_{k,n} + \mathcal{X}^2_{k,n} \big) \bar D_{t, \Gamma} \big(\mathcal{X}^1_{k,n} + \mathcal{X}^2_{k,n} \big)}_{A_2} \\
        & \,\,+  \underbrace{\delta_{q+1, n}^{1/2} \eta_k \bar D_{t, \Gamma}\bigg[ D\gamma_\xi \big( \mathcal{X}^1_{k,n} + \mathcal{X}^2_{k,n}\big)\bigg] \bar D_{t, \Gamma} \big( \mathcal{X}^1_{k,n} + \mathcal{X}^2_{k,n}\big)}_{A_3} 
+ \underbrace{\delta_{q+1, n}^{1/2} \eta_k D\gamma_\xi \big(\mathcal{X}^1_{k,n} + \mathcal{X}^2_{k,n} \big) \bar D_{t, \Gamma}^2 \big( \mathcal{X}^1_{k,n} + \mathcal{X}^2_{k,n} \big)}_{A_4},
    \end{align*}
    and 
     \begin{align}
       &\bar D_{t, \Gamma}^2 \bar b_{\zeta, k, n} = \underbrace{\lambda_q^{-\alpha/2}\delta_{q+1, n}^{1/2} \partial_t^2 \eta_k \Gamma^{1/2}_\zeta \big(\mathcal{X}^3_{k,n}\big)}_{B_1} 
        +  \underbrace{2 \lambda_q^{-\alpha/2} \delta_{q+1, n}^{1/2} \partial_t \eta_k D\Gamma^{1/2}_\zeta \big(\mathcal{X}^3_{k,n} \big) \bar D_{t, \Gamma} \big(\mathcal{X}^3_{k,n}\big)}_{B_2} \label{est0001}\\
        & \quad +  \underbrace{\lambda_q^{-\alpha/2} \delta_{q+1, n}^{1/2} \eta_k \bar D_{t, \Gamma}\bigg[ D\Gamma^{1/2}_\zeta \big( \mathcal{X}^3_{k,n} \big)\bigg] \bar D_{t, \Gamma} \big( \mathcal{X}^3_{k,n} \big)}_{B_3} 
+ \underbrace{\lambda_q^{-\alpha/2} \delta_{q+1, n}^{1/2} \eta_k D\Gamma^{1/2}_\zeta \big(\mathcal{X}^3_{k,n} \big) \bar D_{t, \Gamma}^2 \big( \mathcal{X}^3_{k,n}\big)}_{B_4} \nonumber.
    \end{align}
Thanks to Lemma \ref{tmoli_estim} and Corollary \ref{Flow_gam_estim}, we can estimate above terms. Indeed, arguing as in the proof of Lemma \ref{a_estim}, we have
    \begin{align*}
    &\|\gamma_\xi \big( \mathcal{X}^1_{k,n} + \mathcal{X}^2_{k,n}\big)\|_N \lesssim \lambda_q^{N}, \hspace{3.2cm}
         \|\Gamma^{1/2}_\zeta \big( \mathcal{X}^3_{k,n} \big)\|_N \lesssim \lambda_q^{N}, \,\,\, \forall N \in \{0,1,...,L-4\}, \\
    &\|\gamma_\xi \big(  \mathcal{X}^1_{k,n} + \mathcal{X}^2_{k,n}\big)\|_{N+L -4} \lesssim \lambda_q^{L - 4}\ell_q^{-N},
       \hspace{1.4cm}  \|\Gamma^{1/2}_\zeta \big(  \mathcal{X}^3_{k,n} \big)\|_{N+L -4} \lesssim \lambda_q^{L - 4}\ell_q^{-N}, \,\,\, \forall N \geq 0, \\
    &\|\bar D_{t, \Gamma} \gamma_\xi \big(  \mathcal{X}^1_{k,n} + \mathcal{X}^2_{k,n}\big)\|_N \lesssim \lambda_q^{N} \tau_q^{-1}, \hspace{1.9cm}  \|\bar D_{t, \Gamma} \Gamma^{1/2}_\zeta \big(  \mathcal{X}^3_{k,n} \big)\|_N \lesssim \lambda_q^{N} \tau_q^{-1}, \, \forall N \in \{0,1,...,L-4\}, \\
    &\|\bar D_{t, \Gamma} \gamma_\xi \big(  \mathcal{X}^1_{k,n} + \mathcal{X}^2_{k,n}\big)\|_{N+L - 4} \lesssim \lambda_q^{L - 4}\tau_q^{-1}\ell_q^{-N}, \,\, \|\bar D_{t, \Gamma} \Gamma^{1/2}_\zeta \big(  \mathcal{X}^3_{k,n} \big)\|_{N+L - 4} \lesssim \lambda_q^{L - 4}\tau_q^{-1}\ell_q^{-N}, \,\,\, \forall N \geq 0.
    \end{align*}
We can also write similar estimates when $D\gamma_\xi, D \Gamma_\xi$ replaces $\gamma_\xi$ and $\Gamma_\xi$ respectively. This would be relevant for expressions $A_2$, $A_3$, $A_4$, $B_2$, $B_3$, and $B_4$. Hence, we have 
    \begin{align*}
        \|A_1\|_N + \|A_2\|_N + \|A_3\|_N \lesssim \delta_{q+1, n}^{1/2} \tau_q^{-2} \lambda_q^N, \,\,\, \forall N \in \{0,1,...,L-4\}, \\
         \|B_1\|_N + \|B_2\|_N + \|B_3\|_N \lesssim \delta_{q+1, n}^{1/2} \tau_q^{-2} \lambda_q^N, \,\,\, \forall N \in \{0,1,...,L-4\},
    \end{align*}
    and
    \begin{align*}
        \|A_1\|_{N+L -4} + \|A_2\|_{N+L -4} + \|A_3\|_{N+L-4} \lesssim \delta_{q+1, n}^{1/2} \tau_q^{-2} \lambda_q^{L-4} \ell_q^{-N}, \,\,\, \forall N \geq 0, \\
         \|B_1\|_{N+L -4} + \|B_2\|_{N+L -4} + \|B_3\|_{N+L-4} \lesssim \delta_{q+1, n}^{1/2} \tau_q^{-2} \lambda_q^{L-4} \ell_q^{-N}, \,\,\, \forall N \geq 0.
    \end{align*}
To deal with the rest of the terms $A_4$ and $B_4$, we require two material derivative estimates of $\bar c^2_{\zeta, k, n}$ which can be derived as in \eqref{est0001}. Therefore, we have all the required estimates except the estimate of $\bar D_{t, \Gamma}^2 \nabla \tilde \Psi_k$. In fact, following \cite{vikram}, we have
    \begin{equation*}
        \|\bar D_{t, \Gamma}^2 \nabla \tilde \Psi_k\|_N \lesssim \delta_q \lambda_q^2 \lambda_q^N \lesssim \tau_q^{-2} \lambda_q^N, \,\,\, \forall N\in \{0,1,...,L-4\},
    \end{equation*}
    \begin{equation*}
        \|\bar D_{t, \Gamma}^2 \nabla \tilde \Psi_k\|_{N+L-4} \lesssim \delta_q \lambda_q^2 \lambda_q^{L-4} \ell_q^{-N} \lesssim \tau_q^{-2}  \lambda_q^{L-4} \ell_q^{-N}, \,\,\, \forall N \geq 0.
    \end{equation*}
Now we can use corollary \ref{Flow_gam_estim} and lemma \ref{tmoli_estim} to conclude
    \begin{equation*}
        \|\bar D_{t, \Gamma}^2 \big( \mathcal{X}^1_{k,n} + \mathcal{X}^2_{k,n}\big)\|_N, \, \|\bar D_{t, \Gamma}^2 \big( \mathcal{X}^3_{k,n} \big)\|_N\lesssim \tau_q^{-1} \ell_{t, q}^{-1} \lambda_q^{N}, \,\,\, \forall N\in \{0,1,...,L-4\},
    \end{equation*}
    \begin{equation*}
        \|\bar D_{t, \Gamma}^2 \big( \mathcal{X}^1_{k,n} + \mathcal{X}^2_{k,n}\big)\|_{N+L-4}, \, \|\bar D_{t, \Gamma}^2 \big( \mathcal{X}^3_{k,n} \big)\|_{N+L-4}\lesssim \tau_q^{-1} \ell_{t, q}^{-1} \lambda_q^{L-4} \ell_q^{-N}, \,\,\, \forall N\geq 0.
    \end{equation*}
This concludes the proof of the lemma.
\end{proof}

\begin{lem} \label{Nash_err_estim}
We have the following estimates: 
\begin{align*}
   & \|\mathcal{R} (w_{q+1}^{(s)} \cdot \nabla \bar v_{q, \Gamma})\|_N, \, \|\mathcal{R} (w_{q+1}^{(s)} \cdot \nabla \bar \theta_{q, \Gamma})\|_N \lesssim \frac{\delta_{q}^{1/2} \delta_{q+1}^{1/2} \lambda_q}{\lambda_{q+1}^{1-\alpha}} \lambda_{q+1}^N, \hspace{1.5cm} \forall N \geq 0, \\
       & \|\bar D_{t,\Ga}\mathcal{R} (w_{q+1}^{(s)} \cdot \nabla \bar v_{q, \Gamma})\|_N, \, \|\bar D_{t,\Ga}\mathcal{R} (w_{q+1}^{(s)} \cdot \nabla \bar \theta_{q, \Gamma})\|_N  \lesssim \frac{\mu_{q+1}\delta_{q}^{1/2} \delta_{q+1}^{1/2} \lambda_q}{\lambda_{q+1}^{1-\alpha}} \lambda_{q+1}^N, \,\,\, \forall N \geq 0, 
\end{align*}
where the implicit constants only depend on $\Gamma$, $M$, $\alpha$, and $N$. 
\end{lem}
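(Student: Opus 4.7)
The plan is to express each summand of $w_{q+1}^{(s)} \cdot \nabla \bar v_{q,\Gamma}$ as a slowly varying amplitude against the high-frequency phase $e^{\pm i \lambda_{q+1} \xi^\perp \cdot \tilde\Psi_k}$ and then appeal to the stationary phase estimate of Proposition \ref{prop.inv.div}. Using \eqref{total_Nash} together with the product formula $\nabla^\perp(\alpha \beta) = \alpha \nabla^\perp\beta + \beta\nabla^\perp \alpha$, we decompose $w_{q+1}^{(s)} = w_{q+1}^{(p)} + w_{q+1}^{(c)}$, with the principal part giving
\[
w_{q+1}^{(p)}\cdot \nabla \bar v_{q,\Gamma} = \sum_{\xi,k,n}^{\pm} F^{\pm}_{\xi,k,n}(x,t)\, e^{\pm i\la_{q+1}\xi^\perp\cdot\tilde\Psi_k} + \sum_{\zeta,k,n}^{\pm} G^{\pm}_{\zeta,k,n}(x,t)\, e^{\pm i\la_{q+1}\zeta^\perp\cdot\tilde\Psi_k},
\]
where $F^{\pm}_{\xi,k,n} = \tfrac{1}{\sqrt 2} g_{\xi,k,n+1}(\mu_{q+1}t)\, \bar a_{\xi,k,n}\,\big((\na\tilde\Psi_k)^{-1}\xi\big)\cdot\na\bar v_{q,\Gamma}$ (and $G^{\pm}$ is analogous with $b,\zeta,h$). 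By Lemma \ref{a_bar_estim}, Corollary \ref{Flow_gam_estim}, and Corollary \ref{Gamma_velo_estim}, $\|F^{\pm}_{\xi,k,n}\|_0 + \|G^{\pm}_{\zeta,k,n}\|_0 \lec \de_{q+1}^{1/2} \de_q^{1/2}\la_q$, with the higher-order bounds $\|F^{\pm}\|_{N+\al} \lec \de_{q+1}^{1/2}\de_q^{1/2}\la_q\la_q^{N}$ for $N\leq L-3$ and the usual $\ell_q^{-N}$ scaling beyond that. The corrector piece $w_{q+1}^{(c)}\cdot \na\bar v_{q,\Gamma}$ produces analogous oscillatory terms, but each amplitude carries the prefactor $\la_{q+1}^{-1}$, so its contribution is strictly smaller than the principal one.

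Next, apply Proposition \ref{prop.inv.div} with $\Psi=\tilde\Psi_k$ (whose gradient is uniformly bounded above and below by Remark \ref{remark_flow_bd}) and $k=\la_{q+1}\xi^\perp$. The leading term gives
\[
\big\|\cR(F^{\pm}_{\xi,k,n}\,e^{\pm i\la_{q+1}\xi^\perp\cdot\tilde\Psi_k})\big\|_{\al} \lec \frac{\|F^{\pm}_{\xi,k,n}\|_0}{\la_{q+1}^{1-\al}} + \frac{\|F^{\pm}_{\xi,k,n}\|_{M+\al} + \|F^{\pm}_{\xi,k,n}\|_0\|\na\tilde\Psi_k\|_{M+\al}}{\la_{q+1}^{M-\al}}.
\]
Choosing $M=L-3$ and picking $a_0$ large enough so $(\la_q/\la_{q+1})^{L-3-\al}$ absorbs the losses from $\ell_q^{-\cdot}$, the second bracket is dominated by the first, yielding the $N=0$ case of the lemma. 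For $N\geq 1$, each $\pa^N\cR(\cdots)$ produces either a factor $\la_{q+1}^N$ from differentiating the exponential (handled by re-applying the same estimate with the same $M$), or a derivative on the amplitude $F^{\pm}$ which by the interpolation estimate is controlled by $\la_q^N$ combined with the already-established $\al$-norm bound. Hence we obtain $\|\cR(w_{q+1}^{(s)}\cdot\na\bar v_{q,\Gamma})\|_N \lec \de_q^{1/2}\de_{q+1}^{1/2}\la_q\la_{q+1}^{\al-1}\la_{q+1}^N$. The identical argument with $\bar\theta_{q,\Gamma}$ replacing $\bar v_{q,\Gamma}$ delivers the companion bound, using that $\|\bar\theta_{q,\Ga}\|_{N+1}$ obeys the same bound as $\|\bar v_{q,\Ga}\|_{N+1}$ by Corollary \ref{Gamma_velo_estim}.

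For the material derivative, write
\[
\bar D_{t,\Gamma}\cR\big(w_{q+1}^{(s)}\cdot\na\bar v_{q,\Ga}\big) = \cR\big(\bar D_{t,\Gamma}(w_{q+1}^{(s)}\cdot\na\bar v_{q,\Ga})\big) + [\bar v_{q,\Gamma}\cdot\na,\cR]\big(w_{q+1}^{(s)}\cdot\na\bar v_{q,\Ga}\big).
\]
The commutator is controlled by Proposition \ref{prop.comm}: taking $b = F^{\pm}_{\xi,k,n}$ and $a = \bar v_{q,\Gamma}$ gives a gain of $\la_{q+1}^{\al-1}\|\bar v_{q,\Ga}\|_{1+\al}\|F^{\pm}\|_0 \lec \de_q^{1/2}\la_q\cdot\tfrac{\de_q^{1/2}\de_{q+1}^{1/2}\la_q}{\la_{q+1}^{1-\al}}$, which is dominated by the required right-hand side since $\de_q^{1/2}\la_q \ll \mu_{q+1}$. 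For the first term, use that $\tilde\Psi_k$ is transported by $\bar v_{q,\Gamma}$ so the phase is annihilated: $\bar D_{t,\Gamma} e^{\pm i\la_{q+1}\xi^\perp\cdot\tilde\Psi_k}=0$. Therefore $\bar D_{t,\Gamma}$ falls entirely on the amplitude $F^{\pm}$, and among $\bar D_{t,\Ga}g_{\xi,k,n+1}(\mu_{q+1}t)$, $\bar D_{t,\Ga}\bar a_{\xi,k,n}$, $\bar D_{t,\Ga}(\na\tilde\Psi_k)^{-1}$ and $\bar D_{t,\Ga}\na\bar v_{q,\Ga}$ (estimated via Lemmas \ref{a_bar_estim}, \ref{high_mat_der} and Corollary \ref{Flow_gam_estim}), the dominant factor comes from $g_{\xi,k,n+1}'(\mu_{q+1}t)\mu_{q+1}$ since $\mu_{q+1}\gg \tau_q^{-1}$. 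Applying Proposition \ref{prop.inv.div} once more to the resulting oscillatory object yields the claimed $\mu_{q+1}$ factor.

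The main obstacle is threefold: (i) tracking that the $\ell_q^{-N}$ losses for $N>L-3$ are indeed absorbed by the large power $\la_{q+1}^{M-\al}$ from the stationary phase estimate with $M=L-3$, which forces the concrete choice of $M$ and a lower bound on $b$; (ii) verifying that the auxiliary corrector $w_{q+1}^{(c)}\cdot\na\bar v_{q,\Ga}$ — which produces terms with derivatives landing on the slowly varying amplitude — remains dominated by the principal contribution after the gain $\la_{q+1}^{-1}$ is taken into account; and (iii) ensuring in the material-derivative estimate that the replacement $(\bar D_{t,\Gamma} F^{\pm})$ genuinely scales like $\mu_{q+1}\|F^{\pm}\|_0$, which requires the parameter hierarchy $\tau_q^{-1}, \de_q^{1/2}\la_q \ll \mu_{q+1}$ fixed in Section \ref{Prelim}.
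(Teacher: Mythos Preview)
Your proposal is correct and reaches the same bounds, but the route differs from the paper's. The paper exploits the stream-function representation \eqref{total_Nash} together with the two-dimensional identity $\nabla^\perp\psi\cdot\nabla f = -\div(\psi\,\nabla^\perp f)$ to write
\[
\mathcal{R}\big(w_{q+1}^{(s)}\cdot\nabla\bar\theta_{q,\Gamma}\big) = -\frac{1}{\lambda_{q+1}}\,\mathcal{R}\div\Big(\sum_{\xi,k,n} g_{\xi,k,n+1}\bar a_{\xi,k,n}\Phi_\xi(\lambda_{q+1}\tilde\Psi_k)\nabla^\perp\bar\theta_{q,\Gamma} + \text{analogous }\zeta\text{-sum}\Big),
\]
so the crucial $\lambda_{q+1}^{-1}$ gain is extracted \emph{algebraically}. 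One then merely uses that $\mathcal{R}\div$ is Calder\'on--Zygmund bounded and estimates the H\"older norm of the slowly varying amplitude times the bounded oscillatory factor $\Phi_\sigma$; no stationary phase is needed. This treats the principal and corrector pieces in one shot (both sit inside the stream function) and avoids your separate analysis of $w_{q+1}^{(c)}\cdot\nabla\bar v_{q,\Gamma}$ in point (ii). By contrast, you obtain the same $\lambda_{q+1}^{-1}$ gain through Proposition~\ref{prop.inv.div} applied to $\mathcal{R}(F^\pm e^{\pm i\lambda_{q+1}\xi^\perp\cdot\tilde\Psi_k})$; this is equally valid and mirrors what the paper does for the transport error in Lemma~\ref{Tranp_err_estim}, but it is slightly heavier here. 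One small point: when you invoke Proposition~\ref{prop.comm} for the commutator $[\bar v_{q,\Gamma}\cdot\nabla,\mathcal{R}]$, the proposition is stated for phases $e^{i\lambda k\cdot x}$ rather than $e^{i\lambda\xi^\perp\cdot\tilde\Psi_k}$, so you should insert the auxiliary phase $\phi_{\xi,k}(x,t)=e^{i\lambda_{q+1}\xi^\perp\cdot(\tilde\Psi_k(x,t)-x)}$ into the amplitude (exactly as the paper does in the proof of Lemma~\ref{Tranp_err_estim}) before applying it.
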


\begin{proof}
We remark that the first and second estimates of lemma \eqref{Nash_err_estim}, related to the velocity field, can be established following the same argument as in \cite[Lemma 4.8]{vikram}, modulo cosmetic changes. In fact, other two estimates also follow using same argument. Indeed, we can write 
\begin{align*}
    \mathcal{R} (w_{q+1}^{(s)} \cdot \nabla \bar \theta_{q, \Gamma}) &= - \frac{1}{\lambda_{q+1}} \mathcal{R} \div \sum_{\xi \in \Lambda_R, k, n} g_{\xi, k, n+1} \bar a_{\xi, k, n} \Phi_\xi (\lambda_{q+1}\tilde \Psi_k) \nabla^\perp \bar \theta_{q, \Gamma} \\
   & \qquad - \frac{1}{\lambda_{q+1}} \mathcal{R} \div \sum_{\zeta \in \Lambda_T, k, n} h_{\zeta, k, n+1} \bar b_{\zeta, k, n} \Phi_\zeta (\lambda_{q+1}\tilde \Psi_k) \nabla^\perp \bar \theta_{q, \Gamma}.
\end{align*}
Using disjointness of temporal supports of the terms in the sum above, and $\mathcal{R} \div$ being a sum of operators of Calder\'on-Zygmund type, we can follow the arguments as in \cite[Lemma 4.8]{vikram} to conclude the proof of the lemma.
\end{proof}

Next, we move on to the estimation of the transport terms. To proceed, first note the important remark which will be used in the following lemma:
\begin{equation*}
\mathcal{R} (g (t, \cdot)+ h(t)) = \mathcal{R} (g (t, \cdot)),
\end{equation*}
for every smooth periodic time-dependent vector field $g$ and for every $h$ which depends only on time. Moreover, a straightforward adaptation of \cite[Appendix F, G and H]{BDLIS15} reveals that Proposition~\ref{prop.inv.div} and Proposition~\ref{prop.comm} are also true when $\mathcal{R}$ acts on a scalar function. 
\begin{lem} \label{Tranp_err_estim}
The following estimates hold
\begin{align}
    & \|\mathcal{R}(\bar D_{t, \Gamma} w^{(s)}_{q+1})\|_N, \, \|\mathcal{R}(\bar D_{t, \Gamma} \theta^{(s)}_{q+1})\|_N \lesssim \frac{\delta_{q+1} \lambda_q^{2/3}}{\lambda_{q+1}^{2/3 - 5\alpha}} \lambda_{q+1}^N, \hspace{1.9cm}\,\, \forall N \geq 0, \label{11} \\
    & \|\bar D_{t,\Ga} \mathcal{R}(\bar D_{t, \Gamma} w^{(s)}_{q+1})\|_N, \, \|\bar D_{t,\Ga} \mathcal{R}(\bar D_{t, \Gamma} \theta^{(s)}_{q+1})\|_N \lesssim \frac{\ell_{t,q}^{-1}\delta_{q+1} \lambda_q^{2/3}}{\lambda_{q+1}^{2/3 - 5\alpha}} \lambda_{q+1}^N, \,\,\, \forall N \geq 0, \label{12}
\end{align}
where the implicit constants only depend on $\Gamma$, $M$, $\alpha$, and $N$.
\end{lem}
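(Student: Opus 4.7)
The plan is to exploit the stream-function representation of the Nash perturbations together with the crucial identity $\bar D_{t,\Gamma}\tilde\Psi_k = 0$, which holds because $\tilde\Psi_k$ is transported by $\bar v_{q,\Gamma}$. Writing $w_{q+1}^{(s)} = \frac{1}{\lambda_{q+1}}\nabla^\perp \psi$ as in \eqref{total_Nash} and $\theta_{q+1}^{(p)} = \sum h_{\zeta,k,n+1}\bar b_{\zeta,k,n}\mathbb V_\zeta(\lambda_{q+1}\tilde\Psi_k)$, and using the remark preceding the lemma to discard the time-only corrector $\theta_{q+1}^{(c)}(t)$, it suffices to analyze oscillatory building blocks of the form $A(x,t)\,\Phi_\sigma(\lambda_{q+1}\tilde\Psi_k)$ and $A(x,t)\,\mathbb V_\zeta(\lambda_{q+1}\tilde\Psi_k)$ with slowly varying amplitudes $A$.

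For \eqref{11}, I would first compute
\[
\bar D_{t,\Gamma}\psi \;=\; \sum_{\xi,k,n}\Big(\mu_{q+1}g'_{\xi,k,n+1}(\mu_{q+1}t)\,\bar a_{\xi,k,n} + g_{\xi,k,n+1}(\mu_{q+1}t)\,\bar D_{t,\Gamma}\bar a_{\xi,k,n}\Big)\Phi_\xi(\lambda_{q+1}\tilde\Psi_k)
\]
plus the analogous sum over $(\zeta,h,\bar b)$, using $\bar D_{t,\Gamma}\tilde\Psi_k = 0$ to freeze the $\lambda_{q+1}$-oscillation. Since $\mu_{q+1}>\tau_q^{-1}$, the $g'$-contribution dominates and has size $\mu_{q+1}\delta_{q+1}^{1/2}$ by Lemma~\ref{a_bar_estim}. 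Writing $\bar D_{t,\Gamma}w_{q+1}^{(s)} = \frac{1}{\lambda_{q+1}}\nabla^\perp\bar D_{t,\Gamma}\psi + \frac{1}{\lambda_{q+1}}[\bar D_{t,\Gamma},\nabla^\perp]\psi$, the commutator only involves $\nabla \bar v_{q,\Gamma}$ and is of much lower order. Applying the inverse divergence $\mathcal R$ to each oscillatory block, the stationary-phase estimate of Proposition~\ref{prop.inv.div} with phase $\lambda_{q+1}\xi^\perp\cdot\tilde\Psi_k$ (non-degenerate by Corollary~\ref{Flow_gam_estim} and Remark~\ref{remark_flow_bd}) produces a gain $\lambda_{q+1}^{-(1-\alpha)}$. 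Combining these factors yields
\[
\|\mathcal R(\bar D_{t,\Gamma}w_{q+1}^{(s)})\|_0 \;\lesssim\; \frac{\mu_{q+1}\delta_{q+1}^{1/2}}{\lambda_{q+1}^{1-\alpha}} \;=\; \frac{\delta_{q+1}\lambda_q^{2/3}}{\lambda_{q+1}^{2/3-5\alpha}},
\]
after substituting $\mu_{q+1} = \delta_{q+1}^{1/2}\lambda_q^{2/3}\lambda_{q+1}^{1/3+4\alpha}$. The higher-order bounds $\|\cdot\|_N$ follow from the $M\geq 1$ version of Proposition~\ref{prop.inv.div}, Lemma~\ref{a_bar_estim} at higher derivatives, and Corollary~\ref{Flow_gam_estim}; the scalar case $\mathcal R(\bar D_{t,\Gamma}\theta_{q+1}^{(s)})$ is completely parallel, using $\bar b_{\zeta,k,n}$ and $\mathbb V_\zeta$, with the disjointness of temporal supports from Lemma~\ref{osc_prof} controlling the sums over $(\xi,k,n)$ and $(\zeta,k,n)$.

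For \eqref{12}, I would apply $\bar D_{t,\Gamma}$ once more to the decomposition above and commute it past $\mathcal R$ using Proposition~\ref{prop.comm}. The extra derivative either produces a $\mu_{q+1}^2 g''(\mu_{q+1}t)$-type factor, or the second material derivatives $\bar D_{t,\Gamma}^2\bar a_{\xi,k,n}$ and $\bar D_{t,\Gamma}^2\bar b_{\zeta,k,n}$ controlled by Lemma~\ref{high_mat_der} with size $\delta_{q+1}^{1/2}\tau_q^{-1}\ell_{t,q}^{-1}$, or $\bar D_{t,\Gamma}\nabla^\perp\tilde\Psi_k$ handled by Corollary~\ref{Flow_gam_estim}; since $\ell_{t,q}<\mu_{q+1}^{-1}<\tau_q$ from Section~\ref{Section_Moli_Along_flow}, the dominant additional factor multiplying the bound of \eqref{11} is precisely $\ell_{t,q}^{-1}$, yielding \eqref{12}. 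The main obstacle will be the careful bookkeeping of the $\ell_q^{-\alpha}$ and $\lambda_{q+1}^\alpha$ losses coming from Proposition~\ref{prop.comm} and from pushing derivatives past $\tilde\Psi_k$, while keeping the total $\lambda_{q+1}$-exponent within the allowed $5\alpha$ slack; a secondary subtlety is verifying $\mu_{q+1}^2\leq \mu_{q+1}\ell_{t,q}^{-1}$, i.e.\ $\mu_{q+1}\leq \ell_{t,q}^{-1}$, which is exactly the inequality recorded in Section~\ref{Section_Moli_Along_flow}.
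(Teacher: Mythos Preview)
Your proposal is correct and follows essentially the same route as the paper: both rely on $\bar D_{t,\Gamma}\tilde\Psi_k=0$ to push the material derivative onto the slow amplitudes, identify the dominant contribution $\mu_{q+1}\delta_{q+1}^{1/2}$ from the time-oscillatory factors $g',h'$, and then gain $\lambda_{q+1}^{-(1-\alpha)}$ via Proposition~\ref{prop.inv.div}; for \eqref{12} both split $\bar D_{t,\Gamma}\mathcal R = \mathcal R\bar D_{t,\Gamma} + [\bar v_{q,\Gamma}\cdot\nabla,\mathcal R]$, invoke Lemma~\ref{high_mat_der} for the second material derivatives, and use the parameter ordering $\mu_{q+1}\le \ell_{t,q}^{-1}$. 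The only point you gloss over is that Proposition~\ref{prop.comm} requires the oscillation in the explicit form $e^{i\lambda_{q+1}k\cdot x}$, so the paper first factors out the ``unwound phase'' $\phi_{\zeta,k}(x,t)=e^{i\lambda_{q+1}\zeta^\perp\cdot(\tilde\Psi_k(x,t)-x)}$ and absorbs it into the amplitude before applying the commutator estimate; also, for $N\ge 1$ the paper uses that $\partial_i\mathcal R$ is Calder\'on--Zygmund rather than a higher-$M$ version of Proposition~\ref{prop.inv.div}, but either device works.
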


\begin{proof}
The proof of \eqref{11} and \eqref{12}, related to velocity perturbation $w^{(s)}_{q+1}$, are very similar to \cite[Lemma 4.9]{vikram}. Indeed, we can write    
\begin{align*}
        \mathcal{R}(\bar D_{t, \Gamma} w_{q+1}^{(s)}) =& \frac{1}{\lambda_{q+1}} \mathcal{R}\nabla^\perp\bigg( \bar D_{t, \Gamma}\sum_{\xi \in \Lambda_R, k, n} g_{\xi, k, n+1}\bar a_{\xi, k, n} \Phi_\xi(\lambda_{q+1} \tilde \Psi_k)\bigg)\\
        & - \frac{1}{\lambda_{q+1}} \mathcal{R}\div\bigg(\sum_{\xi\in \Lambda_R, k, n} g_{\xi, k, n+1} \bar a_{\xi, k, n} \Phi_\xi(\lambda_{q+1}\tilde \Psi_k) \nabla^\perp \bar v_{q, \Gamma}\bigg)\\
       & + \frac{1}{\lambda_{q+1}} \mathcal{R}\nabla^\perp\bigg( \bar D_{t, \Gamma}\sum_{\zeta \in \Lambda_T, k, n} h_{\zeta, k, n+1} \bar b_{\zeta, k, n} \Phi_\zeta(\lambda_{q+1} \tilde \Psi_k)\bigg) \\
        & - \frac{1}{\lambda_{q+1}} \mathcal{R}\div\bigg(\sum_{\zeta \in \Lambda_T, k, n} h_{\zeta, k, n+1} \bar b_{\zeta, k, n} \Phi_\zeta(\lambda_{q+1}\tilde \Psi_k) \nabla^\perp \bar v_{q, \Gamma}\bigg).
    \end{align*}
We can deal with each of the term as in \cite[Lemma 4.9]{vikram}. We leave the details to the interested reader.    
To derive the estimates \eqref{11} and \eqref{12}, related to the temperature perturbation, first notice that, we can write 
\begin{align*}
\theta_{q+1}^{(p)} &= \sum_{n = 0}^{\Gamma - 1} \sum_{k \in \mathbb Z_{q,n}} \sum_{\zeta\in\Lambda_T} \underbrace{h_{\zeta,k,n+1} \bar b_{\zeta,k,n}}_{:=\mathcal{E}_{\zeta,k,n+1}} \mathbb{V}_\zeta (\la_{q+1} \tilde\Psi_k) \\
&= \sum_{n = 0}^{\Gamma - 1} \sum_{k \in \mathbb Z_{q,n}} \sum_{\zeta\in\Lambda_T} \frac{1}{\sqrt{2}}\mathcal{E}_{\zeta,k,n+1} 
\big ( e^{i \lambda_{q+1}\zeta^\perp \cdot \tilde\Psi_k} + e^{- i \la_{q+1}\zeta^\perp \cdot \tilde\Psi_k} \big).
\end{align*}
Keeping in mind that $\mathbb{V}_\zeta (\la_{q+1} \tilde\Psi_k)$ is carried by the flow of $\bar v_{q, \Gamma}$, we can calculate
\begin{align*}
\bar  D_{t, \Gamma} \theta_{q+1}^{(p)} &= \sum_{n = 0}^{\Gamma - 1} \sum_{k \in \mathbb Z_{q,n}} \sum_{\zeta\in\Lambda_T} \frac{1}{\sqrt{2}} \big(\bar  D_{t, \Gamma} \mathcal{E}_{\zeta,k,n+1} \big)
\big ( e^{i \lambda_{q+1}\zeta^\perp \cdot \tilde\Psi_k} + e^{- i \la_{q+1}\zeta^\perp \cdot \tilde\Psi_k} \big) \\
\bar  D^2_{t, \Gamma} \theta_{q+1}^{(p)} &= \sum_{n = 0}^{\Gamma - 1} \sum_{k \in \mathbb Z_{q,n}} \sum_{\zeta\in\Lambda_T} \frac{1}{\sqrt{2}} \big(\bar  D^2_{t, \Gamma} \mathcal{E}_{\zeta,k,n+1} \big)
\big ( e^{i \lambda_{q+1}\zeta^\perp \cdot \tilde\Psi_k} + e^{- i \la_{q+1}\zeta^\perp \cdot \tilde\Psi_k} \big)
\end{align*}
Next, we already know that 
 \begin{align}
        \|\bar D_{t, \Gamma}(\mathcal{E}_{\zeta,k,n+1}) \|_{N+\alpha} &\lesssim \delta_{q+1}^{1/2} \mu_{q+1} \ell_q^{-N-\alpha} \lesssim \delta_{q+1} \lambda_q^{2/3}\lambda_{q+1}^{1/3 + 4\alpha} \ell_q^{-N-\alpha} \label{new_03}\\
        \|\bar D^2_{t, \Gamma}(\mathcal{E}_{\zeta,k,n+1}) \|_{N+\alpha} &\lesssim \ell_{t,q}^{-1}\mu_{q+1} \delta_{q+1}^{1/2} \ell_q^{-N} \lesssim \ell_{t,q}^{-1} \delta_{q+1} \lambda_q^{2/3}\lambda_{q+1}^{1/3 + 4\alpha} \ell_q^{-N} \label{new_04}
    \end{align}
Therefore, we can apply Proposition~\ref{prop.inv.div} to conclude
\begin{align*}
 \left\|\mathcal{R}\left(\bar D_{t, \Gamma} \theta^{(s)}_{q+1}\right)\right\|_\alpha &\lesssim \frac{\|\bar  D_{t, \Gamma} \mathcal{E}_{\zeta,k,n+1}\|_0}{\la_{q+1}^{1-\alpha}} + \frac{\|\bar  D_{t, \Gamma} \mathcal{E}_{\zeta,k,n+1}\|_{A+\alpha} + \|\bar  D_{t, \Gamma} \mathcal{E}_{\zeta,k,n+1}\|_0\|\nabla \tilde\Psi_k\|_{A+\alpha}}{\la_{q+1}^{A-\alpha}} \\
 & \lesssim \frac{\delta_{q+1} \lambda_q^{2/3}\lambda_{q+1}^{1/3 + 4\alpha}}{\la_{q+1}^{1-\alpha}}
 = \frac{\delta_{q+1} \lambda_q^{2/3}}{\lambda_{q+1}^{2/3 - 5\alpha}},
\end{align*}
where to obtain the last inequality we choose $A \in \N$ sufficiently large, in particular $A > \alpha + \frac{2b}{b-1}$. To obtain the estimates for the higher derivatives, consider $N \geq 1$ and let $\theta$ be a multi-index with $|\theta| = N-1$, and $i \in \{1,2\}$. Then, leveraging the fact that $\partial_i \mathcal{R}$ can be expressed as a sum of Calder\'on-Zygmund operators, we proceed to estimate each term involved. 
\begin{eqnarray*}
    \big\|\partial_i \partial^\theta \mathcal{R}\left(\bar D_{t, \Gamma} \theta^{(s)}_{q+1}\right)\big\|_\alpha &\lesssim& \| \left(\bar D_{t, \Gamma} \theta^{(s)}_{q+1}\right)\|_{N-1+\alpha} \\ 
    & \lesssim & \|\bar D_{t, \Gamma}(\mathcal{E}_{\zeta,k,n+1})\|_{N-1+\alpha} + \lambda_{q+1}^{N-1+\alpha}\|\bar D_{t, \Gamma}(\mathcal{E}_{\zeta,k,n+1})\|_0  \\
    & \lesssim & \delta_{q+1} \lambda_q^{2/3}\lambda_{q+1}^{1/3 + 4\alpha} (\lambda_{q+1}^{N-1+\alpha} + \ell_q^{-N + 1 - \alpha}) \lesssim \frac{ \delta_{q+1} \lambda_q^{2/3}}{\lambda_{q+1}^{2/3 - 5\alpha}} \lambda_{q+1}^N,
\end{eqnarray*}
and this gives the estimate \eqref{11}, related to the temperature perturbation.

Next, to derive the estimate \eqref{12}, we write 
$$
\bar D_{t,\Ga} \mathcal{R}(\bar D_{t, \Gamma} \theta^{(s)}_{q+1}) = \mathcal{R}(\bar D^2_{t, \Gamma} \theta^{(s)}_{q+1})
+ [\bar D_{t,\Ga} , \mathcal{R}] (\bar D_{t, \Gamma} \theta^{(s)}_{q+1}) =  \mathcal{R}(\bar D^2_{t, \Gamma} \theta^{(s)}_{q+1})
+ [\bar v_{q, \Gamma} \cdot, \mathcal{R}] (\na \bar D_{t, \Gamma} \theta^{(s)}_{q+1}).
$$
To deal with the first term in the above expression, we again use Proposition~\ref{prop.inv.div}, and choose $A > \alpha + \frac{2b}{b-1}$, to conclude
\begin{align*}
 \left\|\mathcal{R}\left(\bar D^2_{t, \Gamma} \theta^{(s)}_{q+1}\right)\right\|_\alpha & \lesssim \frac{\|\bar  D^2_{t, \Gamma} \mathcal{E}_{\zeta,k,n+1}\|_0}{\la_{q+1}^{1-\alpha}} + \frac{\|\bar  D^2_{t, \Gamma} \mathcal{E}_{\zeta,k,n+1}\|_{A+\alpha} + \|\bar  D^2_{t, \Gamma} \mathcal{E}_{\zeta,k,n+1}\|_0\|\nabla \tilde\Psi_k\|_{A+\alpha}}{\la_{q+1}^{A-\alpha}} \\
 & \lesssim \frac{\ell_{t,q}^{-1} \delta_{q+1} \lambda_q^{2/3}\lambda_{q+1}^{1/3 + 4\alpha}}{\la_{q+1}^{1-\alpha}}
 = \frac{\ell_{t,q}^{-1} \delta_{q+1} \lambda_q^{2/3}}{\lambda_{q+1}^{2/3 - 5\alpha}} =  \delta_{q+1} \delta_q^{1/2} \lambda_q \lambda_{q+1}^{ 5\alpha},
 \end{align*}
 and we also have
\begin{align*}
\big\|\partial_i \partial^\theta \mathcal{R}\left(\bar D^2_{t, \Gamma} \theta^{(s)}_{q+1}\right)\big\|_\alpha \lesssim
\frac{\ell_{t,q}^{-1} \delta_{q+1} \lambda_q^{2/3}}{\lambda_{q+1}^{2/3 - 5\alpha}} \lambda_{q+1}^N =  \delta_{q+1} \delta_q^{1/2} \lambda_q \lambda_{q+1}^{ N+ 5\alpha}.
\end{align*}
Finally, to deal with the commutator term, we make use of the Proposition~\ref{prop.comm}. However, in order to apply the Proposition~\ref{prop.comm}, we first introduce the ``phase'' $\phi_{\zeta, k} (x,t):= e^{i \lambda_{q+1} \zeta^\perp \cdot [\tilde\Psi_k(x,t) -x]}$ so that
\begin{align*}
\phi_{\zeta, k} (x,t) e^{i \lambda_{q+1} \zeta^\perp \cdot x} = e^{i \lambda_{q+1} \zeta^\perp \cdot \tilde\Psi_k(x,t)}
\end{align*}
Moreover, we also have the estimates, for any $N\ge1$
\begin{align}\label{new_02}
\| \phi_{\zeta, k} \|_N \lesssim \lambda_{q+1} \| D \tilde\Psi_k \|_{N-1} + \lambda^N_{q+1} \| D \tilde\Psi_k - \mathrm{Id}\|_0^{N}.
\end{align}
Let us now denote by 
$$
\Omega_{\zeta,k}:= \sum_{n = 0}^{\Gamma - 1} \sum_{k \in \mathbb Z_{q,n}} \sum_{\zeta\in\Lambda_T} \phi_{\zeta, k} \bar  D_{t, \Gamma} \mathcal{E}_{\zeta,k,n+1},
$$
and write
\begin{align*}
[\bar v_{q, \Gamma} \cdot, \mathcal{R}] (\na \bar D_{t, \Gamma} \theta^{(s)}_{q+1}) = 
[\bar v_{q, \Gamma} \cdot, \mathcal{R}] \na \Omega_{\zeta,k}  e^{i \lambda_{q+1} \zeta^\perp \cdot x} 
+ i \lambda_{q+1}[\bar v_{q, \Gamma} \cdot \zeta^\perp, \mathcal{R}] \Omega_{\zeta,k}  e^{i \lambda_{q+1} \zeta^\perp \cdot x} 
\end{align*}
Now we can apply the Proposition~\ref{prop.comm} to both terms on the right hand side of the above expression to conclude
\begin{align*}
& \left\|[\bar v_{q, \Gamma} \cdot, \mathcal{R}] (\na \bar D_{t, \Gamma} \theta^{(s)}_{q+1})\right\|_\alpha \\
& \quad \lesssim \frac{ \| \bar v_{q, \Gamma} \|_1 \| \phi_{\zeta, k} \bar  D_{t, \Gamma} \mathcal{E}_{\zeta,k,n+1}\|_1}{\la_{q+1}^{2-\alpha}} + \frac{C}{\la_{q+1}^{A-\alpha}} \sum_{i=0}^{A-1} \|\phi_{\zeta, k} \bar  D_{t, \Gamma} \mathcal{E}_{\zeta,k,n+1}\|_{1 +i+\alpha} \|\bar v_{q, \Gamma}\|_{A-i+\alpha}\\
 & \qquad + \frac{ \la_{q+1}\| \bar v_{q, \Gamma} \|_1 \|\phi_{\zeta, k} \bar  D_{t, \Gamma} \mathcal{E}_{\zeta,k,n+1}\|_0}{\la_{q+1}^{2-\alpha}} 
+ \frac{C \la_{q+1}}{\la_{q+1}^{A-\alpha}} \sum_{i=0}^{A-1} \|\phi_{\zeta, k} \bar  D_{t, \Gamma} \mathcal{E}_{\zeta,k,n+1}\|_{i+\alpha} \|\bar v_{q, \Gamma}\|_{A-i+\alpha}.
\end{align*}
We can make use of the bounds given in Corollary~\ref{Gamma_velo_estim}, \eqref{new_03}, \eqref{new_02}, and choose $A >\frac{(4b-1)}{3(b-1)}$, to conclude
\begin{align*}
& \left\|[\bar v_{q, \Gamma} \cdot, \mathcal{R}] (\na \bar D_{t, \Gamma} \theta^{(s)}_{q+1})\right\|_\alpha 
\lesssim \frac{\delta_{q+1} \delta_q^{1/2} \lambda_q^{13/6}}{\lambda_{q+1}^{7/6 - 5\alpha}} 
+ \frac{\delta_{q+1} \delta_q^{1/2} \lambda_q^{5/3}}{\lambda_{q+1}^{2/3 - 5\alpha}} \lesssim \delta_{q+1} \delta_q^{1/2} \lambda_q \lambda_{q+1}^{ 5\alpha}.
\end{align*}
In order to estimate higher derivatives, let $N \geq 1$ be a fixed integer, and consider a multi-index $\theta$ such that the total order of the derivatives, $|\theta|$, is equal to $N$. Then for any smooth functions $b$ and $F$, we have
\begin{align*} 
\partial^{\theta} [b, \mathcal{R}] (F) =  [\partial^{\theta} b, \mathcal{R}] (F)
+  [b, \mathcal{R}] (\partial^{\theta} F).
\end{align*}
Therefore, we apply Proposition~\ref{prop.comm} to conclude
\begin{align*}
& \left\|[\bar v_{q, \Gamma} \cdot, \mathcal{R}] (\na \bar D_{t, \Gamma} \theta^{(s)}_{q+1})\right\|_{N+\alpha} \\
& \quad \lesssim \frac{ \| \bar v_{q, \Gamma} \|_{N+1} \| \phi_{\zeta, k} \bar  D_{t, \Gamma} \mathcal{E}_{\zeta,k,n+1}\|_1}{\la_{q+1}^{2-\alpha}} + \frac{C}{\la_{q+1}^{A-\alpha}} \sum_{i=0}^{A-1} \|\phi_{\zeta, k} \bar  D_{t, \Gamma} \mathcal{E}_{\zeta,k,n+1}\|_{1 +i+\alpha} \|\bar v_{q, \Gamma}\|_{N+A-i+\alpha}\\
& \qquad + \frac{ \| \bar v_{q, \Gamma} \|_1 \|\phi_{\zeta, k} \bar  D_{t, \Gamma} \mathcal{E}_{\zeta,k,n+1}\|_{N+1}}{\la_{q+1}^{2-\alpha}} 
+ \frac{C }{\la_{q+1}^{A-\alpha}} \sum_{i=0}^{A-1} \|\phi_{\zeta, k} \bar  D_{t, \Gamma} \mathcal{E}_{\zeta,k,n+1}\|_{N+i+\alpha} \|\bar v_{q, \Gamma}\|_{A-i+\alpha} \\
 & \qquad + \frac{ \la^{N+1}_{q+1}\| \bar v_{q, \Gamma} \|_1 \|\phi_{\zeta, k} \bar  D_{t, \Gamma} \mathcal{E}_{\zeta,k,n+1}\|_0}{\la_{q+1}^{2-\alpha}} 
+ \frac{C \la^{N+1}_{q+1}}{\la_{q+1}^{A-\alpha}} \sum_{i=0}^{A-1} \|\phi_{\zeta, k} \bar  D_{t, \Gamma} \mathcal{E}_{\zeta,k,n+1}\|_{i+\alpha} \|\bar v_{q, \Gamma}\|_{A-i+\alpha} \\
& \quad + \frac{\lambda_{q+1} \| \bar v_{q, \Gamma} \|_{N+1} \| \phi_{\zeta, k} \bar  D_{t, \Gamma} \mathcal{E}_{\zeta,k,n+1}\|_0}{\la_{q+1}^{2-\alpha}} + \frac{C\lambda_{q+1}}{\la_{q+1}^{A-\alpha}} \sum_{i=0}^{A-1} \|\phi_{\zeta, k} \bar  D_{t, \Gamma} \mathcal{E}_{\zeta,k,n+1}\|_{i+\alpha} \|\bar v_{q, \Gamma}\|_{N+A-i+\alpha}\\
& \qquad + \frac{ \lambda_{q+1}\| \bar v_{q, \Gamma} \|_1 \|\phi_{\zeta, k} \bar  D_{t, \Gamma} \mathcal{E}_{\zeta,k,n+1}\|_{N}}{\la_{q+1}^{2-\alpha}} 
+ \frac{C \la_{q+1}}{\la_{q+1}^{A-\alpha}} \sum_{i=0}^{A-1} \|\phi_{\zeta, k} \bar  D_{t, \Gamma} \mathcal{E}_{\zeta,k,n+1}\|_{N-1+i+\alpha} \|\bar v_{q, \Gamma}\|_{A-i+\alpha} \\
 & \quad + \frac{\la^{N+1}_{q+1}\| \bar v_{q, \Gamma} \|_1 \|\phi_{\zeta, k} \bar  D_{t, \Gamma} \mathcal{E}_{\zeta,k,n+1}\|_0}{\la_{q+1}^{2-\alpha}} 
+ \frac{C \la^{N+1}_{q+1}}{\la_{q+1}^{A-\alpha}} \sum_{i=0}^{A-1} \|\phi_{\zeta, k} \bar  D_{t, \Gamma} \mathcal{E}_{\zeta,k,n+1}\|_{i+\alpha} \|\bar v_{q, \Gamma}\|_{A-i+\alpha} \\
& \quad \lesssim \delta_{q+1} \delta_q^{1/2} \lambda_q \lambda_{q+1}^{ N+ 5\alpha}
\end{align*}
Here we have again used the bounds given in Corollary~\ref{Gamma_velo_estim}, \eqref{new_03}, \eqref{new_02}, and $A >\frac{(4b-1)}{3(b-1)}$. This finishes the proof of the lemma.
\end{proof}

Finally, we consider the following linear error term. To that context, we have the following lemma.
\begin{lem} \label{final}
The following estimates hold
\begin{align}
 \|\mathcal{R}(\theta^{(s)}_{q+1} e_2)\|_N \lesssim \frac{\delta^{1/2}_{q+1}}{\lambda_{q+1}^{1 - \alpha}} \lambda_{q+1}^N, \quad \|\bar D_{t,\Ga} \mathcal{R}(\theta^{(s)}_{q+1} e_2)\|_N \lesssim \frac{\ell_{t,q}^{-1}\delta^{1/2}_{q+1} }{\lambda_{q+1}^{1 - 5\alpha}} \lambda_{q+1}^N, \,\,\, \forall N \geq 0, \label{122}
\end{align}
where all the implicit bounds are dependent on $\Gamma$, $M$, $\alpha$, and $N$.
\end{lem}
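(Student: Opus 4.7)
The plan is to treat both inequalities in the same way that the temperature version of Lemma \ref{Tranp_err_estim} was handled, since the object being controlled is the same highly oscillatory function. First, the mean-free corrector $\theta_{q+1}^{(c)}(t)$ depends only on $t$, so $\theta_{q+1}^{(c)}(t) e_2$ is also constant in $x$; the remark immediately preceding the lemma then gives $\mathcal{R}(\theta_{q+1}^{(c)} e_2) = 0$, and it suffices to control $\mathcal{R}(\theta_{q+1}^{(p)} e_2)$. Setting $\mathcal{E}_{\zeta,k,n+1} := h_{\zeta,k,n+1}(\mu_{q+1}\cdot)\,\bar b_{\zeta,k,n}$, the same amplitude as in the proof of Lemma \ref{Tranp_err_estim}, we write
\[
\theta_{q+1}^{(p)} e_2 = \frac{1}{\sqrt{2}} \sum_{n,k,\zeta} \mathcal{E}_{\zeta,k,n+1}\bigl( e^{i\lambda_{q+1}\zeta^\perp \cdot \tilde\Psi_k} + e^{-i\lambda_{q+1}\zeta^\perp \cdot \tilde\Psi_k}\bigr) e_2.
\]
Lemma \ref{a_bar_estim} yields $\|\mathcal{E}_{\zeta,k,n+1}\|_0 \lesssim \delta_{q+1}^{1/2}$ (and the corresponding $\ell_q^{-N-\alpha}$-weighted higher-norm bounds), while the estimates \eqref{new_03}--\eqref{new_04} already established remain at our disposal for $\bar D_{t,\Gamma}\mathcal{E}_{\zeta,k,n+1}$.

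For the first bound, the plan is to apply Proposition \ref{prop.inv.div} to each summand, using Corollary \ref{Flow_gam_estim} and Remark \ref{remark_flow_bd} to secure that $|\nabla(\zeta^\perp \cdot \tilde\Psi_k)|$ is uniformly bounded above and below. Choosing the integer $A > \alpha + \frac{2b}{b-1}$ as in Lemma \ref{Tranp_err_estim} absorbs the higher-order correction and produces $\|\mathcal{R}(\theta^{(s)}_{q+1} e_2)\|_\alpha \lesssim \delta_{q+1}^{1/2}/\lambda_{q+1}^{1-\alpha}$. For $N \geq 1$, writing any $N$-th order derivative as $\partial_i \partial^\theta$ with $|\theta|=N-1$ and using that $\partial_i \mathcal{R}$ is a sum of Calder\'on--Zygmund operators reduces the bound to $\|\theta^{(s)}_{q+1}\|_{N-1+\alpha}$, which Lemma \ref{velo_estim_fin} controls by $\delta_{q+1}^{1/2} \lambda_{q+1}^{N-1+\alpha}$, exactly what is needed. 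The constant vector $e_2$ plays no role in any of these estimates.

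For the second bound, we decompose
\[
\bar D_{t,\Gamma} \mathcal{R}(\theta^{(s)}_{q+1} e_2) = \mathcal{R}\bigl((\bar D_{t,\Gamma}\theta^{(s)}_{q+1}) e_2\bigr) + \bigl[\bar v_{q,\Gamma} \cdot \nabla,\, \mathcal{R}\bigr](\theta^{(s)}_{q+1} e_2).
\]
The first piece again has the oscillatory structure above with amplitude $\bar D_{t,\Gamma}\mathcal{E}_{\zeta,k,n+1}$, so Proposition \ref{prop.inv.div} combined with $\|\bar D_{t,\Gamma}\mathcal{E}_{\zeta,k,n+1}\|_0 \lesssim \delta_{q+1}^{1/2}\mu_{q+1}$ produces a bound of order $\delta_{q+1}\lambda_q^{2/3}/\lambda_{q+1}^{2/3-5\alpha}$, which is strictly dominated by the required $\ell_{t,q}^{-1}\delta_{q+1}^{1/2}/\lambda_{q+1}^{1-5\alpha}$ by the factor $(\lambda_q/\lambda_{q+1})^{1/3+\beta}$. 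The commutator is the main obstacle: following the temperature case of Lemma \ref{Tranp_err_estim}, we introduce the flow phase $\phi_{\zeta,k}(x,t) := e^{i\lambda_{q+1}\zeta^\perp \cdot (\tilde\Psi_k - x)}$, set $\Omega_{\zeta,k} := \phi_{\zeta,k}\mathcal{E}_{\zeta,k,n+1}$, and split
\[
[\bar v_{q,\Gamma}\cdot\nabla, \mathcal{R}](\Omega_{\zeta,k} e^{i\lambda_{q+1}\zeta^\perp\cdot x} e_2) = [\bar v_{q,\Gamma}\cdot,\mathcal{R}](\nabla\Omega_{\zeta,k}\, e^{i\lambda_{q+1}\zeta^\perp\cdot x} e_2) + i\lambda_{q+1}[\bar v_{q,\Gamma}\cdot\zeta^\perp,\mathcal{R}](\Omega_{\zeta,k}\, e^{i\lambda_{q+1}\zeta^\perp\cdot x} e_2).
\]
Proposition \ref{prop.comm} is then applied to each piece, using \eqref{new_02} together with Corollary \ref{Gamma_velo_estim}; the dominant contribution is $\lambda_{q+1}^{\alpha-1}\|\bar v_{q,\Gamma}\|_1 \|\mathcal{E}_{\zeta,k,n+1}\|_0 \lesssim \delta_q^{1/2}\delta_{q+1}^{1/2}\lambda_q/\lambda_{q+1}^{1-\alpha}$, which falls below the target by the factor $(\lambda_q/\lambda_{q+1})^{2/3-6\alpha}$, and the remaining contributions are absorbed by choosing the parameter $m$ in Proposition \ref{prop.comm} large enough ($m > \frac{4b-1}{3(b-1)}$ suffices, as in the proof of Lemma \ref{Tranp_err_estim}). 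Higher derivatives are obtained by distributing $\partial^\theta$ across both the commutator and the amplitude in the standard way, and selecting $a_0$ sufficiently large completes the proof.
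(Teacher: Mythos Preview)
Your proposal is correct and follows essentially the same approach as the paper, which simply refers back to the argument of Lemma~\ref{Tranp_err_estim}: reduce to $\theta_{q+1}^{(p)}$ via the time-dependent corrector, use the oscillatory expansion with amplitude $\mathcal{E}_{\zeta,k,n+1}$, apply Proposition~\ref{prop.inv.div} for the spatial bound, and treat the material derivative via the commutator decomposition handled by Proposition~\ref{prop.comm}. You have in fact supplied more detail than the paper, which omits the computation entirely.
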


\begin{proof}
Note that it is enough to get the estimates for $\theta^{(p)}_{q+1}$, since for every smooth periodic time-dependent vector field $g$ and for every $h$ which depends only on time, we have $\mathcal{R} (g (t, \cdot)+ h(t)) = \mathcal{R} (g (t, \cdot))$.
Therefore, in order to establish the estimates \eqref{122}, we first recall that 
\begin{align*}
\theta_{q+1}^{(p)} e_2&= \sum_{n = 0}^{\Gamma - 1} \sum_{k \in \mathbb Z_{q,n}} \sum_{\zeta\in\Lambda_T} \underbrace{h_{\zeta,k,n+1} \bar b_{\zeta,k,n}}_{:=\mathcal{E}_{\zeta,k,n+1}} \mathbb{V}_\zeta (\la_{q+1} \tilde\Psi_k) e_2\\
&= \sum_{n = 0}^{\Gamma - 1} \sum_{k \in \mathbb Z_{q,n}} \sum_{\zeta\in\Lambda_T} \frac{1}{\sqrt{2}}\mathcal{E}_{\zeta,k,n+1} 
\big ( e^{i \lambda_{q+1}\zeta^\perp \cdot \tilde\Psi_k} + e^{- i \la_{q+1}\zeta^\perp \cdot \tilde\Psi_k} \big) e_2.
\end{align*}
Indeed, we can follow the proof of the Lemma~\ref{Tranp_err_estim} to get the desired estimates. Indeed, for the first estimate, we only need informations on the perturbations $\theta^{(s)}_{q+1}$, unlike in the previous case where we needed material derivative informations. For the second estimate also we can use similar arguments as in Lemma~\ref{Tranp_err_estim}. For the sake of brevity, we skip the details. 
\end{proof}

\subsection{Oscillation error \texorpdfstring{$R_{q+1, O}$}{rororo}}
Let us first write
\begin{equation*}
    R_{q+1, O} = \underbrace{\mathcal{R}\div(S_{q, \Gamma} + w_{q+1}^{(p)} \otimes w_{q+1}^{(p)})}_{\text{Principal oscillation error}} + \underbrace{\mathcal{R} \div (w_{q+1}^{(p)} \otimes w_{q+1}^{(c)} + w_{q+1}^{(c)}\otimes w_{q+1}^{(p)} + w_{q+1}^{(c)} \otimes w_{q+1}^{(c)})}_{\text{Divergence corrector error}}.
\end{equation*}
To deal with the first term, we closely follow \cite{vikram}. Indeed, we first recall the notation 
\begin{equation*}
    A_{\sigma, k, n} = a^2_{\xi, k, n} (\nabla  \Psi_k)^{-1} \xi \otimes \xi (\nabla \Psi_k)^{-T} + b^2_{\zeta, k, n} (\nabla  \Psi_k)^{-1} \zeta \otimes \zeta (\nabla \Psi_k)^{-T} := A^1_{\xi, k, n} + A^2_{\zeta, k, n},
\end{equation*}
and analogously denote 
\begin{equation*}
    \bar A_{\sigma, k, n} = \bar a^2_{\xi, k, n} (\nabla \tilde \Psi_k)^{-1} \xi \otimes \xi (\nabla \tilde \Psi_k)^{-T} + \bar b^2_{\zeta, k, n} (\nabla \tilde \Psi_k)^{-1} \zeta \otimes \zeta (\nabla \tilde \Psi_k)^{-T}
    := \bar A^1_{\xi, k, n} + \bar A^2_{\zeta, k, n}.
\end{equation*}
Then we compute
\begin{align*}
& w_{q+1}^{(p)} \otimes w_{q+1}^{(p)} 
   =  \sum_{\xi \in \Lambda_R, k, n} g_{\xi, k, n+1}^2 \bar a_{\xi, k, n}^2 (\nabla \tilde \Psi_k)^{-1} (\mathbb W_{\xi} \otimes \mathbb W_{\xi})(\lambda_{q+1} \tilde \Psi_k) (\nabla \tilde \Psi_k)^{-T} \\
     &+  \sum_{\zeta \in \Lambda_T, k, n} h_{\zeta, k, n+1}^2 \bar b_{\zeta, k, n}^2 (\nabla \tilde \Psi_k)^{-1} (\mathbb W_{\zeta} \otimes \mathbb W_{\zeta})(\lambda_{q+1} \tilde \Psi_k) (\nabla \tilde \Psi_k)^{-T} \\ 
    & =  \sum_{\xi \in \Lambda_R, k, n} g_{\xi, k, n+1}^2 \bar a_{\xi, k, n}^2 (\nabla \tilde \Psi_k)^{-1} \xi \otimes \xi (\nabla \tilde \Psi_k)^{-T} 
  + \sum_{\zeta \in \Lambda_T, k, n} h_{\zeta, k, n+1}^2 \bar b_{\zeta, k, n}^2 (\nabla \tilde \Psi_k)^{-1} \zeta \otimes \zeta (\nabla \tilde \Psi_k)^{-T} \\
    & + \sum_{\xi \in \Lambda_R, k, n} g^2_{\xi, k, n+1} \bar a_{\xi, k, n}^2 (\nabla \tilde \Psi_k)^{-1} (\mathbb{P}_{\neq 0} \mathbb{W}_\xi \otimes \mathbb{W}_\xi)(\la_{q+1} \tilde\Psi_k) (\nabla \tilde \Psi_k)^{-T} \\ 
    & + \sum_{\zeta \in \Lambda_T, k, n} h^2_{\zeta, k, n+1} \bar b_{\zeta, k, n}^2 (\nabla \tilde \Psi_k)^{-1} (\mathbb{P}_{\neq 0} \mathbb{W}_\zeta \otimes \mathbb{W}_\zeta)(\la_{q+1} \tilde\Psi_k) (\nabla \tilde \Psi_k)^{-T} \\ 
    &=  -S_{q, \Gamma}  + \sum_{\xi \in \Lambda_R, k, n} g_{\xi, k, n+1}^2 (\bar A^1_{\xi,k,n} - A^1_{\xi, k, n}) + \sum_{\zeta \in \Lambda_T, k, n} h_{\zeta, k, n+1}^2 (\bar A^2_{\zeta,k,n} - A^2_{\zeta, k, n}) \\ 
    & + \sum_{\xi \in \Lambda_R, k, n} g^2_{\xi, k, n+1} \bar a_{\xi, k, n}^2 (\nabla \tilde \Psi_k)^{-1} (\mathbb{P}_{\neq 0} \mathbb{W}_\xi \otimes \mathbb{W}_\xi)(\la_{q+1} \tilde\Psi_k) (\nabla \tilde \Psi_k)^{-T} \\
    & + \sum_{\zeta \in \Lambda_T, k, n} h^2_{\zeta, k, n+1} \bar b_{\zeta, k, n}^2 (\nabla \tilde \Psi_k)^{-1} (\mathbb{P}_{\neq 0} \mathbb{W}_\zeta \otimes \mathbb{W}_\zeta)(\la_{q+1} \tilde\Psi_k) (\nabla \tilde \Psi_k)^{-T},
\end{align*}
where $\mathbb P_{\neq 0}$ represents the Fourier projection operator that extracts the mean-zero components of 
$2$-tensors. We can also write 
\begin{equation*}
    \mathbb P_{\neq 0} \mathbb W_\sigma \otimes \mathbb W_\sigma = \Omega_\sigma \sigma \otimes \sigma,
\end{equation*}
where $\Omega_\sigma$ is defined by 
\begin{equation*}
    \Omega_\sigma(x) = \frac{1}{2}(e^{2i\sigma^\perp \cdot x} + e^{-2i\sigma^\perp \cdot x}).
\end{equation*}
This enables us to decompose the oscillation error into distinct components as:
\begin{eqnarray*}
    \mathcal{R}\div(S_{q, \Gamma} + w_{q+1}^{(p)} \otimes w_{q+1}^{(p)}) &=& \mathcal{R} \div \bigg(\sum_{\xi \in \Lambda_R, k, n} g^2_{\xi, k, n+1} \Omega_\xi(\lambda_{q+1} \tilde \Psi_k) \bar A^1_{\xi, k, n}\bigg) \\
    && \hspace{-5cm} +\mathcal{R} \div \bigg(\sum_{\zeta \in \Lambda_T, k, n} h^2_{\zeta, k, n+1} \Omega_\zeta(\lambda_{q+1} \tilde \Psi_k) \bar A^2_{\zeta, k, n}\bigg) 
    + \mathcal{R}\div \bigg(\sum_{\xi \in \Lambda_R, k, n} g_{\xi, k, n+1}^2 ( \bar A^1_{\xi,k,n} - A^1_{\xi, k, n})\bigg)\\
    && + \mathcal{R}\div \bigg(\sum_{\zeta \in \Lambda_T, k, n} h_{\zeta, k, n+1}^2 ( \bar A^2_{\zeta,k,n} - A^2_{\zeta, k, n})\bigg).
\end{eqnarray*}
To get estimates for the above terms, we need the following estimates. The proof of the following lemma is similar to \cite[Lemma 4.10]{vikram}. We skip the details.

\begin{lem} \label{big_a_bar_estim}
The following bounds are valid: 
\begin{align*}
 &\|\bar A^1_{\xi, k, n}\|_N, \, \|\bar A^2_{\zeta, k, n}\|_N \lesssim \delta_{q+1, n} \lambda_q^N, \hspace{5cm}\,\, \forall N \in \{0,1,..., L-3\}, \\
 &\|\bar D_{t, \Gamma} \bar A^1_{\xi, k, n} \|_N, \, \|\bar D_{t, \Gamma} \bar A^2_{\zeta, k, n} \|_N  \lesssim \delta_{q+1, n} \tau_q^{-1} \lambda_q^N, \hspace{3.2cm} \forall N \in \{0,1,..., L-3\},\\
&\|\bar A^1_{\xi, k, n}\|_{N+L-3}, \, \|\bar A^2_{\zeta, k, n}\|_{N+L-3} \lesssim \delta_{q+1, n} \lambda_q^{L-3} \ell_q^{-N},  \hspace{2.6cm} \,\forall N \geq 0,\\
&\|\bar D_{t, \Gamma} \bar A^1_{\xi, k, n} \|_{N+L-3}, \, \|\bar D_{t, \Gamma} \bar A^2_{\zeta, k, n} \|_{N+L-3} \lesssim \delta_{q+1, n} \lambda_q^{L-3} \tau_q^{-1} \ell_q^{-N}, \hspace{0.7cm} \,\forall N \geq 0,
\end{align*}
where all the implicit bounds are dependent on $\Gamma$, $M$, $\alpha$ and $N$. Moreover, it holds that 
\begin{equation*}
    \|\bar A^1_{\xi, k, n} - A^1_{\xi,k, n}\|_0, \, \|\bar A^2_{\zeta, k, n} - A^2_{\zeta,k, n}\|_0 \lesssim  \delta_{q+1,n}\frac{\delta_{q+1}^{1/2} \lambda_q^{1/3}}{\delta_{q}^{1/2} \lambda_{q+1}^{1/3}}.
\end{equation*}
\end{lem}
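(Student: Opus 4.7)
The first four families of estimates are a direct analogue of Corollary~\ref{a_cor} with the replacements $a_{\xi,k,n} \mapsto \bar a_{\xi,k,n}$, $b_{\zeta,k,n}\mapsto \bar b_{\zeta,k,n}$, and $\nabla \Psi_k \mapsto \nabla \tilde\Psi_k$. So the plan is to differentiate the product
\[
\bar A^1_{\xi,k,n} = \bar a_{\xi,k,n}^2 \,(\nabla\tilde\Psi_k)^{-1}(\xi\otimes\xi)(\nabla\tilde\Psi_k)^{-T}
\]
(and similarly for $\bar A^2_{\zeta,k,n}$) by the standard product rule, then invoke Lemma~\ref{a_bar_estim} for the amplitudes and Corollary~\ref{Flow_gam_estim} for the flow factors, exactly mimicking the three–term estimate written out in the proof of Corollary~\ref{a_cor}. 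Since Lemma~\ref{a_bar_estim} is valid only up to $L-3$ in the ``low-regularity'' range and up to $L-3+N$ in the ``high-regularity'' range, the resulting estimates inherit the same restriction, which is precisely what is claimed. For the material derivative estimates one proceeds identically, using in addition the material derivative bounds of both $\bar a_{\xi,k,n}$/$\bar b_{\zeta,k,n}$ (Lemma~\ref{a_bar_estim}) and of $(\nabla\tilde\Psi_k)^{\pm 1}$ (Corollary~\ref{Flow_gam_estim}).

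The only delicate point is the $C^0$ difference estimate. Here I would split
\[
\bar A^1_{\xi,k,n} - A^1_{\xi,k,n}
= (\bar a_{\xi,k,n}^2 - a_{\xi,k,n}^2)(\nabla\tilde\Psi_k)^{-1}\xi\otimes\xi(\nabla\tilde\Psi_k)^{-T}
+ a_{\xi,k,n}^2 \bigl[(\nabla\tilde\Psi_k)^{-1}\xi\otimes\xi(\nabla\tilde\Psi_k)^{-T} - (\nabla\Psi_k)^{-1}\xi\otimes\xi(\nabla\Psi_k)^{-T}\bigr],
\]
and analogously for $\bar A^2 - A^2$. The flow–difference piece is controlled directly by Lemma~\ref{flow_stabil} (with $\tau \sim \tau_q$), which gives
\[
\|(\nabla\tilde\Psi_k)^{-1}-(\nabla\Psi_k)^{-1}\|_0 \lesssim \tau_q\,\frac{\delta_{q+1}\lambda_q^2 \ell_q^{-\alpha}}{\mu_{q+1}}
\lesssim \frac{\delta_{q+1}^{1/2}\lambda_q^{1/3}}{\delta_q^{1/2}\lambda_{q+1}^{1/3}},
\]
after substituting $\tau_q = (\delta_q^{1/2}\lambda_q\lambda_{q+1}^{\alpha})^{-1}$, $\mu_{q+1} = \delta_{q+1}^{1/2}\lambda_q^{2/3}\lambda_{q+1}^{1/3}\lambda_{q+1}^{4\alpha}$, and $\ell_q = (\lambda_q\lambda_{q+1})^{-1/2}$; combined with $\|a_{\xi,k,n}\|_0^2 \lesssim \delta_{q+1,n}$ from Lemma~\ref{a_estim} this yields the target rate.

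The main obstacle is the amplitude difference $\bar a_{\xi,k,n}^2 - a_{\xi,k,n}^2$, since it requires tracking both the flow mollification $\tilde\Psi_k$ vs.\ $\Psi_k$ and the Lagrangian temporal mollification $\bar R_{q,n}$ vs.\ $R_{q,n}$, $\bar T_{q,n}$ vs.\ $T_{q,n}$. The strategy is to use the Lipschitz-in-argument property of $\gamma_\xi$ (and $\Gamma_\zeta^{1/2}$), so that
\[
|\bar a_{\xi,k,n} - a_{\xi,k,n}| \lesssim \delta_{q+1,n}^{1/2}\Bigl(\|\nabla\tilde\Psi_k - \nabla\Psi_k\|_0 + \delta_{q+1,n}^{-1}\|\bar R_{q,n} - R_{q,n}\|_0 + \delta_{q+1,n}^{-1}\lambda_q^{-\alpha}\| \bar b_{\zeta,k',n}^2 - b_{\zeta,k',n}^2\|_0 \bigr),
\]
together with an analogous bound for $|\bar b_{\zeta,k,n} - b_{\zeta,k,n}|$. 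For the Lagrangian-mollification differences one writes $\bar R_{q,n}(x,t) - R_{q,n}(x,t)$ as the average (against $\varrho_{\ell_{t,q}}$) of $R_{q,n}(\tilde X_t(x,t+s),t+s) - R_{q,n}(x,t)$ and applies the fundamental theorem of calculus along the flow $\tilde X_t$, picking up a factor $\ell_{t,q}\|\bar D_{t,\Gamma} R_{q,n}\|_0$ (modulo a commutator between $\bar D_t$ and $\bar D_{t,\Gamma}$ handled via Lemma~\ref{w_t_estim}); the choice $\ell_{t,q} = \delta_q^{-1/2}\lambda_q^{-1/3}\lambda_{q+1}^{-2/3}$ is precisely tuned so that this difference matches the flow-difference bound above. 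One then bootstraps: the $b$-difference feeds into the $a$-difference, but each piece contributes $O(\delta_{q+1,n}^{1/2}\delta_{q+1}^{1/2}\lambda_q^{1/3}/(\delta_q^{1/2}\lambda_{q+1}^{1/3}))$, so multiplying by the $C^0$ bound $\delta_{q+1,n}^{1/2}$ of the other factor in $a^2$ (or $b^2$) closes the estimate with the stated rate.
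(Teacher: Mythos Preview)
Your proposal is correct and follows essentially the same approach as the paper. The paper itself skips the details here, deferring to \cite[Lemma~4.10]{vikram}; however, the analogous $C^0$--difference estimate it \emph{does} write out (for $\bar B_{\zeta,k,n}-B_{\zeta,k,n}$) proceeds exactly as you outline: split into an amplitude-difference piece and a flow-difference piece, bound the latter by Lemma~\ref{flow_stabil}, and for the former use the mean value inequality on the smooth functions $\gamma_\xi$, $\Gamma_\zeta^{1/2}$ together with the Lagrangian-mollification bound $\|R_{q,n}-\bar R_{q,n}\|_0\lesssim \ell_{t,q}\|\bar D_{t,\Gamma}R_{q,n}\|_0\lesssim \delta_{q+1,n}\tau_q^{-1}\ell_{t,q}$, arriving at $\delta_{q+1,n}\bigl(\tau_q\,\delta_{q+1}\lambda_q^2\ell_q^{-\alpha}/\mu_{q+1}+\tau_q^{-1}\ell_{t,q}\bigr)$ and then absorbing the second term into the first for small $\alpha$; your bootstrap of the $b$-difference into the $a$-difference is the one extra wrinkle needed for $\bar A^1$ and is handled correctly.
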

We can now state the estimates in the following two lemmas. Again, we skip the proofs since they are very similar to \cite[Lemma 4.11]{vikram} and \cite[Lemma 4.12]{vikram}.
\begin{lem}
The following bounds are valid:
\begin{equation} \label{high-high-estim}
    \bigg\|\mathcal{R} \div \bigg(\sum_{\xi \in \Lambda_R, k, n} g^2_{\xi, k, n+1} \Omega_\xi(\lambda_{q+1} \tilde \Psi_k) \bar A^1_{\xi, k, n}\bigg)\bigg\|_{N} \lesssim \frac{\delta_{q+1} \lambda_q}{\lambda_{q+1}^{1 - 2\alpha}} \lambda_{q+1}^N, \qquad \forall N \geq 0,
\end{equation}
\begin{equation} \label{high-high-estim_01}
    \bigg\|\mathcal{R} \div \bigg(\sum_{\zeta \in \Lambda_T, k, n} h^2_{\zeta, k, n+1} \Omega_\zeta(\lambda_{q+1} \tilde \Psi_k) \bar A^2_{\zeta, k, n}\bigg)\bigg\|_{N} \lesssim \frac{\delta_{q+1} \lambda_q}{\lambda_{q+1}^{1 - 2\alpha}} \lambda_{q+1}^N, \qquad \,\forall N \geq 0,
\end{equation}
\begin{equation}
    \bigg\|\bar D_{t,\Ga} \mathcal{R} \div \bigg(\sum_{\xi \in \Lambda_R, k, n} g^2_{\xi, k, n+1} \Omega_\xi(\lambda_{q+1} \tilde \Psi_k) \bar A^1_{\xi, k, n}\bigg)\bigg\|_{N} \lesssim \mu_{q+1}\delta_{q+1} \lambda_{q+1}^\alpha \lambda_{q+1}^N, \,\,\, \forall N \geq 0.
\end{equation}
\begin{equation}
    \bigg\|\bar D_{t,\Ga} \mathcal{R} \div \bigg(\sum_{\zeta \in \Lambda_T, k, n} h^2_{\zeta, k, n+1} \Omega_\zeta(\lambda_{q+1} \tilde \Psi_k) \bar A^2_{\zeta, k, n}\bigg)\bigg\|_{N} \lesssim \mu_{q+1}\delta_{q+1} \lambda_{q+1}^\alpha \lambda_{q+1}^N, \,\,\, \forall N \geq 0.
\end{equation}
where all the implicit bounds are dependent on $\Gamma$, $M$, $\alpha$, $b$, and $N$. 
\end{lem}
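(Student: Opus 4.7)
The plan is to reduce to a single summand in each sum and then exploit an algebraic cancellation. By Lemma~\ref{osc_prof}, the profiles $\{g_{\xi,k,n+1}\}$ and $\{h_{\zeta,k,n+1}\}$ have pairwise disjoint temporal supports, so the sums are locally finite with bounded cardinality, and it suffices to bound a generic summand. Fixing one triple $(\xi, k, n)$, I write $\Omega_\xi(\lambda_{q+1}\tilde\Psi_k)=\tfrac{1}{2}\bigl(e^{2i\lambda_{q+1}\xi^\perp\cdot\tilde\Psi_k}+\mathrm{c.c.}\bigr)$ and treat a single exponential, i.e.\ $F_+ := \tfrac{1}{2} g^2_{\xi,k,n+1} \bar A^1_{\xi,k,n}\, e^{2i\lambda_{q+1}\xi^\perp\cdot\tilde\Psi_k}$.

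The crucial observation is that the ``phase-derivative'' piece of $\div F_+$, which a priori scales like $\lambda_{q+1}\delta_{q+1,n}$, vanishes identically. Indeed, using $\bar A^1_{\xi,k,n}=\bar a^2 (\nabla\tilde\Psi_k)^{-1}\xi\otimes\xi(\nabla\tilde\Psi_k)^{-T}$ together with the identity $\sum_j (\nabla\tilde\Psi_k)_{lj}\,\bigl[(\nabla\tilde\Psi_k)^{-1}\bigr]_{jb}=\delta_{lb}$, this term reduces to a multiple of $\xi\cdot\xi^\perp=0$; the analogous orthogonality $\zeta\cdot\zeta^\perp=0$ disposes of the $\bar A^2_{\zeta,k,n}$ summand. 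The problem is thereby reduced to estimating $\mathcal{R}\bigl(g^2_{\xi,k,n+1}(\div\bar A^1_{\xi,k,n})\, e^{\pm 2i\lambda_{q+1}\xi^\perp\cdot\tilde\Psi_k}\bigr)$, to which Proposition~\ref{prop.inv.div} applies with wave vector of modulus $2\lambda_{q+1}$, phase $\tilde\Psi_k$, and amplitude of $C^0$-norm $\lesssim \delta_{q+1,n}\lambda_q$ by Lemma~\ref{big_a_bar_estim}. The leading term yields $\delta_{q+1,n}\lambda_q/\lambda_{q+1}^{1-\alpha}$, safely inside the claimed bound once $\delta_{q+1,n}\le\delta_{q+1}$ is used. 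Higher $C^N$ estimates proceed from the fact that $\partial_i\mathcal{R}$ is of Calder\'on--Zygmund type, together with the Sobolev-type bounds in Lemma~\ref{big_a_bar_estim} and Corollary~\ref{Flow_gam_estim}, with the parameter $M$ in Proposition~\ref{prop.inv.div} chosen large enough depending on $b$ so that the $\lambda_{q+1}^\alpha$ slack absorbs the $\ell_q^{-\alpha}$ losses coming from higher derivatives of $\tilde\Psi_k$.

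For the material-derivative bounds, I would commute $\bar D_{t,\Gamma}$ past $\mathcal{R}\div$ via $\bar D_{t,\Gamma}\mathcal{R}\div F = \mathcal{R}\div(\bar D_{t,\Gamma} F)+[\bar v_{q,\Gamma}\cdot\nabla,\,\mathcal{R}\div]F$. Since $\tilde\Psi_k$ is advected by $\bar v_{q,\Gamma}$, one has $\bar D_{t,\Gamma} e^{\pm 2i\lambda_{q+1}\xi^\perp\cdot\tilde\Psi_k}=0$, so $\bar D_{t,\Gamma}F_+$ retains the oscillatory structure, with amplitude $\bar D_{t,\Gamma}(g^2_{\xi,k,n+1}\bar A^1_{\xi,k,n})$ of $C^0$-size $\lesssim \mu_{q+1}\delta_{q+1,n}$---dominated by $\partial_t g^2 \sim \mu_{q+1}$ and not by $\bar D_{t,\Gamma}\bar A^1\sim \tau_q^{-1}$. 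The same cancellation then applies verbatim, and Proposition~\ref{prop.inv.div} produces the first piece of the bound, of order $\mu_{q+1}\delta_{q+1,n}\lambda_{q+1}^\alpha$. For the commutator, introduce the slowly varying phase factor $\phi_{\xi,k}(x,t):=e^{\pm 2i\lambda_{q+1}\xi^\perp\cdot(\tilde\Psi_k(x,t)-x)}$ to convert the oscillation into a fixed-frequency exponential $e^{\pm 2i\lambda_{q+1}\xi^\perp\cdot x}$, and apply Proposition~\ref{prop.comm} with the familiar bound $\|\phi_{\xi,k}\|_N\lesssim \lambda_{q+1}\|D\tilde\Psi_k\|_{N-1}+\lambda_{q+1}^N\|D\tilde\Psi_k-\I\|_0^N$ from Corollary~\ref{Flow_gam_estim}.

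The principal obstacle is this commutator step: the expansion order in Proposition~\ref{prop.comm} must be chosen large enough, depending on $b$, so that the tail terms are dominated by the available $\lambda_{q+1}^\alpha$, and the high-order $C^{A-i+\alpha}$ norms of $\bar v_{q,\Gamma}$ must be controlled via Corollary~\ref{Gamma_velo_estim} without generating spurious losses in $\lambda_q/\mu_{q+1}$. These bookkeeping computations parallel the ones already carried out for the temperature transport error in Lemma~\ref{Tranp_err_estim} and should transfer directly, once the principal-order cancellation $\xi\cdot\xi^\perp=0$ (resp.\ $\zeta\cdot\zeta^\perp=0$) has been used to gain the decisive factor $\lambda_{q+1}^{-1}$.
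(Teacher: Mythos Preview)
Your proposal is correct and follows exactly the approach the paper has in mind: the paper skips this proof entirely, referring to \cite[Lemma 4.11]{vikram}, and your argument---disjoint supports, the cancellation $\xi\cdot\xi^\perp=0$ killing the leading $\lambda_{q+1}$-term in $\div F_+$, Proposition~\ref{prop.inv.div} for the remaining amplitude $\div\bar A^1_{\xi,k,n}$, and the commutator treatment via Proposition~\ref{prop.comm} with the phase $\phi_{\xi,k}$---is precisely the Giri--Radu argument adapted to the present amplitudes. Nothing is missing.
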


\begin{lem}
The following bounds are valid:
\begin{equation}
    \bigg\|\mathcal{R} \div \bigg(\sum_{\xi \in \Lambda_R, k, n} g_{\xi, k, n+1}^2 (\bar A^1_{\xi, k, n} - A^1_{\xi, k, n})\bigg)\bigg\|_N \lesssim \delta_{q+1}\frac{\delta_{q+1}^{1/2} \lambda_{q}^{1/3}}{\delta_{q}^{1/2}\lambda_{q+1}^{1/3}} \lambda_{q+1}^\alpha \lambda_{q+1}^N, \,\,\, \forall N \geq 0.
\end{equation}
\begin{equation}
    \bigg\|\mathcal{R} \div \bigg(\sum_{\zeta \in \Lambda_T, k, n} h_{\zeta, k, n+1}^2 (\bar A^2_{\zeta, k, n} - A^2_{\zeta, k, n})\bigg)\bigg\|_N \lesssim \delta_{q+1}\frac{\delta_{q+1}^{1/2} \lambda_{q}^{1/3}}{\delta_{q}^{1/2}\lambda_{q+1}^{1/3}} \lambda_{q+1}^\alpha \lambda_{q+1}^N, \,\,\, \forall N \geq 0.
\end{equation}
\begin{equation}
    \bigg\|\bar D_{t, \Gamma}\mathcal{R} \div \bigg(\sum_{\xi \in \Lambda_R, k, n} g_{\xi, k, n+1}^2 (\bar A^1_{\xi, k, n} - A^1_{\xi, k, n})\bigg)\bigg\|_{N} \lesssim \mu_{q+1}\delta_{q+1} \lambda_{q+1}^\alpha \lambda_{q+1}^{N}, \,\,\, \forall N \geq 0
\end{equation}
\begin{equation}
    \bigg\|\bar D_{t, \Gamma}\mathcal{R} \div \bigg(\sum_{\zeta \in \Lambda_T, k, n} h_{\zeta, k, n+1}^2 (\bar A^2_{\zeta, k, n} - A^2_{\zeta, k, n})\bigg)\bigg\|_{N} \lesssim \mu_{q+1}\delta_{q+1} \lambda_{q+1}^\alpha \lambda_{q+1}^{N}, \,\,\, \forall N \geq 0
\end{equation}
where all the implicit bounds are dependent on $\Gamma$, $M$, $\alpha$ and $N$. 
\end{lem}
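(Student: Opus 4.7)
The plan is to estimate the four quantities in parallel, since the pieces $\bar A^1_{\xi,k,n}-A^1_{\xi,k,n}$ and $\bar A^2_{\zeta,k,n}-A^2_{\zeta,k,n}$ have the same structural form. I would first split each difference telescopically into a ``flow error'' and a ``coefficient error'':
\begin{align*}
\bar A^1_{\xi,k,n} - A^1_{\xi,k,n}
&= \big(\bar a_{\xi,k,n}^2 - a_{\xi,k,n}^2\big)(\na\tilde\Psi_k)^{-1}\xi\otimes\xi(\na\tilde\Psi_k)^{-T} \\
&\quad + a_{\xi,k,n}^2 \big[(\na\tilde\Psi_k)^{-1}-(\na\Psi_k)^{-1}\big]\xi\otimes\xi(\na\tilde\Psi_k)^{-T} \\
&\quad + a_{\xi,k,n}^2 (\na\Psi_k)^{-1}\xi\otimes\xi\big[(\na\tilde\Psi_k)^{-T}-(\na\Psi_k)^{-T}\big],
\end{align*}
and analogously for $\bar A^2_{\zeta,k,n}-A^2_{\zeta,k,n}$. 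The flow differences are controlled by Lemma~\ref{flow_stabil}, while the coefficient differences $\bar a^2-a^2$ and $\bar b^2 - b^2$ are handled by further splitting into (i) the replacement of $R_{q,n}, T_{q,n}$ by their material mollifications $\bar R_{q,n}, \bar T_{q,n}$, which is bounded by Lemma~\ref{tmoli_estim}, and (ii) the replacement of $\Psi_k$ by $\tilde\Psi_k$ inside the composition with the functions $\gamma_\xi, \Gamma_\zeta^{1/2}$, again handled by Lemma~\ref{flow_stabil} together with Proposition~\ref{comp_estim}. The $N=0$ bound is already recorded in Lemma~\ref{big_a_bar_estim}, and the higher--order bounds follow from the standard interpolation between the sharp $N=0$ bound and the $L^\infty$-type bound $\|\bar A^j_{\cdot,k,n}\|_N + \|A^j_{\cdot,k,n}\|_N \lesssim \delta_{q+1,n}\lambda_q^N$.

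Once the Hölder bound on the differences is obtained, the stationary estimate follows because $\mathcal R \div$ is a sum of Calder\'on--Zygmund operators, hence bounded on $C^\alpha$, and the temporal cutoffs $\eta_k$ are locally finite in $k$; thus
\[
\Big\|\mathcal R\div\sum_{\xi,k,n} g^2_{\xi,k,n+1}(\bar A^1_{\xi,k,n}-A^1_{\xi,k,n})\Big\|_N
\lesssim \sup_{\xi,k,n}\|\bar A^1_{\xi,k,n}-A^1_{\xi,k,n}\|_{N+\alpha},
\]
and similarly with $\Lambda_T, h^2_{\zeta,k,n+1}, \bar A^2, A^2$. Collecting the scales $\delta_{q+1,n}\lesssim \delta_{q+1}$, the $N=0$ improvement $\delta_{q+1}^{1/2}\lambda_q^{1/3}/(\delta_q^{1/2}\lambda_{q+1}^{1/3})$, and one factor $\lambda_{q+1}^\alpha$ from the Hölder loss in Calder\'on--Zygmund and from interpolation, yields the first two inequalities.

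For the material derivative bounds, I would write
\[
\bar D_{t,\Gamma}\,\mathcal R\div\, F = \mathcal R\div(\bar D_{t,\Gamma} F) + [\bar v_{q,\Gamma}\cdot\nabla,\mathcal R\div]F,
\]
where $F$ is the corresponding sum. The commutator is controlled through Proposition~\ref{CZ_comm} by $\|\bar v_{q,\Gamma}\|_{1+\alpha}\|F\|_{N+\alpha}+\|\bar v_{q,\Gamma}\|_{N+1+\alpha}\|F\|_\alpha$, which by Corollary~\ref{Gamma_velo_estim} and the previous step is already subsumed by $\mu_{q+1}\delta_{q+1}\lambda_{q+1}^{\alpha+N}$. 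For $\bar D_{t,\Gamma} F$, I would commute $\bar D_{t,\Gamma}$ past the sum and use Lemma~\ref{a_bar_estim} together with $\bar D_{t,\Gamma}=\bar D_t+w_{q+1}^{(t)}\cdot\nabla$ to bound $\bar D_{t,\Gamma}(\bar A^j-A^j)$. Here the crucial point is that each material derivative costs only a factor $\tau_q^{-1}=\delta_q^{1/2}\lambda_q\lambda_{q+1}^\alpha$, while the oscillation from the nearby high-frequency profile has already been absorbed in the spatial bound; the worst surviving scale is $\mu_{q+1}=\delta_{q+1}^{1/2}\lambda_q^{2/3}\lambda_{q+1}^{1/3+4\alpha}$, which dominates $\tau_q^{-1}$ under the inductive choice of $b<(1+3\beta)/(6\beta)$.

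The main obstacle I anticipate is keeping the bookkeeping clean when differentiating the three-term splitting of $\bar A^j-A^j$ with $\bar D_{t,\Gamma}$: each summand produces several terms in which $\bar D_{t,\Gamma}$ hits either a flow difference (handled by differentiating in time in Lemma~\ref{flow_stabil}, since $\bar D_{t,\Gamma}\nabla\tilde\Psi_k$ and $\bar D_t\nabla\Psi_k$ behave respectively) or the coefficient $a_{\xi,k,n}^2$ (handled by Lemma~\ref{a_estim} with an extra loss $\tau_q^{-1}$) or the mollification difference $\bar R_{q,n}-R_{q,n}$, whose material derivative carries an additional $\ell_{t,q}^{-1}$ by Lemma~\ref{tmoli_estim}. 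Verifying that $\ell_{t,q}^{-1}\lesssim\mu_{q+1}\lambda_{q+1}^\alpha$ (true by the very definition of $\ell_{t,q}$) is what allows the final bound to reach the stated rate $\mu_{q+1}\delta_{q+1}\lambda_{q+1}^{\alpha+N}$; this is the tightest inequality among those entering the argument and therefore the step where I would invest the most care.
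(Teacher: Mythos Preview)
Your plan is essentially the standard argument (the paper itself gives no proof here, deferring to \cite[Lemma~4.12]{vikram}), and the spatial estimates go through exactly as you outline: the $C^0$ gain from Lemma~\ref{big_a_bar_estim}, the crude $C^N$ bound $\|\bar A^j\|_N+\|A^j\|_N\lesssim\delta_{q+1,n}\lambda_q^N$, and one interpolation/$C^\alpha$ loss via $\mathcal R\div$ give precisely the first two inequalities.

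There is, however, one genuine slip in your material-derivative discussion. You assert that $\ell_{t,q}^{-1}\lesssim\mu_{q+1}\lambda_{q+1}^\alpha$, calling it ``true by the very definition of $\ell_{t,q}$''. It is in fact \emph{false}: from the definitions one computes
\[
\frac{\ell_{t,q}^{-1}}{\mu_{q+1}\lambda_{q+1}^\alpha}
=\frac{\delta_q^{1/2}\lambda_q^{1/3}\lambda_{q+1}^{2/3}}{\delta_{q+1}^{1/2}\lambda_q^{2/3}\lambda_{q+1}^{1/3+5\alpha}}
=\Big(\frac{\lambda_{q+1}}{\lambda_q}\Big)^{1/3+\beta}\lambda_{q+1}^{-5\alpha}\gg 1
\]
for small $\alpha$. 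Fortunately you do not need this inequality. The factor $\ell_{t,q}^{-1}$ only enters through \emph{second} material derivatives of $\bar R_{q,n}$ (cf.\ \eqref{tmoli3}); for a single $\bar D_{t,\Gamma}$ one should not differentiate the telescoped difference term by term but simply apply the triangle inequality
\[
\|\bar D_{t,\Gamma}(\bar A^j_{\cdot,k,n}-A^j_{\cdot,k,n})\|_{N+\alpha}
\le \|\bar D_{t,\Gamma}\bar A^j_{\cdot,k,n}\|_{N+\alpha}+\|\bar D_{t,\Gamma}A^j_{\cdot,k,n}\|_{N+\alpha}
\lesssim \delta_{q+1,n}\tau_q^{-1}\ell_q^{-N-\alpha},
\]
using Lemma~\ref{big_a_bar_estim} for the first term and Corollary~\ref{a_cor} together with $\bar D_{t,\Gamma}=\bar D_t+w_{q+1}^{(t)}\cdot\nabla$ for the second. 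Combined with the contribution $\mu_{q+1}\|\bar A^j-A^j\|_{N+\alpha}$ coming from $\partial_t g_{\xi,k,n+1}^2(\mu_{q+1}\cdot)$ and the commutator $[\bar v_{q,\Gamma}\cdot\nabla,\mathcal R\div]$ handled via Proposition~\ref{CZ_comm}, the worst scale is indeed $\mu_{q+1}$ (since $\tau_q^{-1}<\mu_{q+1}$), and the last two inequalities follow.
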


Finally, we derive the estimates for the error introduced by the divergence corrector.

\begin{lem} \label{div_corr_estim}
The following bounds are valid:
\begin{align}
    \left\|\mathcal{R} \div (w_{q+1}^{(p)} \otimes w_{q+1}^{(c)} + w_{q+1}^{(c)}\otimes w_{q+1}^{(p)} + w_{q+1}^{(c)} \otimes w_{q+1}^{(c)})\right\|_{N} &\lesssim \frac{\delta_{q+1} \lambda_q}{\lambda_{q+1}^{1-\alpha}} \lambda_{q+1}^N, \qquad \,\,\forall N \geq 0, \\
    \left\|\bar D_{t,\Ga} \mathcal{R} \div (w_{q+1}^{(p)} \otimes w_{q+1}^{(c)} + w_{q+1}^{(c)}\otimes w_{q+1}^{(p)} + w_{q+1}^{(c)} \otimes w_{q+1}^{(c)})\right\|_{N} &\lesssim \mu_{q+1} \frac{\delta_{q+1} \lambda_q}{\lambda_{q+1}^{1-\alpha}} \lambda_{q+1}^N, \,\,\, \forall N \geq 0,
\end{align}
where all the implicit bounds are dependent on $\Gamma$, $M$, $\alpha$, and $N$. 
\end{lem}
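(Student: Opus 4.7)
The overall strategy mirrors the treatment of the principal oscillation error in the preceding two lemmas and the divergence‑corrector analysis in \cite{vikram}; the essential gain comes from the explicit $1/\lambda_{q+1}$ factor built into $w_{q+1}^{(c)}$ through \eqref{w.c}. First I would substitute the definitions \eqref{new_001} and \eqref{w.c} into each of the three tensor products $w_{q+1}^{(p)}\otimes w_{q+1}^{(c)}$, $w_{q+1}^{(c)}\otimes w_{q+1}^{(p)}$, $w_{q+1}^{(c)}\otimes w_{q+1}^{(c)}$, obtaining double sums indexed over $(\xi,k,n)$ and $(\xi',k',n')$ (and the analogous $\zeta$‑sums) of smooth amplitudes multiplied by products of transported building blocks of the form $\mathbb{W}_\xi(\lambda_{q+1}\tilde\Psi_k)\otimes \Phi_{\xi'}(\lambda_{q+1}\tilde\Psi_{k'})$ or $\Phi_\xi(\lambda_{q+1}\tilde\Psi_k)\otimes \Phi_{\xi'}(\lambda_{q+1}\tilde\Psi_{k'})$.

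Next, I would invoke the disjoint temporal support property of Lemma~\ref{osc_prof} applied to $\{g_{\xi,k,n+1}\}$ and $\{h_{\zeta,k,n+1}\}$ to collapse each double sum into a diagonal single sum (i.e.\ only terms with $(\xi,k,n)=(\xi',k',n')$, $(\zeta,k,n)=(\zeta',k',n')$, or the cross‑term with the single matching index survive). On the diagonal, the building‑block products are mean‑free oscillatory functions at frequency $2\lambda_{q+1}\xi^\perp$ (respectively $2\lambda_{q+1}\zeta^\perp$ or $\lambda_{q+1}(\xi^\perp \pm \xi'^\perp)$), so each surviving term has the structure (smooth amplitude)$\,\cdot\,e^{i\lambda_{q+1}k\cdot \tilde\Psi_k}$ with $|k|\ge 1$. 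The amplitude is controlled by one factor of $\bar a_{\xi,k,n}$ or $\bar b_{\zeta,k,n}$ together with one factor of $\nabla^\perp\bar a$ or $\nabla^\perp\bar b$ (the latter carrying $\lambda_q$, not $\lambda_{q+1}$), divided by $\lambda_{q+1}$ from the explicit prefactor in \eqref{w.c}. Using Lemma~\ref{a_bar_estim} and Corollary~\ref{Flow_gam_estim}, the amplitude and its derivatives are bounded by
\[
\|\text{amplitude}\|_{N+\alpha}\lesssim \frac{\delta_{q+1}\lambda_q}{\lambda_{q+1}}\,(\lambda_q^{N+\alpha}+\lambda_q^{N+\alpha}),
\]
so Proposition~\ref{prop.inv.div} (the stationary phase lemma, applied with an $A\in\mathbb N$ large enough relative to $b$ and $\alpha$ to absorb the $\|\nabla\tilde\Psi_k\|_{A+\alpha}$ term) yields
\[
\bigl\|\mathcal{R}\,\mathrm{div}(\cdots)\bigr\|_{N+\alpha}\lesssim \frac{1}{\lambda_{q+1}^{1-\alpha}}\cdot\frac{\delta_{q+1}\lambda_q}{\lambda_{q+1}}\cdot\lambda_{q+1}\,\lambda_{q+1}^N=\frac{\delta_{q+1}\lambda_q}{\lambda_{q+1}^{1-\alpha}}\,\lambda_{q+1}^N,
\]
where the extra $\lambda_{q+1}$ comes from the $\mathrm{div}$ acting on the oscillation. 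The $c\otimes c$ term is strictly better by another factor $\lambda_q/\lambda_{q+1}$ and is absorbed.

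For the material derivative estimate I would write, as in Lemma~\ref{Tranp_err_estim},
\[
\bar D_{t,\Gamma}\,\mathcal{R}\,\mathrm{div}(F)=\mathcal{R}\,\mathrm{div}\bigl(\bar D_{t,\Gamma}F\bigr)+[\bar v_{q,\Gamma}\!\cdot\!\nabla,\mathcal{R}\,\mathrm{div}]\,F.
\]
Since the phases $\mathbb W_\sigma(\lambda_{q+1}\tilde\Psi_k)$ and $\Phi_\sigma(\lambda_{q+1}\tilde\Psi_k)$ are transported by $\bar v_{q,\Gamma}$, the operator $\bar D_{t,\Gamma}$ falls only on the amplitudes, which by Lemma~\ref{a_bar_estim} acquires a factor $\tau_q^{-1}\le \mu_{q+1}$. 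The first term is then bounded exactly as above with an extra $\mu_{q+1}$. The commutator term is rewritten, after introducing the slow phase $\phi_{\sigma,k}(x,t)=e^{i\lambda_{q+1}\sigma^\perp\cdot[\tilde\Psi_k(x,t)-x]}$, as a sum of commutators $[\bar v_{q,\Gamma}\cdot,\mathcal{R}]$ acting on amplitudes times $e^{i\lambda_{q+1}\sigma^\perp\cdot x}$, plus a term of order $\lambda_{q+1}[\bar v_{q,\Gamma}\cdot\sigma^\perp,\mathcal{R}](\cdots)e^{i\lambda_{q+1}\sigma^\perp\cdot x}$. Proposition~\ref{prop.comm} (with $A$ chosen large enough so that the tail sum converges at the required rate) produces the bound $\mu_{q+1}\,\delta_{q+1}\lambda_q\,\lambda_{q+1}^{N-1+\alpha}$, which is absorbed in the stated estimate.

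The main technical obstacle is the commutator analysis for the material derivative: one must be careful, exactly as in Lemma~\ref{Tranp_err_estim}, to exploit the transported phase, to use Corollary~\ref{Gamma_velo_estim} for $\|\bar v_{q,\Gamma}\|_N$ and the slow‑phase estimate $\|\phi_{\sigma,k}\|_M\lesssim \lambda_{q+1}\|D\tilde\Psi_k\|_{M-1}+\lambda_{q+1}^M\|D\tilde\Psi_k-\mathrm{Id}\|_0^M$, and to choose the auxiliary parameter $A\in\mathbb N$ large enough relative to $b$ and $\alpha$ so that the high‑frequency tail in Proposition~\ref{prop.comm} is summable and of the required order. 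The higher spatial derivative estimates ($N\ge 1$) are then obtained exactly as in the proofs of Lemmas~\ref{Tranp_err_estim}, \ref{Nash_err_estim}, by distributing spatial derivatives inside the commutator and using $\partial_i\mathcal R$ being a Calder\'on–Zygmund operator, together with the higher norm bounds in Lemma~\ref{a_bar_estim} and Corollary~\ref{Flow_gam_estim}.
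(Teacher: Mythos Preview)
Your plan is correct and matches the paper's approach, which simply defers to \cite[Lemma~4.13]{vikram}: collapse to diagonal terms via the disjoint temporal supports of Lemma~\ref{osc_prof}, extract the $1/\lambda_{q+1}$ from \eqref{w.c}, apply the stationary phase bound of Proposition~\ref{prop.inv.div} to the oscillatory part, and treat the material derivative by the transported-phase/commutator decomposition exactly as in Lemma~\ref{Tranp_err_estim}. One minor caveat worth tracking when you write it out: $\Phi_\xi^2$ has a nonzero mean, so the $c\otimes c$ term is not purely oscillatory, but as you already observe that term carries an additional $\lambda_q/\lambda_{q+1}$ and is harmless.
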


\begin{proof}
The proof follows directly from \cite[Lemma 4.13]{vikram}, modulo cosmetic changes.
\end{proof}

\subsection{Oscillation error \texorpdfstring{$T_{q+1, O}$}{rororo}}
Keeping in mind that $\div w_{q+1}^{(s)}=0$, we can rewrite the error as 
\begin{equation*}
    T_{q+1, O} = \underbrace{\mathcal{R}\div(X_{q, \Gamma} + \theta_{q+1}^{(p)} w_{q+1}^{(p)})}_{\text{Principal oscillation error}} + \underbrace{\mathcal{R} \div (\theta_{q+1}^{(p)} w_{q+1}^{(c)})}_{\text{Divergence-mean corrector error}}.
\end{equation*}
As usual, our aim is to cancel the error $X_{q, \Gamma}$ with the low frequency part of the interaction of $\theta_{q+1}^{(p)}$ and $w_{q+1}^{(p)}$. To that context, let us first recall that
\begin{equation*}
B_{\zeta, k, n} = b^2_{\zeta, k, n} (\nabla \Psi_k)^{-1} \zeta, \,\, \text{and}\,\, \bar B_{\zeta, k, n} = \bar b^2_{\zeta, k, n} (\nabla \tilde \Psi_k)^{-1} \zeta.
\end{equation*}
Making use of disjointness properties, we write
\begin{align*}
  &  \theta_{q+1}^{(p)} w_{q+1}^{(p)}
    =  \sum_{\zeta \in \Lambda_T, k, n} h_{\zeta, k, n+1}^2 \bar b_{\zeta, k, n}^2  (\nabla \tilde \Psi_k)^{-1} (\mathbb V_{\zeta}  \mathbb W_{\zeta})(\lambda_{q+1} \tilde \Psi_k) \\
  &=  \sum_{\zeta \in \Lambda_T, k, n} h_{\zeta, k, n+1}^2 \bar b_{\zeta, k, n}^2 (\nabla \tilde \Psi_k)^{-1} \zeta 
    + \sum_{\zeta \in \Lambda_T, k, n} h^2_{\zeta, k, n+1} \bar b_{\zeta, k, n}^2 (\nabla \tilde \Psi_k)^{-1} (\mathbb{P}_{\neq 0} \mathbb{V}_\zeta \mathbb{W}_\zeta)(\la_{q+1} \tilde\Psi_k) \\ 
        &=  -X_{q, \Gamma}  + \sum_{\zeta \in \Lambda_T, k, n} h_{\zeta, k, n+1}^2 (\bar B_{\zeta,k,n} - B_{\zeta, k, n})  
    + \sum_{\zeta \in \Lambda_T, k, n} h^2_{\zeta, k, n+1} \bar b_{\zeta, k, n}^2 (\nabla \tilde \Psi_k)^{-1} (\mathbb{P}_{\neq 0} \mathbb{V}_\zeta \mathbb{W}_\zeta)(\la_{q+1} \tilde\Psi_k).
    \end{align*}
As before we have $\mathbb P_{\neq 0} \mathbb V_\zeta \mathbb W_\zeta = \Omega_\zeta \zeta$, and we can write
\begin{align*}
    \mathcal{R}\div(X_{q, \Gamma} + \theta_{q+1}^{(p)} w_{q+1}^{(p)}) &= \mathcal{R} \div \bigg(\sum_{\zeta \in \Lambda_T, k, n} h^2_{\zeta, k, n+1} \Omega_\zeta(\lambda_{q+1} \tilde \Psi_k) \bar B_{\zeta, k, n}\bigg)\\
    & \qquad \qquad + \mathcal{R}\div \bigg(\sum_{\zeta \in \Lambda_T, k, n} h_{\zeta, k, n+1}^2 ( \bar B_{\zeta,k,n} - B_{\zeta, k, n})\bigg).
\end{align*}
In order to estimate each of the term above, we first need the following lemma.

\begin{lem} \label{big_a_bar_estim}
The following bounds are valid: 
\begin{align*}
 &\|\bar B_{\zeta, k, n}\|_N \lesssim \delta_{q+1, n} \lambda_q^N, \hspace{5cm} \forall N \in \{0,1,..., L-3\}, \\
 &\|\bar D_{t, \Gamma} \bar B_{\zeta, k, n} \|_N \lesssim \delta_{q+1, n} \tau_q^{-1} \lambda_q^N, \hspace{3.8cm} \forall N \in \{0,1,..., L-3\},\\
&\|\bar B_{\zeta, k, n}\|_{N+L-3} \lesssim \delta_{q+1, n} \lambda_q^{L-3} \ell_q^{-N},  \hspace{3.3cm}\, \forall N \geq 0,\\
&\|\bar D_{t, \Gamma} \bar B_{\zeta, k, n} \|_{N+L-3} \lesssim \delta_{q+1, n} \lambda_q^{L-3} \tau_q^{-1} \ell_q^{-N}, \hspace{2.1cm} \forall N \geq 0,
\end{align*}
where all the implicit bounds are dependent on $\Gamma$, $M$, $\alpha$ and $N$. Moreover, we have
\begin{equation*}
    \|\bar B_{\zeta, k, n} - B_{\zeta,k, n}\|_0 \lesssim  \delta_{q+1,n}\frac{\delta_{q+1}^{1/2} \lambda_q^{1/3}}{\delta_{q}^{1/2} \lambda_{q+1}^{1/3}}.
\end{equation*}
\end{lem}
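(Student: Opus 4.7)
The plan is to prove this lemma by mirroring the argument that gave Corollary \ref{a_cor} for $B_{\zeta,k,n}$, with each occurrence of $b_{\zeta,k,n}$ replaced by $\bar b_{\zeta,k,n}$ and each occurrence of $\nabla\Psi_k$ replaced by $\nabla\tilde\Psi_k$. The amplitude bounds are then supplied by Lemma \ref{a_bar_estim}, while the bounds on $\nabla\tilde\Psi_k$, $(\nabla\tilde\Psi_k)^{-1}$, and their material derivatives (with respect to $\bar D_{t,\Gamma}$) come from Corollary \ref{Flow_gam_estim}. Concretely, writing $\bar B_{\zeta,k,n} = \bar b^2_{\zeta,k,n}(\nabla\tilde\Psi_k)^{-1}\zeta$ and differentiating, one obtains
\begin{align*}
\|\bar B_{\zeta,k,n}\|_N &\lesssim \|\bar b^2_{\zeta,k,n}\|_N\|(\nabla\tilde\Psi_k)^{-1}\|_0 + \|\bar b^2_{\zeta,k,n}\|_0\|(\nabla\tilde\Psi_k)^{-1}\|_N,\\
\|\bar D_{t,\Gamma}\bar B_{\zeta,k,n}\|_N &\lesssim \|\bar D_{t,\Gamma}\bar b^2_{\zeta,k,n}\|_N\|(\nabla\tilde\Psi_k)^{-1}\|_0 + \|\bar D_{t,\Gamma}\bar b^2_{\zeta,k,n}\|_0\|(\nabla\tilde\Psi_k)^{-1}\|_N\\
&\quad + \|\bar b^2_{\zeta,k,n}\|_N\|\bar D_{t,\Gamma}(\nabla\tilde\Psi_k)^{-1}\|_0 + \|\bar b^2_{\zeta,k,n}\|_0\|\bar D_{t,\Gamma}(\nabla\tilde\Psi_k)^{-1}\|_N,
\end{align*}
and plugging in \eqref{a_bar_estim_1}--\eqref{a_bar_estim_4} and \eqref{Flow_gam_estim_1}--\eqref{Flow_gam_estim_5} yields the four stated norm and material-derivative estimates. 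The bound on $\bar b^2_{\zeta,k,n}$ is, as in the proof of Lemma \ref{a_estim}, identical to that of $\bar b_{\zeta,k,n}$ with $\delta_{q+1,n}^{1/2}$ replaced by $\delta_{q+1,n}$.

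For the stability estimate on $\|\bar B_{\zeta,k,n} - B_{\zeta,k,n}\|_0$, the plan is to split
\begin{align*}
\bar B_{\zeta,k,n} - B_{\zeta,k,n} = \bigl(\bar b^2_{\zeta,k,n} - b^2_{\zeta,k,n}\bigr)(\nabla\tilde\Psi_k)^{-1}\zeta + b^2_{\zeta,k,n}\bigl((\nabla\tilde\Psi_k)^{-1} - (\nabla\Psi_k)^{-1}\bigr)\zeta.
\end{align*}
The second term is handled directly by Lemma \ref{flow_stabil}: on $\supp\eta_k$ one has $|t-t_k|\lesssim\tau_q$, so
\begin{align*}
\|(\nabla\tilde\Psi_k)^{-1} - (\nabla\Psi_k)^{-1}\|_0 \lesssim \tau_q\,\frac{\delta_{q+1}\lambda_q^2\ell_q^{-\alpha}}{\mu_{q+1}},
\end{align*}
and combining with $\|b^2_{\zeta,k,n}\|_0\lesssim\delta_{q+1,n}\lambda_q^{-\alpha}$ from \eqref{a_estim_1} and the choices of $\tau_q=(\delta_q^{1/2}\lambda_q\lambda_{q+1}^\alpha)^{-1}$ and $\mu_{q+1}=\delta_{q+1}^{1/2}\lambda_q^{2/3}\lambda_{q+1}^{1/3}\lambda_{q+1}^{4\alpha}$ produces the prefactor $\delta_{q+1,n}\delta_{q+1}^{1/2}\lambda_q^{1/3}\delta_q^{-1/2}\lambda_{q+1}^{-1/3}$ up to a $\lambda_{q+1}^{C\alpha}$ loss, which is absorbed by taking $a_0$ sufficiently large.

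The first term is more involved. Factoring $\bar b^2_{\zeta,k,n} - b^2_{\zeta,k,n} = (\bar b_{\zeta,k,n}-b_{\zeta,k,n})(\bar b_{\zeta,k,n}+b_{\zeta,k,n})$ and comparing the definitions of $\bar b_{\zeta,k,n}$ and $b_{\zeta,k,n}$, the difference $\bar b_{\zeta,k,n} - b_{\zeta,k,n}$ reduces, via the fundamental theorem of calculus applied to $\Gamma_\zeta^{1/2}$, to
\begin{align*}
\lambda_q^{\alpha/2}\delta_{q+1,n}^{-1/2}\,\bigl\|\nabla\tilde\Psi_k\,\bar T_{q,n} - \nabla\Psi_k\,T_{q,n}\bigr\|_0 \lesssim \lambda_q^{\alpha/2}\delta_{q+1,n}^{-1/2}\bigl(\|\bar T_{q,n}-T_{q,n}\|_0 + \|\nabla\tilde\Psi_k - \nabla\Psi_k\|_0\|T_{q,n}\|_0\bigr).
\end{align*}
The mollification error $\|\bar T_{q,n} - T_{q,n}\|_0 \lesssim \ell_{t,q}\|\bar D_{t,\Gamma}T_{q,n}\|_0 + \ell_{t,q}^2\|\bar D^2_{t,\Gamma}T_{q,n}\|_0$ is estimated through the definition \eqref{tmoli_def_01} and \eqref{NewIter_2}, while the flow discrepancy is again controlled by Lemma \ref{flow_stabil}. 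The main obstacle is a bookkeeping one: one must verify that every accumulated error from the flow stability and the material mollification is dominated by the target $\delta_{q+1,n}\delta_{q+1}^{1/2}\lambda_q^{1/3}\delta_q^{-1/2}\lambda_{q+1}^{-1/3}$; this follows from a direct algebraic check using $\ell_{t,q}=\delta_q^{-1/2}\lambda_q^{-1/3}\lambda_{q+1}^{-2/3}$, $\mu_{q+1}\tau_q\gtrsim(\lambda_{q+1}/\lambda_q)^{1/3}$, and absorbing a $\lambda_{q+1}^{C\alpha}$ factor by enlarging $a_0$.
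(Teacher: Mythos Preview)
Your proposal is correct and follows essentially the same route as the paper: the first four estimates are obtained exactly as in Corollary~\ref{a_cor} with Lemma~\ref{a_bar_estim} and Corollary~\ref{Flow_gam_estim} in place of Lemma~\ref{a_estim} and Lemma~\ref{Flow_estim}; and the stability estimate is proved via the same splitting, Lemma~\ref{flow_stabil} for the flow discrepancy, and the first-order mollification bound $\|T_{q,n}-\bar T_{q,n}\|_0\lesssim\ell_{t,q}\|\bar D_{t,\Gamma}T_{q,n}\|_0$. The only cosmetic differences are that the paper applies the mean value inequality directly to $\Gamma_\zeta$ rather than factoring $\bar b^2-b^2=(\bar b-b)(\bar b+b)$, and it keeps only the first-order mollification term (your second-order term is harmless but unnecessary).
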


\begin{proof}
Note that we only need to verify the last estimate, rest of the estimates follow from Corollary \ref{a_cor}, with the help of Lemma \ref{a_bar_estim} and Corollary \ref{Flow_gam_estim}. In what follows, we first write
\begin{eqnarray*}
        \bar B_{\zeta, k, n} - B_{\zeta, k, n} = (\bar b_{\zeta, k, n}^2 - b_{\zeta, k, n}^2) (\nabla \tilde \Psi_k)^{-1}\zeta 
        + b_{\zeta, k, n}^2\big( (\nabla \tilde \Psi_k)^{-1} - (\nabla \Psi_k)^{-1}\big) \zeta.
    \end{eqnarray*}
Therefore,
    \begin{equation*}
        \|\bar B_{\zeta, k, n} - B_{\zeta, k, n}\|_0 \lesssim \|\bar b_{\zeta, k, n}^2 - b_{\zeta, k, n}^2\|_0 + \|b_{\zeta, k, n}^2\|_0 \|(\nabla \tilde \Psi_k)^{-1} - (\nabla \Psi_k)^{-1}\|_0.
    \end{equation*}
For the first term, an application of the mean value inequality yields
    \begin{eqnarray*}
        \|\bar b_{\zeta, k, n}^2 - b_{\zeta, k, n}^2\|_0  \lesssim  \|\nabla \tilde \Psi_k - \nabla \Psi_k\|_0 \|\bar T_{q, n}\|_0 + \|T_{q, n} - \bar T_{q, n}\|_0 \|\nabla \tilde \Psi_k \|.
    \end{eqnarray*}
Since mollification estimates yield  
    \begin{equation*}
        \|T_{q, n} - \bar T_{q,n}\|_0 \lesssim \|\bar D_{t, \Gamma} T_{q, n}\|_0 \ell_{t,q} \lesssim \delta_{q+1, n} \tau_q^{-1} \ell_{t,q},
    \end{equation*}
we can make use of lemmas \ref{a_estim}, \ref{flow_stabil}, \ref{tmoli_estim}, and \ref{a_bar_estim} to conclude    \begin{eqnarray*}
        \|\bar B_{\zeta, k, n} - B_{\zeta, k, n}\|_0 \lesssim & \delta_{q+1, n}\bigg( \tau_q \frac{\delta_{q+1} \lambda_q^2 \ell_q^{-\alpha}}{\mu_{q+1}} + \tau_q^{-1} \ell_{t,q}\bigg) \lesssim \delta_{q+1, n} \bigg( \frac{\delta_{q+1}^{1/2} \lambda_q^{1/3}}{\delta_{q}^{1/2}\lambda_{q+1}^{1/3}} + \frac{\lambda_{q}^{2/3}}{\lambda_{q+1}^{2/3}} \lambda_{q+1}^\alpha \bigg).
    \end{eqnarray*}
This concludes the proof of the lemma by choosing $\alpha$ sufficiently small. 
\end{proof}

We can now follow \cite{vikram} to state results regarding the high-high-high oscillation error. Indeed, we have the following lemmas.

\begin{lem}
The following bounds are valid: 
\begin{equation} \label{high-high-estim}
    \bigg\|\mathcal{R} \div \bigg(\sum_{\zeta \in \Lambda_T, k, n} h^2_{\zeta, k, n+1} \Omega_\zeta(\lambda_{q+1} \tilde \Psi_k) \bar B_{\zeta, k, n}\bigg)\bigg\|_{N} \lesssim \frac{\delta_{q+1} \lambda_q}{\lambda_{q+1}^{1 - 2\alpha}} \lambda_{q+1}^N, \qquad \,\forall N \geq 0,
\end{equation}
\begin{equation}
    \bigg\|\bar D_{t,\Ga} \mathcal{R} \div \bigg(\sum_{\zeta \in \Lambda_T, k, n} h^2_{\zeta, k, n+1} \Omega_\zeta(\lambda_{q+1} \tilde \Psi_k) \bar B_{\zeta, k, n}\bigg)\bigg\|_{N} \lesssim \mu_{q+1}\delta_{q+1} \lambda_{q+1}^\alpha \lambda_{q+1}^N, \,\,\, \forall N \geq 0.
\end{equation}
where all the implicit bounds are dependent on $\Gamma$, $M$, $\alpha$, $b$, and $N$. 
\end{lem}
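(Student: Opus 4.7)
The plan is to exploit the pairwise disjointness of temporal supports of $\{h_{\zeta,k,n+1}\}$ guaranteed by Lemma~\ref{osc_prof} to reduce to a single summand, and then use the high-frequency oscillation via Proposition~\ref{prop.inv.div}. First I expand $\Omega_\zeta(\la_{q+1}\tilde\Psi_k) = \tfrac{1}{2}\bigl(e^{i(2\la_{q+1}\zeta^\perp)\cdot\tilde\Psi_k} + \text{c.c.}\bigr)$ and use the Leibniz rule to obtain
\begin{equation*}
\operatorname{div}\bigl(\bar B_{\zeta,k,n}\, e^{\pm i(2\la_{q+1}\zeta^\perp)\cdot\tilde\Psi_k}\bigr) = \bigl(\operatorname{div}\bar B_{\zeta,k,n}\bigr)\, e^{\pm i(\cdots)}\;\pm\;2i\la_{q+1}\bigl(\bar B_{\zeta,k,n}\cdot(\nabla\tilde\Psi_k)^T\zeta^\perp\bigr)\,e^{\pm i(\cdots)}.
\end{equation*}
The cornerstone of the argument is the algebraic identity
\begin{equation*}
\bar B_{\zeta,k,n}\cdot(\nabla\tilde\Psi_k)^T\zeta^\perp = \bar b^2_{\zeta,k,n}\,\zeta^T(\nabla\tilde\Psi_k)^{-T}(\nabla\tilde\Psi_k)^T\zeta^\perp = \bar b^2_{\zeta,k,n}\,(\zeta\cdot\zeta^\perp) = 0,
\end{equation*}
which kills the dangerous $O(\la_{q+1})$ term identically. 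Applying Proposition~\ref{prop.inv.div} to the surviving piece $\mathcal{R}\bigl(\operatorname{div}\bar B_{\zeta,k,n}\, e^{\pm i(2\la_{q+1}\zeta^\perp)\cdot\tilde\Psi_k}\bigr)$, with $\|\operatorname{div}\bar B_{\zeta,k,n}\|_0 \lesssim \delta_{q+1,n}\la_q$ from Lemma~\ref{big_a_bar_estim} and the integer $M$ in that proposition chosen larger than $2b/(b-1)$ to absorb the high-derivative remainder, yields the $C^\alpha$ bound $\delta_{q+1,n}\la_q/\la_{q+1}^{1-\alpha}$; the $N\ge 1$ case follows by commuting $\partial^\theta$ through $\mathcal{R}$ (valid on mean-zero data) and noting that each spatial derivative of the phase contributes at most one factor $\la_{q+1}$, producing the claimed $\la_{q+1}^N$ factor.

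For the material derivative, I would write $\bar D_{t,\Gamma}\mathcal{R} G = \mathcal{R}\bar D_{t,\Gamma} G + [\bar v_{q,\Gamma}\cdot\nabla,\mathcal{R}]G$. The crucial simplification is that $\bar D_{t,\Gamma}\tilde\Psi_k = 0$ by the defining equation of the backward flow, so $\bar D_{t,\Gamma}\Omega_\zeta(\la_{q+1}\tilde\Psi_k)\equiv 0$ and no $\la_{q+1}$ is lost from differentiating the phase. Hence $\bar D_{t,\Gamma}(\Omega_\zeta\bar B_{\zeta,k,n}) = \Omega_\zeta\bar D_{t,\Gamma}\bar B_{\zeta,k,n}$ and the previous expansion applies with $\bar B_{\zeta,k,n}$ replaced by $\bar D_{t,\Gamma}\bar B_{\zeta,k,n}$. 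Here, however, the cancellation only partially survives: writing $\bar D_{t,\Gamma}\bar B_{\zeta,k,n} = (\bar D_{t,\Gamma}\bar b^2_{\zeta,k,n})(\nabla\tilde\Psi_k)^{-1}\zeta + \bar b^2_{\zeta,k,n}(\nabla\bar v_{q,\Gamma})(\nabla\tilde\Psi_k)^{-1}\zeta$ (using $\bar D_{t,\Gamma}(\nabla\tilde\Psi_k)^{-1} = (\nabla\bar v_{q,\Gamma})(\nabla\tilde\Psi_k)^{-1}$), the first piece still vanishes against $(\nabla\tilde\Psi_k)^T\zeta^\perp$ via $\zeta\cdot\zeta^\perp = 0$, while the second contributes $\bar b^2_{\zeta,k,n}\,\zeta^T(\nabla\tilde\Psi_k)^{-T}(\nabla\bar v_{q,\Gamma})^T(\nabla\tilde\Psi_k)^T\zeta^\perp$ of size $\lesssim \delta_{q+1,n}\delta_q^{1/2}\la_q$. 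Combined with the $\la_{q+1}$ from the phase and the $\la_{q+1}^{-1+\alpha}$ gain from stationary phase, this produces $\delta_{q+1,n}\delta_q^{1/2}\la_q\la_{q+1}^\alpha \lesssim \mu_{q+1}\delta_{q+1}\la_{q+1}^\alpha$, the last inequality reducing to $(\la_q/\la_{q+1})^{1/3-\beta}\le 1$; the interior term $\Omega_\zeta\operatorname{div}(\bar D_{t,\Gamma}\bar B_{\zeta,k,n})$ carries an extra $\tau_q^{-1}$ and is strictly smaller via $\tau_q^{-1}\la_q \le \la_{q+1}\mu_{q+1}$. The commutator $[\bar v_{q,\Gamma}\cdot\nabla,\mathcal{R}]G$ is handled exactly as in the proof of Lemma~\ref{Tranp_err_estim}: factor $e^{ik\cdot\tilde\Psi_k} = \phi_{\zeta,k}(x,t)\,e^{ik\cdot x}$ with $\phi_{\zeta,k}(x,t) = e^{ik\cdot(\tilde\Psi_k - x)}$ and apply Proposition~\ref{prop.comm} with a pure plane-wave phase.

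The main obstacle, and the structural observation on which the whole estimate rests, is the algebraic cancellation $\bar B_{\zeta,k,n}\cdot(\nabla\tilde\Psi_k)^T\zeta^\perp = 0$: without it the leading contribution would scale as $\delta_{q+1,n}\la_{q+1}^\alpha$, larger than the target by a factor of $\la_{q+1}^{1-\alpha}/\la_q$, which would be fatal for the Nash step. A secondary delicate point is the bookkeeping in the material-derivative case -- correctly identifying that the piece coming from $\bar D_{t,\Gamma}\bar b^2_{\zeta,k,n}$ still cancels while only the piece coming from $\bar D_{t,\Gamma}(\nabla\tilde\Psi_k)^{-1}$ survives -- which is what produces only a $\delta_q^{1/2}\la_q$ loss (rather than a full $\la_{q+1}$) and keeps the estimate consistent with the target $\mu_{q+1}\delta_{q+1}\la_{q+1}^\alpha$.
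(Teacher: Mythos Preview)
Your approach is correct and is precisely the argument the paper has in mind: the paper does not give a proof here but refers to \cite[Lemma~4.11]{vikram}, where the identical strategy---disjoint temporal supports, the algebraic cancellation $\bar B_{\zeta,k,n}\cdot(\nabla\tilde\Psi_k)^T\zeta^\perp=0$ killing the $O(\lambda_{q+1})$ term in $\div(\Omega_\zeta\bar B_{\zeta,k,n})$, stationary phase (Proposition~\ref{prop.inv.div}) with $M>2b/(b-1)$, and the commutator treatment via Proposition~\ref{prop.comm} for the material derivative---is carried out for the tensor $\bar A_{\xi,k,n}$. One small omission: when computing $\bar D_{t,\Gamma}$ of the full expression you should also record the term where $\partial_t$ hits $h^2_{\zeta,k,n+1}(\mu_{q+1}t)$, which produces a clean factor $\mu_{q+1}$ times the first estimate and is in fact the source of the $\mu_{q+1}$ in the target bound; this is harmless but should be stated.
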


\begin{lem}
The following bounds are valid: 
\begin{equation}
    \bigg\|\mathcal{R} \div \bigg(\sum_{\zeta \in \Lambda_T, k, n} h_{\zeta, k, n+1}^2 (\bar B_{\zeta, k, n} - B_{\zeta, k, n})\bigg)\bigg\|_N \lesssim \delta_{q+1}\frac{\delta_{q+1}^{1/2} \lambda_{q}^{1/3}}{\delta_{q}^{1/2}\lambda_{q+1}^{1/3}} \lambda_{q+1}^\alpha \lambda_{q+1}^N, \,\,\, \forall N \geq 0.
\end{equation}
\begin{equation}
    \bigg\|\bar D_{t, \Gamma}\mathcal{R} \div \bigg(\sum_{\zeta \in \Lambda_T, k, n} h_{\zeta, k, n+1}^2 (\bar B_{\zeta, k, n} - B_{\zeta, k, n})\bigg)\bigg\|_{N} \lesssim \mu_{q+1}\delta_{q+1} \lambda_{q+1}^\alpha \lambda_{q+1}^{N}, \,\,\, \forall N \geq 0
\end{equation}
where all the implicit bounds are dependent on $\Gamma$, $M$, $\alpha$ and $N$. 
\end{lem}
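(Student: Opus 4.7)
The plan is to mirror, in the vector-valued setting of $\bar B_{\zeta,k,n}-B_{\zeta,k,n}$, the argument used earlier in the section to estimate $\mathcal{R}\div\bigl(\sum g^2 (\bar A - A)\bigr)$. Since $\mathcal{R}\div$ is (a sum of) operators of Calder\'on--Zygmund type, it is bounded on $C^{N+\alpha}$, so it suffices to prove
\[
\Bigl\|\sum_{\zeta,k,n} h_{\zeta,k,n+1}^2 (\bar B_{\zeta,k,n}-B_{\zeta,k,n})\Bigr\|_{N+\alpha} \lesssim \delta_{q+1}\frac{\delta_{q+1}^{1/2}\lambda_q^{1/3}}{\delta_q^{1/2}\lambda_{q+1}^{1/3}}\lambda_{q+1}^{\alpha}\lambda_{q+1}^{N}.
\]
Thanks to the disjoint temporal supports of $\{h_{\zeta,k,n+1}(\mu_{q+1}\cdot)\}$ from Lemma~\ref{osc_prof}, this reduces to bounding a single summand at each time $t$.

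For the single-summand $C^{N+\alpha}$ bound, I would start from the decomposition
\[
\bar B_{\zeta,k,n}-B_{\zeta,k,n} = (\bar b_{\zeta,k,n}^{2}-b_{\zeta,k,n}^{2})(\nabla\tilde\Psi_k)^{-1}\zeta + b_{\zeta,k,n}^{2}\bigl((\nabla\tilde\Psi_k)^{-1}-(\nabla\Psi_k)^{-1}\bigr)\zeta
\]
already used in Lemma~\ref{big_a_bar_estim}. The $C^0$ estimate from that lemma provides the small factor $\delta_{q+1,n}\delta_{q+1}^{1/2}\lambda_q^{1/3}/(\delta_q^{1/2}\lambda_{q+1}^{1/3})$, while for higher derivatives I combine Lemma~\ref{a_estim}, Lemma~\ref{a_bar_estim}, Lemma~\ref{Flow_estim}, Corollary~\ref{Flow_gam_estim}, and Lemma~\ref{flow_stabil} (plus the mollification bound $\|T_{q,n}-\bar T_{q,n}\|_M \lesssim \ell_{t,q}\,\|\bar D_{t,\Gamma}T_{q,n}\|_M$) to produce individual bounds of size $\delta_{q+1,n}\lambda_q^{M+\alpha}$ up to order $M \le L-3$. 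Interpolating between the small $C^0$ bound and this crude high-order bound and choosing $L$ large enough in terms of $N$, $b$, and $\alpha$, the interpolant is dominated by the claimed right-hand side times an at-most $\lambda_{q+1}^\alpha$ loss, exactly as in the $\bar A-A$ argument.

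For the material-derivative bound, I would split
\[
\bar D_{t,\Gamma}\,\mathcal{R}\div F = \mathcal{R}\div(\bar D_{t,\Gamma} F) + [\bar v_{q,\Gamma}\!\cdot\!\nabla,\,\mathcal{R}\div] F,\qquad F := \sum_{\zeta,k,n} h_{\zeta,k,n+1}^{2}(\bar B_{\zeta,k,n}-B_{\zeta,k,n}).
\]
The commutator is handled by Proposition~\ref{CZ_comm}, using $\|\bar v_{q,\Gamma}\|_{N+1+\alpha}$ from Corollary~\ref{Gamma_velo_estim} and the first estimate above; this contributes at most $\mu_{q+1}\delta_{q+1}\lambda_{q+1}^\alpha \lambda_{q+1}^N$ after noting $\mu_{q+1}\gg \tau_q^{-1}\gg\|\bar v_{q,\Gamma}\|_1$. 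The main term $\bar D_{t,\Gamma} F$ is expanded by the product rule: differentiation of $h_{\zeta,k,n+1}(\mu_{q+1}t)$ yields the dominant prefactor $\mu_{q+1}$ multiplying the first estimate; differentiation of $\bar B-B$ is controlled via Lemma~\ref{big_a_bar_estim} for $\bar D_{t,\Gamma}\bar B$ and via the identity $\bar D_{t,\Gamma}B = \bar D_t B + w_{q+1}^{(t)}\!\cdot\!\nabla B$ together with Corollary~\ref{a_cor} and Lemma~\ref{w_t_estim} for $\bar D_{t,\Gamma}B$. Summing, the $\mu_{q+1}$-piece dominates, and using $\delta_{q+1,n}\le\delta_{q+1}$ absorbs the other factors into the desired bound.

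The main obstacle is the interpolation in the first estimate: to upgrade the small $C^0$ gain $\delta_{q+1,n}\delta_{q+1}^{1/2}\lambda_q^{1/3}/(\delta_q^{1/2}\lambda_{q+1}^{1/3})$ into a $C^{N+\alpha}$ bound with $\lambda_{q+1}^{N+\alpha}$ rather than $\lambda_q^{N+\alpha}$, one must trade a portion of the $C^0$ smallness against $\lambda_q$-powers coming from the high-order reference bound. This forces the interpolation exponent, and hence the choice of $L$ and $\alpha_0$, to depend quantitatively on $b$ and $N$; verifying that the exchange costs no more than $\lambda_{q+1}^\alpha$ (so that the final bound matches the one proved for the $\bar A - A$ oscillation term) is the technical heart of the argument. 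The material-derivative case then follows by the same mechanism once the extra transport error $w_{q+1}^{(t)}\!\cdot\!\nabla B$ is shown to be sub-dominant, which is precisely what the bound $\|w_{q+1}^{(t)}\|_0\lesssim \delta_{q+1}\lambda_q\ell_q^{-\alpha}/\mu_{q+1}$ of Lemma~\ref{w_t_estim} provides.
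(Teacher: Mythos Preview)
Your proposal is correct and follows essentially the same route as the paper, which itself defers the argument to \cite[Lemma~4.12]{vikram}: reduce to a single summand via disjoint temporal supports, use the $C^0$ smallness of $\bar B_{\zeta,k,n}-B_{\zeta,k,n}$ from Lemma~\ref{big_a_bar_estim} together with the separate high-order bounds on $\bar B$ and $B$, interpolate, and for the material derivative split off the commutator and let the $\mu_{q+1}$ from $\partial_t h_{\zeta,k,n+1}^2(\mu_{q+1}t)$ dominate. One small imprecision: you do not need to ``choose $L$ large in terms of $N$''; $L$ is fixed in Proposition~\ref{Main_prop}, and for $N\ge L-3$ you simply feed the interpolation with the $\ell_q^{-1}$-scaled estimates \eqref{a_bar_estim_3}, \eqref{a_estim_3}, and Corollaries~\ref{Flow_gam_estim}, \ref{a_cor}, noting $\ell_q^{-1}=(\lambda_q\lambda_{q+1})^{1/2}\le\lambda_{q+1}$.
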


\begin{lem} \label{div_corr_estim_01}
The following bounds are valid: 
\begin{align}
    \left\|\mathcal{R} \div (\theta_{q+1}^{(p)} w_{q+1}^{(c)} )\right\|_{N} &\lesssim \frac{\delta_{q+1} \lambda_q}{\lambda_{q+1}^{1-\alpha}} \lambda_{q+1}^N, \qquad\,\, \forall N \geq 0, \\
    \left\|\bar D_{t,\Ga} \mathcal{R} \div (\theta_{q+1}^{(p)} w_{q+1}^{(c)} )\right\|_{N} &\lesssim \mu_{q+1} \frac{\delta_{q+1} \lambda_q}{\lambda_{q+1}^{1-\alpha}} \lambda_{q+1}^N, \,\,\, \forall N \geq 0,
\end{align}
where all the implicit bounds are dependent on $\Gamma$, $M$, $\alpha$, and $N$. 
\end{lem}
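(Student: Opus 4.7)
The plan is to exploit the algebraic structure of $\theta_{q+1}^{(p)} w_{q+1}^{(c)}$ dictated by Lemma~\ref{osc_prof} and the fact that $\Lambda_R \cap \Lambda_T = \varnothing$, so as to reduce matters to a single application of the stationary phase estimate. Expanding the product with formulas \eqref{new_002} and \eqref{w.c}, the cross terms $h_{\zeta, k, n+1} g_{\xi, k', n'+1}$ vanish (disjoint temporal supports), while among the $h\cdot h$ terms only the diagonal choice $(\zeta', k', n') = (\zeta, k, n)$ survives. Combining $\bar b \nabla^\perp \bar b = \tfrac12 \nabla^\perp (\bar b^2)$, this yields
\[
\theta_{q+1}^{(p)}\, w_{q+1}^{(c)} = \frac{1}{2\lambda_{q+1}} \sum_{\zeta, k, n} h_{\zeta, k, n+1}^2\, \nabla^\perp(\bar b_{\zeta, k, n}^2)\, \mathcal{G}_\zeta(\lambda_{q+1} \tilde\Psi_k),
\]
where $\mathcal{G}_\zeta(x) := \mathbb{V}_\zeta(x) \Phi_\zeta(x) = \tfrac{i}{2}(e^{2i\zeta^\perp \cdot x} - e^{-2i\zeta^\perp \cdot x})$ is mean-zero and oscillates at frequency $2\lambda_{q+1}$ after composition with $\tilde\Psi_k$.

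Next, taking the divergence and using $\div \nabla^\perp = 0$, the leading $1/\lambda_{q+1}$ prefactor is exactly compensated by the chain rule acting on $\mathcal{G}_\zeta(\lambda_{q+1}\tilde\Psi_k)$:
\[
\div(\theta_{q+1}^{(p)}\, w_{q+1}^{(c)}) = \frac{1}{2} \sum_{\zeta, k, n} h_{\zeta, k, n+1}^2\, \big[\nabla \tilde\Psi_k\, \nabla^\perp(\bar b_{\zeta, k, n}^2)\big] \cdot (D\mathcal{G}_\zeta)(\lambda_{q+1} \tilde\Psi_k).
\]
This is a sum of terms of the schematic form $F_{\zeta, k, n}(x)\, e^{\pm 2i\lambda_{q+1}\zeta^\perp \cdot \tilde\Psi_k(x)}$ in which the amplitude obeys $\|F_{\zeta, k, n}\|_{N+\alpha} \lesssim \delta_{q+1}\lambda_q \lambda_q^N$ for $0 \le N \le L-3$ (and the analogous bound above $L-3$) by Lemma~\ref{a_bar_estim} together with Corollary~\ref{Flow_gam_estim}. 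Applying Proposition~\ref{prop.inv.div} with phase frequency $|k| \sim \lambda_{q+1}$ and an integer parameter $A$ chosen large enough (depending on $b$) to absorb the tail, yields $\|\mathcal{R}\div(\theta^{(p)}_{q+1} w^{(c)}_{q+1})\|_\alpha \lesssim \delta_{q+1}\lambda_q/\lambda_{q+1}^{1-\alpha}$. Higher derivatives in $N$ are handled by writing $\partial_i\partial^\theta\mathcal{R}$ as a sum of Calder\'on--Zygmund operators and interpolating the amplitude norms, exactly as in Lemma~\ref{Tranp_err_estim}.

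For the material derivative estimate, I would use $\bar D_{t,\Gamma}\mathcal{R}\,g = \mathcal{R} \bar D_{t,\Gamma}\,g + [\bar v_{q,\Gamma}\cdot\nabla, \mathcal{R}]\,g$ with the scalar $g = \div(\theta^{(p)}_{q+1} w^{(c)}_{q+1})$. Since $\mathcal{G}_\zeta(\lambda_{q+1}\tilde\Psi_k)$ is transported by $\bar v_{q,\Gamma}$, the material derivative falls only on the amplitude $F_{\zeta, k, n}$, which costs at most an extra factor of $\tau_q^{-1} \leq \mu_{q+1}$ by Lemma~\ref{a_bar_estim} combined with Corollary~\ref{Flow_gam_estim}; Proposition~\ref{prop.inv.div} then gives the factor $\lambda_{q+1}^{-(1-\alpha)}$ as before. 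The commutator is controlled via Proposition~\ref{prop.comm} with $A > (4b-1)/(3(b-1))$, in complete analogy to the proof of the bound \eqref{12} in Lemma~\ref{Tranp_err_estim}. The principal bookkeeping challenge, which I expect to be the main obstacle, is tracking how $\bar D_{t,\Gamma}$ acts on the composite amplitude $F_{\zeta, k, n}$: one must verify that the factors arising from $\bar D_{t,\Gamma}(\nabla \tilde\Psi_k)$ and $\bar D_{t,\Gamma}\nabla^\perp(\bar b_{\zeta, k, n}^2)$ are never worse than $\mu_{q+1}$, which follows from the sharp estimates in Corollary~\ref{Flow_gam_estim} and Lemma~\ref{a_bar_estim} together with $\tau_q^{-1} \leq \mu_{q+1}$ and $\delta_q^{1/2}\lambda_q \leq \mu_{q+1}$.
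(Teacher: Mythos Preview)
Your proposal is correct and takes essentially the same route as the paper, which simply defers to \cite[Lemma~4.13]{vikram}: exploit the disjoint temporal supports from Lemma~\ref{osc_prof} to reduce the product to the diagonal, recognize that the surviving oscillatory factor $\mathbb{V}_\zeta\Phi_\zeta$ is mean-zero at frequency $2\lambda_{q+1}$, and then apply the stationary phase bound (Proposition~\ref{prop.inv.div}) together with the commutator estimate (Proposition~\ref{prop.comm}) for the material derivative. The only minor imprecision is that when you say the material derivative on the amplitude ``costs at most $\tau_q^{-1}$'', the dominant contribution actually comes from $\partial_t h_{\zeta,k,n+1}^2 \sim \mu_{q+1}$ directly, but since you immediately pass to the bound $\mu_{q+1}$ this does not affect the conclusion.
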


\begin{proof}
The proof follows directly from \cite[Lemma 4.13]{vikram}, modulo cosmetic changes.
\end{proof}

\subsection{Residual errors \texorpdfstring{$R_{q+1, R}$}{rrrrr}, and \texorpdfstring{$T_{q+1, R}$}{rrrrr}}
\label{err.residual} 

First notice that, in view of $P_{q+1, \Gamma}$ and $Y_{q+1, \Gamma}$, we can write 
\begin{align*}
    R_{q+1, R} &=R_{q, \Gamma} + w_{q+1}^{(t)} \mathring \otimes w_{q+1}^{(t)} + w_{q+1} \mathring \otimes (v_q - \bar v_q) + (v_q - \bar v_q) \mathring \otimes w_{q+1} + (R_{q} - R_{q, 0}), \\
    T_{q+1, R} &= T_{q, \Gamma} + w_{q+1}^{(t)} \theta_{q+1}^{(t)} + w_{q+1} (\theta_q - \bar \theta_q) + (v_q - \bar v_q) \Theta_{q+1} + (T_{q} - T_{q, 0}).
\end{align*}
Let us now estimates for each of the terms above. To begin with, we have the following lemma.
\begin{lem}
The following bounds are valid: 
    \begin{align*}
        \|R_{q, \Gamma}\|_N, \,  \|T_{q, \Gamma}\|_N & \lesssim \frac{\delta_{q+1} \lambda_q}{\lambda_{q+1}} \lambda_{q+1}^N, \qquad \qquad \quad \,\,\,\,\forall N \geq 0, \\
                \|\bar D_{t, \Gamma} R_{q, \Gamma}\|_N, \,  \|\bar D_{t, \Gamma} T_{q, \Gamma}\|_N &\lesssim \tau_q^{-1} \frac{\delta_{q+1} \lambda_q}{\lambda_{q+1}} \lambda_{q+1}^N, \hspace{1.5cm} \forall N \geq 0,
    \end{align*}
where all the implicit bounds are dependent on $\Gamma$, $M$, $\alpha$, and $N$.
\end{lem}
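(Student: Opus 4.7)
\smallskip
\noindent\textbf{Proof plan.} The lemma is a direct consequence of Proposition~\ref{NewIter} applied at the terminal Newton level $n = \Gamma$, combined with the specific choice $\Gamma = \lceil 1/(1/3-\beta)\rceil$ made in \eqref{def:ga}. Since $\delta_{q+1,n} = \delta_{q+1}(\lambda_q/\lambda_{q+1})^{n(1/3-\beta)}$, this choice forces $\Gamma(1/3-\beta) \geq 1$, yielding the crucial gain
\[
\delta_{q+1,\Gamma} \leq \delta_{q+1}\,\frac{\lambda_q}{\lambda_{q+1}}.
\]
Plugging this into \eqref{NewIter_1}--\eqref{NewIter_2} at $n=\Gamma$ produces estimates with $\lambda_q^{N-\alpha}$ in place of the target $\lambda_{q+1}^N$, so the elementary inequality $\lambda_q^{N-\alpha} \leq \lambda_{q+1}^N$ (harmless since $\lambda_q \leq \lambda_{q+1}$ and $\alpha>0$) immediately yields the desired bounds in the range $N \in \{0,\ldots,L-1\}$.

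\smallskip
\noindent For $N \geq L-1$, I would write $N = M + (L-1)$ with $M \geq 0$ and appeal to \eqref{NewIter_3}--\eqref{NewIter_4}. Recalling $\ell_q = (\lambda_q\lambda_{q+1})^{-1/2}$, one estimates
\[
\delta_{q+1,\Gamma}\,\lambda_q^{L-1-\alpha}\,\ell_q^{-M} \leq \delta_{q+1}\,\frac{\lambda_q}{\lambda_{q+1}}\,\lambda_q^{L-1-\alpha+M/2}\,\lambda_{q+1}^{M/2} \leq \delta_{q+1}\,\frac{\lambda_q}{\lambda_{q+1}}\,\lambda_{q+1}^{N},
\]
with the identical argument (carrying an extra factor of $\tau_q^{-1}$) for the material derivative. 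This step uses only $\lambda_q \leq \lambda_{q+1}$ and $\alpha<1$, and covers the full range $N \geq 0$.

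\smallskip
\noindent The one genuine subtlety is that Proposition~\ref{NewIter} furnishes bounds on $\bar D_t R_{q,\Gamma}$ with $\bar D_t = \partial_t + \bar v_q\cdot\nabla$, whereas the statement requires $\bar D_{t,\Gamma} = \partial_t + \bar v_{q,\Gamma}\cdot\nabla = \bar D_t + w_{q+1}^{(t)}\cdot\nabla$. I would therefore split
\[
\bar D_{t,\Gamma} R_{q,\Gamma} = \bar D_t R_{q,\Gamma} + w_{q+1}^{(t)}\cdot\nabla R_{q,\Gamma},
\]
treat the first piece as above, and estimate the correction via the product inequality of Section~\ref{pre}, using the pointwise bounds on $w_{q+1}^{(t)}$ supplied by Lemma~\ref{w_t_estim} together with the spatial bounds for $R_{q,\Gamma}$ just derived. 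After substituting $\tau_q^{-1} = \delta_q^{1/2}\lambda_q\lambda_{q+1}^\alpha$ and $\mu_{q+1} = \delta_{q+1}^{1/2}\lambda_q^{2/3}\lambda_{q+1}^{1/3+4\alpha}$, the requirement that this correction be absorbed into $\tau_q^{-1}\delta_{q+1}(\lambda_q/\lambda_{q+1})\lambda_{q+1}^N$ reduces to an algebraic comparison of powers of $\lambda_q$ and $\lambda_{q+1}$ guaranteed by the Onsager constraint $b < (1+3\beta)/(6\beta)$ and sufficient smallness of $\alpha$, upon choosing $a_0$ large. The case of $T_{q,\Gamma}$ is identical, since Proposition~\ref{NewIter} yields the same estimates for $T_{q,n}$ as for $R_{q,n}$. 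The main obstacle here is purely this parameter bookkeeping; no new cancellation or estimate is required.
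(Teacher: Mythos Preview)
Your proposal is correct and follows essentially the same route as the paper: the paper's own proof simply cites \cite[Lemma 4.14]{vikram} together with Proposition~\ref{NewIter} and Lemma~\ref{w_t_estim}, which is precisely the combination you have unpacked (Proposition~\ref{NewIter} at $n=\Gamma$ with $\delta_{q+1,\Gamma}\le \delta_{q+1}\lambda_q/\lambda_{q+1}$, plus Lemma~\ref{w_t_estim} to pass from $\bar D_t$ to $\bar D_{t,\Gamma}$). One minor remark: the correction term $w_{q+1}^{(t)}\cdot\nabla R_{q,\Gamma}$ does not actually require the full constraint $b<(1+3\beta)/(6\beta)$; the inequality $\delta_{q+1}^{1/2}\lambda_q^{1/3}\le \delta_q^{1/2}\lambda_{q+1}^{1/3}$, which follows already from $\beta<1/3$ and $\lambda_q<\lambda_{q+1}$, suffices.
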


\begin{proof}
The proof of the lemma follows from \cite[Lemma 4.14]{vikram}, thanks to the Proposition \ref{NewIter} and the Lemma \ref{w_t_estim}.
\end{proof}

\begin{lem} \label{Newt_err_estim}
The following bounds are valid:
\begin{align*}
    \|w_{q+1}^{(t)} \mathring \otimes w_{q+1}^{(t)}\|_N, \, \|w_{q+1}^{(t)} \theta_{q+1}^{(t)}\|_N & \lesssim \frac{\delta_{q+1} \lambda_q^{2/3}}{\lambda_{q+1}^{2/3}} \lambda_{q+1}^N, \qquad  \qquad \, \forall N \geq 0, \\
   \|\bar D_{t, \Gamma}(w_{q+1}^{(t)} \mathring \otimes w_{q+1}^{(t)})\|_N, \,  \|\bar D_{t, \Gamma}(w_{q+1}^{(t)} \theta_{q+1}^{(t)})\|_N&\lesssim \mu_{q+1} \frac{\delta_{q+1} \lambda_q^{2/3}}{\lambda_{q+1}^{2/3}} \lambda_{q+1}^N, \hspace{0.8cm} \forall N \geq 0,
\end{align*}
where all the implicit bounds are dependent on $\Gamma$, $M$, $\alpha$, and $N$.
\end{lem}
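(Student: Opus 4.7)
The plan is to reduce both families of estimates to a direct application of Lemma \ref{w_t_estim} combined with the standard product rule for H\"older norms. Since $w_{q+1}^{(t)}\mathring\otimes w_{q+1}^{(t)}$ differs from $w_{q+1}^{(t)}\otimes w_{q+1}^{(t)}$ only by a scalar multiple of the identity depending on $|w_{q+1}^{(t)}|^2$, it suffices to bound the full tensor product, and the argument for $w_{q+1}^{(t)}\theta_{q+1}^{(t)}$ is word-for-word the same because $\theta_{q+1}^{(t)}$ obeys the same estimates as $w_{q+1}^{(t)}$ in Lemma \ref{w_t_estim}.

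For the non-derivative bound I would apply the inequality $\|fg\|_N\lesssim\|f\|_N\|g\|_0+\|f\|_0\|g\|_N$ followed by Lemma \ref{w_t_estim}. For $N\in\{0,\dots,L-2\}$ this gives
\begin{align*}
\|w_{q+1}^{(t)}\otimes w_{q+1}^{(t)}\|_N \lesssim \|w_{q+1}^{(t)}\|_N\|w_{q+1}^{(t)}\|_0 \lesssim \frac{\delta_{q+1}^2\,\lambda_q^{N+2}\,\ell_q^{-2\alpha}}{\mu_{q+1}^2}.
\end{align*}
Substituting the explicit values $\mu_{q+1}=\delta_{q+1}^{1/2}\lambda_q^{2/3}\lambda_{q+1}^{1/3+4\alpha}$ and $\ell_q^{-1}=(\lambda_q\lambda_{q+1})^{1/2}$, the right-hand side becomes $\delta_{q+1}\lambda_q^{2/3}\lambda_{q+1}^{N-2/3}\cdot(\lambda_q/\lambda_{q+1})^{N}\lambda_{q+1}^{-c\alpha}$ for some $c>0$, hence is dominated by the target as soon as $a_0$ is large and $\alpha$ is small. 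The range $N\ge L-2$ is handled identically, using the higher-derivative estimate $\|w_{q+1}^{(t)}\|_{N+L-2}\lesssim \delta_{q+1}\lambda_q^{L-1}\ell_q^{-N-\alpha}/\mu_{q+1}$ from Lemma \ref{w_t_estim}.

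For the material derivative I would write $\bar D_{t,\Gamma}=\bar D_t+w_{q+1}^{(t)}\cdot\nabla$ and distribute by Leibniz:
\begin{align*}
\bar D_{t,\Gamma}(w_{q+1}^{(t)}\otimes w_{q+1}^{(t)})=(\bar D_t w_{q+1}^{(t)})\otimes w_{q+1}^{(t)}+w_{q+1}^{(t)}\otimes(\bar D_t w_{q+1}^{(t)})+(w_{q+1}^{(t)}\cdot\nabla)(w_{q+1}^{(t)}\otimes w_{q+1}^{(t)}).
\end{align*}
The first two terms, via the product rule and the $\bar D_t$-estimates of Lemma \ref{w_t_estim}, are controlled by $\delta_{q+1}\lambda_q^{N+1}\ell_q^{-\alpha}\cdot(\delta_{q+1}\lambda_q\ell_q^{-\alpha}/\mu_{q+1})$, which matches the claim with exactly one extra factor $\mu_{q+1}$ once the identity $\mu_{q+1}^2=\delta_{q+1}\lambda_q^{4/3}\lambda_{q+1}^{2/3+8\alpha}$ is used. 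The commutator term $(w_{q+1}^{(t)}\cdot\nabla)(w_{q+1}^{(t)}\otimes w_{q+1}^{(t)})$ is strictly lower order because of the smallness of $\|w_{q+1}^{(t)}\|_0\lesssim \delta_{q+1}\lambda_q\ell_q^{-\alpha}/\mu_{q+1}\ll 1$, and is trivially absorbed.

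The ``main obstacle'' is therefore purely algebraic bookkeeping: verifying that all cumulative $\ell_q^{-\alpha}$ losses and powers of $\lambda_q/\lambda_{q+1}$ are absorbed into the positive $\lambda_{q+1}^{\alpha}$ buffer coming from $\mu_{q+1}^2/(\delta_{q+1}\lambda_q^{2/3}\lambda_{q+1}^{2/3})=\lambda_{q+1}^{8\alpha}$. Once checked, the identical computation, with $\theta_{q+1}^{(t)}$ in place of one copy of $w_{q+1}^{(t)}$ and the corresponding estimate $\|\bar D_t\theta_{q+1}^{(t)}\|_N\lesssim\delta_{q+1}\lambda_q^{N+1}\ell_q^{-\alpha}$ from Lemma \ref{w_t_estim}, delivers the $w_{q+1}^{(t)}\theta_{q+1}^{(t)}$ bound and its material-derivative analog.
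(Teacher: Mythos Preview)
Your proposal is correct and follows the same route the paper takes: the paper simply invokes \cite[Lemma~4.14]{vikram} together with Lemma~\ref{w_t_estim}, and what you have written is precisely the standard product-rule/Leibniz expansion that that reference supplies. Your bookkeeping of the parameter algebra (in particular the identity $\mu_{q+1}^2=\delta_{q+1}\lambda_q^{4/3}\lambda_{q+1}^{2/3+8\alpha}$ and the observation that the commutator term $(w_{q+1}^{(t)}\cdot\nabla)(\cdots)$ is lower order) is accurate.
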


\begin{proof}
The proof of the lemma follows from \cite[Lemma 4.14]{vikram}, thanks to Lemma \ref{w_t_estim}.
\end{proof}

Let us now provide estimates for the error introduced by spatial mollification.

\begin{lem} \label{spatial_moli_estim_err_fin}
The following bounds are valid:
\begin{align} 
    \left\|w_{q+1} \mathring\otimes (v_q - \bar v_q) + (v_q - \bar v_q) \mathring\otimes w_{q+1}\right\|_{N} & \lesssim \frac{\delta_{q+1}^{1/2} \delta_q^{1/2} \lambda_q}{\lambda_{q+1}} \lambda_{q+1}^N, \qquad \forall N \in \{0,1,..., L\} \label{spat_moli_estim_1} \\
    \left\|w_{q+1} (\theta_q - \bar \theta_q) + (v_q - \bar v_q) \Theta_{q+1}\right\|_{N} & \lesssim \frac{\delta_{q+1}^{1/2} \delta_q^{1/2} \lambda_q}{\lambda_{q+1}} \lambda_{q+1}^N, \qquad \,\forall N \in \{0,1,..., L\} \label{spat_moli_estim_10} \\
    \left\|\bar D_{t,\Ga} \left(w_{q+1} \mathring\otimes (v_q - \bar v_q) + (v_q - \bar v_q) \mathring\otimes w_{q+1}\right)\right\|_{N} &\lesssim \delta_{q+1}^{1/2} \lambda_q^{1/3}\lambda_{q+1}^{2/3} \frac{\delta_{q+1}^{1/2} \delta_q^{1/2} \lambda_q}{\lambda_{q+1}} \lambda_{q+1}^N, \label{spat_moli_estim_2} \\ 
    & \hspace{3.8cm} \,\forall N \in \{0,1,...,L-1\}, \nonumber \\
    \left\|\bar D_{t,\Ga} \left(w_{q+1} (\theta_q - \bar \theta_q) + (v_q - \bar v_q) \Theta_{q+1}\right)\right\|_{N} &\lesssim \delta_{q+1}^{1/2} \lambda_q^{1/3}\lambda_{q+1}^{2/3} \frac{\delta_{q+1}^{1/2} \delta_q^{1/2} \lambda_q}{\lambda_{q+1}} \lambda_{q+1}^N, \label{spat_moli_estim_20} \\ 
    & \hspace{3.9cm} \forall N \in \{0,1,...,L-1\}, \nonumber \\
    \|R_q - R_{q, 0}\|_N, \, \|T_q - T_{q, 0}\|_N & \lesssim \frac{\delta_{q+1} \lambda_q}{\lambda_{q+1}} \lambda_{q+1}^N, \hspace{1.5cm} \forall N \in \{0,1,...,L\}, \label{spat_moli_estim_3} \\
    \|\bar D_{t, \Gamma}(R_{q} - R_{q, 0})\|_N, \, \|\bar D_{t, \Gamma}(T_{q} - T_{q, 0})\|_N & \lesssim \delta_{q+1} \delta_q^{1/2} \lambda_q \lambda_{q+1}^N, \hspace{1.0cm} \forall N \in \{0,1,...,L-1\}, \label{spat_moli_estim_4}
\end{align}
where all the implicit bounds are dependent on $\Gamma$, $M$, $\alpha$, and $N$. 
\end{lem}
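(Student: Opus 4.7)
The plan is to handle the six estimates in two blocks: first the spatial H\"older bounds \eqref{spat_moli_estim_1}, \eqref{spat_moli_estim_10}, \eqref{spat_moli_estim_3}, then the material derivative estimates \eqref{spat_moli_estim_2}, \eqref{spat_moli_estim_20}, \eqref{spat_moli_estim_4}. The backbone of the argument is Proposition~\ref{moli} applied with the symmetric mollifier $\zeta$ (which has vanishing first moments) and mollification scale $\ell_q^2 = (\lambda_q\lambda_{q+1})^{-1}$. Together with the inductive bounds \eqref{ind_est_2} and \eqref{ind_est_4}, this gives
\[
\|v_q - \bar v_q\|_N \lesssim \ell_q^2 \|v_q\|_{N+2} \lesssim \frac{\delta_q^{1/2}\lambda_q^{N+1}}{\lambda_{q+1}},
\]
and identical bounds for $\theta_q - \bar\theta_q$, as well as $\|R_q - R_{q,0}\|_N + \|T_q - T_{q,0}\|_N \lesssim \ell_q^2\delta_{q+1}\lambda_q^{N+2-2\alpha}$; after absorbing the $\lambda_q^{-2\alpha}$ factor at the cost of taking $a_0$ large, this yields \eqref{spat_moli_estim_3}. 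For the product estimates \eqref{spat_moli_estim_1} and \eqref{spat_moli_estim_10}, I would apply the classical product rule, controlling $w_{q+1} = w_{q+1}^{(s)} + w_{q+1}^{(t)}$ and $\Theta_{q+1} = \theta_{q+1}^{(s)} + \theta_{q+1}^{(t)}$ by the sharp spatial bounds \eqref{spatial_pert_est} of Lemma~\ref{velo_estim_fin} (which give $\delta_{q+1}^{1/2}\lambda_{q+1}^N$ for the dominant piece) and by Lemma~\ref{w_t_estim} (which ensures the Newton component is strictly smaller thanks to the choice of $\mu_{q+1}$).

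For the material derivative estimates \eqref{spat_moli_estim_2} and \eqref{spat_moli_estim_20} I would expand via Leibniz,
\[
\bar D_{t,\Gamma}\big(w_{q+1}\mathring\otimes(v_q-\bar v_q)\big) = (\bar D_{t,\Gamma} w_{q+1})\mathring\otimes(v_q-\bar v_q) + w_{q+1}\mathring\otimes\bar D_{t,\Gamma}(v_q - \bar v_q),
\]
and estimate each factor separately. For $\bar D_{t,\Gamma}w_{q+1}^{(s)}$ I would follow the reasoning already used in the proof of Lemma~\ref{Tranp_err_estim}: since the phase $\lambda_{q+1}\zeta^\perp\cdot\tilde\Psi_k$ is transported by $\bar v_{q,\Gamma}$, the material derivative produces only factors of $\mu_{q+1}$ (from differentiating $g_{\xi,k,n+1}(\mu_{q+1}t)$ or $h_{\zeta,k,n+1}(\mu_{q+1}t)$) and $\tau_q^{-1}$ (from differentiating $\eta_k$), giving $\|\bar D_{t,\Gamma}w_{q+1}^{(s)}\|_N \lesssim \mu_{q+1}\delta_{q+1}^{1/2}\lambda_{q+1}^N$, and analogously for $\bar D_{t,\Gamma}\theta_{q+1}^{(s)}$. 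The contribution of $w_{q+1}^{(t)}$ is controlled by \eqref{w_t_estim_2}. For $\bar D_{t,\Gamma}(v_q - \bar v_q)$ I would write $\bar D_{t,\Gamma} = \bar D_t + w_{q+1}^{(t)}\cdot\nabla$, use the Constantin-E-Titi commutator estimate of Proposition~\ref{CET_comm} applied to $[\bar v_q\cdot\nabla, \ast\zeta_{\ell_q}] v_q$, and substitute $\partial_t v_q = -\div(v_q\otimes v_q) - \nabla p_q + \theta_q e_2 + \div R_q$ from \eqref{ER} at level $q$ to reduce everything to inductively controlled quantities. A careful count shows that $\mu_{q+1}\cdot\frac{\delta_{q+1}^{1/2}\delta_q^{1/2}\lambda_q}{\lambda_{q+1}} = \delta_{q+1}^{1/2}\lambda_q^{1/3}\lambda_{q+1}^{2/3}\cdot\frac{\delta_{q+1}^{1/2}\delta_q^{1/2}\lambda_q}{\lambda_{q+1}}$ up to harmless $\lambda_{q+1}^{\alpha}$ losses, matching the stated bound.

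The principal technical obstacle is \eqref{spat_moli_estim_4}, the material derivative of the stress difference. Here I would use the decomposition
\[
\bar D_{t,\Gamma}(R_q - R_{q,0}) = \big(D_t R_q - (D_t R_q)\ast\zeta_{\ell_q}\big) + \big[(v_q\cdot\nabla R_q)\ast\zeta_{\ell_q} - v_q\cdot\nabla R_{q,0}\big] + (\bar v_{q,\Gamma} - v_q)\cdot\nabla(R_q - R_{q,0}),
\]
obtained by inserting $\partial_t R_{q,0} = (\partial_t R_q)\ast\zeta_{\ell_q}$ and rearranging. The first bracket is handled via Proposition~\ref{moli} combined with the material derivative bound \eqref{ind_est_5}; the second is exactly the commutator controlled by Proposition~\ref{CET_comm}; and the third is a product estimate in which both factors are already small, since $\bar v_{q,\Gamma} - v_q = (\bar v_q - v_q) + w_{q+1}^{(t)}$ and $\nabla(R_q - R_{q,0})$ both admit mollification-type bounds. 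The same argument applies verbatim to $\bar D_{t,\Gamma}(T_q - T_{q,0})$, using the inductive hypothesis \eqref{ind_est_5} on $D_t T_q$. I expect the entire argument to parallel very closely \cite[Lemma 4.14]{vikram}; the only new feature is the temperature stress, which is handled by the same manipulations since $T_q$ satisfies estimates of exactly the same form as $R_q$.
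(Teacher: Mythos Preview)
Your proposal is correct and follows essentially the same route as the paper: mollification bounds via Proposition~\ref{moli} together with the inductive estimates and Lemma~\ref{velo_estim_fin} for the spatial bounds, and for the material derivatives the Leibniz expansion combined with the substitution of the Euler--Boussinesq--Reynolds system and the Constantin--E--Titi commutator of Proposition~\ref{CET_comm}. One small imprecision: in your decomposition for \eqref{spat_moli_estim_4}, the middle bracket $[(v_q\cdot\nabla R_q)\ast\zeta_{\ell_q} - v_q\cdot\nabla R_{q,0}]$ is not literally of the form in Proposition~\ref{CET_comm} since the second term carries the unmollified $v_q$; you need to add and subtract $\bar v_q\cdot\nabla R_{q,0}$, after which the genuine CET commutator and the harmless correction $(\bar v_q - v_q)\cdot\nabla R_{q,0}$ are both easily bounded.
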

\begin{proof}
For the proof of \eqref{spat_moli_estim_1}, \eqref{spat_moli_estim_2}, \eqref{spat_moli_estim_3} and \eqref{spat_moli_estim_4}, we refer to the reader \cite[Lemma 4.16]{vikram}. Here we only give details of the estimates \eqref{spat_moli_estim_10} and \eqref{spat_moli_estim_20}.

First observe that, thanks to Proposition \ref{moli} and inductive assumptions on $ \theta_q$, we have
\begin{equation*}
\|v_q - \bar v_q\|_N, \, \|\theta_q - \bar \theta_q\|_N \lesssim \delta_{q}^{1/2} \frac{\lambda_q}{\lambda_{q+1}} \lambda_{q+1}^{N}.
\end{equation*}
Therefore, we can use estimates of Lemma \ref{velo_estim_fin} to conclude \eqref{spat_moli_estim_10}. Indeed, we have
    \begin{eqnarray*}
        \left\|w_{q+1} (\theta_q - \bar \theta_q) + (v_q - \bar v_q) \Theta_{q+1}\right\|_{N} \lesssim \|w_{q+1}\|_N \|\theta_q - \bar \theta_q\|_0 + \|\Theta_{q+1}\|_0 \|v_q - \bar v_q\|_N 
        \lesssim \frac{\delta_{q+1}^{1/2} \delta_q^{1/2}\lambda_q}{\lambda_{q+1}} \lambda_{q+1}^N,
    \end{eqnarray*}
To estimate \eqref{spat_moli_estim_20}, we first recall that we have (cf. Lemma \ref{div_corr_estim} and Lemma \ref{Newt_err_estim})     
\begin{equation*}
        \|\bar D_{t, \Gamma} w_{q+1}^{(s)}\|_N, \, \|\bar D_{t, \Gamma} \theta_{q+1}^{(s)}\|_N \lesssim \mu_{q+1} \delta_{q+1}^{1/2} \lambda_{q+1}^N, \quad 
        \|\bar D_{t, \Gamma} w_{q+1}^{(t)}\|_N, \, \|\bar D_{t, \Gamma} \theta_{q+1}^{(t)}\|_N \lesssim \mu_{q+1} \frac{\delta_{q+1}^{1/2}\lambda_q^{1/3}}{\lambda_{q+1}^{1/3}} \lambda_{q+1}^N,
    \end{equation*}
Moreover, following \cite{vikram}, we have
\begin{equation*}
        \|\bar D_{t, \Gamma} (v_q - \bar v_q)\|_N \lesssim \delta_{q+1}^{1/2} \lambda_q^{1/3}\lambda_{q+1}^{2/3} \frac{\delta_q^{1/2} \lambda_q}{\lambda_{q+1}} \lambda_{q+1}^N, \,\,\, \forall N \in \{0,1,...,L-1\},
    \end{equation*}
while for the other term, we notice that
    \begin{equation*}
        \bar D_{t, \Gamma} (\theta_q - \bar \theta_q) = w_{q+1}^{(t)} \cdot \nabla (\theta_q - \bar \theta_q) + \bar D_t (\theta_q - \bar \theta_q),
    \end{equation*}
where 
    \begin{equation*}
        \|w_{q+1}^{(t)} \cdot \nabla (\theta_q - \bar \theta_q)\|_N \lesssim \frac{\delta_{q+1}^{1/2}\lambda_q^{1/3}}{\lambda_{q+1}^{1/3}} \delta_q^{1/2} \frac{\lambda_q}{\lambda_{q+1}} \lambda_{q+1}^{N+1}, \,\,\, \forall N \in \{0,1...,L-1\}.
    \end{equation*}
Therefore, we need to estimate the term $\bar D_t (v_q - \bar v_q)$. To that context, we write
    \begin{equation*}
        \bar D_{t} (\theta_q - \bar \theta_q) =  (\partial_t \theta_q + v_q \cdot \nabla \theta_q) - (\partial_t \theta_q + v_q \cdot \nabla \theta_q)*\zeta_{\ell_q} + (\bar v_q - v_q) \cdot \nabla \theta_q + \div((v_q \theta_q)*\zeta_{\ell_q} - \bar v_q \bar \theta_q).
    \end{equation*}
Thanks to the Euler-Biussinesq-Reynolds system \ref{ER}, we can write
    \begin{equation*}
        \|(\partial_t v_q + v_q \cdot \nabla \theta_q) - (\partial_t v_q + v_q \cdot \nabla \theta_q)*\zeta_{\ell_q}\|_N \lesssim  \|\div T_q - \div T_{q, 0}\|_N,
    \end{equation*}
and Proposition \ref{moli} yields 
\begin{equation*}
        \|\div T_q - \div T_{q, 0}\|_N \lesssim \frac{\delta_{q+1} \lambda_q^2}{\lambda_{q+1}} \lambda_{q+1}^N, \,\,\, \forall N \in \{0,1,...,L-1\}.
    \end{equation*}
    Moreover, we have
    \begin{equation*}
        \|(\bar v_q - v_q) \cdot \nabla \theta_q\|_N \lesssim \|\bar v_q - v_q\|_N \|\theta_q\|_1 + \|\bar v_q - v_q\|_0\|\theta_q\|_{N+1} \lesssim \frac{\delta_q \lambda_q^2}{\lambda_{q+1}} \lambda_{q+1}^N, \,\,\, \forall N \in \{0,1,...,L-1\},
    \end{equation*}
and Constantin-E-Titi commutator estimate (cf. Proposition \ref{CET_comm}) yields
    \begin{equation*}
        \|\div((v_q \theta_q)*\zeta_{\ell_q} - \bar v_q \bar \theta_q)\|_N \lesssim \frac{\delta_q \lambda_q^2}{\lambda_{q+1}}\lambda_{q+1}^{N}, \,\,\, \forall N \geq 0. 
    \end{equation*}
Therefore, we conclude that 
    \begin{equation*}
        \|\bar D_{t, \Gamma} (\theta_q - \bar \theta_q)\|_N \lesssim \delta_{q+1}^{1/2} \lambda_q^{1/3}\lambda_{q+1}^{2/3} \frac{\delta_q^{1/2} \lambda_q}{\lambda_{q+1}} \lambda_{q+1}^N, \,\,\, \forall N \in \{0,1,...,L-1\}.
    \end{equation*}
This essentially finishes the proof of the lemma.
\end{proof}

\subsection{The Pressure term} Recalling the expression \eqref{p_q_Gam} for $p_{q, \Gamma}$, we write 
\begin{align*}
    p_{q+1} &= p_q + \sum_{n=1}^\Gamma p_{q+1, n}^{(t)}  - \frac{|w_{q+1}^{(t)}|^2}{2} + \langle \bar v_q - v_q, w_{q+1} \rangle \\
    &\quad - \sum_{n=0}^{\Gamma-1} \Delta^{-1} \div \div\big(R_{q,n} + \sum_{k \in \mathbb Z_{q,n}} \sum_{\xi \in\Lambda_R} g_{\xi, k, n+1}^2  A^1_{\xi, k, n} + \sum_{k \in \mathbb Z_{q,n}} \sum_{\zeta \in\Lambda_T}  h_{\zeta, k, n+1}^2  A^2_{\zeta, k, n} \big).
\end{align*}

\begin{lem} 
The following bounds are valid: 
\begin{equation}
    \|p_{q+1}\|_N \lesssim \frac{\delta_{q+1}^{1/2}\delta_q^{1/2}\lambda_q^{1/3}}{\lambda_{q+1}^{1/3}} \lambda_{q+1}^N, \,\,\, \forall N \in \{1, 2,...,L\},
\end{equation}
where all the implicit bounds are dependent on $\Gamma$, $M$, $\alpha$, and $N$.
\end{lem}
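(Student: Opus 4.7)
The plan is to estimate each of the five summands in the expression
\[
p_{q+1} = p_q + \sum_{n=1}^\Gamma p_{q+1, n}^{(t)} - \frac{|w_{q+1}^{(t)}|^2}{2} + \langle \bar v_q - v_q, w_{q+1} \rangle - \sum_{n=0}^{\Gamma-1} \Delta^{-1} \div \div\Big(R_{q,n} + \textstyle\sum_{\xi,k} g_{\xi, k, n+1}^2  A^1_{\xi, k, n} + \sum_{\zeta,k}  h_{\zeta, k, n+1}^2  A^2_{\zeta, k, n} \Big)
\]
separately, and show each is dominated by the target right-hand side. The guiding heuristic is that all the ``new'' terms carry a factor $\delta_{q+1}^{1/2}$ which we want to pair with the free parameter arrangement $\delta_q^{1/2}\lambda_q^{1/3}\lambda_{q+1}^{-1/3}$, while the old piece $p_q$ (with the relatively weak bound $\|p_q\|_N\le M^2 \delta_q \lambda_q^N$) is absorbed using $N\ge 1$, $\delta_q=\lambda_q^{-2\beta}$, and the constraint $\beta<1/3$ (so that $(\lambda_q/\lambda_{q+1})^{N-1/3-\beta}\le 1$ for $a$ large).

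First, for $|w_{q+1}^{(t)}|^2$ I would use $\| |w_{q+1}^{(t)}|^2\|_N\lesssim \|w_{q+1}^{(t)}\|_0\|w_{q+1}^{(t)}\|_N$ together with the estimates of Lemma~\ref{w_t_estim}; the quadratic dependence on $\delta_{q+1}\lambda_q\ell_q^{-\alpha}/\mu_{q+1}$ is much better than needed once one inserts the definition of $\mu_{q+1}$ and $\ell_q$. For $\langle \bar v_q-v_q, w_{q+1}\rangle$, I would combine Proposition~\ref{moli} (giving $\|v_q-\bar v_q\|_N\lesssim \delta_q^{1/2}(\lambda_q/\lambda_{q+1})\lambda_{q+1}^N$) with the bound $\|w_{q+1}\|_0\lesssim\delta_{q+1}^{1/2}$ coming from Lemma~\ref{velo_estim_fin} (and distributing the derivatives via the classical product rule); this immediately produces a factor $\delta_{q+1}^{1/2}\delta_q^{1/2}\lambda_q/\lambda_{q+1}$, which is stronger than what is claimed.

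Second, for the $\Delta^{-1}\div\div$-terms, I will use that $\Delta^{-1}\div\div$ is of Calderón–Zygmund type (Proposition applied to mean-zero inputs) so that it is bounded on the $C^{N+\alpha}$-scale. The stress piece $R_{q,n}$ satisfies $\|R_{q,n}\|_N\lesssim \delta_{q+1,n}\lambda_q^{N-\alpha}$ by Proposition~\ref{NewIter}, and for the cut-off-weighted tensors $A^1_{\xi,k,n}$, $A^2_{\zeta,k,n}$ I will apply Corollary~\ref{a_cor}, which yields the same $\delta_{q+1,n}\lambda_q^{N}$ scaling (up to the usual $\lambda_q^{-\alpha}$); the locally finite disjointness of the $\eta_k$'s eliminates the $k$-sum, and $\delta_{q+1,n}\le \delta_{q+1}$, so one ends up with a bound $\lesssim \delta_{q+1}\lambda_q^N$, which is again much better than the target after using the parameter relations.

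Third, and this is the most delicate step, I need to control $p_{q+1,n}^{(t)}=\sum_{k\in \Z_{q,n}}\tilde\eta_k\, p_{k,n+1}$. The pressure $p_{k,n+1}$ is determined by taking $\div$ in \eqref{LocalNewt}: using $\div w_{k,n+1}=0$, it solves the Poisson equation
\[
\Delta p_{k,n+1}=-\div\bigl(\bar v_q\cdot\nabla w_{k,n+1}+w_{k,n+1}\cdot\nabla \bar v_q\bigr)+\div(\theta_{k,n+1}e_2)+\div\Bigl(\sum_\xi f_{\xi,k,n+1}(\mu_{q+1}t)\,\mathbb P\div A^1_{\xi,k,n}+\sum_\zeta m_{\zeta,k,n+1}(\mu_{q+1}t)\,\mathbb P\div A^2_{\zeta,k,n}\Bigr).
\]
Solving $\Delta^{-1}$ against this right-hand side via Calderón–Zygmund and using $\|w_{k,n+1}\|_{N+1+\alpha}\lesssim \|\psi_{k,n+1}\|_{N+2+\alpha}$, $\|\theta_{k,n+1}\|_{N+\alpha}\lesssim \|\varphi_{k,n+1}\|_{N+\alpha}$ together with Lemma~\ref{psi_estim} and Corollary~\ref{a_cor} will produce $\|p_{k,n+1}\|_N\lesssim \delta_{q+1,n}\lambda_q^N\ell_q^{-\alpha}$, and then summing against the $\tilde\eta_k$'s (which are locally finite in time and uniformly bounded) gives the same bound for $p_{q+1,n}^{(t)}$. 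Finally, I would finish by verifying that every one of the above bounds is majorized by $\delta_{q+1}^{1/2}\delta_q^{1/2}\lambda_q^{1/3}\lambda_{q+1}^{-1/3}\lambda_{q+1}^N$ through the parameter inequalities $\mu_{q+1}=\delta_{q+1}^{1/2}\lambda_q^{2/3}\lambda_{q+1}^{1/3}\lambda_{q+1}^{4\alpha}$ and $\ell_q^{-\alpha}\le \lambda_{q+1}^\alpha$, absorbing the powers of $\lambda_{q+1}^\alpha$ by taking $a_0$ large and $\alpha$ small. The main obstacle is the rigorous control of $p_{q+1,n}^{(t)}$, because it requires invoking the well-posedness of the linearized Euler–Boussinesq pressure together with the stream-function bounds of Lemma~\ref{psi_estim}; once that is in place the rest is a bookkeeping exercise.
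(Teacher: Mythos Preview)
Your overall strategy and the handling of $p_q$, $|w_{q+1}^{(t)}|^2$, $\langle\bar v_q-v_q,w_{q+1}\rangle$ and the $\Delta^{-1}\div\div\big(R_{q,n}+\sum g^2A^1+\sum h^2A^2\big)$ terms all match the paper's proof. The problem is your treatment of $p_{q+1,n}^{(t)}$.

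Two minor points first: (i) the forcing contributions in your Poisson equation vanish identically, since $\div\circ\mathbb P=0$, so they should simply be dropped; (ii) $\theta_{k,n+1}=\div\varphi_{k,n+1}$, so $\|\theta_{k,n+1}\|_{N+\alpha}\lesssim\|\varphi_{k,n+1}\|_{N+1+\alpha}$, not $\|\varphi_{k,n+1}\|_{N+\alpha}$ as you wrote (this is harmless, since what is actually needed is $\|\Delta^{-1}\div(\theta_{k,n+1}e_2)\|_{N+\alpha}\lesssim\|\varphi_{k,n+1}\|_{N+\alpha}$).

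The substantive gap concerns the term $\bar v_q\cdot\nabla w_{k,n+1}$. If you estimate it as written---say via $\Delta^{-1}\div\div(w_{k,n+1}\otimes\bar v_q)$ and Calder\'on--Zygmund---the product estimate produces $\|\bar v_q\|_\alpha\|w_{k,n+1}\|_{N+\alpha}\lesssim M\,\delta_{q+1,n}\lambda_q^{N+1}\ell_q^{-\alpha}/\mu_{q+1}$, which is \emph{not} bounded by your claimed $\delta_{q+1,n}\lambda_q^N\ell_q^{-\alpha}$ (that would require $\lambda_q\lesssim\mu_{q+1}$, i.e.\ $b\ge 1/(1-3\beta)$). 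Comparing directly with the target $\delta_{q+1}^{1/2}\delta_q^{1/2}\lambda_q^{1/3}\lambda_{q+1}^{N-1/3}$ at $N=1$, one would need $b\ge 1+\beta$; but the standing constraint $b<(1+3\beta)/(6\beta)$ forces $b<1+\beta$ for all $\beta$ sufficiently close to $1/3$, so your outline fails precisely in the Onsager-critical range.

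The paper avoids this by using the divergence-free structure twice. First, since $\div\bar v_q=\div w_{k,n+1}=0$, one has $\div(\bar v_q\cdot\nabla w_{k,n+1}+w_{k,n+1}\cdot\nabla\bar v_q)=2\div(w_{k,n+1}\cdot\nabla\bar v_q)$. Second, using the $2$D stream function $w_{k,n+1}=\nabla^\perp\psi_{k,n+1}$ one has $w_{k,n+1}\cdot\nabla\bar v_q=-\div(\psi_{k,n+1}\nabla^\perp\bar v_q)$, and similarly $\theta_{k,n+1}e_2=\div A_{\varphi_{k,n+1}}$ for a matrix built from the entries of $\varphi_{k,n+1}$. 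Hence
\[
p_{k,n+1}=\Delta^{-1}\div\div\big(2\psi_{k,n+1}\nabla^\perp\bar v_q+A_{\varphi_{k,n+1}}\big),
\]
and Calder\'on--Zygmund now gives $\|p_{k,n+1}\|_{N+\alpha}\lesssim\|\psi_{k,n+1}\|_{N+\alpha}\|\nabla\bar v_q\|_\alpha+\|\psi_{k,n+1}\|_\alpha\|\nabla\bar v_q\|_{N+\alpha}+\|\varphi_{k,n+1}\|_{N+\alpha}$. This trades $\|\bar v_q\|_0\|w_{k,n+1}\|_N$ for $\|\nabla\bar v_q\|_0\|\psi_{k,n+1}\|_N$, a net gain of exactly $\delta_q^{1/2}$, which is what makes the estimate close.
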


\begin{proof}
    We have 
    \begin{align*}
        \|p_{q+1}\|_N &\lesssim  \|p_q\|_N + \||w_{q+1}^{(t)}|^2\|_N + \|\langle \bar v_q - v_q, w_{q+1} \rangle\|_N + \sum_{n=1}^\Gamma \|p_{q+1, n}^{(t)}\|_N  \\
        & \quad + \sum_{n = 0}^{\Gamma-1}\|\Delta^{-1} \div \div \big( R_{q, n} + \sum_{k \in \mathbb Z_{q,n}} \sum_{\xi \in\Lambda_R} g_{\xi, k, n+1}^2  A^1_{\xi, k, n} + \sum_{k \in \mathbb Z_{q,n}} \sum_{\zeta \in\Lambda_T}  h_{\zeta, k, n+1}^2  A^2_{\zeta, k, n} \big)\|_N 
    \end{align*}
First notice that, inductive assumptions give us
    \begin{equation*}
        \|p_q\|_N \lesssim \frac{\delta_{q+1}^{1/2}\delta_q^{1/2} \lambda_q^{1/3}}{\lambda_{q+1}^{1/3}}, \,\,\, \forall N \in \{1, 2,...,L\}.
    \end{equation*}
Next, recall that $p_{q+1, n}^{(t)} = \sum_k \tilde \eta_k p_{k, n}$, and \eqref{LocalNewt} implies 
    \begin{equation*}
        p_{k,n} = \Delta^{-1} [-2 \div (w_{k, n} \cdot \nabla \bar v_q) + \div (\theta_{k,n} e_2)]=   \Delta^{-1} \div \div (2 \psi_{k, n} \nabla^\perp \bar v_q + A_{\varphi}),
    \end{equation*}
where $A_{\varphi}$ denotes the $2 \times 2$ matrix with entries $A_{\varphi} (1,1)= A_{\varphi} (1,2)=0$, and $A_{\varphi} (2,1) = A_{\varphi} (2,2)= \varphi_{k,n}$. Since $\Delta^{-1} \div \div$ is a Calder\'on-Zygmund type operator, we have, for $N \geq 0$, 
    \begin{eqnarray*}
        \| p_{k,n}\|_{N+\alpha} &\lesssim & \|\psi_{k, n}\|_{N+\alpha}\|\nabla \bar v_q\|_\alpha + \|\psi_{k, n}\|_{\alpha} \|\nabla \bar v_q\|_{N+\alpha} +  \|\varphi_{k, n}\|_{N+\alpha} \\ 
        & \lesssim & \frac{\delta_{q+1} }{\mu_{q+1}}\delta_q^{1/2} \lambda_q \lambda_{q+1}^{N + 2\alpha} \lesssim \frac{\delta_{q+1}^{1/2} \delta_{q}^{1/2}\lambda_q^{1/3}}{\lambda_{q+1}^{1/3}} \lambda_{q+1}^N.
    \end{eqnarray*}
Fir the other term, thanks to sufficiently small $\alpha > 0$, Lemma \ref{a_cor} and Proposition \ref{NewIter}, we have for $N \geq 1$, 
    \begin{align*}
       & \|\Delta^{-1} \div \div \big( R_{q, n} + \sum_{k \in \mathbb Z_{q,n}} \sum_{\xi \in\Lambda_R} g_{\xi, k, n+1}^2  A^1_{\xi, k, n} + \sum_{k \in \mathbb Z_{q,n}} \sum_{\zeta \in\Lambda_T}  h_{\zeta, k, n+1}^2  A^2_{\zeta, k, n} \big) \|_{N+ \alpha} \\
       & \qquad \lesssim \|R_{q,n}\|_{N+\alpha} + \|A_{\sigma, k, n}\|_{N+\alpha} 
      \lesssim \frac{\delta_{q+1}^{1/2}\delta_q^{1/2}\lambda_q^{1/3}}{\lambda_{q+1}^{1/3}}.
        \end{align*}
Finally, the estimates for $|w_{q+1}^{(t)}|^2$ and $\langle \bar v_q - v_q, w_{q+1} \rangle$ are the same as in Lemma \ref{Newt_err_estim} and Lemma~\ref{spatial_moli_estim_err_fin}, respectively. This finishes the proof.
\end{proof}

\begin{cor} \label{press_corol_fin}
The following bounds are valid: 
\begin{equation}
    \|p_{q+1}\|_N \leq M^2 \delta_{q+1} \lambda_{q+1}^N, \,\,\, \forall N\in\{1,2,...,L\}.
\end{equation}
\end{cor}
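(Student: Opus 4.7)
The plan is to derive Corollary~\ref{press_corol_fin} directly from the preceding lemma by a gain-of-smallness argument that absorbs the implicit constant into $M^2$ via a suitable choice of $a_0$. The previous lemma gives, for $N \in \{1,2,\ldots,L\}$,
\[
\|p_{q+1}\|_N \leq C \,\frac{\delta_{q+1}^{1/2}\delta_q^{1/2}\lambda_q^{1/3}}{\lambda_{q+1}^{1/3}} \,\lambda_{q+1}^N,
\]
where $C$ depends on $\Gamma$, $M$, $\alpha$, $N$ (hence on $\beta$, $b$, $L$ through $\Gamma$), but is independent of $q$ and $a$. So the target inequality $\|p_{q+1}\|_N \leq M^2 \delta_{q+1} \lambda_{q+1}^N$ will follow provided
\[
C\,\frac{\delta_q^{1/2} \lambda_q^{1/3}}{\delta_{q+1}^{1/2} \lambda_{q+1}^{1/3}} \leq M^2.
\]

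The next step is to express the left-hand side as a negative power of $\lambda_{q+1}/\lambda_q$. Using $\delta_q = \lambda_q^{-2\beta}$ yields
\[
\frac{\delta_q^{1/2} \lambda_q^{1/3}}{\delta_{q+1}^{1/2} \lambda_{q+1}^{1/3}} = \left(\frac{\lambda_q}{\lambda_{q+1}}\right)^{\!1/3 - \beta},
\]
which is positive in the exponent because $\beta < 1/3$ is part of the standing hypotheses of Proposition~\ref{Main_prop}. Since $\lambda_{q+1}/\lambda_q \geq a^{b^q(b-1)} \geq a^{b-1}$ for every $q \geq 0$, we get the uniform bound
\[
C\,\frac{\delta_q^{1/2} \lambda_q^{1/3}}{\delta_{q+1}^{1/2} \lambda_{q+1}^{1/3}} \leq C\, a^{-(b-1)(1/3 - \beta)}.
\]

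Finally, since $M > M_0 > 1$, the inequality $C\, a^{-(b-1)(1/3 - \beta)} \leq M^2$ holds as soon as $a \geq a_0$ with $a_0$ chosen sufficiently large (depending on $C$, hence on $\beta$, $b$, $\alpha$, $M$, $L$), which is exactly the type of threshold condition already collected in the statement of Proposition~\ref{Main_prop}. There is no genuine obstacle here: the only item to track is that the constant $C$ in the previous lemma is indeed independent of $a$ (it depends on the finite parameters $\Gamma, M, \alpha, N \leq L$), so enlarging $a_0$ once at the end of the construction makes the estimate uniform in $q$. This finishes the verification of the inductive estimate \eqref{ind_est_3} at level $q+1$.
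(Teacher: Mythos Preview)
Your proposal is correct and matches the intended approach: the paper itself does not spell out the argument but simply refers to \cite[Lemma 4.18]{vikram}, whose proof is precisely the gain-of-smallness computation you wrote — use the preceding lemma, rewrite the ratio as $(\lambda_q/\lambda_{q+1})^{1/3-\beta}$, and absorb the constant into $M^2$ by choosing $a_0$ large enough.
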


\begin{proof}
The proof follows from \cite[Lemma 4.18]{vikram}.
\end{proof}

\subsection{Verification of inductive estimates at level $q+1$} Note that Corollary~\ref{velo_corrol_fin} and Corollary~\ref{press_corol_fin} confirm the inductive estimates regarding the velocity field, temperature and the pressure. For the desired estimates on the Reynolds stresses, we have the following result.
\begin{cor}
The following bounds are valid: 
    \begin{equation} \label{Spatial_estim_qplus1}
        \|R_{q+1}\|_N, \, \|T_{q+1}\|_N\leq \delta_{q+2} \lambda_{q+1}^{N-2\alpha}, \hspace{2.5cm}\,\, \forall N \in \{0,1,...,L\},
    \end{equation}
    \begin{equation} \label{Material_estim_qplus1}
        \|D_{t, q+1} R_{q+1}\|_N, \, \|D_{t, q+1} T_{q+1}\|_N \leq \delta_{q+2} \delta_{q+1}^{1/2} \lambda_{q+1}^{N+1-2\alpha}, \,\,\, \forall N \in \{0,1,...,L-1\}.
    \end{equation}
\end{cor}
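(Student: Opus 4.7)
The plan is to combine the decomposition $R_{q+1} = R_{q+1,L} + R_{q+1,O} + R_{q+1,R}$ (and the analogous one for $T_{q+1}$) with the estimates collected in the preceding subsections, and to verify that under the parameter constraint $1<b<(1+3\beta)/(6\beta)$ every contribution is absorbed into $\delta_{q+2}\lambda_{q+1}^{N-2\alpha}$ once $\alpha$ is sufficiently small and $a_0$ sufficiently large.

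First, for the spatial bound \eqref{Spatial_estim_qplus1}, I would split the three errors and dominate each one using the lemmas already proven. Lemmas \ref{Nash_err_estim}, \ref{Tranp_err_estim}, \ref{final} handle $R_{q+1,L}$ and $T_{q+1,L}$, contributing terms of the shape $\tfrac{\delta_{q+1}\delta_q^{1/2}\lambda_q}{\lambda_{q+1}^{1-\alpha}}\lambda_{q+1}^N$, $\tfrac{\delta_{q+1}\lambda_q^{2/3}}{\lambda_{q+1}^{2/3-5\alpha}}\lambda_{q+1}^N$, and $\tfrac{\delta_{q+1}^{1/2}}{\lambda_{q+1}^{1-\alpha}}\lambda_{q+1}^N$. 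The oscillation lemmas produce terms $\tfrac{\delta_{q+1}\lambda_q}{\lambda_{q+1}^{1-2\alpha}}\lambda_{q+1}^N$ and $\delta_{q+1}\tfrac{\delta_{q+1}^{1/2}\lambda_q^{1/3}}{\delta_q^{1/2}\lambda_{q+1}^{1/3}}\lambda_{q+1}^{N+\alpha}$, plus the divergence-corrector terms of Lemmas \ref{div_corr_estim} and \ref{div_corr_estim_01}. Finally the residual errors, controlled by Lemmas \ref{Newt_err_estim} and \ref{spatial_moli_estim_err_fin}, deliver $\tfrac{\delta_{q+1}\lambda_q^{2/3}}{\lambda_{q+1}^{2/3}}\lambda_{q+1}^N$, $\tfrac{\delta_{q+1}^{1/2}\delta_q^{1/2}\lambda_q}{\lambda_{q+1}}\lambda_{q+1}^N$ and the mollification remainder. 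Using $\lambda_q/\lambda_{q+1} = \lambda_{q+1}^{-(b-1)/b}$ and $\delta_{q+2}/\delta_{q+1} = \lambda_{q+1}^{-2\beta(b-1)}$, each contribution reduces to an arithmetic inequality of the form $\lambda_{q+1}^{-c+2\alpha} \leq \lambda_{q+1}^{-2\beta(b-1)}$ for an explicit exponent $c$; the restrictive inequality $\beta b < 1/3 - C\alpha$ is covered by the hypothesis on $b$.

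Second, for the material-derivative bound \eqref{Material_estim_qplus1}, I would rewrite
$$
D_{t,q+1} = \bar D_{t,\Gamma} + \bigl(w_{q+1}^{(s)} + (v_q-\bar v_q)\bigr)\cdot\nabla,
$$
so that for each $*\in\{L,O,R\}$,
$$
D_{t,q+1} R_{q+1,*} = \bar D_{t,\Gamma} R_{q+1,*} + \bigl(w_{q+1}^{(s)} + (v_q-\bar v_q)\bigr)\cdot\nabla R_{q+1,*}.
$$
The first piece is controlled by the $\bar D_{t,\Gamma}$-estimates recorded alongside each spatial bound, each containing the factor $\mu_{q+1}$ (or $\tau_q^{-1}$ or $\ell_{t,q}^{-1}$) against the spatial bound. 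The second piece is handled by the standard product inequality
$$
\bigl\|\bigl(w_{q+1}^{(s)}+(v_q-\bar v_q)\bigr)\cdot\nabla R_{q+1,*}\bigr\|_N \lesssim \bigl\|w_{q+1}^{(s)}+(v_q-\bar v_q)\bigr\|_0 \|R_{q+1,*}\|_{N+1} + \bigl\|w_{q+1}^{(s)}+(v_q-\bar v_q)\bigr\|_N \|R_{q+1,*}\|_1,
$$
combined with Lemma \ref{velo_estim_fin} and Proposition \ref{moli}, which yield $\|w_{q+1}^{(s)}+(v_q-\bar v_q)\|_0\lesssim\delta_{q+1}^{1/2}$ and $\|w_{q+1}^{(s)}+(v_q-\bar v_q)\|_N\lesssim\delta_{q+1}^{1/2}\lambda_{q+1}^N$. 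Matching to $\delta_{q+2}\delta_{q+1}^{1/2}\lambda_{q+1}^{N+1-2\alpha}$ then boils down, in the worst case, to verifying $\mu_{q+1}\delta_{q+1}\lambda_{q+1}^{C\alpha}\lesssim\delta_{q+2}\delta_{q+1}^{1/2}\lambda_{q+1}^{1-2\alpha}$, which is exactly the calibration built into $\mu_{q+1} = \delta_{q+1}^{1/2}\lambda_q^{2/3}\lambda_{q+1}^{1/3+4\alpha}$.

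The main technical obstacle is the sheer number of distinct contributions: each one forces a slightly different arithmetic inequality in $(b,\beta,\alpha)$, and the binding ones are exactly those motivating the hypothesis $b<(1+3\beta)/(6\beta)$, arising from the high-high-low oscillation term together with the transport error where the factor $\lambda_q^{2/3}\lambda_{q+1}^{1/3+O(\alpha)}$ first appears. Once $\alpha<\alpha_0(\beta,b)$ is fixed so that every such inequality holds with slack $\lambda_{q+1}^{-\alpha}$ to spare, and $a_0$ is chosen large enough to convert $\lesssim$ into $\leq$, the verification of \eqref{Spatial_estim_qplus1} and \eqref{Material_estim_qplus1} is a direct accounting exercise.
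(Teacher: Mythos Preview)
Your proposal is correct and follows essentially the same approach as the paper: summing the bounds from the linear, oscillation, and residual lemmas, then decomposing $D_{t,q+1}=\bar D_{t,\Gamma}+(w_{q+1}^{(s)}+(v_q-\bar v_q))\cdot\nabla$ and treating the two pieces via the material-derivative lemmas and the product rule plus the already-proven spatial bound. The paper phrases the second piece as acting on the full $R_{q+1}$ rather than on each $R_{q+1,*}$ separately, and singles out the dominant spatial contribution as the flow-stability term $\delta_{q+1}^{3/2}\lambda_q^{1/3}/(\delta_q^{1/2}\lambda_{q+1}^{1/3})$ coming from $\bar A-A$ and $\bar B-B$, but these are cosmetic differences only.
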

\begin{proof}
We have all the required estimates to furnish the proof. In fact, in view of the estimates obtained in the previous sections, we have
\begin{equation*}
        \|R_{q+1}\|_N, \, \|T_{q+1}\|_N \lesssim \bigg(\frac{\delta_q^{1/2}\delta_{q+1}^{1/2}\lambda_q}{\lambda_{q+1}} + \frac{\delta_{q+1}\lambda_q^{2/3}}{\lambda_{q+1}^{2/3}} + \frac{\delta_{q+1}\lambda_q}{\lambda_{q+1}} + \frac{\delta_{q+1}^{3/2} \lambda_q^{1/3}}{\delta_q^{1/2}\lambda_{q+1}^{1/3}}\bigg) \lambda_{q+1}^{5\alpha} \lambda_{q+1}^N.
    \end{equation*}
It is easy to see, thanks to $1 < b < \frac{1 + 3 \beta}{6 \beta}$ and $\beta<1/3$, that the last term of the above expression is dominant and we have (for sufficiently large $a_0$, sufficiently small $\alpha > 0$ and for all $N \in \{0,1,..., L\}$):
\begin{equation*}
        \|R_{q+1}\|_N, \, \|T_{q+1}\|_N \lesssim \frac{\delta_{q+1}^{3/2} \lambda_q^{1/3}}{\delta_q^{1/2}\lambda_{q+1}^{1/3}} \lambda_{q+1}^{5\alpha} \lambda_{q+1}^N \leq \delta_{q+2} \lambda_{q+1}^{N-2\alpha}.
    \end{equation*}
To obtain material derivative estimates associated with $v_{q+1}$, we first write
    \begin{align*}
        \|D_{t,q+1} R_{q+1}\|_N &\lesssim \|\bar D_{t,\Ga} R_{q+1}\|_N + \|(v_q - \bar v_q) \cdot\na R_{q+1}\|_N + \|w_{q+1}^{(s)} \cdot \nabla R_{q+1}\|_{N}, \\
        \|D_{t,q+1} T_{q+1}\|_N &\lesssim \|\bar D_{t,\Ga} T_{q+1}\|_N + \|(v_q - \bar v_q) \cdot\na T_{q+1}\|_N + \|w_{q+1}^{(s)} \cdot \nabla T_{q+1}\|_{N}
    \end{align*}
Again, collecting all the estimates derived in the previous sections, for all $N \in \{0,1,..., L-1\}$, we conclude that
    \begin{align*}
        \|\bar D_{t, \Gamma} R_{q+1}\|_N, \, \|\bar D_{t, \Gamma} T_{q+1}\|_N &\lesssim \bigg(\frac{\delta_{q+1}\delta_q^{1/2}\lambda_q^{5/3}}{\lambda_{q+1}^{2/3}} + \delta_{q+1}\delta_q^{1/2} \lambda_q + \delta_{q+1}^{3/2}\lambda_q^{2/3}\lambda_{q+1}^{1/3} + \frac{\delta_{q+1}^{3/2}\lambda_q^{5/3}}{\lambda_{q+1}^{2/3}}  \\ 
        & \qquad + \frac{\delta_{q+1}\delta_q^{1/2}\lambda_q^2}{\lambda_{q+1}} + \frac{\delta_{q+1}^{3/2}\lambda_q^{4/3}}{\lambda_{q+1}^{1/3}} + \frac{\delta_{q+1}\delta_q^{1/2}\lambda_q^{4/3}}{\lambda_{q+1}^{1/3}} \bigg) \lambda_{q+1}^{5\alpha}\lambda_{q+1}^N.
    \end{align*}
Therefore, thanks to $b < \frac{1 + 3 \beta}{6 \beta} < \frac{1}{3\beta}$, we have for sufficiently small $\alpha > 0$,
\begin{equation*}
        \|\bar D_{t, \Gamma} R_{q+1}\|_N, \,  \|\bar D_{t, \Gamma} T_{q+1}\|_N \leq \delta_{q+2} \delta_{q+1}^{1/2} \lambda_{q+1}^{N+1-2\alpha}, \,\,\, \forall N \in \{0,1,...,L-1\}.
\end{equation*}
For the rest of the terms, thanks to Lemma \ref{velo_estim_fin}, we have    
\begin{align*}
        \|(\bar v_q - v_q) \cdot \nabla R_{q+1}\|_N, \,  \|(\bar v_q - v_q) \cdot \nabla T_{q+1}\|_N & \lesssim \delta_{q+2} \delta_{q+1}^{1/2} \lambda_{q+1}^{N+1-2\alpha}, \\
        \|w_{q+1}^{(s)} \cdot \nabla R_{q+1}\|_N, \,  \|w_{q+1}^{(s)} \cdot \nabla T_{q+1}\|_N &\lesssim \delta_{q+2}\delta_{q+1}^{1/2}\lambda_{q+1}^{N +1 - 2\alpha}.
    \end{align*}
This finishes the proof of the lemma.
\end{proof}

\appendix

\section{Well-posedness Results for the Linearized Euler-Boussinesq Equations} 
\label{wl-psd}

Here we discuss, for any fixed $T>0$ and dimension $d \geq 2$, the well-posedness results of the linearized Euler-Boussinesq system of equations on the domain $[-T, T] \times \mathbb T^d$. In what follows, we consider the following linearized Euler-Boussinesq
equations:
\begin{equation} \label{LinEul}
    \begin{cases}
        \partial_t w + v \cdot \nabla w + w \cdot \nabla v + \nabla p + \Theta e_2= F \\ 
         \partial_t \Theta + v \cdot \nabla \Theta + w \cdot \nabla \theta  = G\\
        \div w = 0, \\ 
        w \big|_{t = 0} = w_0, \quad \Theta \big|_{t = 0} = \Theta_0,
    \end{cases}
\end{equation}
Here the unknowns are the scalar temperature $\Theta:[-T,T] \times \mathbb T^d \rightarrow \mathbb R$, the vector-field $w:[-T,T] \times \mathbb T^d \rightarrow \mathbb R^d$, and the pressure $p:[-T,T] \times \mathbb T^d \rightarrow \mathbb R$. Moreover, $u$ is a given divergence-free vector field. Furthermore, $\theta: [-T,T] \times \mathbb T^d \rightarrow \mathbb R$, $F:[-T,T] \times \mathbb T^d \rightarrow \mathbb R^d$, $G:[-T,T] \times \mathbb T^d \rightarrow \mathbb R$, $w_0:\mathbb T^d \rightarrow \mathbb R^d$ and $\Theta_0:\mathbb T^d \rightarrow \mathbb R^d$ are all known smooth functions, the source term $F$ is divergence-free, and the source term $G$ has mean-zero. Note that, we can calculate the pressure, up to a constant by 
\begin{equation} \label{press_LinEul}
    - \Delta p = \div \div (v \otimes w + w \otimes v) + \div(\Theta e_2)= 2 \div (w \cdot \nabla v) + \div(\Theta e_2).
\end{equation}
Hence, we can recast the first equation of the above system \eqref{LinEul} as follows: 
\begin{equation} \label{LinEul2}
    \begin{cases}
        \partial_t w + v \cdot \nabla w + (\I - 2 \nabla \Delta^{-1}\div)(w \cdot \nabla v) - \nabla \Delta^{-1}\div (\Theta e_2) + \Theta e_2= F \\ 
        w \big|_{t = 0} = w_0.
    \end{cases}
\end{equation}

Now we are in a position to state and prove the global well-posedness results for the system of equations given by \eqref{LinEul}. In fact, we will assume that the pressure $p$ has mean zero to make sure it is uniquely determined by the pressure equation \eqref{press_LinEul}.

\begin{prop}
Let us fix $T > 0$, $N \in \mathbb N \setminus \{0\}$, $\alpha \in (0,1)$. Moreover, assume that the velocity and the temperature $v, \theta \in C([-T,T]; C^{N + 1+\alpha}(\mathbb T^d))$, the source terms $F, G \in C([-T, T]; C^{N+\alpha}(\mathbb T^d))$ and $w_0, \Theta_0 \in C^{N+\alpha}(\mathbb T^d)$ such that $w_0$ is divergence-free. Then, there exists a unique solution $(w, \Theta, p)$ of the system \eqref{LinEul} in the following regularity class:
$$
w, \Theta \in C([-T, T]; C^{N+\alpha}(\mathbb T^d)) \cap C^{1}([-T,T]; C^{N -1 + \alpha }(\mathbb T^d)), \,\, \text{and}\,\,
p \in C([-T,T]; C^{N+\alpha}(\mathbb T^d)).
$$
\end{prop}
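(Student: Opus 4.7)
The plan is to eliminate the pressure and set up a Picard iteration for a closed system in $(w, \Theta)$. Using the Leray projector $\mathbb{P} = \I - \nabla\Delta^{-1}\div$ and writing $\mathbb{Q} := \I - 2\nabla\Delta^{-1}\div$, the momentum equation \eqref{LinEul2} rewrites as
\[
\partial_t w + v\cdot\nabla w + \mathbb{Q}(w\cdot\nabla v) + \mathbb{P}(\Theta e_2) = F,
\]
coupled with the transport equation for $\Theta$ as in \eqref{LinEul}. Both $\mathbb{P}$ and $\mathbb{Q}$ are sums of Calder\'on--Zygmund operators, hence bounded on $C^{N+\alpha}(\mathbb{T}^d)$. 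I would then define iterates $(w^{(n)}, \Theta^{(n)})$ starting from $(w^{(0)}, \Theta^{(0)}) \equiv (w_0, \Theta_0)$ by solving the pair of decoupled transport equations
\begin{align*}
\partial_t w^{(n+1)} + v\cdot\nabla w^{(n+1)} &= F - \mathbb{Q}(w^{(n)}\cdot\nabla v) - \mathbb{P}(\Theta^{(n)} e_2), \\
\partial_t \Theta^{(n+1)} + v\cdot\nabla \Theta^{(n+1)} &= G - w^{(n)}\cdot\nabla\theta,
\end{align*}
with initial data $(w_0, \Theta_0)$.

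Proposition \ref{transport_estim} guarantees that each iterate belongs to $C([-T,T]; C^{N+\alpha}(\mathbb{T}^d))$, using that $\nabla v, \nabla\theta \in C^{N+\alpha}$ by the hypothesis $v,\theta \in C^{N+1+\alpha}$. For the differences $\delta w^{(n)} := w^{(n+1)} - w^{(n)}$ and $\delta\Theta^{(n)} := \Theta^{(n+1)} - \Theta^{(n)}$, the transport equations are homogeneous with forcing controlled linearly by $\|\delta w^{(n-1)}(s)\|_{N+\alpha} + \|\delta\Theta^{(n-1)}(s)\|_{N+\alpha}$. The integral form of Proposition \ref{transport_estim} yields a bound $\|\delta^{(n)}(t)\|_{N+\alpha} \le C\int_0^{t}\|\delta^{(n-1)}(s)\|_{N+\alpha}\,ds$, and iterating gives $\sup_{[-T,T]}\|\delta^{(n)}\|_{N+\alpha} \le (2CT)^n/n!$, which is summable. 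Consequently the iterates form a Cauchy sequence in $C([-T,T]; C^{N+\alpha})^2$, and the limit $(w,\Theta)$ solves the coupled system. The time regularity $\partial_t w, \partial_t \Theta \in C([-T,T]; C^{N-1+\alpha})$ is then read off directly from the equations, since the transport term $v\cdot\nabla w$ costs one spatial derivative of $w$.

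Uniqueness follows from the same Gr\"onwall-type argument applied to the difference of any two solutions, which satisfies the coupled system with zero data and zero forcing. For the divergence-free property of $w$: taking divergence of the momentum equation, using $\div\mathbb{P} = 0$ and $\div\mathbb{Q} = -\div$, together with the identity $\div(v\cdot\nabla w) - \div(w\cdot\nabla v) = v\cdot\nabla\div w$ (valid because $\div v = 0$ and the two quadratic expressions $\partial_k v^j\partial_j w^k$ and $\partial_k w^j\partial_j v^k$ coincide after relabeling indices), one obtains $\partial_t \div w + v\cdot\nabla\div w = 0$, so $\div w_0 = 0$ propagates to all times. Finally, the pressure is recovered by solving $-\Delta p = 2\div(w\cdot\nabla v) + \div(\Theta e_2)$ under the normalization $\int_{\mathbb{T}^d} p = 0$, and standard Schauder theory yields $p \in C([-T,T]; C^{N+\alpha})$.

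The only real subtlety is book-keeping the H\"older bounds so that nothing exceeds the regularity available from $v$ and $\theta$: both the cross term $w\cdot\nabla v$ in the momentum equation and the cross term $w\cdot\nabla\theta$ in the temperature equation cost one derivative of the coefficients, so the hypothesis $v,\theta \in C^{N+1+\alpha}$ is precisely what is needed to close the iteration at the level $C^{N+\alpha}$.
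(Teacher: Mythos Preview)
Your approach is essentially the same as the paper's: both eliminate the pressure via the operators $\mathcal{T}=\I-2\nabla\Delta^{-1}\div$ and $\nabla\Delta^{-1}\div$, then run a Picard iteration in which each step is a pair of decoupled transport equations driven by the previous iterate, and close via the transport estimates of Proposition~\ref{transport_estim}. The only notable differences are cosmetic. First, the paper establishes contraction on a short interval $[-\tau,\tau]$ with $\tau\|v\|_1<1$ (so that Proposition~\ref{transport_estim} applies verbatim) and then patches overlapping intervals to reach $[-T,T]$, whereas you invoke the factorial gain $(CT)^n/n!$ directly on the full interval; strictly speaking you should mention that the transport bound on $[-T,T]$ carries a constant $C_T$ (obtained by iterating the short-time estimate), but this is routine. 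Second, you explicitly verify that $\div w$ satisfies the homogeneous transport equation and hence vanishes identically --- the paper omits this check, so your version is in fact slightly more complete on that point.
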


\begin{proof}
We first prove the local existence of solutions to \eqref{LinEul}. In fact, we will establish well-posedness results for $w$ and $\Theta$ only, in the required regularity class, since the estimate for the pressure $p$ will follow from the pressure equation \eqref{press_LinEul}. For the local existence of solutions, we first let $q \in \mathbb N \setminus \{0\}$ and inductively define $w_q, \Theta_q \in C_tC_x^{N+\alpha} \cap C_t^1C_x^{N-1+\alpha}$ as solutions of the following system:
    \begin{equation*}
        \begin{cases}
            \partial_t w_q + v\cdot \nabla w_q + \mathcal{T}(w_{q-1}\cdot \nabla v) - \mathcal{S}(\Theta_{q-1} e_2) + \Theta_{q-1} e_2= F \\ 
            \partial_t \Theta_q + v\cdot \nabla \Theta_q + w_{q-1} \cdot \nabla \theta = G\\
            w_q \big|_{t=0} = w_0, \quad \Theta_q \big|_{t=0} = \Theta_0.
        \end{cases}
    \end{equation*}
Here $\mathcal{T}$ and $\mathcal{S}$ are the Calder\'on-Zygmund type operators $(\I - 2 \nabla \Delta^{-1}\div)$ and $ \nabla \Delta^{-1} \div $ respectively. We can now apply the standard Cauchy-Lipschitz theory, for ordinary differential equations, to prove the existence and uniqueness of the solutions (for all time) $w_q$, and $\Theta_q$. Moreover, notice that 
\begin{align*}
        w_q(x,t) &= w_0(\Psi(x,t)) - \int_0^t \mathcal{T}(w_{q-1}\cdot \nabla v)(X(\Psi(x, t), s),s)ds 
        + \int_0^t F(X(\Psi(x,t),s),s)ds, \\
        & \qquad + \int_0^t \mathcal{S}(\Theta_{q-1} e_2)(X(\Psi(x, t), s),s)ds - \int_0^t \Theta_{q-1} e_2(X(\Psi(x,t),s),s)ds \\
        \Theta_q(x,t) &= \Theta_0(\Psi(x,t)) - \int_0^t (w_{q-1}\cdot \nabla \theta)(X(\Psi(x, t), s),s)ds + \int_0^t G(X(\Psi(x,t),s),s)ds,
    \end{align*}
where, as before, we denote by $X$ and $\Psi$, the forward and backwards flows of $v$, respectively. Notice that, in view of the above expressions and induction on $q$, we can conclude $w_q, \Theta_q \in C_tC_x^{N+\alpha}$. Moreover, we conclude that $w_q, \Theta_q \in C^1_tC_x^{N-1+\alpha}$, since 
    \begin{align*}
       \partial_t w_q &= - v\cdot \nabla w_q - \mathcal{T}(w_{q-1} \cdot \nabla v) 
       - \mathcal{S}(\Theta_{q-1} e_2) + \Theta_{q-1} e_2 + F, \\
       \partial_t \Theta_q &= - v\cdot \nabla \Theta_q - w_{q-1} \cdot \nabla \theta + G.
    \end{align*}
For a sufficiently small $\tau > 0$, it is also easy to see that both the sequences $\{w_m\}$ and $\{\Theta_m\}$ are Cauchy in the space $C([-\tau, \tau]; C^{N+\alpha}(\mathbb T^d)) \cap C^{1}([-\tau,\tau]; C^{N -1 + \alpha }(\mathbb T^d))$. Indeed, to see this, let us denote by $v_q = w_{q+1} - w_q$ and $z_q = \Theta_{q+1} - \Theta_q$. Then, they satisfy 
    \begin{equation*}
        \begin{cases}
            \partial_t v_q + v \cdot \nabla v_q + \mathcal{T}(v_{q-1}\cdot\nabla v) - \mathcal{S}(z_{q-1} e_2) + z_{q-1} e_2= 0, \\ 
            \partial_t z_q + v \cdot \nabla z_q - v_{q-1} \cdot \nabla \theta =0, \\
            v_q \big|_{t = 0} = 0, \quad z_q \big|_{t = 0} = 0.
        \end{cases}
    \end{equation*}
Then, thanks to Proposition \ref{transport_estim} and under the assumption $\tau \|v\|_{N+\alpha} < 1$, we have    \begin{equation*}
        \|v_q\|_\alpha \lesssim \tau \|\mathcal{T}(v_{q-1}\cdot \nabla v) - \mathcal{S}(z_{q-1} e_2) + z_{q-1} e_2\|_\alpha \lesssim \tau \left( \|v_{q-1}\|_\alpha + \|z_{q-1}\|_\alpha \right),
    \end{equation*}
Moreover, for all $1 \leq k \leq N$, we have
    \begin{equation*}
        [v_q]_{k+\alpha} \lesssim \tau \|\mathcal{T}(v_{q-1} \cdot \nabla v) - \mathcal{S}(z_{q-1} e_2) + z_{q-1} e_2 \|_{k+\alpha} \lesssim \tau \left( \|v_{q-1}\|_{k+\alpha} + \|z_{q-1}\|_{k+\alpha} \right).
    \end{equation*}
Therefore, we conclude that 
\begin{equation*}
        \|v_q\|_{N+\alpha} \leq C \tau \left( \|v_{q-1}\|_{N+\alpha} + \|z_{q-1}\|_{N+\alpha} \right),
\end{equation*}
where the constant $C$ depends on $N$, $\alpha$ and $\|v\|_{N+1+\alpha}$. Similarly, under the same assumption $\tau \|v\|_{N+\alpha} < 1$, Proposition \ref{transport_estim} implies
     \begin{equation*}
        \|z_q\|_{N+\alpha} \leq C \tau \|v_{q-1}\|_{N+\alpha}.
    \end{equation*}
If we choose $\tau$ such that $C\tau < 1/2$, then, combining above two estimates, we obtain 
    \begin{equation*}
     \left(\|v_q \|_{N+\alpha} + \|z_q \|_{N+\alpha} \right) \leq \frac{1}{2^q} \left( \|v_0\|_{N+\alpha} + \|z_0\|_{N+\alpha} \right).
    \end{equation*}
In other words, both the sequences $\{w_q\}$ and $\{\Theta_q\}$ are Cauchy in $C([-\tau, \tau]; C^{N+\alpha}(\mathbb T^d))$. To conclude that both the sequences are also Cauchy in $C^1([-\tau, \tau]; C^{N-1+\alpha}(\mathbb T^d))$, we see that for $q, q' \in \mathbb N \setminus \{0\}$, we have 
\begin{align*}
\|\partial_t (w_q - w_{q'})\|_{N-1-\alpha} &+ \|\partial_t (\Theta_q - \Theta_{q'})\|_{N-1-\alpha} 
\lesssim \|w_q - w_{q'}\|_{N+\alpha} + \|\Theta_{q} - \Theta_{q'}\|_{N+\alpha} \\
& \quad +  \|w_{q-1} - w_{q' -1}\|_{N-1+\alpha} + \|\Theta_{q-1} - \Theta_{q' -1}\|_{N-1+\alpha}. 
\end{align*}
Then, by defining,  
    \begin{align*}
        w = \lim_{q \rightarrow \infty} w_q \in C([-\tau, \tau]; C^{N+\alpha}(\mathbb T^d)) \cap C^{1}([-\tau,\tau]; C^{N -1 + \alpha }(\mathbb T^d)), \\
        \Theta = \lim_{q \rightarrow \infty} \Theta_q \in C([-\tau, \tau]; C^{N+\alpha}(\mathbb T^d)) \cap C^{1}([-\tau,\tau]; C^{N -1 + \alpha }(\mathbb T^d)),
    \end{align*}
we can conclude that $w$ and $\Theta$ solves the system \eqref{LinEul} on the time interval $[-\tau, \tau]$. This proves the existence of local solutions to \eqref{LinEul}.

It is also clear that such local solutions are unique. Indeed, by denoting $w_1$ and $w_2$, two solutions of the first equation of \eqref{LinEul}, and $\Theta_1$ and $\Theta_2$, two solutions of the second equation of \eqref{LinEul}, we see that $y := w_2 - w_1$ and $z := \Theta_2 - \Theta_1$ satisfy
    \begin{equation*}
    \begin{cases}
        \partial_t y + v \cdot \nabla y + \mathcal{T}(y \cdot \nabla v) - \mathcal{S}(z e_2) + z e_2= 0, \\ 
            \partial_t z + v\cdot \nabla z - y \cdot \nabla \theta =0,  \\
        y \big|_{t = 0} = 0, \quad z \big|_{t = 0} = 0.
    \end{cases} 
    \end{equation*}
Again, a simple application of Proposition~\ref{transport_estim}, under the assumption $|t| \|v\|_1 \leq 1$, reveals that
    \begin{align*}
        \|y(\cdot, t)\|_{\alpha} &\lesssim \int_0^t\|\mathcal{T}(y\cdot \nabla v)(\cdot, s) - \mathcal{S}(z e_2) + z e_2\|_\alpha ds \lesssim \int_0^t \left( \|y(\cdot, s)\|_\alpha + \|z(\cdot, s)\|_\alpha \right) ds, \\
     \|z(\cdot, t)\|_{\alpha} &\lesssim \int_0^t\| (y\cdot \nabla \theta)(\cdot, s)\|_\alpha ds \lesssim \int_0^t  \|y(\cdot, s)\|_\alpha ds, 
    \end{align*}
Therefore, a standard Gr\"onwall's argument reveals that $y = z= 0$, inside the time interval $(-\|v\|_1^{-1}, \|v\|_1^{-1})$. To obtain the global uniqueness, we simply cover the interval $[-T,T]$ by intervals of length $\|v\|_1^{-1}$.

To prove the existence of global-in-time solutions, we first denote by $t_p = \frac{1}{2} p \tau$, for $p \in \mathbb Z$. Moreover, we denote the unique solutions by $w^0$ and $\Theta^0$ on the interval $(-\tau, \tau)$, with initial data $w^0\big|_{t=0}=w_0$ and $\Theta^0\big|_{t=0}=\Theta_0$ respectively. Furthermore, we define $w^p$, and $\Theta^p$ to be the solutions on $(t_p - \tau, t_p + \tau) \cap [-T, T]$ with initial conditions 
    \begin{equation*}
        w^p \big|_{t = t_p} = 
        \begin{cases}
            w^{p-1}(t_p), \,\,\, \text{if } p > 0, \\ 
            w^{p+1}(t_p), \,\,\, \text{if } p < 0,
        \end{cases} \quad
        \text{and} \quad \Theta^p \big|_{t = t_p} = 
        \begin{cases}
            \Theta^{p-1}(t_p), \,\,\, \text{if } p > 0, \\ 
            \Theta^{p+1}(t_p), \,\,\, \text{if } p < 0,
        \end{cases}
    \end{equation*}
respectively. Thanks to uniqueness result, we can now define global-in-time solutions $w(\cdot, t) = w^{p}(\cdot, t)$, for $t \in (t_p - \tau, t_p + \tau)$, and $\Theta(\cdot, t) = \Theta^{p}(\cdot, t)$, for $t \in (t_p - \tau, t_p + \tau)$. This finishes the proof of the proposition. 
\end{proof}

\begin{rem}
If we are given that $v, \theta, F, G \in C^\infty ([-T,T] \times \mathbb T^d)$, and $w_0, \Theta_0 \in C^\infty(\mathbb T^d)$, then we conclude that the unique solutions $w$ and $\Theta$ belongs to the regularity class $C^\infty ([-T,T] \times \mathbb T^d)$. In fact, the conclusion $w, \Theta \in C_t C^\infty_x$ is clear, and for the regularity in time, observe that
\begin{align*}
    \partial_t w &= - v \cdot \nabla w - (\I - 2\nabla \Delta^{-1} \div)(w\cdot \nabla v) - \nabla \Delta^{-1}\div (\Theta e_2) + \Theta e_2 + F, \\
    \partial_t \Theta &= - v \cdot \nabla \Theta - w \cdot \nabla \theta + G
\end{align*}
and, therefore, we conclude
\begin{equation*}
    w, \Theta \in C_t^p C_x^\infty \implies \partial_t w, \partial_t \Theta \in C_t^p C_x^\infty \implies w, \Theta \in C_t^{p+1} C_x^\infty.
\end{equation*}
\end{rem}

\medskip\noindent
{\bf Acknowledgments:} {The author wishes to thank Vikram Giri for enlightening discussions on this topic, and also acknowledges the support of the Department of Atomic Energy, Government of India, under the project no.$12$-R$\&$D-TFR-$5.01$-$0520$, and the DST-SERB SJF grant DST/SJF/MS/$2021$/$44$. } 

\end{document}